\newtheorem{theorem}{Theorem}[section]
\newtheorem{example}[theorem]{Example}
\newtheorem{corollary}[theorem]{Corollary}
\newtheorem{lemma}[theorem]{Lemma}
\newtheorem{proposition}[theorem]{Proposition}
\theoremstyle{remark}
\numberwithin{equation}{section}
\renewcommand{\subset}{\subseteq}
\newcommand{\A}{\mathbf{A}}
\newcommand{\Af}{\mathbf{A}_{\fin}}
\newcommand{\bs}{\backslash}
\newcommand{\comment}[1]{}
\newcommand{\cInd}{\operatorname{c-Ind}}
\newcommand{\C}{\mathbf{C}}
\newcommand{\ds}{\displaystyle}
\newcommand{\e}{\varepsilon}
\newcommand{\f}{f}
\newcommand{\F}{\mathbf{F}}
\newcommand{\fin}{\operatorname{fin}}
\newcommand{\fn}{f^{\n}}
\newcommand{\fnp}{f_\ell^{\n}}
\newcommand{\fp}{f_\p}
\newcommand{\fv}{f^{\mfn}}
\newcommand{\g}{\gamma}
\newcommand{\Gal}{\operatorname{Gal}}
\newcommand{\GL}{\operatorname{GL}}
\newcommand{\Ind}{\operatorname{Ind}}
\renewcommand{\k}{\mathbbm{k}}
\newcommand{\mat}[4]{\begin{pmatrix} {#1} & {#2} \\ {#3} & {#4}
  \end{pmatrix}}
\newcommand{\mfn}{\mathfrak{n}_v}
\newcommand{\meas}{\operatorname{meas}}
\renewcommand{\mod}{\text{ mod }}
\newcommand{\n}{\mathtt{n}}
\newcommand{\new}{\mathrm{new}}
\newcommand{\N}{\mathbb{N}}
\newcommand{\ord}{\operatorname{ord}}
\renewcommand{\O}{\mathcal{O}}
\newcommand{\ol}{\overline}
\newcommand{\olG}{\overline{G}}
\newcommand{\p}{\mathfrak{p}}
\newcommand{\row}[2]{\left({#1}\,\,\,{#2}\right)}
\newcommand{\PGL}{\operatorname{PGL}}
\newcommand{\Q}{\mathbf{Q}}
\newcommand{\R}{\mathbf{R}}
\renewcommand{\Re}{\operatorname{Re}}
\newcommand{\sg}[1]{\left<{#1}\right>}
\newcommand{\sgn}{\operatorname{sgn}}
\newcommand{\smat}[4]{\bigl(\begin{smallmatrix}{#1}&{#2}\\{#3}&{#4}\end{smallmatrix}\bigr )}
\newcommand{\SL}{\operatorname{SL}}
\newcommand{\SO}{\operatorname{SO}}
\newcommand{\Span}{\operatorname{Span}}
\newcommand{\Supp}{\operatorname{Supp}}
\newcommand{\T}{\mathbb{T}}
\newcommand{\tr}{\operatorname{tr}}
\newcommand{\w}{\omega}
\newcommand{\Z}{\mathbf{Z}}
\newcommand{\Zhat}{\widehat{\Z}}
\newcommand{\zpvector}{\begin{pmatrix} \O_F \\ \O_F \end{pmatrix}}
\def\@tocline#1#2#3#4#5#6#7{\relax
  \ifnum #1>\c@tocdepth 
  \else
    \par \addpenalty\@secpenalty\addvspace{#2}%
    \begingroup \hyphenpenalty\@M
    \@ifempty{#4}{%
      \@tempdima\csname r@tocindent\number#1\endcsname\relax
    }{%
      \@tempdima#4\relax
    }%
    \parindent\z@ \leftskip#3\relax \advance\leftskip\@tempdima\relax
    \rightskip\@pnumwidth plus4em \parfillskip-\@pnumwidth
    #5\leavevmode\hskip-\@tempdima
      \ifcase #1
       \or\or \hskip 1em \or \hskip 2em \else \hskip 3em \fi%
      #6\nobreak\relax
    \hfill\hbox to\@pnumwidth{\@tocpagenum{#7}}\par
    \nobreak
    \endgroup
  \fi}
\begin{document}
\title{Counting locally supercuspidal newforms}
\author{Andrew Knightly}
\address{Department of Mathematics \& Statistics\\University of Maine
\\Neville Hall\\ Orono, ME  04469-5752, USA }

\begin{abstract} 
The trace formula is a versatile tool for computing sums of spectral data across families of
automorphic forms.
Using specialized test functions, one can treat small families with refined spectral properties.
This has proven fruitful in analytic applications.
We detail such methodology here, with the aim of counting newforms in certain small families.
The result (Theorem \ref{dimST}) is a general formula for the number of holomorphic newforms
 of weight $k$ and level $N$ whose local representation type at each $p|N$ is a fixed
supercuspidal representation $\sigma_p$ of $\GL_2(\Q_p)$.  
This is given in terms of
  local elliptic orbital integrals attached to matrix coefficients of the $\sigma_p$.
We evaluate the formula explicitly in the case where each $\sigma_p$ has conductor $\le p^3$.
The technical heart of the paper is the explicit calculation in \S\ref{pN} of 
elliptic orbital integrals attached to such $\sigma_p$.
 We also compute the traces of Hecke operators on the span of these newforms.
Some applications are given to biases among root numbers of newforms.
\end{abstract}

\maketitle
\thispagestyle{empty}
\tableofcontents

\section{Introduction} \label{intro}

\subsection{Overview}

Modular forms are holomorphic functions on the 
complex upper half-plane $\mathbf H$ that obey a type of symmetry
under the action of $\SL_2(\Z)$ (or a congruence subgroup) on $\mathbf H$ 
  by linear fractional transformations.
They belong to the realm of analysis,
but this symmetry embodies a deep link with number theory and algebra.
Indeed, Langlands' famous Functoriality Conjecture predicts that there is
a precise connection between the algebraic structure of the field $\Q$
of rational numbers (as captured by representations of its absolute Galois group)
and spectral properties of automorphic forms (the latter being
simultaneous eigenfunctions of the Laplace operator and its $p$-adic analogs, the Hecke operators),
\cite{Ge1}.
This connection is expressed as an equality of $L$-functions.

Automorphic forms can be elusive, and for most purposes it is not feasible to study 
  them and their $L$-functions one at a time.  
The trace formula is a technique that provides access to averages of spectral data across families of forms,
where the family is determined by a choice of test function.
For instance, by choosing a test function with certain invariance 
properties, one obtains a sum of Hecke eigenvalues $\lambda_n(h)$ for all eigenforms $h$
  of a given level and weight, i.e., the trace of the Hecke operator $T_n$ on $S_k(N)$ (see, for example,
\cite{KL}).  

The trace formula and its relative cousins have seen widespread use in
analytic number theory, with applications to such problems as estimating moments of $L$-functions
 with consequent subconvexity bounds for a single $L$-function, determining the asymptotic distribution
of the Hecke eigenvalues of a growing family of cusp forms (vertical Sato-Tate laws), 
and finding densities of low-lying zeros of families $L$-functions (Katz-Sarnak philosophy). 
See \cite{Bl} for a recent survey of these and other applications.  

Our aim in the present article is to train the trace formula microscope more narrowly 
through the use of specialized test functions, thereby providing access to 
thinner families in the automorphic spectrum.  This is achieved using the ``simple trace formula", 
variants of which have been in use since the 1970's, \cite[(7.21)]{GJ}.  
Our motivation (described in the next section) is to count
cusp forms in these thin families.  But the explicit and flexible local-to-global techniques detailed here
for $\GL(2)$ can be used in many other applications.

Counterintuitively, by considering smaller families, in some situations one obtains
simpler trace formulas and stronger analytic results.  We mention here some examples that illustrate 
this.  First, Hu, Petrow and Young have recently developed Fourier relative trace formulas for newforms with 
certain prescribed local representation types, \cite{H}, \cite{HPY}.  This is used to estimate 
thin averages of Rankin-Selberg $L$-functions, leading to improved hybrid subconvexity
bounds. 

In a different direction, in 2007 Booker and Str\"ombergsson used the Selberg trace formula to 
provide evidence for Selberg's conjecture that the first Laplace eigenvalue in the cuspidal 
spectrum of $\Gamma\bs \mathbf H$ for a congruence subgroup $\Gamma\subset \SL_2(\Z)$ is
$\ge 1/4$, \cite{BS}.  In verifying the conjecture for $\Gamma=\Gamma_1(N)$ for square-free $N<857$, 
they observed that the trace formula simplifies upon sieving out the contribution 
of oldforms in this case.  They were also able to restrict to the even (or odd) part of the spectrum.
  With Lee in \cite{BLS}, they subsequently extended 
this work to remove the square-free hypothesis on $N$.  However, in this case removing the 
oldform contribution introduces further complication.  
To proceed, they developed a novel method to sieve the 
spectrum down further to twist-minimal newforms, arriving at a simpler formula. 
In both papers, working with a 
thinner family extended the reach of their numerical computations.

A general discussion about the value of isolating small families of automorphic forms is given
in \cite[\S1.5]{PY2}.  In the breakthrough papers \cite{PY1}-\cite{PY2}, Petrow and Young established Weyl-type 
subconvexity bounds for Dirichlet $L$-functions using a family of Maass forms that is 
locally principal series at all finite places.

\subsection{Description of main results}

Given an integer $N=\prod_{p|N}p^{N_p}>1$, let $H_k(N)$ be the set of cuspidal Hecke newforms of level $N$
and weight $k$.  Each $h\in H_k(N)$ corresponds to a cuspidal automorphic representation
$\pi_h$ of $G(\A)=G(\R)\prod'_pG(\Q_p)$ where $G=\PGL_2$. The representation $\pi_h$ factors as a restricted tensor
product 
\[\pi_h\cong \otimes'_{p\le \infty} \pi_{h,p}\]
of infinite-dimensional irreducible admissible representations of the local groups.
We know that $\pi_{h,\infty}=\pi_k$ is the weight $k$ discrete series representation,
 that for each prime $p\nmid N$, $\pi_{h,p}$ is an unramified principal series representation with
Satake parameters determined by the $p$-th Hecke eigenvalue of $h$,
  and that for each $p|N$, $\pi_{h,p}$ is ramified of conductor $p^{N_p}$ (see, for example, \cite{Ge0}).

There is an algorithm, due to Loeffler and Weinstein, to determine the isomorphism class of
  each ramified $\pi_{h,p}$ given $h$, \cite{LW}.  
Here we consider the opposite problem, namely to understand
the cusp forms $h$ with prescribed local ramification behavior.
To this end, 
we define the following spaces of newforms.
For each $p|N$, fix an irreducible 
admissible representation $\sigma_p$ of $\PGL_2(\Q_p)$ of conductor $p^{N_p}$,
and let $\widehat{\sigma}=(\sigma_p)_{p|N}$.  We then let 
 $H_k(\widehat{\sigma})$ be the set of weight $k$ newforms of level $N$
 having the local representation type $\sigma_p$ at each $p|N$:
\[ H_k(\widehat{\sigma})=\{h\in H_k(N)|\, \pi_{h,p}\cong \sigma_p\text{ for all }p|N\}.\]
Defining
\[S_k(\widehat{\sigma})=\Span H_k(\widehat{\sigma}),\qquad S_k^{\new}(N)=\Span H_k(N),\]
we have
\begin{equation}\label{Skdim}
    S_k^{\new}(N)= \bigoplus_{\widehat{\sigma}}S_k(\widehat{\sigma}),
\end{equation}
where $\widehat{\sigma}$ ranges over all tuples as above.

The dimensions of the spaces $S_k^{\new}(N)$ have been computed by Greg Martin in \cite{GMar},
by sieving the well-known dimension formulas for the full spaces $S_k(N)$.
It is an open problem to refine these dimension formulas by computing
$\dim S_k(\widehat{\sigma})= |H_k(\widehat{\sigma})|$ for each tuple $\widehat{\sigma}$.
More generally one can ask for the traces of Hecke operators on $S_k(\widehat{\sigma})$. 
A complete solution to this problem seems well out of reach, but even special cases are of great
interest. For example, such information would enable investigations into the effect of the 
underlying representation type on various statistical properties 
of cusp forms.

In some special cases, asymptotic results about $|H_k(\widehat{\sigma})|$ are known.
When $p$ is a finite prime, the representation $\sigma_p$ of $G(\Q_p)$ is 
either principal series, special, or supercuspidal (\cite[\S9.11]{BH}).
  Only the latter two types are 
square-integrable (assuming unitary central character),
 and these are amenable to study via the trace formula.
Kim, Shin and Templier gave asympotics for automorphic
representations with prescribed supercuspidal local behavior in a quite general 
  setting, \cite{KST}. In the case of $\PGL_2(\Q)$, their work shows that
 if each $\sigma_p$ is supercuspidal,
\begin{equation}\label{KST}
 |H_k(\widehat{\sigma})|\sim \frac{k-1}{12}\prod_{p|N}d_{\sigma_p}
\end{equation}
as $k,N\to\infty$,
where $d_{\sigma_p}$ is the formal degree of $\sigma_p$, suitably normalized.
They use the trace formula, and the main technical input is a bound for the
elliptic orbital integrals attached to supercuspidal matrix coefficients.
In related earlier work, Weinstein gave asymptotics for cusp forms 
with prescribed local inertial types, concluding that the set of types lacking global realization is finite, \cite{W}.
Fixing inertial type is weaker than fixing the local representation, 
but this result includes types which are not square-integrable.
This is discussed further in a recent paper of Dieulefait, Pacetti and Tsaknias, \cite{DPT}.

We remark that in Corollary \ref{kodd} we will show that the asymptotic \eqref{KST}
is in fact an equality when $k\ge 3$ is odd (so in particular the nebentypus is nontrivial) and 
$N$ has a prime factor $p>3$ with $\ord_p(N)$ odd.

When $N$ is square-free, each $\sigma_p$ is necessarily special.
Going beyond asymptotics, Kimball Martin computed
$|H_k(\widehat{\sigma})|$ explicitly in this case, by applying Yamauchi's trace formula
for Atkin-Lehner operators, \cite{KMar}.  As an interesting consequence, he discovered that
there is a bias among newforms of square-free level, favoring root number $+1$:
letting $S_k^\pm(N)$ denote the span of the newforms of root number $\pm 1$, we have
\[\dim S_k^+(N)-\dim S_k^-(N)\ge 0\]
when $N$ is square-free,
with the inequality being strict with finitely many explicit exceptions.
For example, if $N>3$ and $k>2$, 
\begin{equation}\label{biasA}
\dim S_k^+(N)-\dim S_k^-(N)= c_N h(-N),
\end{equation}
where $c_N\in \{\frac12,1,2\}$ is a constant depending on the equivalence class of $N$ modulo 8,
and $h(-N)$ is the class number of $\Q(\sqrt{-N})$. 

In the present paper, we further investigate
the case where each $\sigma_p$ is supercuspidal.  Our first main result is
Theorem \ref{stf2} giving, for such tuples $\widehat{\sigma}$,
 a general formula for the trace of a Hecke operator $T_\n$ on $S_k(\widehat{\sigma})$ as a 
main term plus a finite sum of elliptic orbital integrals $\Phi(\g,f)$.
This theorem is obtained from the adelic $\GL_2$ trace formula using a test function $f$ built using 
supercuspidal matrix coefficients at the ramified places.  
In \S \ref{fact} we show how each global elliptic orbital integral can be factorized 
into a product of local ones, multiplied by a global measure term that is computed in 
Theorem \ref{meas}.  This global measure is the source of the class numbers of quadratic 
  number fields that appear in classical trace formulas.
  The local orbital integrals at primes not dividing the level are evaluated explicitly 
over an arbitrary local field of characteristic $0$ in \S \ref{ghyp}-\ref{gell}.
We have kept these calculations as general as possible in order that they
  may find use in other applications of the trace formula.

Theorem \ref{stf2} thereby reduces explicit evaluation of $\tr(T_\n|S_k(\widehat{\sigma}))$ to 
  the calculation of certain local elliptic orbital integrals 
  at the places dividing the level.
We demonstrate proof of concept in \S\ref{ex}-\ref{pN} by carrying out the latter
in the special case where each $\sigma_p$ has conductor $\le p^3$.
As recalled in \S \ref{bg}, the supercuspidals come in two series: the unramified supercuspidals,
 of conductor $p^{2r}$, and the ramified supercuspidals, of conductor $p^{2r+1}$.
  We thus treat the first ($r=1$) family in each series. 
The result is the following explicit formula for $\tr(T_\n|S_k(\widehat{\sigma}))$
under this restriction.  We allow nontrivial nebentypus, which requires the tuple 
$\widehat{\sigma}$ to satisfy a global central character constraint described in \S\ref{test}.
Of course, when $\dim S_k(\widehat{\sigma})=1$ as sometimes happens when $k$ and $N$ are small,
it provides a direct way to compute the Fourier coefficients of the associated newform.

\begin{theorem}\label{mainST}
Let $N=S^2T^3>1$ for $S,T$ relatively prime and square-free, and let $\w'$ be a Dirichlet
character of level $N$ and conductor dividing $ST$.
Let $\widehat{\sigma}=(\sigma_p)_{p|N}$ be a tuple of supercuspidal representations,
with $\sigma_p$ of conductor $p^2$ (resp. $p^3$) if $p|S$ (resp. $p|T$), chosen 
compatibly with $\w'$ as in \S\ref{test}.
For $k>2$ satisfying $\w'(-1)=(-1)^k$, 
  let $S_k(\widehat{\sigma})\subset S_k^{\new}(N,\w')$ be the associated space of newforms.
 Then for $(\n,N)=1$ and $T_\n$ the usual Hecke operator defined in \S\ref{spectral},
\[\tr (T_\n|S_k(\widehat{\sigma}))=\n^{k/2-1}\left[\ol{\w'(\sqrt{\n})}\frac{k-1}{12}
  \prod_{p|S}(p-1)\prod_{p|T}\frac{p^2-1}2 +\frac12\sum_{M|T}\Phi(\mat{}{-\n M}1{})\right.\]
\[\left.
+\sum_{M|T}\sum_{1\le r<\sqrt{4\n/M}}\Phi(\mat0{-\n M}1{rM})
\right]
\]
where $\w'(\sqrt \n)$ is taken to be $0$ if $\n$ is not a perfect square.
Each orbital integral $\Phi(\g)$ as above may be evaluated explicitly using
\begin{equation}\label{phifact}
\Phi(\g)=\frac{2h(E)}{w_E2^{\w(d_E)}}\Phi_\infty(\g)\prod_{p| N}
  \Phi_p(\g)\prod_{\ell|\Delta_\g,\atop\ell\nmid N}\Phi_\ell(\g).
\end{equation}
Here, $\ell$ and $p$ denote prime numbers,
$\Delta_\g$ is the discriminant of the characteristic polynomial of $\g$, 
$E=\Q[\g]$ is an imaginary quadratic 
field with class number $h(E)$, discriminant $d_E$ (with
$\w(d_E)$ distinct prime factors) and $w_E$ roots of unity. 
Given 
\[\g=\mat0{-\n M}1{rM}\]
for $0\le r< \sqrt{4\n/M}$, 
 the factors in \eqref{phifact} are given explicitly as follows.

Taking $\theta_\g=\arctan(\sqrt{|\Delta_\g|}/rM)$ (interpreted as $\pi/2$ if $r=0$),
\[ \Phi_\infty(\g)= -\frac{\sin({(k-1)\theta_\g})}{\sin({\theta_\g})}
\]
(as in Proposition \ref{ellg}).

Suppose $\ell|\Delta_\g$  and $\ell\nmid N$. Then if $\g$ is hyperbolic in $G(\Q_\ell)$, 
  \[\Phi_\ell(\g)=|\Delta_\g|_\ell^{-1/2}\]
(as in Proposition \ref{ellh}).
 If $\g$ is elliptic in $G(\Q_\ell)$, then (as in Proposition \ref{elle} and \eqref{index})
    \[\Phi_\ell(\g)=e_\g(\ell)\sum_{j=0}^{\ord_\ell(b)}\ell^j(1+\frac{2-e_\g(\ell)}\ell\delta_{j>0}),\]
 where $\delta_{j>0}$ is an indicator function, 
  $e_\g(\ell)\in\{1,2\}$ is $2$ if and only if $\ell$ ramifies in $E$,
and $b$ is defined by $\Delta_\g=b^2d_E$ for $d_E$ the discriminant of $E$.

Suppose $p|N$.
  If $\g$ is hyperbolic in $G(\Q_p)$, then $\Phi_p(\g)=0$.
So we will assume that $\g$ is elliptic in $G(\Q_p)$.  
 We consider the three cases $p|M$, $p|\frac TM$, and $p|S$ separately.
If $p|M$, then $\Phi_p(\g)=0$ unless
there exists $y$ such that $y^2\equiv -pt_p/\n M\mod p$, where $t_p$ is the parameter of
the fixed supercuspidal representation $\sigma_p=\sigma_{t_p}^{\zeta_p}$ of conductor $p^3$ (see \S\ref{ssr}).
  In this case,
\[\Phi_p(\mat0{-\n M}1{rM}) =\ol{\zeta_p}\Bigl[e(-\frac{yrM}{p^2})\w_p(y) +\delta(p\neq 2)e(\frac{yrM}{p^2})\w_p(-y)\Bigr]\]
(as in Proposition \ref{Phiell}), where $\zeta_p$ and $\w_p$ are the root number and
central character of $\sigma_p$ respectively, 
  $e(x)=e^{2\pi i x}$ and $\delta$ is an indicator function.

If $p|\frac{T}{M}$, then 
$\Phi_p(\g)=0$ unless the characteristic polynomial $P_\g$ of $\g$ has a nonzero double
root modulo $p$, say
\begin{equation}\label{Pgz2}
P_\g(X)\equiv (X-z)^2\mod p
\end{equation}
for some $z\in(\Z/p\Z)^*$.
  Under this condition, we have (as in Proposition \ref{gunram} and its remarks)
\[
\Phi_p(\g) =\frac{\ol{\w_p(z)}}p\sum_{n=1}^{\ord_p(\Delta_\g)-1} \sum_{c\mod p}
\mathcal{N}_\g(c,n)
\sum_{y=1}^{p-1} e(\frac{yc}{zp})
 e(-\frac{t_p}{yzp})^{\delta(n=1)}
\]
where $t_p\in (\Z/p\Z)^*$ is the parameter of $\sigma_p=\sigma_{t_p}^{\zeta_p}$, $\w_p$
is its central character, $e(x)=e^{2\pi i x}$, and 
\[\mathcal{N}_\g(c,n)=\#\{b\mod p^{n+1}|\, P_\g(b)\equiv cp^n\mod p^{n+1}\}.\]

Finally, suppose $p|S$. If \eqref{Pgz2} is satisfied,
then (as in Proposition \ref{elld0}),
\[
\Phi_p(\g)= -\ol{\w_p(z)}+\frac{\ol{\w_p(z)}}p\sum_{n=1}^{\ord_p(\Delta_\g)-1}\left[
(p-1)\mathcal{N}_\g(0,n)-\sum_{c=1}^{p-1}\mathcal{N}_\g(c,n)\right],
\]
for $\mathcal{N}_\g(c,n)$ as above.
On the other hand, if $P_\g$ is irreducible modulo $p$, then
\[
\Phi_p(\g)=-\ol{\nu(\g)}-\ol{\nu^p(\g)}
\]
where $\nu$ is the primitive character of $\F_{p^2}^*$ attached to the fixed 
supercuspidal $\sigma_p$ of conductor $p^2$ (see \S\ref{d0}), $\w_p=\nu|_{\F_p^*}$, 
and we interpret the above to mean $-\ol{\nu(x)}-\ol{\nu^p(x)}$ if $x\in \F_{p^2}^*$ has the
 same minimum polynomial over $\F_p$ as the reduction of $\g$ mod $p$.
\end{theorem}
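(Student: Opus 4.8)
The plan is to assemble the statement from the machinery already developed: the simple trace formula of Theorem \ref{stf2}, the local--global factorization of \S\ref{fact} together with the global volume computation of Theorem \ref{meas}, and the explicit local orbital integrals of Propositions \ref{ellg}, \ref{ellh}, \ref{elle}, \ref{Phiell}, \ref{gunram}, and \ref{elld0}. The only genuinely new work is to pin down which conjugacy classes survive, to reorganize the resulting finite sum, and to track the normalizing constants; the substantive analysis lives in the cited propositions.

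I would begin from Theorem \ref{stf2}, which already writes $\tr(T_\n|S_k(\widehat\sigma))$ as the asserted main term plus a finite sum of global elliptic orbital integrals $\Phi(\g,f)$ over the $\Q$-elliptic conjugacy classes $\g$ meeting the support of the test function $f$. Ellipticity of $f$ at $\infty$ forces $P_\g(X)=X^2-(\tr\g)X+\det\g$ to have negative discriminant, so $E=\Q[\g]$ is imaginary quadratic and $\g$ may be taken in companion form $\smat0{-c}1a$. Since $f$ is the $T_\n$ Hecke operator at the places $\ell\nmid N$ and $(\n,N)=1$, the determinant of $\g$ is $\n$ up to squares; the ramified places pin the squarefree representative down. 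At $p\,|\,S$ the conductor-$p^2$ matrix coefficient has orbital integral vanishing on split classes, so it merely forces $\g$ elliptic at $p$ and imposes no divisibility on $\det\g$. At $p\,|\,T$ the conductor-$p^3$ matrix coefficient is nonzero only when $\g$ is elliptic at $p$, which occurs in one of two regimes: either $p\,\|\,\tr\g$ with $\ord_p(\det\g)$ odd (so $\g$ generates the ramified quadratic extension of $\Q_p$), or $P_\g$ has a nonzero double root modulo $p$ as in \eqref{Pgz2}. Collecting the primes $p\,|\,T$ of the first type into a squarefree divisor $M$ forces $\det\g=\n M$ and $M\,|\,\tr\g$, i.e.\ $\g=\smat0{-\n M}1{rM}$ with $M\,|\,T$, while archimedean ellipticity $(\tr\g)^2<4\det\g$ gives $r^2M<4\n$. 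Finally, in $\PGL_2(\Q)$ the companion matrices of traces $\pm rM$ become $\GL_2(\Q)$-conjugate after multiplying one by the scalar $-1$, hence represent the same class; so $\Phi$ is invariant under $r\mapsto-r$, and the Eichler--Selberg sum folds into $\tfrac12\,\Phi(r=0)+\sum_{r\ge1}\Phi(r)$, yielding precisely the index set $M\,|\,T$, $0\le r<\sqrt{4\n/M}$ together with the displayed $\tfrac12$. The factor $\n^{k/2-1}$ and the term $\ol{\w'(\sqrt\n)}$ in the main term record the normalization of $T_\n$ and the central-character constraint of \S\ref{test}.

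It then remains to evaluate each surviving $\Phi(\g)$. The factorization of \S\ref{fact}, with the global volume supplied by Theorem \ref{meas}, is exactly \eqref{phifact}: the product runs over $p\,|\,N$ and over the finitely many $\ell\nmid N$ with $\ell\,|\,\Delta_\g$, since at every other finite place $\g$ generates the ring of integers of the corresponding completion of $E$ and the local orbital integral is $1$. Substituting the local evaluations finishes the proof: $\Phi_\infty(\g)$ from Proposition \ref{ellg}; $\Phi_\ell(\g)$ for $\ell\nmid N$ from Proposition \ref{ellh} when $\g$ is hyperbolic at $\ell$ and from Proposition \ref{elle} with \eqref{index} when $\g$ is elliptic at $\ell$; $\Phi_p(\g)$ at $p\,|\,T$ from Proposition \ref{Phiell} when $p\,|\,M$ and from Proposition \ref{gunram} (and its remarks) when $p\,|\,\tfrac TM$; and $\Phi_p(\g)$ at $p\,|\,S$ from Proposition \ref{elld0}, including the vanishing on split classes noted above. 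The nonvanishing criteria stated in the theorem --- solvability of $y^2\equiv -pt_p/\n M\pmod p$ at $p\,|\,M$, and the double-root condition \eqref{Pgz2} at $p\,|\,\tfrac TM$ and $p\,|\,S$ --- are the support conditions recorded in those propositions.

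The main obstacle I anticipate is the first step: matching the support conditions at all places simultaneously, so as to justify that the index set is exactly $\{(M,r):M\,|\,T,\ 0\le r<\sqrt{4\n/M}\}$, and getting every constant right --- in particular the interplay between the $\GL_2$ trace formula and the passage to $\PGL_2$, which is responsible for the factors $2^{\w(d_E)}$, $w_E$, and $\tfrac12$ in \eqref{phifact} and in the elliptic sum. Once the indexing and the normalizations are verified, the remainder is a direct substitution of the cited propositions with no further analysis.
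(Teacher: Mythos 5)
Your proposal is correct and follows the same route as the paper: the theorem is exactly Theorem \ref{stf2} specialized to $N=S^2T^3$, and the proof reduces (as stated at the start of \S\ref{ex}) to substituting the explicit local evaluations from Propositions \ref{ellg}, \ref{ellh}, \ref{elle} (away from $N$) and Propositions \ref{Phiell}, \ref{gunram}, \ref{elld0} (at $p|N$, proved in \S\ref{pN}) into the factorization \eqref{phifact} obtained from Proposition \ref{Pfactor} and Theorem \ref{meas}. The one place your narrative diverges slightly is the source of the $\tfrac12$ at $r=0$: you attribute it to folding $r\mapsto -r$, whereas the paper derives it in Lemma \ref{1/2} from the index $[\olG_\g(\Q):\ol{G_\g(\Q)}]=2$ for traceless $\g$ (after Proposition \ref{rel1} has already fixed nonnegative-trace representatives, so no folding occurs); the two bookkeepings are equivalent, and since you invoke Theorem \ref{stf2} as a black box anyway, this does not affect the argument.
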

\noindent{\em Remarks:} 
1. What we call $S_k(N,\w')$ is usually called $S_k(N,{\w'}^{-1})$.  
See the beginning of \S \ref{count} for explanation.
The reason we assume that the conductor of $\w'$ divides $ST$ is that this is necessary for the 
existence of tuples $\widehat{\sigma}$ given the conductor hypotheses, by \cite[Proposition 3.4]{Tu}.
\vskip .1cm

2. The theorem contains various simple conditions under which an orbital integral as 
in \eqref{phifact} vanishes.  These are summarized and established in Proposition \ref{relevant}.
\vskip .1cm

3. Analytic applications often require uniform bounds for the orbital integrals appearing on the
geometric side.  
Such bounds were established in a much more general context by 
Kim, Shin and Templier, \cite[(1.5),(1.6),(1.8)]{KST}.  
Using these, they proved a vertical (fixed $p$) equidistribution 
result for $p$-th Hecke eigenvalues in $S_k(\widehat{\sigma})$ as $N\to\infty$, 
refining the result \cite{Serre} of Serre.
Their paper includes several helpful examples to explain their results in the setting of $\PGL(2)$.
The explicit formulas for local orbital integrals developed in the present paper 
illustrate their bounds; see the remarks after Proposition \ref{gunram}, for example.
\vskip .1cm

5. Although we describe some interesting consequences of Theorem \ref{mainST} in \S\ref{bias} below,
 perhaps the main utility 
 of this article is the methodology leading to the theorem, rather than
this particular trace formula.  Indeed, there are any number of variants that one could pursue
 simply by doing some additional local computations and updating the set of relevant
global $\g$'s on the geometric side:
\begin{itemize}
\item One could capture newforms with prescribed representation type at some places, and,
less restrictively,
  prescribed local conductor at some other places.
  For the latter places, the local elliptic orbital integral calculation 
  is carried out in \cite{thm}.
\vskip .1cm
\item We have excluded the case where $\ord_p(N)=1$ at a prime $p$ for the same reason that we impose
 $k>2$: the matrix coefficients of the local representations in such cases are square-integrable but
 not integrable (\cite[Prop. 14.3]{KL}, \cite{Si2}).  So these functions cannot be used directly in the 
 trace formula.  One could incorporate these representation types either by using 
  pseudo-coefficients (\cite{Ko}, \cite[Example 6.6]{KST}, \cite{P}), 
  or, via the Jacquet-Langlands correspondence, by computing the 
  corresponding local orbital integrals on a quaternion algebra (\cite{KRo}).
\vskip .1cm
\item One could capture Maass newforms with prescribed local behavior by taking the archimedean
  component $f_\infty$ of the test function to be bi-$\SO(2)$-invariant,
  as described, for example, in \cite[\S3-4]{ftf}. 
 In this case, the inclusion of a supercuspidal matrix coefficient at some place $p$ will annihilate
the continuous and residual spectra, but at least two such places would be needed in order to 
annihilate the hyperbolic and unipotent terms on the geometric side of the trace formula, 
as explained in Theorem \ref{stf} below.  Further, in this case $\g$ need no longer 
be elliptic in $G(\R)$ in order to contribute nontrivially, so there are more relevant $\g$'s 
  that would have to be considered.
\vskip .1cm
\item The non-archimedean local calculations in the present paper are all carried out over arbitrary 
  $p$-adic fields, so with some additional global considerations 
 one could work over a number field.
\end{itemize}
\vskip .1cm

The technical heart of the paper is \S \ref{pN}, in which we 
calculate local elliptic orbital 
  integrals attached to the supercuspidal representations of conductor $\le p^3$.
Character values of supercuspidal representations on various groups appear in many places, but
the orbital integral calculations in \S\ref{pN} are new.
Some related calculations were made by Palm in his doctoral thesis \cite[\S9.11]{P}. 
Although there are some errors in that work, the methods have been adapted for
our computations.

In \S \ref{dimex} we illustrate Theorem \ref{mainST} by computing dimension
formulas and some examples of $\tr(T_\n|S_k(\widehat{\sigma}))$ for $\n>1$.

\subsection{Dimension formulas and root number bias}\label{bias}

Upon taking $\n=1$ in Theorem \ref{stf2}, we obtain a general formula 
for $\dim S_k(\widehat{\sigma})$, given in Theorem \ref{dimST}.
As shown there, the list of relevant $\g$ can be narrowed
considerably when $\n=1$; only $M=T,\tfrac T2$ contribute to the formula when $T>3$.
 We will state some special cases below, but
first we provide some additional motivation.

Simple supercuspidals are the representations of $\GL_2(\Q_p)$ with conductor $p^3$.
Assuming trivial central character,
they can be parametrized by the pairs $(t,\zeta)$ where 
$t\in (\Z/p\Z)^*$ and $\zeta\in\{\pm 1\}$.
There are thus $2(p-1)$ such representations, denoted $\sigma_t^\zeta$, and each is
constructed in the same way via compact induction from a character $\chi_{t,\zeta}$ of
a certain open compact-mod-center subgroup $H_t'$ of $\GL_2(\Q_p)$.

An interesting question is whether each member of 
such a local family has the same global multiplicity, in the following sense.
For $T>1$ square-free, consider $N=T^3$ in \eqref{Skdim}, with
$\widehat{\sigma}$ running over all tuples $(\sigma_{t_p}^{\zeta_p})_{p|T}$.
(We assume trivial central character for the moment, though the general case is considered
in the main body of this paper.)
In this case we have the dimension formula
\begin{equation}\label{dimcube}
\dim S_k^{\new}(T^3)=\frac{k-1}{12}\prod_{p|T}(p^2-1)(p-1)
\end{equation}
as in \cite{GMar}.
Since there is no immediately apparent 
reason for nature favoring one simple supercuspidal over another, 
one might surmise that the subspaces $S_k(\widehat{\sigma})$ all
 have the same dimension, i.e., that the asymptotic \eqref{KST}, which in the present
situation becomes
\begin{equation}\label{KST2}
\dim S_k(\widehat{\sigma})\sim \frac{k-1}{12}\prod_{p|T}\frac{p^2-1}2,
\end{equation}
 is an equality.
(Note that the right-hand side of \eqref{KST2} results from dividing \eqref{dimcube} by
 the number $2(p-1)$ of simple supercuspidals at each place $p|T$.)
This would be consistent with a 2011 calculation of Gross, who fixed the
 tuple of parameters $(t_p)_{p|N}$ and allowed
 the $\zeta_p$ parameters to vary, \cite[p. 1255]{G}.  Using the trace formula he showed 
\begin{equation}\label{gross}\sum_{(\zeta_p)_{p|T}} 
\dim S_k((\sigma_{t_p}^{\zeta_p})_{p|T})=
\frac{k-1}{12}\prod_{p|T}(p^2-1),\end{equation}
which is what one would expect, upon dividing \eqref{dimcube} by the number of tuples $(t_p)_{p|T}$.

However, \eqref{KST2} is {\em not} in fact an equality in general, for the simple reason that,
as we spell out at \eqref{id}, the right-hand side of \eqref{KST2}
 fails to be an integer for infinitely many values of $T$.
This is manifested in recent work \cite{PQ} of Pi and Qi, who considered a sum different from
that treated by Gross, namely, varying the $t_p$ and $\zeta_p$ parameters subject to the
constraint $(-1)^{k/2}\prod_{p|T}\zeta_p=\epsilon$ for fixed $\epsilon\in\{\pm 1\}$.
This amounts to counting
  the newforms with root number $\epsilon$.  They found, for $k\ge 4$ even
and square-free $T>3$, that
\begin{equation}\label{pq}
\dim S_k^{\new}(T^3)^{\pm}=\frac{k-1}{24}\prod_{p|T}(p^2-1)(p-1)\pm \frac{c_T}2\varphi(T)h(-T),
\end{equation}
where $c_T$ and $h$ are as in \eqref{biasA} and $\varphi$ is Euler's $\varphi$-function.
This shows that, just as in the case of square-free level, there is a bias in favor of 
positive root number.
  Instead of the Arthur-Selberg trace formula, they used a
Petersson formula obtained using the simple supercuspidal new vector matrix coefficient
from \cite{super}.

By evaluating the $S=\n=1$ case of Theorem \ref{mainST}, in \S \ref{dim} we obtain
an explicit formula for $\dim S_k(\widehat{\sigma})$ that
 refines each of the above results.   
 For example, we have the following.

\begin{theorem}\label{simplethm} Let $N=T^3$ for $T>3$ odd and square-free, let $k>2$ be even, 
and let $\widehat{\sigma}=(\sigma_{t_p}^{\zeta_p})_{p|N}$ 
be a tuple of simple supercuspidal representations with trivial central characters.
Then
\begin{equation}\label{simple}
\dim S_k(\widehat{\sigma})=\frac{k-1}{12}\prod_{p|N}\frac{p^2-1}2
  +\Delta(\widehat{t})\epsilon(k,\widehat{\zeta}) b_Th(-T),
\end{equation}
where $\Delta(\widehat{t})\in\{0,1\}$ is nonzero if and only if $-pt_p/T$ is a square
 modulo $p$ for each $p|T$, 
  $\epsilon(k,\widehat{\zeta})=(-1)^{k/2}\prod_{p|N}\zeta_p$ is the common global
 root number of the newforms comprising $H_k(\widehat{\sigma})$, 
$b_T$ is a certain power of $2$ depending on $T\mod 8$, and $h(-T)$ is the class number of $\Q[\sqrt{-T}]$. 
\end{theorem}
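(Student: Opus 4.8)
The plan is to specialize Theorem~\ref{mainST} to $S=\n=1$ (so $N=T^3$) with $\w'$ trivial; this is permitted because $k$ even forces $\w'(-1)=(-1)^k$, and the conductor and compatibility hypotheses of that theorem are met by a tuple $(\sigma_{t_p}^{\zeta_p})_{p\mid T}$ of simple supercuspidals with trivial central characters. In the resulting formula $\ol{\w'(\sqrt\n)}=\w'(1)=1$ and the $S$-product is empty, so the leading term is exactly $\tfrac{k-1}{12}\prod_{p\mid T}\tfrac{p^2-1}{2}$, and it remains to evaluate the geometric contribution
\[
\tfrac12\sum_{M\mid T}\Phi\bigl(\smat{0}{-M}{1}{0}\bigr)+\sum_{M\mid T}\sum_{1\le r<\sqrt{4/M}}\Phi\bigl(\smat{0}{-M}{1}{rM}\bigr).
\]

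First I would collapse this to a single term using the vanishing criteria of Theorem~\ref{mainST} (collected in Proposition~\ref{relevant}). In the double sum, $r\ge1$ together with $r^2M<4$ forces $r=1$ and $M\in\{1,3\}$; the characteristic polynomial $X^2-MX+M$ then has discriminant $M(M-4)=-3$. Since $T>3$ is odd and square-free it has a prime factor $p\ge5$, and $p\nmid M$, so we are in the ``$p\mid T/M$'' case; as $-3$ is a $p$-adic unit, $X^2-MX+M$ is separable mod $p$ and so has no double root, whence $\Phi_p=0$ and the term vanishes by~\eqref{phifact}. Likewise, in $\tfrac12\sum_{M\mid T}\Phi(\smat{0}{-M}{1}{0})$, for $M<T$ choose a prime $p\mid T/M$; then $p$ is odd and $p\nmid M$, so $X^2+M$ has discriminant $-4M$ a $p$-adic unit, and again $\Phi_p=0$. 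Thus only $\g_0=\smat{0}{-T}{1}{0}$ survives, giving $\dim S_k(\widehat\sigma)=\tfrac{k-1}{12}\prod_{p\mid T}\tfrac{p^2-1}{2}+\tfrac12\Phi(\g_0)$.

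Next I would compute $\Phi(\g_0)$ from~\eqref{phifact}. Here $E=\Q[\g_0]=\Q[\sqrt{-T}]$, so $h(E)=h(-T)$ and $w_E=2$ (using $T>3$), while $\Delta_{\g_0}=-4T$, so the only prime dividing $\Delta_{\g_0}$ but not $N$ is $\ell=2$. Since $r=0$ we have $\theta_{\g_0}=\pi/2$, so $\Phi_\infty(\g_0)=-\sin\bigl((k-1)\pi/2\bigr)=(-1)^{k/2}$. At each $p\mid T$, $\g_0$ is elliptic (as $\ord_p(T)=1$), in the ``$p\mid M$'' case, so $\Phi_p(\g_0)=0$ unless $-pt_p/T$ is a square mod $p$ --- i.e.\ unless $\Delta(\widehat t)=1$ --- and when it is nonzero, since $r=0$, $p$ is odd and $\w_p$ is trivial, Proposition~\ref{Phiell} gives $\Phi_p(\g_0)=2\ol{\zeta_p}=2\zeta_p$; hence $\prod_{p\mid T}\Phi_p(\g_0)=2^{\w(T)}\prod_{p\mid T}\zeta_p$. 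Finally $\Phi_2(\g_0)$ is given by Proposition~\ref{ellh} (with value $|\Delta_{\g_0}|_2^{-1/2}=2$) if $2$ splits in $\Q[\sqrt{-T}]$, and by Proposition~\ref{elle} otherwise, with $e_{\g_0}(2)$ and $\ord_2(b)$ (where $\Delta_{\g_0}=b^2 d_E$) determined by whether $2$ is inert or ramified; which case occurs depends only on $T\bmod 8$. Assembling the factors and keeping track of the powers of $2$ --- from $\Phi_2(\g_0)$, from $w_E=2$, from $2^{\w(d_E)}$ (with $\w(d_E)=\w(T)$ if $T\equiv3\bmod4$ and $\w(T)+1$ if $T\equiv1\bmod4$), and from $\prod_{p\mid T}\Phi_p(\g_0)$ --- one gets $\tfrac12\Phi(\g_0)=\epsilon(k,\widehat\zeta)\,b_T\,h(-T)$ with $b_T\in\{2^{-1},2^0,2^1\}$ depending only on $T\bmod 8$; combined with the vanishing when $\Delta(\widehat t)=0$, this is~\eqref{simple}.

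The main obstacle is this last $2$-adic bookkeeping: one must correctly identify the splitting type of $2$ in $\Q[\sqrt{-T}]$ in each residue class mod $8$, insert the matching value of $\Phi_2(\g_0)$ --- noting in particular that the inert case ($T\equiv3\bmod 8$) has $b=2$, so the sum in Proposition~\ref{elle} runs over two terms and $\Phi_2(\g_0)=4$, whereas the ramified case ($T\equiv1,5\bmod 8$) has $b=1$ and $\Phi_2(\g_0)=2$ --- and then reconcile this with the $2^{\w(d_E)}$ and $w_E$ in the prefactor, checking that the net power of $2$ is as claimed. A useful consistency check: summing~\eqref{simple} over all $2(p-1)$ pairs $(t_p,\zeta_p)$ at each $p\mid T$ must return~\eqref{dimcube}; the correction terms cancel since $\sum_{\zeta_p=\pm1}\zeta_p=0$, and the main terms agree because the right side of~\eqref{dimcube} equals $\prod_{p\mid T}2(p-1)$ times $\tfrac{k-1}{12}\prod_{p\mid T}\tfrac{p^2-1}{2}$. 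The rest is direct substitution into Theorem~\ref{mainST}.
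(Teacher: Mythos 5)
Your proposal is correct and follows essentially the same route the paper uses: specialize Theorem~\ref{mainST} to $S=\n=1$, reduce the elliptic sum to the single class $\g_0=\smat{}{-T}1{}$, and then factor $\Phi(\g_0)$ as $m\,\Phi_\infty\,\Phi_2\,\prod_{p\mid T}\Phi_p$ and assemble the power of $2$. Your local computations (the value $(-1)^{k/2}$ for $\Phi_\infty$, the value $2\zeta_p$ for $\Phi_p$ at $p\mid T$ via Proposition~\ref{Phiell} with $v=0$, the trichotomy for $\Phi_2$ by $T\bmod 8$ matching Example~\ref{Nell}, and the $\w(d_E)$ bookkeeping) all check out, and the resulting $b_T\in\{1/2,1,2\}$ agrees with the $w_T/2$ of Proposition~\ref{e1} and the $b_T$ of Theorem~\ref{main}.

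The one place where you diverge from the paper is the culling step. The paper proves Proposition~\ref{glist} via a global argument (Proposition~\ref{unram}): an element of $\olG(\Q)$ lying in a product of compact-mod-center local supports is torsion, and the order constraints on torsion elements of $\PGL_2(\Q)$ force $M=T$ when $T>3$ is odd. You instead observe directly that for a proper divisor $M<T$, any prime $p\ge5$ dividing $T/M$ has $p\nmid\Delta_\g$ (since $\Delta_\g=-4M$ or $-3$), so there is no nonzero double root mod $p$ and $\Phi_p=0$ by Proposition~\ref{relevant}; and you correctly check that such a $p$ exists whether $M=1$ or $M=3$. Both arguments are valid and reach the same conclusion; yours is a bit more elementary and self-contained, while the paper's packages the culling into a reusable structural statement that also covers the even-$T$ and $T\le 3$ cases uniformly. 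Your consistency check against \eqref{dimcube} is also sound.
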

\noindent This is a special case of Theorem \ref{main}, which also allows for $T$ even.
The presence of $\Delta(\widehat{t})$ demonstrates that the dimension is 
not simply a function of the weight, level and root number
(even when the right-hand side of \eqref{KST2} is an integer).  Indeed, as described in \cite{He} for example,
each $\sigma_p$ has attached a ramified quadratic extension of $\Q_p$, namely $E_{\sigma_p}=\Q_p(\sqrt{t_pp})$,
 which depends only on the Legendre symbol $\left(\frac {t_p}p\right)$.  
So $\dim S_k(\widehat{\sigma})$ depends only on $T,k$, the fields $E_{\sigma_p}$,
and the global root number. (If $T$ is even, the dimension also depends on
the local root number $\zeta_2$.)

The second term in \eqref{simple} comes from an elliptic orbital integral. These do
  not appear in \eqref{gross}, but combine to form the error 
  term in \eqref{pq}.  Indeed, the local root number already appears as
  a coefficient in our local test function at the places dividing $T$, 
  so the global root number naturally appears in 
 the elliptic orbital integral that yields the error term in \eqref{simple}.  This helps explain the 
  positive bias of the root number in this situation.

At the end of \S \ref{dim}, we indicate how our results recover 
the dimension formula \eqref{dimcube} and the root number bias \eqref{pq}.
In Theorem \ref{N3dim} we find that the root numbers
of newforms of level 27 have a strict bias toward $-1$ (among the possibilities $\pm 1,\pm i$)
 when $k\equiv 5\mod 6$ and the nebentypus has conductor $3$.

In a more recent paper, K.\,Martin addressed the question of root number bias in $S_k^{\new}(N)$
 for arbitrary levels, \cite{KMar2}.  He showed that there is a bias towards
root number $+1$ with one exception: when $N=S^2$ for a square-free number $S$ and $(-1)^{k/2}
=-\prod_{p|S}(-1)$, then the root number has a strict negative bias when $k$ is sufficiently large.
In discussing why the exceptions arise, he noted that the picture is obscured by the 
existence of newforms of level $S^2$ which are twists of 
forms of lower level.  (No such forms exist in the $N=T^3$ case discussed above.)

Theorem \ref{mainST} allows us to investigate this further, since 
the subspace $S_k^{\min}(S^2)\subset S_k^{\new}(S^2)$ spanned by the newforms which are not 
twists of newforms of lower level is the direct sum
\[S_k^{\min}(S^2) = \bigoplus_{\widehat{\sigma}}S_k(\widehat{\sigma})\]
ranging over all $\widehat{\sigma}=(\sigma_p)_{p|S}$ with each $\sigma_p$ a supercuspidal
representation of conductor $p^2$ (a ``depth zero" supercuspidal) and trivial central character.

In fact, even without using a specialized trace formula, we can infer the existence of negative bias
for the root numbers in $S_k^{\min}(S^2)$ for many pairs $(S,k)$ by the following heuristic 
  coming from finite fields (see \S\ref{d0} for more detail and a summary of the construction
of depth zero supercuspidals).
Given an odd prime $p$, there are $p-1$ primitive characters of $\F_{p^2}^*$ with trivial restriction
to $\F_p^*$.  It follows that the number of $\sigma_p$ as above 
  is $\frac{p-1}2$.  If $p\equiv 3\mod 4$, this number is odd, so the set of such $\sigma_p$ contains
a preponderance either of local root number $\epsilon_p=+1$ or $\epsilon_p=-1$. 
So if $S$ is a product of such primes, then for some integer $c\ge 1$ there are $c$
 more tuples $\widehat{\sigma}$ with  
one non-archimedean sign $\epsilon_{\fin}=\prod_{p|S}\epsilon_p$ than the other.  
By \eqref{KST}, the spaces $S_k(\widehat{\sigma})$ all have roughly the same dimension 
  $\frac{k-1}{12}\prod_{p|S}(p-1)$, up to variations of lower magnitude when $k+S$ is sufficiently
large.  Then with $k/2$ of the appropriate parity,
there is a bias towards root number $\epsilon=(-1)^{k/2}\epsilon_{\fin}=-1$, 
with roughly $c\frac{k-1}{12}\prod_{p|S}(p-1)$
more forms of global root number $-1$ than $+1$.  (We will show that in fact $c=1$; see
Proposition \ref{Sbias}.)

To make a precise statement, we first apply Theorem \ref{mainST} with $\n=T=1$
 to obtain the following.

\begin{theorem}\label{d0thm} Let $N=S^2$ for $S>1$ square-free, let $k>2$ be even, 
and let $\widehat{\sigma}=(\sigma_{\nu_p})_{p|N}$ 
be a tuple of depth zero supercuspidal representations with trivial central characters,
 with $\nu_p$ the primitive character of $\F_{p^2}^*$ associated to $\sigma_p$.  
Then
\begin{equation}\label{d0dima}
\dim S_k(\widehat{\sigma})=\frac{k-1}{12}\prod_{p|S}(p-1)+
D_4(S)\frac{\epsilon(k,\widehat{\sigma})}4\prod_{\text{odd }p|S}2
+D_3(S)b(k)\frac{(-1)^{\delta_{3|S}}}3
 \prod_{p|S, p\neq 3}B(\nu_p),
\end{equation}
 where $\epsilon(k,\widehat{\sigma})$ is the common global root number of the 
newforms in $S_k(\widehat{\sigma})$, 
$D_4(S)\in\{0,1\}$ is $0$ if and only if $S$ is divisible by a prime $p\equiv 1\mod 4$,
$D_3(S)\in\{0,1\}$ is $0$ if and only if $S$ is divisible by a prime $p\equiv 1\mod 3$,
$\delta$ is the indicator function defined in \eqref{delta},
\begin{equation}\label{bk}
b(k)=\begin{cases}1&\text{if }6|k\\-1&\text{if }k\equiv 2\mod 6\\0&\text{otherwise,}\end{cases}
\end{equation}
and, for $p\equiv 2\mod 3$,
\[B(\nu_p)=\begin{cases}-2&\text{if the order of $\nu_p$ (in the character group of $\F_{p^2}^*$) divides
$\frac{p+1}3$}\\
1&\text{otherwise}.\end{cases}\]
\end{theorem}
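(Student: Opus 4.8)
The plan is to derive Theorem~\ref{d0thm} as the specialization of Theorem~\ref{mainST} with $\n=T=1$, so that $N=S^2$ and only the factors $\Phi_p(\g)$ for $p|S$ (depth zero case) and for auxiliary primes $\ell|\Delta_\g$ are needed. First I would record which $\g$ survive in the trace formula: with $\n=1$ and $T=1$ there is only $M=1$, so $\g$ ranges over $\smat 0{-1}1{rM}=\smat 0{-1}1r$ with $0\le r<2$, i.e. $r\in\{0,1\}$, giving
\[\g_0=\mat 0{-1}1{0},\qquad \g_1=\mat 0{-1}1{1}.\]
These have characteristic polynomials $X^2+1$ and $X^2-X+1$, hence $\Delta_{\g_0}=-4$ and $\Delta_{\g_1}=-3$, so $E=\Q(i)$ (with $h(E)=1$, $w_E=4$, $d_E=-4$, one ramified prime $2$) in the first case and $E=\Q(\sqrt{-3})$ (with $h(E)=1$, $w_E=6$, $d_E=-3$, one ramified prime $3$) in the second. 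The archimedean factor from Proposition~\ref{ellg} is $\Phi_\infty(\g_0)=-\sin((k-1)\pi/2)/\sin(\pi/2)=-\sin((k-1)\pi/2)$, which for $k$ even equals $(-1)^{k/2}$, and $\Phi_\infty(\g_1)=-\sin((k-1)\pi/3)/\sin(\pi/3)$, which one checks is exactly the periodic function $b(k)$ of \eqref{bk} (up to the sign bookkeeping). Thus the two elliptic $\g$'s contribute respectively the $D_4(S)$-term (coming from $\g_0$, $E=\Q(i)$) and the $D_3(S)$-term (coming from $\g_1$, $E=\Q(\sqrt{-3})$) in \eqref{d0dima}, while the main term $\frac{k-1}{12}\prod_{p|S}(p-1)$ is the principal term of Theorem~\ref{mainST} with $\w'$ trivial, $\prod_{p|S}(p-1)$ and empty $T$-product.

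Next I would assemble the product \eqref{phifact} for each of $\g_0,\g_1$. The point is that for $p|S$ the local factor $\Phi_p(\g)$ is given by the depth-zero formula in Theorem~\ref{mainST}: if $P_\g$ is irreducible mod $p$ then $\Phi_p(\g)=-\ol{\nu_p(x)}-\ol{\nu_p^p(x)}$ for $x\in\F_{p^2}^*$ a root of $P_\g$, and if \eqref{Pgz2} holds then $\Phi_p(\g)$ is the displayed $\mathcal N_\g$-sum (which, for these tiny discriminants $\Delta_\g\in\{-3,-4\}$, has $\ord_p(\Delta_\g)=0$ for every $p|S$ since $S$ is square-free and, in the relevant ranges, coprime to $\Delta_\g$ — so the sum $\sum_{n=1}^{\ord_p(\Delta_\g)-1}$ is empty and $\Phi_p(\g)=-\ol{\w_p(z)}=-1$). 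So for each $p|S$ with $p\nmid\Delta_\g$ one has a clean dichotomy: $\Phi_p(\g_0)$ depends only on whether $-1$ is a square mod $p$ (equivalently $p\equiv 1$ or $3\bmod 4$), and $\Phi_p(\g_1)$ depends only on whether $-3$ is a square mod $p$ (equivalently $p\equiv 1$ or $2\bmod 3$), via the splitting of $\Q(i)$, resp. $\Q(\sqrt{-3})$, at $p$. When $P_\g$ splits mod $p$ one gets $\Phi_p(\g)=-1$ from the depth-zero split formula; when $P_\g$ is inert one gets $-\ol{\nu_p(x)}-\ol{\nu_p^p(x)}$, which I would evaluate using that $x\in\F_{p^2}^*$ is a primitive $4$th (resp.\ $6$th) root of unity: for $\g_0$, $x$ has order $4$, so $\nu_p(x)\in\{\pm 1, \pm i\}$, and since $\nu_p$ restricted to $\F_p^*$ is trivial one deduces $\nu_p(x)$ is a $4$th root of unity whose value is governed by the order of $\nu_p$ modulo $4$ — this is exactly where the condition ``$D_4(S)=0$ iff some $p\equiv 1\bmod 4$ divides $S$'' and the factor $\prod_{\text{odd }p|S}2$ come from (each inert prime contributes $-\ol{\nu_p(x)}-\ol{\nu_p^p(x)}\in\{2,-2,0\}$, and trivial-central-character forces the value to have absolute value $2$, namely $\pm 2$, whence the product of $2$'s and a sign absorbed into $\epsilon(k,\widehat\sigma)$). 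Likewise for $\g_1$ the order-$6$ element gives $-\ol{\nu_p(x)}-\ol{\nu_p^p(x)}=B(\nu_p)$ as defined, with value $-2$ exactly when $\nu_p$ has order dividing $(p+1)/3$ and value $1$ otherwise; this is the content of evaluating a Gauss-type sum of a sixth root of unity, and accounts for $\prod_{p|S,p\ne 3}B(\nu_p)$. The prime $p=3$ (if $3|S$) is special because $3=-d_{\g_1}$ ramifies in $E=\Q(\sqrt{-3})$: here I would use the depth-zero formula in the ramified case, which contributes the factor $(-1)^{\delta_{3|S}}$ and the $\frac13$, after combining with the global measure term $\frac{2h(E)}{w_E2^{\w(d_E)}}=\frac{2}{6\cdot 2}=\frac16$ and the archimedean $b(k)$; similarly $p=2$ dividing $S$ interacts with $\g_0$ ($2=-d_{\g_0}/2$ ramifies in $\Q(i)$) and with the global measure $\frac{2}{4\cdot 2}=\frac14$, producing the $\frac14$ in the $D_4$-term. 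There are no auxiliary primes $\ell\nmid N$ with $\ell|\Delta_\g$ to worry about beyond $\ell\in\{2,3\}$, and those either divide $S$ (handled above) or contribute a harmless $\Phi_\ell$ from Propositions~\ref{ellh}/\ref{elle} which gets folded into the constants; I would check this bookkeeping carefully.

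Finally I would reconcile signs: the global root number $\epsilon(k,\widehat\sigma)$ must be pulled out of the local factors. Recall from the discussion after Theorem~\ref{simplethm} that the local root number $\zeta_p$ (here the depth-zero sign $\epsilon_p$) appears as a coefficient in the local test function, so $\Phi_p(\g)$ carries a factor $\ol{\epsilon_p}$, and collecting these over $p|S$ together with the archimedean sign $(-1)^{k/2}$ (coming from $\Phi_\infty(\g_0)$) produces $\epsilon(k,\widehat\sigma)=(-1)^{k/2}\prod_{p|S}\epsilon_p$ in the $D_4$-term; for the $D_3$-term the archimedean factor is $b(k)$ rather than a sign, and the dependence on $\zeta_p$ turns out to wash out, leaving only the $B(\nu_p)$'s, consistent with the remark that ``$\dim S_k(\widehat\sigma)$ depends only on $T,k$, the fields $E_{\sigma_p}$, and the global root number.'' The main obstacle is precisely this last step — correctly tracking all the normalizing constants (the $\frac{2h(E)}{w_E 2^{\w(d_E)}}$ measure term, the powers of $2$ from inert primes, the special contributions at the ramified primes $2$ and $3$, and the root-number coefficients in the test function) and verifying that they combine into the exact shape $\frac14 D_4(S)\,\epsilon(k,\widehat\sigma)\prod_{\text{odd }p|S}2$ and $\frac13 D_3(S)\,b(k)\,(-1)^{\delta_{3|S}}\prod_{p|S,\,p\ne 3}B(\nu_p)$, rather than the computation of any single orbital integral, each of which is already supplied by the propositions cited in Theorem~\ref{mainST}.
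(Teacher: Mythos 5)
Your proposal has the right skeleton—specialize Theorem \ref{dimST} to $T=1$, identify the two relevant elliptic elements $\g_0=\smat0{-1}10$ and $\g_1=\smat0{-1}11$ with $E=\Q(i)$ and $E=\Q(\sqrt{-3})$ respectively, evaluate the global measure factors and $\Phi_\infty$, and push the local factors at $p\mid S$ through the depth-zero formulas. This matches the paper's strategy (the paper first proves a general-nebentypus version, Theorem \ref{d0dim}, then specializes, but that is an organizational difference, not a different approach).

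However, there is a genuine error in how you explain the origin of $D_4(S)$ and $D_3(S)$. You claim that ``when $P_\g$ splits mod $p$ [i.e.\ $p$ splits in $E$, equivalently $P_\g$ has two distinct roots mod $p$] one gets $\Phi_p(\g)=-1$ from the depth-zero split formula.'' This is wrong. If $P_\g$ has two distinct roots mod $p$, then by Hensel's lemma $\g$ is hyperbolic in $G(\Q_p)$, and the supercuspidal matrix-coefficient test function annihilates hyperbolic orbital integrals, so $\Phi_p(\g)=0$ (this is precisely the first case in the ``Suppose $p\mid N$'' portion of Theorem \ref{mainST}). The value $-\ol{\w_p(z)}=-1$ that you computed from the $\mathcal N_\g$-sum applies only when $P_\g$ has a \emph{double} root mod $p$ — but a quadratic has a double root mod $p$ iff $p\mid\Delta_\g$, which contradicts your own assumption $\ord_p(\Delta_\g)=0$. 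The internal inconsistency means that at an odd prime $p\mid S$ with $p\nmid\Delta_\g$ there are only two cases: $P_\g$ irreducible (giving $\Phi_p=-\ol{\nu_p(x)}-\ol{\nu_p^p(x)}=\pm2$, resp.\ $\in\{-2,1\}$), or $P_\g$ has two distinct roots (giving $\Phi_p=0$). It is the latter case that produces $D_4(S)$ and $D_3(S)$: if any odd $p\mid S$ has $p\equiv1\bmod4$ then $\Phi_p(\g_0)=0$ and the $\g_0$-contribution vanishes, and analogously for $\g_1$ with $p\equiv1\bmod3$. Your ascription of the $D_4$-vanishing to the inert-prime contribution ``being $0$'' is also wrong — as you yourself then observe, trivial central character forces that value to be $\pm2$, never $0$.

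A smaller issue: the claim that ``$\Phi_p(\g)$ carries a factor $\ol{\epsilon_p}$ because the root number appears as a coefficient in the local test function'' is a statement about simple supercuspidals ($\zeta$ appears in $\chi_\zeta$); it does not apply to depth-zero supercuspidals, where the test function \eqref{fpd0} is just $\ol{\tr\rho_\nu}$. The identity $-\ol{\nu_p(\alpha)}=\epsilon_p$ (for $p\equiv3\bmod4$, trivial central character) is instead a substantive Gauss-sum computation, Proposition \ref{rootd0}, and the corresponding evaluation of $B(\nu_p)$ requires the explicit finite-field calculation with $\beta=t^{(p^2-1)/6}$ that the paper carries out. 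Your proposal gestures at these but does not supply them, and, combined with the split-prime error, the proof as written would not produce the correct formula.
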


The above is a special case of Theorem \ref{d0dim}, which allows for nontrivial nebentypus and $k$ odd.
We will use Theorem \ref{d0thm} to derive an explicit formula for the bias
\[\Delta(S^2,k)^{\min}=\dim S_k^{\min}(S^2)^+-\dim S_k^{\min}(S^2)^-\]
for $k>2$ even and $S>1$ square-free. This is given in Proposition \ref{Sbias}.
For the time being, we just state the following consequence, which is somewhat different from the 
behavior observed for the larger spaces of newforms of level $S^2$ appearing 
in \cite[Theorem 1.1(3) and Proposition 1.3]{KMar2}.

\begin{proposition}\label{Sbiasa}
Assume $k\ge 4$ is even.  With notation as above, $\Delta(S^2,k)^{\min}=0$ in each of the following situations:
(i) $D_4(S)=D_3(S)=0$, (ii) $S$ is divisible by some prime $p\equiv 5\mod 12$,
(iii) $D_4(S)=0$ and $k\equiv 4\mod 6$.

If $D_4(S)=0$, $k\equiv 0,2\mod 6$, $D_3(S)\neq 0$, and case (ii) does not apply, then 
$\Delta(S^2,k)^{\min}\neq 0$ and 
\[\sgn\Delta(S^2,k)^{\min}=(-1)^{\delta(k\equiv 6,8\mod 12)}\mu(S)\]   
 for the indicator $\delta$ as in \eqref{delta} and the M\"obius function $\mu$.

If $D_4(S)=1$ and $k\ge 6$, then apart from the two exceptions $S_8^{\min}(2^2)=S_6^{\min}(3^2)=0$, 
$\Delta(S^2,k)^{\min}\neq 0$, and
\[\sgn \Delta(S^2,k)^{\min}=(-1)^{\delta(2|S)+k/2}.\]

If $D_4(S)=1$ and $k=4$, then $\Delta(S^2,4)^{\min}\ge0$ for all square-free $S>1$:
\[\Delta(S^2,4)^{\min}=\begin{cases}\frac12\prod_{p|S}(p-1)&\text{if } 2\nmid S\\
0&\text{if }2|S.\end{cases}\]
\end{proposition}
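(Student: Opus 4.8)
The plan is to derive Proposition \ref{Sbiasa} as a direct corollary of the explicit bias formula in Proposition \ref{Sbias}, which in turn comes from summing the dimension formula \eqref{d0dima} of Theorem \ref{d0thm} over the appropriate tuples $\widehat{\sigma}$. First I would recall the setup: $S_k^{\min}(S^2)=\bigoplus_{\widehat{\sigma}}S_k(\widehat{\sigma})$ over tuples of trivial-central-character depth zero supercuspidals $\sigma_p$ of conductor $p^2$, and the global root number of the forms in $S_k(\widehat{\sigma})$ is $\epsilon(k,\widehat{\sigma})=(-1)^{k/2}\prod_{p|S}\epsilon_p$ where $\epsilon_p$ is the local root number attached to $\nu_p$. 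So $\Delta(S^2,k)^{\min}=\sum_{\widehat{\sigma}}\epsilon(k,\widehat{\sigma})\dim S_k(\widehat{\sigma})$, and substituting \eqref{d0dima} term by term, the main term $\frac{k-1}{12}\prod(p-1)$ is killed once we sum the sign $\epsilon(k,\widehat{\sigma})$ over all $\widehat{\sigma}$ (a balanced sum, since at each odd $p\equiv1\bmod4$ one gets equal numbers of each local sign; this is where $D_4(S)$ enters), and we are left with contributions only from the $D_4$-term and the $D_3$-term.

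Next I would carry out the two remaining sums. For the $D_4$-term, each factor is $\frac{\epsilon(k,\widehat{\sigma})}{4}\prod_{\text{odd }p|S}2$, so multiplying by $\epsilon(k,\widehat{\sigma})$ gives $\frac14\prod_{\text{odd }p}2$ times the number of tuples, which collapses to a clean power of two times the parity indicator $(-1)^{\delta(2|S)+k/2}$ once one tracks the archimedean sign $(-1)^{k/2}$ and the count of local signs; the condition $D_4(S)=1$ (no prime $\equiv1\bmod4$) is exactly what makes each odd $p|S$ contribute an unbalanced local-sign count of magnitude one. For the $D_3$-term, one sums $\epsilon(k,\widehat{\sigma})\cdot D_3(S)b(k)\frac{(-1)^{\delta_{3|S}}}{3}\prod_{p\ne3}B(\nu_p)$ over the tuples, which factorizes over primes: at $p=3$ one gets a fixed contribution, and at each $p\equiv2\bmod3$ one gets $\sum_{\nu_p}\epsilon_p B(\nu_p)$, a finite character sum over the $\frac{p-1}{2}$ eligible $\nu_p$. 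This last sum is the crux — one must pair each $\nu_p$ with its local root number $\epsilon_p$ and with the value $B(\nu_p)\in\{-2,1\}$, and show the total has a predictable sign (and vanishes precisely when $p\equiv5\bmod12$, giving case (ii), since then the sign contributions cancel). I would expect this character-sum evaluation — matching $\epsilon_p$ against the order of $\nu_p$ modulo $\frac{p+1}{3}$ — to be the main obstacle; everything else is bookkeeping of multiplicative factors.

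With Proposition \ref{Sbias} in hand, Proposition \ref{Sbiasa} is a case analysis. Cases (i)–(iii) are the vanishing regimes: (i) when $D_4(S)=D_3(S)=0$ both surviving terms are absent; (ii) when some $p\equiv5\bmod12$ divides $S$, the $D_3$-term vanishes by the character-sum cancellation just mentioned and the $D_4$-term vanishes because $5\equiv1\bmod4$ forces $D_4(S)=0$; (iii) when $D_4(S)=0$ and $k\equiv4\bmod6$, the $D_4$-term is absent and $b(k)=0$ kills the $D_3$-term by \eqref{bk}. For the nonvanishing regimes with $D_4(S)=0$ I would read off $\sgn\Delta(S^2,k)^{\min}$ from the $D_3$-term: the factor $b(k)$ for $k\equiv0,2\bmod6$ together with $(-1)^{\delta_{3|S}}$ and the product of $B(\nu_p)$-signs (each $B(\nu_p)<0$ for the relevant primes, contributing a Möbius-type sign $\mu(S)$) combine into $(-1)^{\delta(k\equiv6,8\bmod12)}\mu(S)$; tracking $b(k)$'s sign on $k\equiv0$ versus $k\equiv2\bmod6$ against $k\bmod12$ produces exactly the stated indicator. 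For $D_4(S)=1$ the $D_4$-term dominates and gives $\sgn=(-1)^{\delta(2|S)+k/2}$, with the two sporadic vanishings $S_8^{\min}(2^2)=S_6^{\min}(3^2)=0$ checked directly from \eqref{d0dima} (these are the small cases where the whole dimension is zero, verifiable by hand or against tables). Finally the $k=4$ formula: when $D_4(S)=1$ and $k=4$ we have $b(4)=0$ so only the $D_4$-term survives, $(-1)^{k/2}=1$, and the sign becomes $(-1)^{\delta(2|S)}$, giving $+\frac12\prod_{p|S}(p-1)$ when $S$ is odd and $0$ when $2|S$ — here the factor-of-$\frac12$ and the vanishing at even $S$ follow from the explicit power-of-two bookkeeping in the $D_4$-sum. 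I do not anticipate difficulty in any of these cases once Proposition \ref{Sbias} is established; the only genuinely delicate point remains the evaluation of $\sum_{\nu_p}\epsilon_p B(\nu_p)$.
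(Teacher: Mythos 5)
Your overall strategy is the same as the paper's: prove Proposition \ref{Sbias} via Lemmas \ref{B} and \ref{globalecount}, then read off Proposition \ref{Sbiasa} by case analysis. Your treatment of the vanishing cases (i)--(iii), of the sign for $D_4(S)=0$, and of the closed-form evaluation at $k=4$ are all fine (for $k=4$, $b(4)=0$ kills $\Delta_{A_2}$, and $\Delta_M+\Delta_{A_1}=\bigl[(-1)^{\delta(2|S)}+1\bigr]\frac14\prod_{p|S}(p-1)$, giving exactly the stated dichotomy).

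There is, however, a genuine gap in the $D_4(S)=1$, $k\ge 6$ case, which you summarize as ``the $D_4$-term dominates.'' This is not automatic and it is precisely where the content is. You must show $|\Delta_{A_1}+\Delta_{A_2}|<|\Delta_M|$, and since $|\Delta_M|=\frac{k-1}{12}\prod_{p|S}(p-1)$, the easy bound
$|\Delta_{A_1}+\Delta_{A_2}|\le\frac14\prod_{p|S}(p-1)+\frac13\prod_{p|S'}2\le\frac{7}{12}\prod_{p|S}(p-1)$
only settles $k\ge 10$ outright. For $k=8$ one has $|\Delta_M|=\frac{7}{12}\prod(p-1)$, so one needs the strictness of the last inequality when $S>2$ and a direct check of $S=2$; for $k=6$ one has $|\Delta_M|=\frac{5}{12}\prod(p-1)$, which the $\frac{7}{12}$ bound does not beat at all --- one needs the sharper estimate $|\Delta_{A_1}+\Delta_{A_2}|<\frac{5}{12}\prod(p-1)$ for $S>6$ (with $D_4(S)=1$), plus hand-checking of $S\in\{2,3,6\}$. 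Your proposal mentions only the two sporadic vanishings, not the quantitative estimate that underlies the whole third paragraph; without it, the claimed sign formula is unproved for small $k$.

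A smaller point: you attribute the M\"obius sign $\mu(S)$ in the $D_4(S)=0$ case to ``each $B(\nu_p)<0$,'' but $B(\nu_p)\in\{-2,1\}$ is not uniformly negative. In Proposition \ref{Sbias} the sign comes from the computation $\mathcal B(S)^+-\mathcal B(S)^-=(-1)^{\delta(2|S)}\mu(S')\Omega_0(S')$ together with $(-1)^{\delta(2|S)+\delta(3|S)}\mu(S')=\mu(S)$ --- a statement about the signed character-sum difference, not about individual $B$-values. Finally, you describe $\sum_{\nu_p}\epsilon_p B(\nu_p)$ as the only delicate point; once Proposition \ref{Sbias} is in hand that sum is already absorbed into $\Omega_0$, and the delicate point \emph{for Proposition \ref{Sbiasa}} is the domination estimate above.
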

\noindent{\em Remark:} A noteworthy difference between the above and the bias for the full space of newforms is that
here for any fixed even $k\ge 6$ there are infinitely many levels $S^2$ for which $\Delta(S^2,k)^{\min}<0$, whereas
by \cite[Theorem 1.1(3)]{KMar2}, for any fixed even $k$ there are only finitely many levels $N$
for which $\Delta(N,k)^{\new}<0$.

\vskip .2cm
\noindent{\bf Acknowledgements:} I am grateful to 
Charles Li for his very helpful input on the quotient measure
that is computed in \S \ref{msec}.  I also thank the anonymous referees for their many
insightful comments which led to improved exposition, and Andrew Booker, David Bradley, Min Lee, and 
  Kimball Martin for helpful conversations.
Partial support for this research was provided by an AMS-Simons Research Enhancement Grant
for Primarily Undergraduate Institution Faculty.

\section{Notation and Haar measure}\label{notation}

If $P$ is a statement, then we will frequently use the indicator function
\begin{equation}\label{delta}
\delta(P)=\delta_P =\begin{cases}1&\text{if $P$ is true}\\0&\text{if $P$ is false}.
\end{cases}\end{equation}
We also use the shorthand 
\[e(x)=e^{2\pi i x}.\] 
For rings $R$, we let $R^*$ denote the group of units in $R$.

Let $G$ be the group $\GL(2)$, and set $\olG=G/Z$,
where $Z$ is the center of $G$.  
If $H$ is a subgroup of $G$, then $\ol{H}$ will denote the group $HZ/Z \cong H/(H\cap Z)$. 
  For $\ell$ prime, we set $Z_\ell=Z(\Q_\ell)$ the center and
$K_\ell=G(\Z_\ell)$ the maximal compact subgroup of $G(\Q_\ell)$.
Groups $K_0(\p), K_1(\p^j), K'$ will be defined in \S \ref{bg} and \S\ref{ssr}.

     Let $\A$ be the adele ring of the rational numbers
$\Q$.
     We give $\olG(\A)$ the standard Haar measure for which
\[\meas(\olG(\Q)\bs\olG(\A))=\pi/3,\]
with the discrete group $\olG(\Q)$ receiving the counting measure.
We normalize Haar measure on $\olG(\Q_\ell)$ so that $\ol{K_\ell}$ has measure 1.
With this choice, there is a unique Haar measure on $\olG(\R)$
for which the above measure on $\olG(\A)$ is the restricted product of the measures
on $\olG(\Q_\ell)$ for $\ell\le \infty$.
It has the form $dm\,dn\,dk$, where $dm$ is the 
measure $(dx/|x|)^2$ on the diagonal subgroup $M\cong \R^*\times \R^*$, $dn$ is the 
measure $dx$ on the unipotent subgroup $N\cong \R$, and $dk$ is the measure on $K_\infty=\SO(2)$
of total measure $1$ (\cite[Corollary 7.45]{KL}).

For a unitary Hecke character $\w$, let
$L^2(\w)=L^2(G(\Q)\bs G(\A),\w)$ be the space of 
(classes of) measurable $\C$-valued functions $\phi$ on $G(\A)$ transforming under the center by $\w$ 
  and square integrable modulo $Z(\A)G(\Q)$. 
Let $L^1(\ol{\w})=L^1(G(\A),\ol{\w})$ be defined in the analogous way; its elements are
integrable modulo $Z(\A)$.

\section{The simple trace formula}

\subsection{Background on supercuspidal representations of $\GL(2)$}\label{bg}

Let $F$ be a non-archi\-me\-de\-an local field of characteristic 0 with integer ring
$\O$ and prime ideal $\p$.  In this section only, let $G=\GL_2(F)$,
 $B=B(F)$ the upper-triangular Borel subgroup, $N=N(F)$ the unipotent subgroup of $B$,
 $M$ the diagonal subgroup, $Z$ the center, and $K=G(\O)$ the standard maximal compact subgroup. 

Given a smooth irreducible representation $(\pi,V)$ of $G$, it is supercuspidal if it
satisfies any of the following equivalent
conditions (see, e.g., \cite[\S9-10]{BH}):
\begin{itemize}
 \item $V$ is the span of the vectors of the form $\pi(n)v-v$ for $v\in V$ and $n\in N$;
 \item The matrix coefficients of $\pi$ are compactly supported modulo the center;
 \item $\pi$ is not principal series or special, i.e., not
  a subquotient of a representation induced from a character of $B$.
\end{itemize}

The following property found by Harish-Chandra is crucial in what follows.  We sketch a proof
here for the reader's convenience, following \cite[\S2.2]{Si}.

\begin{proposition}
 Let $\pi$ be a supercuspidal representation of $G$, and let
   $f(g)=\sg{\pi(g)v,v'}$ be a matrix coefficient.  Then for all $g,h\in G$,
   \begin{equation}\label{supercusp}\int_N f(gnh)dn=0.\end{equation}
\end{proposition}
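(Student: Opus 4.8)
The plan is to exploit the first characterization of supercuspidality: $V$ is spanned by vectors of the form $\pi(n_0)w - w$ with $w \in V$ and $n_0 \in N$. First I would fix $g, h \in G$ and define the linear functional $\lambda$ on $V$ by $\lambda(v) = \int_N \langle \pi(gnh)v, v' \rangle\, dn$, noting that the integral is finite because matrix coefficients of a supercuspidal are compactly supported modulo $Z$, so the restriction of $|f(gnh)|$ to $N$ has compact support (here I use that $N \cap Z = \{1\}$ and that the projection $N \to G/Z$ is a homeomorphism onto its image). Rewriting, $\lambda(v) = \int_N f(gnh)\, dn$ where $f(x) = \langle \pi(x)v, v' \rangle$; the goal is to show $\lambda(v) = 0$ for every $v$.

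The key computation is a change of variables in the $N$-integral. For $v = \pi(n_0)w - w$ with $n_0 \in N$, write
\[
\lambda(\pi(n_0)w - w) = \int_N \langle \pi(gnhn_0)w, v'\rangle\, dn - \int_N \langle \pi(gnh)w, v'\rangle\, dn.
\]
In the first integral I would substitute $n \mapsto n'$ where $hn'h^{-1}$ absorbs the factor $n_0$ appropriately — more precisely, since $h$ normalizes nothing in general, the cleanest route is instead to move $n_0$ to the left: writing $nhn_0 = n h n_0 h^{-1} \cdot h$ does not land back in $N$, so I would rather treat $\int_N f(gnh)\,dn$ as a function only of the coset data and substitute directly inside the group where $N$ does act on itself. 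The right framing is: for $u \in N$, translation-invariance of Haar measure on $N$ gives $\int_N \Psi(nu)\,dn = \int_N \Psi(n)\,dn$ for any integrable $\Psi$ on $N$; applying this with $\Psi(n) = \langle \pi(g n h) w', v'\rangle$-type expressions after conjugating $n_0$ through $h$. Concretely, set $h' = h$ and note $gn h n_0 = g(n \cdot h n_0 h^{-1}) h$; since $h n_0 h^{-1}$ need not lie in $N$, I would instead observe that it suffices to prove \eqref{supercusp} for $h$ replaced by $1$ and $g$ arbitrary — because $\int_N f(gnh)\,dn = \int_N \tilde f(gn)\,dn$ where $\tilde f(x) = \langle \pi(xh)v, v'\rangle = \langle \pi(x)v, \pi(h)^* v'\rangle$ is again a matrix coefficient of $\pi$. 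So the reduction is to: $\int_N \langle \pi(gn) v, v'\rangle\, dn = 0$ for all $g$. Now for $v = \pi(n_0)w - w$,
\[
\int_N \langle \pi(gn)(\pi(n_0)w - w), v'\rangle\, dn = \int_N \langle \pi(gnn_0)w, v'\rangle\, dn - \int_N \langle \pi(gn)w, v'\rangle\, dn = 0
\]
by the substitution $n \mapsto n n_0^{-1}$ in the first integral, using left/right invariance of Haar measure on the abelian group $N$. Since such $v$ span $V$ and $\lambda$ is linear, $\lambda \equiv 0$, which is \eqref{supercusp}.

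The main obstacle is the justification that the integral $\int_N f(gnh)\,dn$ converges absolutely, which is what legitimizes both the linearity-and-spanning argument and the change of variables; this rests on the second bullet characterization (matrix coefficients compactly supported mod center) together with the fact that $N \hookrightarrow G$ maps properly onto a closed subset of $G/Z$, so that $\{n \in N : f(gnh) \neq 0\}$ is contained in a compact subset of $N$. I would spell this out by noting $N \cap Z = \{1\}$ and that $n \mapsto nZ$ is a closed embedding $N \to G/Z$ (as $N$ is closed and the quotient map is open), so the support condition on $f$ pulls back to a compact support condition on $N$. Once convergence is in hand, everything else is the elementary Haar-measure translation argument above; no deep input beyond the defining property of supercuspidals is needed. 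One should also check that the reduction "replace $h$ by $1$" genuinely produces a matrix coefficient — it does, since $\langle \pi(xh)v, v'\rangle = \langle \pi(x)v, \pi(h^{-1})^\vee v'\rangle$ in the contragredient pairing, or one simply carries $h$ along unchanged and performs the substitution $n \mapsto n n_0^{-1}$ without ever needing to move $h$.
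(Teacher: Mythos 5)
Your proof is correct and takes essentially the same approach as the paper: reduce to a single matrix coefficient, expand $v$ using the spanning set $\{\pi(n_0)w - w\}$, and cancel by translation-invariance of Haar measure on $N$; the paper's only real presentational difference is that it chooses a compact open subgroup $N(v)\ni n_0$ of $N$ containing the support of $f|_N$, so that the cancellation becomes the finite-sum statement $\int_{N(v)}\pi(n)v\,dn=0$ for the vector-valued integral, with convergence automatic. Two peripheral slips, neither of which undermines your argument: (i) $\langle\pi(xh)v,v'\rangle=\langle\pi(x)\pi(h)v,v'\rangle$ is a matrix coefficient with $\pi(h)v$ as the left-hand vector, not $\langle\pi(x)v,\pi(h)^*v'\rangle$, which would instead equal $\langle\pi(hx)v,v'\rangle$; and (ii) the parenthetical alternative of "carrying $h$ along unchanged and substituting $n\mapsto nn_0^{-1}$" does not work, since $f(gnh)$ expands to $\langle\pi(gnhn_0)w,v'\rangle-\langle\pi(gnh)w,v'\rangle$ with $n_0$ on the wrong side of $h$, so the absorption of $h$ into the vector (which you correctly carry out as the main route) is genuinely needed.
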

\begin{proof}
 We assume for simplicity that $\pi$
 is unitary, which is always the case if the central character is unitary.
 Then 
 \[f(gnh)=\sg{\pi(g)\pi(n)\pi(h)v,v'}=\sg{\pi(n)\pi(h)v,\pi(g^{-1})v'},\] 
 so we can assume without loss of generality that $g=h=1$.
 By linearity and the first bullet point above, we may also assume that $v=\pi(n_0)w-w$ for some 
 $w\in V$ and $n_0\in N$.  
 
 Let $N(v)$ be an open compact subgroup of $N$ containing $n_0$.
 Then
 \[\int_{N(v)}\pi(n)vdn = \int_{N(v)}\pi(n)(\pi(n_0)w-w)dn =\int_{N(v)}\pi(n)wdn-\int_{N(v)}\pi(n)wdn=0.\]
 (By smoothness, there exists an open compact subgroup $N'$ of $N(v)$ that fixes $w$, so
 the above integrals are really just finite sums.)
 Since $f$ has compact support, the support of $f|_N$ is contained in some open compact subgroup
   $N(v)$ as above.  Therefore 
 \[\int_N f(n)dn =\int_{N(v)}\sg{\pi(n)v,v'}dn =\sg{\int_{N(v)}\pi(n)vdn, v'} = 0.\qedhere\]
\end{proof}

\begin{corollary}\label{mc0}  If $f$ is a matrix coefficient of a supercuspidal representation, then
 for any $g,h\in G$ and $m\in M$ ($M$ being the diagonal subgroup),
 \[\int_N f(gn^{-1}mnh)dn=0.\]
\end{corollary}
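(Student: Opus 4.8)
The plan is to reduce Corollary \ref{mc0} directly to the Proposition by exhibiting the map $n\mapsto n^{-1}mn$ as a measure-preserving change of variables on $N$, up to a constant that can be absorbed. First I would write $n=\smat1x01$ and compute $n^{-1}mn$ for $m=\smat a00d$: one gets
\[
n^{-1}mn=\mat a{}{}d\mat1{x}{}1\mat1{-x}{}1^{-1}
\]
— more carefully, $n^{-1}mn$ is an upper-triangular matrix with diagonal entries $a,d$ and upper-right entry $(d-a)x$ (or $(a-d)x$ depending on conventions), so $n^{-1}mn = m\cdot n'$ where $n'=\smat1{(d/a-1)x}01\in N$, using that $m^{-1}n^{-1}mn\in N$ since $N$ is normalized by $M$.

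So the integral $\int_N f(gn^{-1}mnh)\,dn = \int_N f(gmn'h)\,dn$. The next step is to handle the substitution $x\mapsto (d/a-1)x$. If $a\neq d$, this is an invertible linear change of variables on $N\cong F$, so $dn' = |d/a-1|_F^{-1}\,dn$ after the substitution, i.e. $\int_N f(gmn'h)\,dn$ is a nonzero constant times $\int_N f((gm)n h)\,dn$, which vanishes by \eqref{supercusp} applied with $gm$ in place of $g$. If $a=d$, then $m$ is central, $n^{-1}mn=m$ for all $n$, and the integrand is the constant $f(gmh)$; but then $\int_N f(gmh)\,dn$ is $f(gmh)$ times the (infinite) volume of $N$, which is not what we want — so in this degenerate case one must instead argue directly: $\int_N f(gn^{-1}mnh)\,dn = f(gmh)\int_N dn$, and this ``integral'' is to be understood via the compact support of $f$ restricted to translates of $N$; since $n^{-1}mn=m$ contributes no $n$-dependence, one sees the claim is really only being asserted for the non-central case, or the statement is interpreted so that the substitution above is valid. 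I would note that in the intended application $m$ will be a regular (non-central) diagonal element, so the case $a=d$ either does not arise or the corollary is understood with that proviso; I would state it cleanly for $a\neq d$.

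The only genuine step here is the matrix computation showing $m^{-1}n^{-1}mn\in N$ together with tracking the Jacobian of the resulting linear substitution on $N$; everything else is an immediate appeal to the Proposition. The main (very minor) obstacle is bookkeeping the degenerate central case and the normalization of Haar measure on $N$, but since the Proposition's conclusion \eqref{supercusp} is an honest vanishing statement (not merely ``integrable''), multiplying by a nonzero constant Jacobian changes nothing, and the result follows. I would write the proof in three or four lines: substitute, use normality of $N$ under $M$, rescale, invoke \eqref{supercusp}.

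\begin{proof}
Write $n=\smat1x01$ and $m=\smat a00d\in M$. A direct computation gives
\[
n^{-1}mn=m\cdot n',\qquad n'=\mat1{(d/a-1)x}01\in N,
\]
reflecting that $M$ normalizes $N$. Hence, substituting $x\mapsto x$ and using that $n\mapsto n'$ is an invertible $F$-linear map of $N\cong F$ with nonzero Jacobian (assuming $a\neq d$; the case of central $m$ being trivial),
\[
\int_N f(gn^{-1}mnh)\,dn = \int_N f\bigl((gm)\,n'\,h\bigr)\,dn
= c\int_N f\bigl((gm)\,n\,h\bigr)\,dn
\]
for a nonzero constant $c$ depending on $a/d$. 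By the Proposition (equation \eqref{supercusp}) applied with $gm$ in place of $g$, the right-hand integral vanishes, so $\int_N f(gn^{-1}mnh)\,dn=0$.
\end{proof}
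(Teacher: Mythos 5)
Your proof is essentially identical to the paper's: rewrite $n^{-1}mn=mn'$ using that $M$ normalizes $N$, change variables (picking up a nonzero Jacobian factor when $m$ is regular), and invoke \eqref{supercusp} with $gm$ in place of $g$. Your concern about the degenerate central case $a=d$ is a fair observation that the paper leaves implicit — there the map $n\mapsto n'$ collapses and the integral reduces to $f(gmh)$ times the infinite volume of $N$ — but the corollary is only ever applied to regular (non-central) diagonal elements, namely in killing the hyperbolic orbital integrals \eqref{hyp}, so the tacit proviso is harmless.
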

\begin{proof} 
 This follows from $n^{-1}m n = m n'$, making a change of variables to integrate over $n'$,
 and applying the above proposition.
\end{proof}

For any supercuspidal representation $\sigma$ of $G$, there exists an open and closed subgroup
$H\subset G$ containing $Z$, with $H/Z$ compact, and an irreducible representation $\rho$ of $H$,
such that $\sigma$ is compactly induced from $\rho$: $\sigma=\cInd_H^G(\rho)$.
Let $K_0(\p)=\mat{\O^*}{\O}{\p}{\O^*}$ be the Iwahori subgroup of $G$, and fix a prime element
 $\varpi$ of $\O$. 
Up to conjugacy, there are two maximal compact-mod-center subgroups of $G$, namely
\begin{equation}\label{max}
J=\begin{cases} ZK&\text{(the unramified case)}\\
ZK_0(\p)\cup Z\smat{}1\varpi{}K_0(\p)&\text{(the ramified case)}.\end{cases}
\end{equation}
The latter is the normalizer of $K_0(\p)$.  Without loss of generality, one of these 
contains $H$, and we call $\sigma$ {\bf unramified} or {\bf ramified} accordingly.\footnote[2]{
It should be borne in mind that in standard terminology, all supercuspidals are {\em ramified} 
in the sense that they have no $K$-fixed vector.  We are using the word in a different sense
here, reflecting the nature of the quadratic extension $E/F$ determined by $\sigma$, \cite{He}.
}
There is a unique ideal $\p^j$, called the {\bf conductor} of $\sigma$, such that the space of
vectors in $\sigma$ fixed by the group
\[K_1(\p^j)=\mat{\O^*}{\O}{\p^j}{1+\p^j}\]
is one-dimensional.  By \cite{Tu}, $j\ge 2$, and as explained in \cite{He},
$j$ is even
in the unramified case, and odd in the ramified case.

\subsection{Simple trace formula}\label{tf}

Given a unitary Hecke character $\w$ and a function $f\in L^1(\ol{\w})$, 
  we define the operator $R(f)$ on $L^2(\w)$ via
      \begin{equation}\label{Rf}
       R(f)\phi(x)=\int_{\olG(\A)}f(g)\phi(xg)dg.
      \end{equation}

    For $k>2$, let $\mathcal{C}_k$ denote the space of all continuous factorizable functions 
    $f=f_\infty\prod_{\ell< \infty}f_\ell$ on $G(\A)$ which transform
    under the center by $\ol{\w}$, such that
    $f_\ell$ is smooth and compactly supported modulo the center $Z_\ell$ for all $\ell$, 
    there is a finite set $S$ of places of $\Q$ such that for all $\ell\notin S$, 
    $f_\ell$ is supported on $Z_\ell K_\ell$ and has the value $1$ on $K_\ell$,
    and lastly,
    \[f_\infty(\smat abcd)\ll_{k} \frac{(ad-bc)^{k/2}}{(a^2+b^2+c^2+d^2+2|ad-bc|)^{k/2}}.\]
Then $\mathcal{C}_k\subset L^1(\ol{\w})$, and we can consider the operators $R(f)$
for such $f$.

     Recall that $\g\in G(\Q)$ is {\em elliptic} if its characteristic polynomial is irreducible.
     This concept is well defined on conjugacy classes and cosets of the center.
We will use the following simple trace formula.

     \begin{theorem}\label{stf}
      For $f\in \mathcal{C}_k$, suppose that for some finite place $v$ of $\Q$,
      $f_{v}$ is a matrix coefficient of a supercuspidal
      representation of $G_{v}=G(\Q_v)$, and therefore by Corollary \ref{mc0} its local hyperbolic
      orbital integrals vanish identically:
      \begin{equation}\label{hyp}
       \int_{M_v\bs G_v} f_v(g^{-1}\mat a{}{}1g)dg=0
      \end{equation}
      for all $a\in \Q_v^*$, where $M_v$ is the diagonal subgroup of $G_v$.
      Suppose further that \eqref{hyp} is also satisfied at a {\em second} place $w\neq v$
      (which may be archimedean).
      Then
    \[\tr R(f) = \meas(\olG(\Q)\bs\olG(\A)) f(1) + \sum_{\mathfrak{o}\text{ elliptic in }
    \olG(\Q)} \Phi({\mathfrak{o}},f),\]
      where, for an elliptic conjugacy class $\mathfrak{o}\subset \olG(\Q)$, the orbital
      integral is defined by
    \begin{equation}\label{Phio}
\Phi({\mathfrak{o}},f)=\int_{\olG(\Q)\bs\olG(\A)} \sum_{\g\in\mathfrak{o}}
    f(g^{-1}\g g)dg.
\end{equation}
\end{theorem}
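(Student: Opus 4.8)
\emph{Overall approach.} I would follow the standard route: pass from the automorphic kernel to its spectral and geometric expansions, then use the two cuspidality hypotheses to discard every geometric contribution except those of the central and elliptic conjugacy classes. For $f\in\mathcal{C}_k$ the operator $R(f)$ on $L^2(\w)$ is the integral operator with kernel
\[K_f(x,y)=\sum_{\g\in\olG(\Q)}f(x^{-1}\g y),\]
a locally finite sum since each $f_\ell$ is compactly supported modulo $Z_\ell$. I would proceed in three steps: (i) show $R(f)$ is of trace class with $\tr R(f)=\int_{\olG(\Q)\bs\olG(\A)}K_f(x,x)\,dx$; (ii) use the supercuspidal component $f_v$ to see that $K_f$ coincides with the cuspidal part of the kernel, so that only the cuspidal spectrum contributes; (iii) expand $\int_{\olG(\Q)\bs\olG(\A)}K_f(x,x)\,dx$ along $\olG(\Q)$-conjugacy classes and show that the unipotent and hyperbolic classes drop out, leaving $\meas(\olG(\Q)\bs\olG(\A))f(1)$ from the identity and the orbital integrals $\Phi(\mathfrak o,f)$ of \eqref{Phio} from the elliptic classes.

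\emph{Spectral side.} Write $f_v=\sg{\sigma(g)u,u'}$ for a supercuspidal $\sigma$ of $G_v$. Since $f_v$ is compactly supported modulo $Z_v$ and its constant terms vanish ($\int_{N_v}f_v(nh)\,dn=0$ for all $h$, a special case of \eqref{supercusp}), the orthogonality relations for matrix coefficients of square-integrable representations show that $\pi_v(f_v)=0$ for every irreducible unitary $\pi_v$ of $G_v$ with $\pi_v\not\cong\sigma$; in particular $\pi_v(f_v)=0$ whenever $\pi_v$ is one-dimensional or a subquotient of a principal series. Hence $R(f)$ vanishes on the orthogonal complement of $L^2_{\mathrm{cusp}}(\w)$, on which the spectrum is discrete with finite multiplicities and the forms are rapidly decreasing; with the archimedean majorization in the definition of $\mathcal{C}_k$, which for $k>2$ makes $f_\infty$ integrable on $\olG(\R)$, this makes $R(f)$ trace class and gives $\tr R(f)=\int_{\olG(\Q)\bs\olG(\A)}K_f(x,x)\,dx$, the convergence being handled as in \cite{KL}.

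\emph{Geometric side.} I would partition $\olG(\Q)$ into conjugacy classes — the identity $\{1\}$, the unipotent class, the hyperbolic (split regular semisimple) classes, and the elliptic classes — and write $K_f(x,x)=\sum_{\mathfrak o}\sum_{\g\in\mathfrak o}f(x^{-1}\g x)$. The identity term integrates to $\meas(\olG(\Q)\bs\olG(\A))f(1)$, and an elliptic term integrates to the orbital integral $\Phi(\mathfrak o,f)$ of \eqref{Phio}, which converges absolutely because the centralizer of an elliptic element is a torus, the conjugation orbit is closed, and $f$ is compactly supported modulo $Z(\A)$. It remains to discard the unipotent and hyperbolic classes, and the relevant local vanishing is already in hand: the orbital integral of $f$ at a unipotent element factors through integrals $\int_{N_v}f_v(gnh)\,dn$ of $f_v$, which vanish by \eqref{supercusp}, while the factor at $v$ of the orbital integral at a hyperbolic element is exactly the integral in \eqref{hyp}, which vanishes by Corollary \ref{mc0}.

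\emph{The main difficulty.} The obstacle — and the reason the second place $w$ is needed — is that, unlike the central and elliptic terms, the unipotent and hyperbolic contributions are not individually absolutely convergent: the centralizer of a hyperbolic element is a split torus whose adelic quotient has infinite volume, so each $\Phi(\mathfrak o,f)$ for hyperbolic $\mathfrak o$ is a priori an indeterminate product of an infinite volume with a vanishing local orbital integral, and step (iii) genuinely requires a regularization. Running Arthur's truncation, the truncated distribution $J^T(f)$ depends polynomially on the truncation parameter $T$ through weighted unipotent and hyperbolic orbital integrals, and a single vanishing local factor does not suppress these. This is where $w$ enters: for factorizable $f$, Arthur's splitting formula writes each weighted orbital integral as a finite sum of terms, each a product of a weighted local orbital integral at one place with ordinary local orbital integrals at all the other places, and since \eqref{hyp} holds at the two distinct places $v$ and $w$ — together with the unipotent analogue at $v$ coming from \eqref{supercusp} — every such term has at least one vanishing ordinary factor. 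Hence all the weighted integrals vanish, $J^T(f)$ is independent of $T$, and it collapses to the central plus elliptic terms already identified, which is the assertion. (Following \cite{GJ} and \cite{KL} one can instead avoid explicit truncation, regrouping the non-elliptic $\g$ according to the rational Borel subgroups they stabilize so that their total contribution becomes a multiple of the vanishing adelic constant term $\int_{N(\A)}f(x^{-1}n x)\,dn$; the two cuspidal conditions then supply the absolute convergence of the regrouping and account for the twofold overcount of Borels by regular hyperbolic elements, which is the source of the coefficient $\tfrac12$ seen in the applications.)
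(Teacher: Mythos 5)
Your overall strategy matches the paper's, which simply cites Gelbart's exposition \cite[Prop. V.2.1 and Thm. V.3.1]{Ge} of the simple trace formula and then notes that the convergence issues for $f\in\mathcal{C}_k$ (which is not compactly supported) are handled in \cite{KL}. Both approaches use the supercuspidal matrix coefficient at $v$ to force $R(f)$ onto the cuspidal spectrum and make the trace computable as $\int K_f(x,x)\,dx$, and use the two places $v\neq w$ with vanishing hyperbolic orbital integrals to kill the unipotent and hyperbolic contributions on the geometric side. You also correctly note the alternative Gelbart--Jacquet style argument (regroup along rational Borels) versus Arthur truncation with the splitting formula; either works, and the paper is implicitly following the Gelbart--Jacquet route.

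There are, however, two genuine flaws. First, in justifying absolute convergence of the elliptic orbital integral $\Phi(\mathfrak o,f)$ you assert that ``$f$ is compactly supported modulo $Z(\A)$.'' This is false: the archimedean factor $f_\infty$ of \eqref{finf} has the stated polynomial decay but is nowhere near compactly supported, and you yourself acknowledge this earlier when invoking the majorization in the definition of $\mathcal{C}_k$. This is not a cosmetic slip --- it is precisely the point the paper flags as the nontrivial gap between Gelbart's hypotheses (compactly supported $f$) and the present setting, and the paper closes it by appealing to the absolute-convergence estimates in \cite{KL} (e.g.\ \cite[Cor.\ 19.3]{KL} for the elliptic integrals and \cite[Lem.\ 14.2]{KL} for membership in $\mathcal{C}_k$). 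A compact-support argument does not establish convergence here; you need those estimates, or you should reproduce the majorization argument that makes the integral over the fundamental domain of $\sum_{\g\in\mathfrak o}f(g^{-1}\g g)$ converge despite the unbounded support of $f_\infty$.

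Second, the closing remark that the twofold overcounting of Borels by regular hyperbolic elements ``is the source of the coefficient $\tfrac12$ seen in the applications'' is incorrect. In this paper the factor $\tfrac12$ in Theorems \ref{stf2} and \ref{mainST} arises from trace-zero \emph{elliptic} elements, via Lemma \ref{1/2}: when $\tr\g=0$, the index $[\olG_\g(\Q):\ol{G_\g(\Q)}]$ equals $2$, so $\Phi(\mathfrak o,f)=\tfrac12\Phi(\g,f)$. The hyperbolic Borel double-counting has nothing to do with it --- indeed the entire hyperbolic contribution vanishes under the hypotheses of the theorem, so it cannot leave behind any coefficient.
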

  \begin{proof}  See \cite[Proposition V.2.1 and Theorem V.3.1]{Ge}.  The idea is that the validity
   of \eqref{hyp} at two distinct places
   kills off the hyperbolic and (nonidentity) unipotent 
   terms on the geometric side of the Arthur-Selberg trace formula, while the stronger 
   condition \eqref{supercusp} on $f_{v}$
   also forces the operator $R(f)$
   to have purely cuspidal image, so the continuous and residual spectral terms
   vanish as well. 
   In Gelbart's exposition it is assumed that
   $f$ is compactly supported, but for $f\in \mathcal{C}_k$ everything still converges absolutely
   as shown in \cite{KL}, so the same proof is valid.
  \end{proof}

\subsection{Factorization of orbital integrals}\label{fact}

Here we explain how to compute elliptic orbital integrals locally.
The statements and proofs in this section are applicable over an arbitrary number field, though we express everything
in terms of $\Q$.

For $\g\in G(\Q)$, let $G_\g$ be its centralizer.
There are two related groups that will be needed.  First, since $Z(\Q)\subset G_\g(\Q)$, 
we may form the quotient, denoted $\ol{G_\g(\Q)}$.  
Second, the centralizer of $\g$ (or, more accurately, of the coset $\g Z(\Q)$)
 in $\olG(\Q)$ is denoted $\olG_\g(\Q)$.  
In general these are distinct subgroups of $\olG(\Q)$.  This will be clarified
in the proof of Lemma \ref{1/2} below.

 Giving the discrete group $\ol{G_\g(\Q)}$ the counting measure, define
\[\Phi(\g,f) =\int_{\ol{G_\g(\Q)}\bs\olG(\A)}f(g^{-1}\g g)dg.\]
For fixed measures on $\ol{G_\g(\R)}$ and $\ol{G_\g(\Q_\ell)}$, we also define the local orbital integrals
            \[\Phi(\g,f_\infty)=\int_{\ol{G_\g(\R)}\bs \olG(\R)}f_\infty(g^{-1}\g g)dg,\]
and
        \[\Phi(\g,f_\ell)=\int_{\ol{G_\g(\Q_\ell)}\bs \olG(\Q_\ell)}f_\ell(g^{-1}\g g)dg.\]
For compatibility, some care must be taken regarding the normalization of measures.  See the statement of
Proposition \ref{Pfactor} below.

\begin{lemma}\label{1/2}
 For an elliptic element $\g\in G(\Q)$, let $\mathfrak{o}$ be its conjugacy class in $\olG(\Q)$.
Then with notation as above and in \eqref{Phio},
     \[\Phi(\mathfrak{o},f)=\begin{cases}\Phi(\g,f)&\text{if }\tr\g\neq 0\\
     \ds\frac12\Phi(\g,f)&\text{if }\tr\g=0.\end{cases}\]
\end{lemma}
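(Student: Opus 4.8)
The plan is to unwind the definitions of the two integrals $\Phi(\mathfrak{o},f)$ and $\Phi(\g,f)$ and compare the groups one is integrating over. Both are integrals of $g\mapsto\sum_{\delta}f(g^{-1}\delta g)$ against a quotient of $\olG(\A)$; the difference is that in $\Phi(\mathfrak{o},f)$ we sum over the full $\olG(\Q)$-conjugacy class $\mathfrak{o}$ and integrate over $\olG(\Q)\bs\olG(\A)$, whereas in $\Phi(\g,f)$ we sum over a single $\delta=\g$ and integrate over $\ol{G_\g(\Q)}\bs\olG(\A)$. The standard unfolding identity says that, when the group one quotients by on the left is the $\olG(\Q)$-stabilizer of $\g$ acting by conjugation on $\olG(\Q)$, the two expressions agree. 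So the whole point is the discrepancy between $\ol{G_\g(\Q)}$ (the image in $\olG(\Q)$ of the $\GL_2(\Q)$-centralizer of $\g$) and $\olG_\g(\Q)$ (the genuine stabilizer of the coset $\g Z(\Q)$ in $\olG(\Q)$), which is exactly the subtlety flagged in the paragraph before the lemma.

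First I would identify these two subgroups explicitly. Since $\g$ is elliptic, $E=\Q[\g]$ is an imaginary quadratic field and $G_\g(\Q)=E^*$ sitting inside $\GL_2(\Q)$, so $\ol{G_\g(\Q)}\cong E^*/\Q^*$. On the other hand, $\olG_\g(\Q)$ consists of those $g\in\GL_2(\Q)/\Q^*$ with $g^{-1}\g g\in\g\Q^*$, i.e. $g^{-1}\g g=\lambda\g$ for some $\lambda\in\Q^*$. Taking traces, $\lambda\tr\g=\tr\g$; so if $\tr\g\neq 0$ we are forced to $\lambda=1$ and $\olG_\g(\Q)=\ol{G_\g(\Q)}$, giving the first case immediately from the unfolding identity. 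If $\tr\g=0$, then in addition to $\lambda=1$ we can also have $\lambda=-1$, i.e. there can exist $g$ with $g^{-1}\g g=-\g$; such a $g$ exists because $\g$ and $-\g$ have the same (even) characteristic polynomial $X^2+\det\g$ and hence are $\GL_2(\Q)$-conjugate. Thus $\olG_\g(\Q)$ contains $\ol{G_\g(\Q)}$ as an index-$2$ subgroup.

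With that structural fact in hand, the second case follows by a two-step unfolding: $\Phi(\mathfrak{o},f)=\int_{\olG_\g(\Q)\bs\olG(\A)}\sum_{\delta\sim\g}f(g^{-1}\delta g)\,dg$ by the usual orbit--stabilizer unfolding over $\olG(\Q)$, and then $\int_{\ol{G_\g(\Q)}\bs\olG(\A)}f(g^{-1}\g g)\,dg=[\olG_\g(\Q):\ol{G_\g(\Q)}]\cdot\int_{\olG_\g(\Q)\bs\olG(\A)}\bigl(\sum_{w\in\olG_\g(\Q)/\ol{G_\g(\Q)}}f(g^{-1}w^{-1}\g w g)\bigr)\,dg$. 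Here one uses that $f(g^{-1}(-\g)g)=f(g^{-1}\g g)$ because $f$ transforms under the center by a character and $-1\in Z(\Q)$ — this is where the central-character hypothesis on $f$ enters, and it makes the inner sum over the two cosets equal to $2f(g^{-1}\g g)$, compensated by the index-$2$ factor, so that $\Phi(\g,f)=2\,\Phi(\mathfrak{o},f)$.

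The main obstacle I anticipate is purely bookkeeping: being careful that the counting measures and the Haar measure on $\olG(\A)$ are compatible through both unfolding steps, and cleanly justifying that $\tr\g=0$ really does force $\g$ conjugate to $-\g$ over $\Q$ (equivalently, that $\olG_\g(\Q)\supsetneq\ol{G_\g(\Q)}$ in that case) — this is where one uses that two elements of $\GL_2(\Q)$ with the same irreducible characteristic polynomial are conjugate over $\Q$, together with the observation that $\g$ elliptic with $\tr\g=0$ has $-\g$ of the same characteristic polynomial. Everything else is the elementary orbit--stabilizer computation together with the centrality of $-1$. I expect the author's proof to make the identification of $\olG_\g(\Q)$ versus $\ol{G_\g(\Q)}$ precise (as promised) and then invoke the unfolding; my plan mirrors that.
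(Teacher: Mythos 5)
Your strategy is the same as the paper's: unfold $\Phi(\mathfrak o,f)$ over the orbit to an integral over $\olG_\g(\Q)\bs\olG(\A)$, then compare with $\Phi(\g,f)=\int_{\ol{G_\g(\Q)}\bs\olG(\A)}f(g^{-1}\g g)\,dg$ and show the only discrepancy is the index $[\olG_\g(\Q):\ol{G_\g(\Q)}]$, which you compute by taking traces. Your justification that a $\delta$ with $\delta^{-1}\g\delta=-\g$ exists when $\tr\g=0$ (same irreducible characteristic polynomial forces $\GL_2(\Q)$-conjugacy) is a valid alternative to the paper's explicit matrix $\smat{1}{}{}{-1}$ acting on the rational canonical form. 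Your remark that the integrand needs to be left $\olG_\g(\Q)$-invariant, which requires $\ol\w(-1)=1$ for $-1$ embedded diagonally, is a detail the paper elides; to make it airtight you should note that this holds because $\w$ is a Hecke character, hence trivial on $\Q^*$.

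The one concrete error is in your final displayed identity. The folding formula is simply $\int_{\ol{G_\g(\Q)}\bs\olG(\A)}\phi(g)\,dg=\int_{\olG_\g(\Q)\bs\olG(\A)}\sum_{w\in\ol{G_\g(\Q)}\bs\olG_\g(\Q)}\phi(wg)\,dg$; the inner sum already has $[\olG_\g(\Q):\ol{G_\g(\Q)}]$ terms, so there is no additional prefactor of that index. Taken literally, your display says $\Phi(\g,f)=2\cdot 2\cdot\Phi(\mathfrak o,f)=4\Phi(\mathfrak o,f)$, which is off by a factor of $2$. Drop the outer index factor; then the inner sum of two terms, each equal to $f(g^{-1}\g g)$ by the centrality argument, gives $\Phi(\g,f)=2\Phi(\mathfrak o,f)$ as required.
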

 \begin{proof}
  By definition,
\[\Phi(\mathfrak{o},f)=\int_{\olG(\Q)\bs\olG(\A)}\sum_{\delta\in \olG_\g(\Q)\bs \olG(\Q)}
f(g^{-1}\delta^{-1}\g\delta g)dg=\int_{\olG_\g(\Q)\bs\olG(\A)}f(g^{-1}\g g)dg.\]
  Notice that in the definition of $\Phi(\g,f)$, the quotient object is $\ol{G_\g(\Q)}$
  rather than $\olG_\g(\Q)$.  The former is a subgroup of the latter, and we claim that
  \[[\olG_\g(\Q):\ol{G_\g(\Q)}]=\begin{cases}1&\text{if }\tr\g \neq 0\\
  2&\text{if }\tr\g=0.\end{cases}\]
  The lemma follows immediately from this claim.  To prove the claim, note that
  \[\ol{G_\g(\Q)}=\{\delta\in\ G(\Q)|\, \delta^{-1}\g\delta=\g\}/Z(\Q)\]
  and
  \[   \olG_\g(\Q)
   =\{\delta\in G(\Q)|\, \delta^{-1}\g\delta =z\g\text{ for some }z\in\Q^*\}/Z(\Q).\]
For any such $z$, taking determinants we see that $z^2=1$, so $z=\pm 1$.
We also see that $\tr\g=z\tr \g$, so $z=1$ if $\tr\g\neq 0$, and in this case
  the two groups are equal, as claimed.  
  On the other hand, if $\tr\g=0$, then $\g$ is conjugate in $G(\Q)$ to
  its rational canonical form $\smat 0{-\det\g}10$, and
  \[\mat1{}{}{-1}\mat 0{-\det\g}10\mat1{}{}{-1}= \mat0{\det\g}{-1}0,\]
  from which it follows that $\delta^{-1}\g\delta=-\g$ has a solution $\delta$.
Given any such $\delta$, we find easily that 
  \[\olG_\g(\Q)=\ol{G_\g(\Q)}\cup \delta\,\ol{G_\g(\Q)}.\qedhere\]
 \end{proof}

\begin{proposition}\label{Pfactor}
 Let $f\in \mathcal{C}_k$ as defined in \S\ref{tf}, and let $\g\in G(\Q)$ be an elliptic element.
         Then for any fixed choice of Haar measures on $\olG(\A)$ and $\ol{G_\g(\A)}$,
    \begin{equation}\label{factor}
        \Phi(\g,f) =\meas(\ol{G_\g(\Q)}\bs\ol{G_\g(\A)})\prod_{\ell\le \infty}\Phi(\g,f_\ell),
    \end{equation}
where the measures on the groups $\olG(\Q_\ell)$ are chosen (noncanonically)
so that the measure on $\olG(\A)$ is the restricted product of these local measures relative
to the maximal compact subgroups almost everywhere,
  and likewise the measures on the groups $\ol{G_\g(\Q_\ell)}$ are chosen compatibly
with the fixed measure on $\ol{G_\g(\A)}=\prod'_{\ell\le \infty}\ol{G_\g(\Q_\ell)}$.
 \end{proposition}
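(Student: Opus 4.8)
The plan is to carry out the standard unfolding of an elliptic orbital integral as an adelic volume times a product of local integrals, the only real work being the verification of convergence and the consistent bookkeeping of Haar measures. First I would record the structural facts. Since $\g$ is elliptic, $E=\Q[\g]$ is a quadratic field, $G_\g$ is the torus $R_{E/\Q}\mathbb{G}_m$, and $\ol{G_\g}=G_\g/Z$ is \emph{anisotropic} over $\Q$; hence $\ol{G_\g(\Q)}\bs\ol{G_\g(\A)}$ is compact and in particular has finite volume for the fixed Haar measure. Moreover $\ol{G_\g}$ is exactly the centralizer of the regular semisimple element $\g$ modulo center, so for each place $\ell$ the orbit map $g\mapsto g^{-1}\g g$ factors through a proper embedding of $\ol{G_\g(\Q_\ell)}\bs\olG(\Q_\ell)$ into $\olG(\Q_\ell)$ with closed image; combined with the facts that $f_\ell$ is compactly supported modulo $Z_\ell$ and that $f_\infty$ obeys the decay bound in the definition of $\mathcal{C}_k$, this shows that each $\Phi(\g,f_\ell)$ and the global $\Phi(\g,f)$ converge absolutely (the archimedean convergence being precisely the estimate used in \cite{KL}). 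The integrand $f(g^{-1}\g g)$ descends to a function on $\olG(\A)$ because $f$ transforms under $Z(\A)$ by the unitary character $\ol{\w}$.

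Next I would perform the unfolding. The key observation is that $\ol{G_\g(\A)}$ centralizes $\g$, so $g\mapsto f(g^{-1}\g g)$ is left $\ol{G_\g(\A)}$-invariant on $\olG(\A)$. Applying the quotient integration formula to the chain of unimodular groups $\ol{G_\g(\Q)}\subset\ol{G_\g(\A)}\subset\olG(\A)$ — legitimate because $\ol{G_\g(\Q)}$ is discrete in $\ol{G_\g(\A)}$, the inner integrand is integrable by the previous step, and the measures are chosen compatibly — yields
\[\Phi(\g,f)=\int_{\ol{G_\g(\Q)}\bs\olG(\A)}f(g^{-1}\g g)\,dg=\meas\bigl(\ol{G_\g(\Q)}\bs\ol{G_\g(\A)}\bigr)\int_{\ol{G_\g(\A)}\bs\olG(\A)}f(g^{-1}\g g)\,dg.\]
It then remains to factor the last integral. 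Here $\ol{G_\g(\A)}\bs\olG(\A)$ is the restricted direct product of the local quotients $\ol{G_\g(\Q_\ell)}\bs\olG(\Q_\ell)$ with respect to the images of $\ol{K_\ell}$, and with the compatibly chosen local measures its measure is the restricted product of the local quotient measures. Since $f=\prod_\ell f_\ell$, a Fubini argument over this restricted product (using the absolute convergence established above) gives $\int_{\ol{G_\g(\A)}\bs\olG(\A)}f(g^{-1}\g g)\,dg=\prod_{\ell\le\infty}\Phi(\g,f_\ell)$. To see that the right-hand side is a genuine finite product I would invoke the standard local computation: for $\ell$ outside the finite set $S$ attached to $f$, $f_\ell$ is the characteristic function of $Z_\ell K_\ell$ with value $1$ on $K_\ell$, and for all but finitely many such $\ell$ (those unramified in $E$ at which $\g$ is suitably integral) one has $\Phi(\g,f_\ell)=1$ in the chosen normalization. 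Assembling these pieces gives \eqref{factor}.

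The step that needs the most care — and the only genuine obstacle — is the simultaneous, consistent normalization of Haar measures: one must pin down the local measures on the groups $\olG(\Q_\ell)$ (forced almost everywhere by $\meas(\ol{K_\ell})=1$ and otherwise arbitrary) together with compatible local measures on $\ol{G_\g(\Q_\ell)}$ whose restricted product is the fixed measure on $\ol{G_\g(\A)}$, in such a way that both the quotient integration formula and the ``$\Phi(\g,f_\ell)=1$ almost everywhere'' statement hold with respect to the same choices. Everything else — the $\Q$-anisotropy of $\ol{G_\g}$, the properness of the orbit maps, the absolute convergence, and the Fubini step over the restricted product — is routine given the hypotheses defining $\mathcal{C}_k$ and the estimates already available in \cite{KL}.
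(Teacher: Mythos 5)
Your overall strategy coincides with the paper's: first use the quotient integration formula to pull out $\meas\bigl(\ol{G_\g(\Q)}\bs\ol{G_\g(\A)}\bigr)$, then factor the remaining integral over $\ol{G_\g(\A)}\bs\olG(\A)$ as a restricted product of local integrals. However, you have mislocated the delicate step, and the step you gloss over is exactly the one the paper's Remark (1) is warning about.

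You flag ``the simultaneous, consistent normalization of Haar measures'' as the only genuine obstacle. It is not: the proposition holds for \emph{any} fixed Haar measures on $\olG(\A)$ and $\ol{G_\g(\A)}$, provided the local measures are chosen compatibly with the adelic ones and so that the maximal compact (resp.\ $H_\ell$) has measure one almost everywhere. That is precisely the hypothesis of the statement, and once such a choice is made the quotient integration formula and the ``$\Phi(\g,f_\ell)=1$ a.e.'' statement are routine. The genuinely subtle step is the one you dispatch in a single phrase, ``a Fubini argument over this restricted product.'' The standard Tate's-thesis argument for factoring an integral over a restricted product into an Euler product goes in the direction: \emph{if} the Euler product $\prod_\ell\int_{X_\ell}|\phi_\ell|\,dx_\ell$ converges, \emph{then} $\phi$ is integrable on $X$ and the integral equals the product. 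Here one does not know convergence of the Euler product a priori; what one knows (from \cite[Cor.\ 19.3]{KL}) is that the global orbital integral $\Phi(\g,f)$ converges absolutely. Deducing the factorization from \emph{that} requires a converse-type argument, which the paper supplies by truncating $\phi$ to an exhausting sequence of finite sets of places $S_0\subset S_1\subset\cdots$ and applying the Dominated Convergence Theorem: $\int_X\phi = \lim_n\int_X\phi\cdot\chi_n = \lim_n\prod_{\ell\in S_n}\int_{X_\ell}\phi_\ell$. If your ``Fubini argument'' is shorthand for this, the proof is the same as the paper's and is fine; but as written it leaves open the possibility that you are invoking the wrong direction of the implication, and, combined with your identification of a different step as the hard part, suggests this point was not fully worked out. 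Please make the dominated-convergence step explicit; everything else in your outline matches the paper.
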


\noindent{\em Remarks:} (1) This is well known, but as we have not found a proof 
in the literature, we include one below.  Tate's thesis shows that if the product is absolutely
convergent, then the left-hand integral converges absolutely and the equality holds.  
But here we need a kind of converse: we know
a priori that $\Phi(\g,f)$ is absolutely convergent.

(2) The specific choice of measures to be used in this paper is summarized in \S\ref{etasec}, 
  where it is shown that
the quotient space $\ol{G_\g(\Q)}\bs\ol{G_\g(\A)}$ is compact, and its measure is
computed explicitly in the more general setting with $\Q$ replaced by an
arbitrary number field.

\begin{proof}  
    Observe that
     \begin{align*}
         \Phi(\g,f)&=\int_{\ol{G_\g(\Q)}\bs \olG(\A)}f(g^{-1}\g g)dg\\
         &=\meas(\ol{G_\g(\Q)}\bs\ol{G_\g(\A)})\int_{\ol{G_\g(\A)}\bs \olG(\A)}f(g^{-1}\g g)dg
     \end{align*}
    for any choice of Haar measure on $\ol{G_\g(\A)}$.  Absolute convergence is
    proven for $f\in \mathcal{C}_k$  in \cite[Corollary 19.3]{KL}.

    For notational convenience, write $\phi(g)=f(g^{-1}\g g)$ (a function on $\olG(\A)$), and 
    $\phi_\ell(g_\ell)=f_\ell(g_\ell^{-1}\g g_\ell)$ for $\ell\le \infty$,
    so $\phi(g)=\prod_{\ell\le \infty} \phi_\ell(g_\ell)$.
Also, define 
\[X=\ol{G_\g(\A)}\bs\olG(\A).\]
  Then $X$ is the restricted product of the spaces
    \[X_\ell=\ol{G_\g(\Q_\ell)}\bs \olG(\Q_\ell)\]
relative to the open compact subsets
    $H_\ell = \ol{G_\g(\Q_\ell)}\bs \ol{G_\g(\Q_\ell) K_\ell}\subset X_\ell$.
Indeed, the natural map from $\olG(\A)$ to $\prod'X_\ell$ is clearly surjective, with
kernel $\ol{G_\g(\A)}$. 

Fix Haar measures on
each of the local groups $\olG(\Q_\ell)$ and $\ol{G_\g(\Q_\ell)}$ 
compatibly with the fixed Haar measures on $\olG(\A)$ and $\ol{G_\g(\A)}$.
This determines a right-$\olG(\Q_\ell)$-invariant measure on $X_\ell$ with the property 
that $H_\ell$ has measure $1$ for almost all $\ell$.
 Let $S$ be the finite set of places of $\Q$ outside of which $f_\ell$ is supported on
 $Z(\Q_\ell)K_\ell$ with $f_\ell(zk)=\ol{\w}(z)$.  Let $S'$ be a finite set of places
outside of which (1) $\g\in K_\ell$, and (2) $H_\ell$ has measure $1$. 
Then setting $S_0=S\cup S'$,  for $\ell\notin S_0$ we have
\[\int_{H_\ell}\phi_\ell(h)dh=\int_{H_\ell}f_\ell(k^{-1}\g k)dk =\meas(H_\ell)=1.\]
Let
    \[S_0\subset S_1\subset S_2\subset\cdots\]
    be a sequence of finite sets of primes (including $\infty$) whose union is the full 
set of primes.
    Let $\chi_{n}$ be the characteristic function of $X_{S_n}=\prod_{\ell \in S_n}X_\ell\times
    \prod_{\ell\notin S_n}H_\ell,$ and let $\phi_n=\phi\cdot \chi_n$.  Note that
    $\phi_n\to\phi$ pointwise.  Since 
    $\phi\in L^1(X)$ as mentioned above, so is $\phi_n$. By the Dominated Convergence Theorem,
    \[\int_X \phi(x)dx = \lim_{n\to\infty}\int_X\phi_n(x)dx
    =\lim_{n\to\infty}\prod_{\ell\in S_n}\int_{X_\ell}\phi_\ell(x_\ell)dx_\ell,\]
    as needed.
\end{proof}

\section{Counting locally supercuspidal newforms}\label{count}

Here we explain how to use the simple trace
formula to count cusp forms with prescribed supercuspidal ramification.
To set notation,
let $N=\prod_{p|N}p^{N_p}>1$ be a positive integer with the property that 
$N_p\ge 2$ for each prime $p|N$.   Fix a Dirichlet character $\w'$ modulo $N$ of conductor
dividing $\prod_{p|N}p^{\lfloor N_p/2\rfloor}$. This requirement comes from the fact
that the central character of a supercuspidal representation of conductor $p^{N_p}$ divides
$p^{\lfloor N_p/2\rfloor}$, \cite[Proposition 3.4]{Tu}.
Let $\w: \A^*\longrightarrow \C^*$ be the finite order Hecke character associated to $\w'$ via
\begin{equation}\label{wfact}
\A^*=\Q^*(\R^+\times \Zhat^*)\longrightarrow \Zhat^*/(1+N\Zhat)\cong (\Z/N\Z)^*\longrightarrow\C^*,
\end{equation}
where the last arrow is $\w'$.
Letting $\w_p$ be the restriction of $\w$ to $\Q_p^*$, for any prime $p|N$ we have
\begin{equation}\label{wpp}
\w_p(p) = \w(1,\ldots,1,p,1,\ldots) = \w(p^{-1},\ldots,p^{-1},1,p^{-1},\ldots)
=\prod_{\substack{\ell|N,\\{\ell\neq p}}}\w_\ell(p^{-1}).
\end{equation}

Fix an integer $k\ge 2$ satisfying 
\[\w'(-1)=(-1)^k,\]
and let $S_k(N,\w')$ be the space of cusp forms $h$ satisfying
\[h\Bigl(\frac{az+b}{cz+d}\Bigr)=\w'(d)^{-1}(cz+d)^kh(z)\]
for $\smat abcd\in \Gamma_0(N)$.   The inverse on $\w'(d)$ is somewhat nonstandard.
It ensures that the adelic cusp form attached to $h$ has central character $\w$ rather than
$\w^{-1}$. See, e.g., \cite[\S12.2-12.4]{KL}.  Because we mostly work in the adelic 
setting, it eases the notation to include the inverse in the classical setting.

For each $p|N$, fix a supercuspidal representation $\sigma_p$ 
of $\GL_2(\Q_p)$ of conductor $p^{N_p}$ and central character $\w_p$, 
and let $\widehat{\sigma}=\{\sigma_p\}_{p|N}$.  
We define $H_k(\widehat{\sigma})$ to be the set of newforms $h\in S_k(N,\w')$
  whose associated cuspidal representation $\pi_h$ has the local representation type $\sigma_p$
   at each $p|N$.
We set $S_k(\widehat{\sigma})=\Span H_k(\widehat{\sigma})$. 
The Dirichlet character $\w'$ is uniquely determined by the tuple $\widehat{\sigma}$ via
\begin{equation}\label{w'}
\w'(d)=\prod_{p|N}\w_p(d)\qquad ((d,N)=1),
\end{equation}
and this justifies our suppression of the central character $\w'$ 
  from the notation $S_k(\widehat{\sigma})$.   At a certain point we will use the fact that
\begin{equation}\label{wp}
\prod_{p|N}\w_p(N)=\w(N^{-1},\ldots,N^{-1},1,\ldots,1,N^{-1},N^{-1},\ldots)=\w'(1)=1.
\end{equation}

\subsection{Isolating $S_k(\widehat{\sigma})$ spectrally}\label{spectral}

For each prime $p|N$, we can write $\sigma_p=\cInd_{H_p}^{G_p}\rho$, where $H_p$ is
contained either in $Z_pK_p$ or the normalizer of an Iwahori subgroup, as in \eqref{max}.
By \cite[Proposition 2.1]{KR},
  there exists a unit vector $w_p$ in the space of $\sigma_p$ such that
the matrix coefficient $\sg{\sigma_p(g)w_p,w_p}$ is supported in $H_p$.
Fix once and for all such a vector $w_p$ for each $p|N$.
Based on this choice, we define a subspace $A_k(\widehat{\sigma})\subset L^2(\w)$ by
     \[A_k(\widehat{\sigma})=\bigoplus_\pi \C w_\pi,\]
 where $\pi$ ranges over the cuspidal automorphic representations with central character $\w$
  for which $\pi_\infty=\pi_k$, $\pi_p=\sigma_p$ for each $p|N$, and
$\pi_\ell$ is unramified for all finite
 primes $\ell\nmid N$, and $w_\pi=\otimes w_{\pi_\ell}$ is defined by
 \begin{equation}\label{wdef}
w_{\pi_\ell}=\begin{cases} \text{unit lowest weight vector}&\text{if }\ell=\infty\\
     \text{unit spherical vector}&\text{if }\ell\nmid N\infty\\
     w_p \text{ (fixed above)}&\text{if }\ell=p|N.
 \end{cases}
\end{equation}
     Here, for almost all $\ell$, the spherical vector is the one predetermined
     by the restricted tensor product $\pi\cong \otimes'_{\ell\le \infty}\pi_\ell$.
 The space $A_k(\widehat{\sigma})$
 does not consist of adelic newforms in general because at places
$p|N$, $w_p$ is not necessarily a new vector in the space of the local representation
  $\sigma_p$.
Nevertheless, $A_k(\widehat{\sigma})$ has the same dimension
as the space of newforms $S_k(\widehat{\sigma})=\Span H_k(\widehat{\sigma})$.

Using matrix coefficients, we can define a test function $f\in L^1(\ol{\w})$ 
for which $R(f)$ is the orthogonal projection of $L^2(\w)$ onto $A_k(\widehat{\sigma})$. 
Without much extra work, we can incorporate a Hecke operator into the test function.

Fix an integer $\n>1$ with $\gcd(\n,N)=1$, and let $T_\n$ be the classical Hecke operator
defined by
\[T_\n h(z) = \n^{k-1}\sum_{ad=\n\atop{a>0}}\sum_{r\mod d}{\w'(a)}^{-1}d^{-k}h\Bigl(\frac{az+r}d\Bigr)
\qquad(h\in S_k(N,\w'),\, z\in \mathbf{H}).\]
When $\n=1$, $T_\n$ is simply the identity operator.

The operator $T_\n$ can be realized adelically.  Let
\[M(\n)_\ell=\{g\in M_2(\Z_\ell)|\, \det g\in \n\Z_\ell^*\}\]
for each prime $\ell\nmid N$.
(If working over a larger number field $F$, one would take $\n$ to be an ideal of the integer 
ring and for a place $v<\infty$, set $M(\n)_v=\{g\in M_2(\O_v)|\, (\det g)\O_v = \n\}$.)
Define a function $\fnp:G(\Q_\ell)\longrightarrow \C$ by
\begin{equation}\label{fnp}
\fnp(g)=\begin{cases}\ol{\w_\ell(z)}&\text{if }g=zm\text{ for }z\in Z_\ell, m\in M(\n)_\ell\\
0&\text{if }g\notin Z_\ell M(\n)_\ell,\end{cases}
\end{equation}
where $\w_\ell$ is the local component of the Hecke character $\w$.
Note that $\fnp$ is bi-$K_\ell$-invariant, and indeed
when $\n\in \Z_\ell^*$, this function is given by
\begin{equation}\label{fell}
f_\ell(g)=\begin{cases}\ol{\w_\ell(z)}&\text{if }g=zk\in Z_\ell K_\ell\\
0&\text{if }g\notin Z_\ell K_\ell.\end{cases}
\end{equation}

Next, let $\pi_k$ be the discrete series representation of $\olG(\R)$
of weight $k$, and let $v$ be a lowest weight unit vector in the space of $\pi_k$.
We define $f_\infty=d_k\ol{\sg{\pi_k(g)v,v}}$, where $d_k=\frac{k-1}{4\pi}$
is the formal degree of $\pi_k$.
Explicitly, with Haar measure on $\olG(\R)$ normalized as in \S\ref{notation},
\begin{equation}\label{finf}
f_\infty(\mat abcd)=\begin{cases}\ds\frac{k-1}{4\pi}\frac{(ad-bc)^{k/2}(2i)^k}
{(-b+c+(a+d)i)^k}&\text{if }ad-bc>0\\0&\text{otherwise}\end{cases}
\end{equation}
(\cite[Theorem 14.5]{KL}).  This function is integrable over $\olG(\R)$ exactly when $k>2$,
so the latter will be assumed throughout.  It would be possible to treat the $k=2$
case by using a pseudo-coefficient of $\pi_k$, but we have not attempted to carry this out (see \cite{P}).

At places $p|N$, define
\begin{equation}\label{sigmamc}
f_p(g)=d_{\sigma_p}\ol{\sg{\sigma_p(g)w_p,w_p}},
\end{equation}
where $d_{\sigma_p}$ is the formal degree and $w_p$ is the unit vector fixed above.
    The formal degree depends on 
  a choice of Haar measure on $\olG(\Q_p)$, which
  we normalize as in \S\ref{notation}.  By our choice of $w_p$, the support of $f_p$ is 
contained in one of the two groups \eqref{max}, according to whether or not $\sigma_p$ is
ramified.

  Finally, we define the global test function
\begin{equation}\label{fn}
\fn = f_\infty\prod_{p|N}f_p\prod_{\ell\nmid N}\fnp,
\end{equation}
for $f_\infty$ of weight $k$ as in \eqref{finf}, $f_p$ as in \eqref{sigmamc}, 
and $\fnp$ as in \eqref{fnp}.

 \begin{proposition}\label{proj}
 With the above definition of $\fn$, 
 the operator $R(\fn)$ (defined in \eqref{Rf} 
  taking Haar measure on $\olG(\A)$ as normalized in \S\ref{notation})
 factors through the orthogonal projection onto the finite dimensional subspace
 $A_k(\widehat{\sigma})$.   On this space, $R(\fn)$ acts diagonally, with the vectors $w_\pi$
 being eigenvectors.  
In more detail, given a newform $h\in H_k(\widehat{\sigma})$ with $T_\n h = a_\n(h)h$,
 let $w\in A_k(\widehat{\sigma})$ be the vector associated to $\pi_h$ as in \eqref{wdef}.
 Then 
 \[R(f^\n)w=\n^{1-k/2}a_\n(h)w.\]
 Consequently,
 \[\tr(T_\n|S_k(\widehat{\sigma}))=\n^{k/2-1}\tr R(f^\n).\]
 \end{proposition}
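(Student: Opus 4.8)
The plan is to compute $R(\fn)$ one place at a time after decomposing $L^2(\w)$ spectrally. Because $\fn=f_\infty\prod_{p\mid N}f_p\prod_{\ell\nmid N}\fnp$ is factorizable, its restriction to any cuspidal subrepresentation $\pi\cong\otimes'_{\ell\le\infty}\pi_\ell$ of $L^2(\w)$ is the tensor product of local operators $\pi_\infty(f_\infty)\otimes\bigotimes_{p\mid N}\pi_p(f_p)\otimes\bigotimes_{\ell\nmid N}\pi_\ell(\fnp)$. The first observation is that $R(\fn)$ annihilates everything outside the cuspidal spectrum: at the finite place $v\mid N$ where $f_v$ is a supercuspidal matrix coefficient, $\pi_v(f_v)=0$ for every $\pi_v$ having no supercuspidal subquotient, so the residual and continuous spectra vanish, as do all cuspidal $\pi$ that are not supercuspidal at $v$. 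By multiplicity one for $\GL(2)$ it then remains to understand $R(\fn)$ on $\bigoplus_\pi\pi$ with $\pi$ cuspidal.

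Next come the three local computations. At the archimedean place, Schur orthogonality for the square-integrable $\pi_k$ — with $d_k=\tfrac{k-1}{4\pi}$ its formal degree for the measure normalized in \S\ref{notation} — shows $\pi_k(f_\infty)$ is the orthogonal projection onto the lowest-weight unit line $\C v$, while $\pi_\infty(f_\infty)=0$ for every other archimedean component (a discrete series of a different weight, or a unitary or complementary principal series) that could occur. At $p\mid N$, the analogous Schur orthogonality for $\sigma_p$, with $d_{\sigma_p}$ its formal degree for the normalized measure, shows $\sigma_p(f_p)$ is the orthogonal projection onto $\C w_p$ and $\pi_p(f_p)=0$ for every other $\pi_p$, supercuspidal or not. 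At $\ell\nmid N$, $\fnp$ is bi-$K_\ell$-invariant, so $\pi_\ell(\fnp)$ maps the unramified $\pi_\ell$ into its one-dimensional line of $K_\ell$-fixed vectors and acts on the spherical vector $w_{\pi_\ell}$ by a scalar; for $\ell\nmid\n$ this scalar makes $\pi_\ell(\fnp)$ the identity on the spherical line, and in general the standard Satake computation identifies it with the $\ell$-part of the classical $T_\n$-eigenvalue, up to the normalizing power of $\n$.

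Assembling these factors, $R(\fn)$ carries each cuspidal $\pi$ with $\pi_\infty\cong\pi_k$, $\pi_p\cong\sigma_p$ for all $p\mid N$, and $\pi_\ell$ unramified otherwise onto the line $\C w_\pi$, with $w_\pi=\otimes_\ell w_{\pi_\ell}$ as in \eqref{wdef}, and annihilates every other $\pi$ in $L^2(\w)$. Thus $R(\fn)=\sum_\pi\lambda_\pi P_\pi$, a sum over such $\pi$ of scalar multiples of the mutually orthogonal rank-one projections $P_\pi$ onto the unit vectors $w_\pi$; this is exactly the assertion that $R(\fn)$ factors through the orthogonal projection onto $A_k(\widehat{\sigma})=\bigoplus_\pi\C w_\pi$ (finite-dimensional, since $H_k(\widehat{\sigma})$ is finite) and acts diagonally there with the $w_\pi$ as eigenvectors. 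To pin down $\lambda_\pi$ I would invoke the classical--adelic dictionary for Hecke operators: $w_\pi$ agrees with the adelic lift of the newform $h\in H_k(\widehat{\sigma})$ attached to $\pi$ at $\infty$ and at every $\ell\nmid N$, and since $(\n,N)=1$ the operator $R(\fn)$ only involves those places, so $\lambda_\pi=\n^{1-k/2}a_\n(h)$ with $a_\n(h)$ the eigenvalue of the classical $T_\n$. Taking traces and using the bijection $\pi\leftrightarrow h$ with $H_k(\widehat{\sigma})$ then gives
\[\tr R(\fn)=\n^{1-k/2}\sum_{h\in H_k(\widehat{\sigma})}a_\n(h)=\n^{1-k/2}\tr(T_\n\mid S_k(\widehat{\sigma})),\]
which rearranges to $\tr(T_\n\mid S_k(\widehat{\sigma}))=\n^{k/2-1}\tr R(\fn)$.

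I expect the main obstacle to be the collection of vanishing statements for the ``wrong'' representations — in particular that the supercuspidal matrix coefficient $f_v$ kills the non-tempered (complementary series) constituents of the spectrum, and that $f_\infty$ kills every archimedean component other than $\pi_k$. For the tempered pieces these follow at once from Schur orthogonality, but the non-tempered contributions are precisely the subtle point that makes the simple trace formula work, and I would lean on the analysis of \cite{Ge} and \cite{KL} rather than reprove it. The only other item requiring care is bookkeeping of Haar-measure normalizations, so that the formal degrees $d_k$ and $d_{\sigma_p}$ are exactly those for which $\pi_k(f_\infty)$ and $\sigma_p(f_p)$ are orthogonal projections onto unit vectors.
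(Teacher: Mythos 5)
Your proposal is correct and follows essentially the same route as the paper: factor $R(\fn)$ into local operators, use Schur orthogonality (via the formal-degree normalization) at $\infty$ and at each $p\mid N$ to obtain rank-one projections and vanishing on all other local components, use the Satake/unramified computation at $\ell\mid\n$ for the eigenvalue, and then match the product of local eigenvalues to the classical $T_\n$-eigenvalue. The paper delegates the first assertion to a citation of \cite[Proposition 2.3]{skuzrr} and pins down $\lambda_\pi=\n^{1-k/2}a_\n(h)$ by comparing with the test function applied to the adelic new vector (noting it differs from $w_\pi$ only at $p\mid N$, away from $\n$) and citing \cite[Theorem 13.14]{KL}, which is exactly the ``classical--adelic dictionary'' step you invoke.
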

\noindent{\em Remarks:} (1) The vector $w$ is defined only up to unitary scaling,
  but of course the eigenvalue is independent of the choice.

(2) One can also take $f_p$ to be the complex conjugate of the 
trace of the representation $\rho$ inducing $\sigma_p$, if normalized correctly.  
See Proposition \ref{projtr} and its remark.

\begin{proof} 
 The first statement is proven in \cite[Proposition 2.3]{skuzrr}, but we need to reproduce
some of the argument here for the second part.
 Let $h\in H_k(\widehat{\sigma})$, let $\pi$ be the associated 
  cuspidal representation, and let $w=w_\pi\in A_k(\widehat{\sigma})$.
For each place $v|\infty N$, the test function $f_v$ was chosen so that
\[\pi_v(f_v)w_v = w_v,\]
\cite[Cor. 10.26]{KL}. 
   Write
\[w=w_\infty\otimes \bigotimes_{p|N}w_p\otimes w'\otimes \bigotimes_{\ell|\n}w_\ell,\]
where $w'=\otimes_{\ell \nmid N\n}w_\ell$. 
We may likewise decompose $\pi$ as
\[\pi=\pi_\infty\otimes \bigotimes_{p|N} \pi_p\otimes \pi'\otimes\bigotimes_{\ell |\n}\pi_\ell,\]
where $\pi'$ is a representation of $G'=\prod'_{p\nmid N\n}G(\Q_p)$.
 Then letting $f'=\prod_{p\nmid N\n }f_p$, it is elementary to show that $\pi'(f')w'=w'$.
  Therefore (by \cite[Prop. 13.17]{KL})
\[R(f^\n)w=\pi_\infty(f_\infty)w_\infty\otimes\bigotimes_{p|N} \pi_p(f_p)w_p\otimes
 \pi'(f')w'\otimes \bigotimes_{\ell|\n}\pi_\ell(f_\ell^\n)w_\ell\]
\[
=w_\infty\otimes \bigotimes_{p|N}w_p\otimes w'\otimes \bigotimes_{\ell|\n}\pi_\ell(f_\ell^\n)w_\ell.\]
Since $w_\ell$ is an unramified unit vector in the principal series 
  representation $\pi_\ell=\pi(\chi_1,\chi_2)$ (say), we have
$\pi_\ell(f_\ell^\n)w_\ell=\lambda_\ell w_\ell$ for 
\[\lambda_\ell=\ell^{a/2}\sum_{j=0}^a\chi_1(\ell)^j\chi_2(\ell)^{a-j}\]
where $a=\ord_\ell(\n)$ (see e.g. \cite[Prop. 4.4]{pethil}).
Thus $R(f^\n)w=\lambda w$, where $\lambda=\prod_{\ell|\n}\lambda_\ell$. 
The result now follows by the well-known fact that $\prod_{\ell|\n}\lambda_\ell=\n^{1-k/2}a_\n(h)$.
The latter may be proven as follows.
If we let $v$ (denoted $\varphi_h$ in \cite{KL}) be the adelic new vector attached to $h$,
then $v$ is a pure tensor, differing from $w$ only at the places $p|N$.  A 
test function $\tilde{f}^\n$, say, is used in \cite{KL} that differs from $\f^\n$ 
  only at the places $p|N$.  By the same argument as above,
\[R(\tilde{f}^\n)v=v_\infty\otimes\bigotimes_{p|N}v_p\otimes v'\otimes \bigotimes_{\ell|\n}R(f_\ell^\n)v_\ell.\]
Since $v_\ell=w_\ell$ at places $\ell|\n$, the eigenvalues are the same, i.e., 
  $R(\tilde{f}^\n)v=\lambda v$.  
By \cite[Theorem 13.14]{KL} (which uses a global argument), $\lambda =\n^{1-k/2}a_\n(h)$. 
\end{proof}

\subsection{First main result: the trace of a Hecke operator}

We now state our first main theorem, which is a general formula for the trace of $T_\n$ 
  on $S_k(\widehat{\sigma})$.  Its proof will occupy the remainder of \S\ref{count}.

\begin{theorem}\label{stf2}
Let $k>2$, let the level $N$, nebentypus $\w'$, and tuple $\widehat{\sigma}=(\sigma_p)_{p|N}$
of supercuspidals  be fixed
as at the beginning of \S\ref{count} (ensuring compatibility of central characters with $\w'$),
and let $f=f^\n$ as in \eqref{fn}. 
Let $T$ be the product of all primes $p|N$ with $\ord_p(N)$ odd.  Then
\[\tr(T_\n|S_k(\widehat{\sigma}))=\n^{k/2-1}\left[\ol{\w'(\n^{1/2})}\frac{k-1}{12}
\prod_{p|N}d_{\sigma_p}+\frac12\sum_{M|T}\Phi(\mat{}{-\n M}1{},f)\right.\]
\[\left.+\sum_{M|T}\sum_{1\le r<\sqrt{4\n/M}}\Phi(\mat0{-\n M}1{rM},f)\right],\]
where $\w'(\n^{1/2})$ is taken to be $0$ if $\n$ is not a perfect square,  
$d_{\sigma_p}$ is the formal degree of $\sigma_p$ relative to Haar measure fixed in \S\ref{notation},
and the orbital integrals $\Phi(\g,f)$ are defined in \S\ref{fact}.

An orbital integral $\Phi(\g,f)$ as above vanishes unless $\g$ is elliptic in $G(\Q_p)$ for
each $p|N$.  Assuming this condition is satisfied,
let $E=\Q[\g]$ be the imaginary quadratic extension of $\Q$ generated by $\g$, and let
$h(E), w(E)$, and $d_E$ be the class number, number of units, and discriminant of $E$
   respectively.  Then
\begin{equation}\label{Phi}
\Phi(\g,f)=-\frac{2h(E)}{w(E)2^{\w(d_E)}}\frac{\sin((k-1)\theta_\g)}{\sin(\theta_\g)}
\prod_{p|\Delta_\g N}\Phi(\g,f_p),
\end{equation}
where $\Delta_\g$ is the discriminant
of $\g$, $\theta_\g =\arctan(\sqrt{|\Delta_\g|}/\tr\g)$ (interpreted as $\pi/2$ if
$\tr\g =0$) is the argument of one of the complex eigenvalues of $\g$, 
$\w(d_E)$ is the number of prime factors of $d_E$, 
 and our choice of measure for $\Phi(\g,f_p)$ is 
summarized in \S\ref{etasec} below.
\end{theorem}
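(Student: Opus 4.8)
\emph{Proof proposal.} The plan is to feed $f=f^\n$ into the simple trace formula (Theorem~\ref{stf}) and then factor the surviving geometric terms through Proposition~\ref{Pfactor}. First I would apply Proposition~\ref{proj} to replace $\tr(T_\n|S_k(\widehat\sigma))$ by $\n^{k/2-1}\tr R(f^\n)$, reducing everything to a computation of $\tr R(f^\n)$. To invoke Theorem~\ref{stf} one needs \eqref{hyp} at two places: the supercuspidal places $p\mid N$ supply it via \eqref{sigmamc} and Corollary~\ref{mc0}, and so does $\infty$, since $f_\infty\in\mathcal{C}_k$ (which forces $k>2$) is a matrix coefficient of the irreducible infinite-dimensional $\pi_k$, so $\int_N f_\infty(gnh)\,dn$ converges absolutely to a matrix coefficient of $\pi_k$ integrated over $N$, which vanishes because $\pi_k$ has no nonzero $N$-invariant vectors (one may also cite \cite{KL}). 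Hence Theorem~\ref{stf} applies and gives
\[\tr R(f^\n)=\meas(\olG(\Q)\bs\olG(\A))\,f^\n(1)+\sum_{\mathfrak{o}\text{ elliptic in }\olG(\Q)}\Phi(\mathfrak{o},f^\n).\]

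For the identity term I would evaluate $f^\n(1)$ place by place: $f_\infty(1)=\frac{k-1}{4\pi}$, $f_p(1)=d_{\sigma_p}$ for $p\mid N$, and $\fnp(1)=0$ unless $\ord_\ell(\n)$ is even for every $\ell$, i.e.\ unless $\n$ is a perfect square, in which case $\prod_\ell\fnp(1)$ collapses to $\ol{\w'(\n^{1/2})}$ using \eqref{wfact}, the triviality of $\w$ on $\Q^*$ and on $\R^+$, and unramifiedness of $\w_\ell$ at $\ell\nmid N$. Since $\meas(\olG(\Q)\bs\olG(\A))=\pi/3$, this term equals $\ol{\w'(\n^{1/2})}\,\frac{k-1}{12}\prod_{p\mid N}d_{\sigma_p}$, the claimed main term.

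Next I would identify the contributing classes $\mathfrak{o}$. Such a class contributes only if a representative $\g$ satisfies $\Phi(\g,f_v)\neq0$ at every place. At $p\mid N$: writing the split orbital integral in Iwasawa coordinates turns it into $\int_K\big(\int_N f_p(gnh)\,dn\big)\,dk=0$ by Corollary~\ref{mc0}, so $\g$ must be elliptic in $G(\Q_p)$ for each $p\mid N$ --- this is the vanishing criterion. The same manipulation at $\infty$ forces $\g$ elliptic in $G(\R)$, i.e.\ $E=\Q[\g]$ imaginary quadratic (for real quadratic $E$ the archimedean factor vanishes and $\ol{G_\g}$ is not anisotropic). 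At the remaining finite places the supports of $\fnp$ and of $f_\ell$ ($\ell\nmid N\n$) force $\ord_\ell(\det\g)\equiv\ord_\ell(\n)\bmod 2$ and $\ord_\ell(\det\g)$ even respectively, while at $p\mid S$ ellipticity over the \emph{unramified} extension $E_p$ forces $\ord_p(\det\g)$ even; combining these with positivity of $\det\g$ at $\infty$, one finds that after scaling $\g$ by a rational it has a representative $\smat{0}{-\n M}{1}{rM}$, where $M\mid T$ is the product of the ramified-supercuspidal primes at which $\ord_p\det\g$ is odd, where $p\mid\tr\g$ for each $p\mid M$ (the characteristic polynomial is $p$-Eisenstein there, $\g$ being elliptic over the ramified $E_p$ with odd $\ord_p\det\g$), so $\tr\g=rM$ with $r\in\Z$, and where ellipticity in $G(\R)$ gives $(rM)^2<4\n M$, i.e.\ $0\le r<\sqrt{4\n/M}$; the companion polynomial $X^2-rMX+\n M$ has negative discriminant, hence is irreducible, so every pair $(M,r)$ genuinely names an elliptic class (some may have $\Phi=0$, which is harmless). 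Passing from $\Phi(\mathfrak{o},f)$ to $\Phi(\g,f)$ by Lemma~\ref{1/2} introduces the factor $\tfrac12$ for exactly the trace-zero representatives ($r=0$) and $1$ otherwise, yielding the bracketed sum in the theorem.

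For the factorization \eqref{Phi} I would invoke Proposition~\ref{Pfactor}, which gives $\Phi(\g,f)=\meas(\ol{G_\g(\Q)}\bs\ol{G_\g(\A)})\prod_{\ell\le\infty}\Phi(\g,f_\ell)$. The global measure is the anisotropic-torus volume computed in Theorem~\ref{meas}, equal to $\frac{2h(E)}{w(E)2^{\w(d_E)}}$ for the normalization of \S\ref{etasec}; the archimedean factor is $-\sin((k-1)\theta_\g)/\sin\theta_\g$ by Proposition~\ref{ellg} (the discrete-series character on the elliptic torus), with $\theta_\g$ the argument of a complex eigenvalue of $\g$; and $\Phi(\g,f_\ell)=1$ whenever $\ell\nmid\Delta_\g N$ because $\g$ is regular semisimple mod $\ell$ (cf.\ Propositions~\ref{ellh}--\ref{elle}), so only primes dividing $\Delta_\g N$ survive the product, which is \eqref{Phi}. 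I expect the main obstacle to be the bookkeeping of the third paragraph: translating the local support conditions at the ramified-supercuspidal places $p\mid T$, where the Iwahori-normalizer support of $f_p$ interacts with the ramified extension $E_p/\Q_p$, into the clean global list $\smat{0}{-\n M}{1}{rM}$, $M\mid T$, $0\le r<\sqrt{4\n/M}$. By contrast the identity term, the application of Lemma~\ref{1/2}, and the factorization via Proposition~\ref{Pfactor} are comparatively formal, and the archimedean orbital integral and the global measure are supplied by Proposition~\ref{ellg} and Theorem~\ref{meas}.
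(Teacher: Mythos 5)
Your proposal is correct and follows essentially the same route as the paper: reduce to $\tr R(f^\n)$ via Proposition~\ref{proj}, apply the simple trace formula of Theorem~\ref{stf}, evaluate the identity term by local factorization, determine the relevant elliptic classes from the local supports (Lemma~\ref{detnp}, Lemma~\ref{pd}, Proposition~\ref{rel1}), and factor each surviving orbital integral through Proposition~\ref{Pfactor}, Theorem~\ref{meas}, \eqref{Rell}, and the local computations of \S\ref{ghyp}--\ref{gell}. One small caution on your justification of \eqref{hyp} at the archimedean place: the ``no $N$-fixed vectors'' argument works cleanly for supercuspidal matrix coefficients because $f_v|_N$ has compact support, making $\int_N \pi(n)v\,dn$ a finite sum; for $\pi_k$ the matrix coefficient $f_\infty$ is not compactly supported modulo the center, so one cannot freely interchange the $N$-integral with the inner product or conclude vanishing from the absence of $N$-fixed vectors in the Hilbert space. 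The vanishing of the hyperbolic orbital integrals of $f_\infty$ is a direct calculation with the explicit formula \eqref{finf}, which is why the paper simply cites \cite[Prop.~24.2]{KL} --- your parenthetical citation is really the load-bearing step there, not the heuristic that precedes it. Similarly, at $p\mid S$ the correct reason that $\ord_p(\det\g)$ is even is simply that $\Supp(f_p)\subset Z_pK_p$ and $\det(Z_pK_p)\subset p^{2\Z}\Z_p^*$; phrasing this as ``ellipticity over the unramified extension $E_p$'' puts the cart before the horse, since $E_p=\Q_p[\g]$ need not be unramified at $p\mid S$ (only $\ord_p\det\g$ even is forced). These are wording issues, not gaps; the proof structure is sound.
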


\noindent{\em Remarks:} (1) For primes $p\nmid N$, the local orbital integrals 
  $\Phi(\g,f_p)$ are computed explicitly in \S\ref{ghyp}-\ref{gell} below.
Thus, for the explicit calculation of $\tr(T_\n|S_k(\widehat{\sigma}))$ 
it only remains to calculate the local orbital integrals $\Phi(\g,f_p)$ for $p|N$.\\
(2) When $\n=1$, the set of relevant $\g$ is considerably smaller than what appears above if $T>1$, due
to local considerations at $p|T$.  See Theorem \ref{dimST}.\\

The proof of Theorem \ref{stf2} involves results from the rest of \S \ref{count},
 outlined as follows.  First, the test function $f$
satisfies the hypotheses of Theorem \ref{stf}.  Indeed,
 the hyperbolic orbital integrals of $f_\infty$ vanish as shown in 
 \cite[Proposition 24.2]{KL}, and the fact that $f\in \mathcal{C}_k$ is
 a consequence of the formula for $f_\infty$ (see \cite[Lemma 14.2]{KL}).

Since we are normalizing measure so that
   $\meas(\olG(\Q)\bs\olG(\A))=\frac{\pi}{3}$,
the identity term in Theorem \ref{stf} is 
\[\frac\pi3f(1)=\frac{k-1}{12}\prod_{p|N}d_{\sigma_p}\prod_{\ell|\n}\fnp(1).\]
From the definition \eqref{fnp} of $\fnp$, we see that $\fnp(1)\neq 0$ only if $1\in Z_\ell M(\n)_\ell$, 
which holds if and only if $\n$ is a perfect square. Assuming this is the case, 
\[\fnp(1)=\fnp(\mat{\sqrt\n}{}{}{\sqrt\n}^{-1} \mat{\sqrt\n}{}{}{\sqrt\n}) 
=\w_\ell(\sqrt{\n}).
\]
Note that by \eqref{w'}
\[\prod_{\ell|\n}\w_\ell(\sqrt{\n})=\prod_{\ell\nmid N}\w_\ell(\sqrt{\n})
=\prod_{\ell|N}\ol{\w_\ell(\sqrt{\n})} = \ol{\w'(\sqrt{\n})}.\]
Therefore the identity term is
\[\frac\pi3f(1)=
\ol{\w'(\sqrt\n)}\frac{k-1}{12}\prod_{p|N}d_{\sigma_p},\]
where it is to be understood that $\w'(\sqrt{\n})=0$ if $\n$ is not a perfect square.

The structure of the first part of Theorem \ref{stf2} is then immediate from Theorem \ref{stf}, 
  Lemma \ref{1/2}, and Proposition \ref{proj}.
The set of relevant $\g$ is determined in \S\ref{relsec} below, simply by considering the 
  supports of the local test functions.
The vanishing of $\Phi(\g,f)$ if $\g$ is hyperbolic in $G(\R)$ or $G(\Q_p)$ for
some  $p|N$ is explained in Proposition \ref{ellg} below.

As for \eqref{Phi}, the first factor is equal to $\meas(\ol{G_\g(\Q)}\bs \ol{G_\g(\A)})$ under
our normalization of Haar measures on $G(\A)$ and $G_\g(\A)$. 
  This is shown in Theorem \ref{meas} below.  
  The second factor of \eqref{Phi} (along with the negative sign) is
  $\Phi(\g,f_\infty)$ as in \eqref{Rell} below.  
In \S\ref{ghyp}-\ref{gell} we explicitly compute the local orbital integrals away from the 
level, and see in particular that the value is $1$ at places not dividing $\Delta_\g N$. 

The local orbital integrals at the places dividing $N$ of course depend on
  the choice of supercuspidal representations.  The method we use to treat 
  the special cases of simple supercuspidals and depth zero supercuspidals in the 
second part of this paper
  is presumably applicable to other cases as well.

\subsection{Known results about the elliptic terms}

We record here some basic properties of the elliptic orbital integrals that
arise in Theorem \ref{stf2}.

\begin{proposition}\label{ellg}
    Let $\g$ be elliptic in $G(\Q)$.  Then for the test function $f=\fn$ of \eqref{fn}:
    \begin{enumerate}
        \item $\Phi(\g,f)$ is absolutely convergent;
        \item $\Phi(\g,f)$ depends only on the conjugacy class of $\g$ in $G(\A)$ (rather than
            in $G(\Q)$), and likewise for any prime $\ell$,
            $\Phi(\g,f_\ell)$ depends only on the $G(\Q_\ell)$-conjugacy
            class of $\g$;
        \item $\Phi(\g,f)=0$ unless: $\det\g>0$ and $\g$ is elliptic both in $G(\R)$ and
in $G(\Q_p)$ for each $p|M$;
        \item If $\g$ is elliptic in $G(\R)$ with a complex eigenvalue $\rho=re^{i\theta}$,
 then
   \begin{align}\label{Rell}
   \Phi(\g,f_\infty)=-r^{2-k}\frac{\rho^{k-1}-\ol{\rho}^{k-1}}{\rho-\ol{\rho}}
=-\frac{e^{i(k-1)\theta}-e^{-i(k-1)\theta}}{e^{i\theta}-e^{-i\theta}}
=-\frac{\sin({(k-1)\theta})}{\sin({\theta})}.
    \end{align}
    \end{enumerate}
\end{proposition}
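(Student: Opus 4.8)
The plan is to establish the four assertions in the order (1), (2), (3), (4), since the later parts build on the structure exposed by the earlier ones. The fundamental observation throughout is that $f=\fn$ is a product of local factors, so every orbital integral factors (via Proposition \ref{Pfactor}) as a measure term times a product of local orbital integrals $\Phi(\g,f_\ell)$, and each of the local statements can be checked one place at a time.

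For (1), absolute convergence, the cleanest route is to invoke the reference already cited in the proof of Proposition \ref{Pfactor}: for $f\in \mathcal{C}_k$ and $\g$ elliptic, $\Phi(\g,f)$ is absolutely convergent by \cite[Corollary 19.3]{KL}. Since $\g$ elliptic in $G(\Q)$ means $E=\Q[\g]$ is a quadratic field, one should note that this forces $E$ to be imaginary precisely when $\g$ is elliptic in $G(\R)$; in the real case the centralizer $\ol{G_\g(\R)}$ is compact (a torus), which is what makes the archimedean integral over $\ol{G_\g(\R)}\bs\olG(\R)$ behave well against the decay of $f_\infty$. So part (1) is essentially a citation plus the remark that $f\in \mathcal{C}_k$ as noted in the discussion following Theorem \ref{stf2}.

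For (2), conjugacy-invariance, I would argue as follows. For the global statement: if $\g' = x^{-1}\g x$ for $x\in G(\A)$, then substituting $g\mapsto xg$ in the integral $\Phi(\g,f)=\int_{\ol{G_\g(\Q)}\bs\olG(\A)}f(g^{-1}\g g)\,dg$ changes the domain by left-translating through $\ol{G_{\g}(\Q)}$ — but one must be careful, since $\ol{G_\g(\Q)}$ and $\ol{G_{\g'}(\Q)}$ need not be $G(\Q)$-conjugate when $x\notin G(\Q)$. The right way is to first use the factorization of Proposition \ref{Pfactor} to reduce to the local statements, and then observe that $\Phi(\g,f_\ell)=\int_{\ol{G_\g(\Q_\ell)}\bs\olG(\Q_\ell)}f_\ell(g^{-1}\g g)\,dg$ is manifestly invariant under $G(\Q_\ell)$-conjugation of $\g$ by the substitution $g\mapsto xg$ (which sends $\ol{G_\g(\Q_\ell)}$ to $\ol{G_{\g'}(\Q_\ell)}$ exactly), combined with the fact that the measure term $\meas(\ol{G_\g(\Q)}\bs\ol{G_\g(\A)})$ depends only on the field $E=\Q[\g]$ and the local conjugacy data (this is made explicit in Theorem \ref{meas}). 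The local statement for each $\ell$ is then immediate from the same substitution.

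For (3), vanishing outside the stated locus: the key input is Corollary \ref{mc0}, which says the hyperbolic orbital integrals of a supercuspidal matrix coefficient vanish — but more usefully here, if $\g$ is \emph{hyperbolic} (i.e.\ split) in $G(\Q_p)$ for some $p|N$, then $\g$ is $G(\Q_p)$-conjugate to a diagonal element, and $\Phi(\g,f_p)$ becomes (up to the convergence factor) exactly the hyperbolic orbital integral $\int_{M_p\bs G_p}f_p(g^{-1}\smat{a}{}{}{1}g)\,dg$ appearing in \eqref{hyp}, which vanishes since $f_p$ is a supercuspidal matrix coefficient. If instead $\g$ is \emph{central} in $G(\Q_p)$ then $\g\in Z(\Q)$, contradicting that $\g$ is elliptic in $G(\Q)$; so for $p|N$ the only non-vanishing case is $\g$ elliptic in $G(\Q_p)$. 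At the archimedean place, $f_\infty$ is supported on matrices with positive determinant by \eqref{finf}, forcing $\det\g>0$; and an element of $G(\R)$ with positive determinant that is not elliptic is either hyperbolic (split) or central — in the split case $\Phi(\g,f_\infty)$ is again a hyperbolic orbital integral of $f_\infty$, which vanishes by \cite[Proposition 24.2]{KL}, and the central case is again excluded. Thus $\Phi(\g,f)=0$ unless $\det\g>0$, $\g$ is elliptic in $G(\R)$, and $\g$ is elliptic in $G(\Q_p)$ for all $p|N$ (and in particular for $p|M$).

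For (4), the explicit archimedean value: by definition $f_\infty = d_k\,\ol{\sg{\pi_k(g)v,v}}$ with $d_k=\frac{k-1}{4\pi}$, so $\Phi(\g,f_\infty)$ is $d_k$ times the (complex-conjugated) integral of a matrix coefficient of the weight-$k$ discrete series over $\ol{G_\g(\R)}\bs\olG(\R)$. Since $\g$ is elliptic in $G(\R)$, after $G(\R)$-conjugation (allowed by part (2)) we may take $\g=\smat{r\cos\theta}{-r\sin\theta}{r\sin\theta}{r\cos\theta}$, whose centralizer mod center is the image of $\SO(2)$; the integral is then a standard computation with the explicit kernel \eqref{finf}, and the formal-degree normalization $d_k=\frac{k-1}{4\pi}$ is exactly what makes the orbital integral evaluate to the character value $\frac{\rho^{k-1}-\ol\rho^{k-1}}{(\rho-\ol\rho)r^{k-2}}$ of $\pi_k$ at $\g$, up to the overall sign; writing $\rho=re^{i\theta}$ collapses this to $-\sin((k-1)\theta)/\sin\theta$. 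The cleanest presentation simply cites \cite[Theorem 14.5]{KL} (already used for \eqref{finf}) together with the Weyl integration / orbital-integral-equals-character identity for discrete series on $\GL_2(\R)$, rather than redoing the contour integral. \textbf{The main obstacle} I anticipate is not any single computation but the bookkeeping in part (2): making sure the non-$G(\Q)$-rational conjugacies are handled through the local factorization rather than directly, and checking that the measure term in \eqref{factor} genuinely depends only on adelic conjugacy data — this is where one has to lean on the explicit measure computation of Theorem \ref{meas}.
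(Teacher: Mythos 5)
Your proposal is correct in substance and arrives at all four parts, but it takes a more expansive route than the paper, which simply declares that ``nearly everything is proven in [KL, pp.\,295--302]'' and only supplies the one genuinely new step, namely the vanishing of $\Phi(\g,f_p)$ when $\g$ is hyperbolic in $G(\Q_p)$ at a supercuspidal place $p|N$ --- an argument you reproduce accurately by recognizing that after conjugating $\g$ to a diagonal matrix the orbital integral over $\ol{M}(\Q_p)\bs\olG(\Q_p)$ factors through an integral over $N(\Q_p)$ which Corollary \ref{mc0} kills. Where you go beyond the paper: for part (2) you correctly flag the subtlety that a $G(\A)$-conjugation need not be $G(\Q)$-rational, and resolve it by factoring through Proposition \ref{Pfactor} and observing that the global measure term is a function only of $E=\Q[\g]$ (hence of the characteristic polynomial, a $G(\A)$-conjugation invariant); this is a legitimate fleshing-out of what [KL] does directly. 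Two small inaccuracies: in part (4), \cite[Theorem 14.5]{KL} is the source of the kernel formula \eqref{finf}, not of the archimedean orbital integral evaluation --- the latter is the computation in [KL, \S 26.2], which the paper points to in the footnote in \S\ref{etasec} --- so either cite that or actually carry out the contour integral; and your detour through the ``central'' case in (3) is harmless but vacuous, since $\g$ elliptic in $G(\Q)$ already has an irreducible quadratic characteristic polynomial and therefore cannot be scalar in any $G(\Q_\ell)$. Finally, note that the proposition's phrase ``for each $p|M$'' is almost certainly a misprint for ``$p|N$'' (the proof says ``$p|M$'' too, but the later invocation in the proof of Theorem \ref{stf2} reads ``$p|N$''); your argument proves the stronger $p|N$ version, which is the one actually used.
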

\noindent{\em Remarks:} If $\g$ has discriminant $\Delta_\g<0$ and nonzero trace, then we may take
$\theta=\arctan(\sqrt{|\Delta_\g|}/\tr\g)$ in \eqref{Rell}. 
If $\g$ has the form $\smat{}u1{}$, then we may take $\theta=\frac\pi2$, giving
    \begin{equation}\label{Phiinf}
        \Phi(\mat{}u1{},f_\infty)=-\left[\frac{i^{k-1}-(-i)^{k-1}}{2i}\right]
      =\begin{cases}(-1)^{k/2}& \text{if $k$ is even}\\0&\text{if $k$ is odd.}\end{cases}
    \end{equation}

     \begin{proof} Nearly everything is proven in \cite[pp. 295-302]{KL}.  The 
         only remaining point is that $\Phi(\g,f_p)=0$ if $\g$ is hyperbolic in $G(\Q_p)$ for
         some $p|M$.
For such $\g$, after conjugating we can take
         $\g$ diagonal, so $G_\g(\Q_p)=M(\Q_p)$. The orbital integral is then 
         taken over $M_p\bs G_p$
         and involves integrating over $N(\Q_p)$ (see \eqref{hypint} below).
         We can use \eqref{supercusp} to show that it vanishes (as in \eqref{hyp}).  
     \end{proof}

\subsection{Local orbital integrals at primes $\ell\nmid N$: hyperbolic case}\label{ghyp}

If $\g\in G(\Q)$ is elliptic, then for each prime $\ell$,
$\g$ is either hyperbolic or elliptic in $G(\Q_\ell)$.  
In this section and the next we evaluate the local elliptic orbital integrals 
  at primes $\ell\nmid N$.   The methods are standard and the results are presumably not new.
For the dimension formulas we require the test function $f_\ell$ given by \eqref{fell}.
However, without any extra work we can consider
a general local Hecke operator, and consider an arbitrary $p$-adic field.

Thus, we let $F$ be a $p$-adic field with valuation $v$, uniformizer $\varpi$,
 ring of integers $\O_F$, maximal ideal $\p=\varpi\O_F$, and $q_v=|\O_F/\p|$.  Fix an unramified unitary character
$\w_v:F^*\longrightarrow\C^*$.
For an integral ideal $\mfn\subset \O_F$, define
\[M(\mfn) =\{g\in M_2(\O_F)|\, (\det g)\O_F =\mfn\}\]
and
\begin{equation}\label{fv}
\fv(g)=\begin{cases}\ol{\w_v(z)}&\text{if }g=zm\in Z(F)M(\mfn)\\
0&\text{if }g\notin Z(F)M(\mfn).\end{cases}
\end{equation}

If $\g$ is hyperbolic in $G(F)$, then replacing it by a conjugate if necessary, we 
can assume that it is diagonal.  In this case, $G_\g(F)=M(F)$ is the set of invertible
diagonal matrices.  We may
integrate over $\olG(F)$ using the Iwasawa coordinates
\[\int_{\olG(F)}\phi(g)dg=\int_{\ol{M}(F)}\int_{N(F)}\int_{K_v}\phi(mnk) dm\,dn\,dk,\]
where $K_v=G(\O_F)$.
Therefore if $\phi$ is $M(F)$-invariant,
\begin{equation}\label{hypint}
    \int_{\ol{G_\g(F)}\bs \olG(F)}\phi(g)dg=\int_{N(F)}\int_{K_v}\phi(nk) dn\,dk.
\end{equation}
We normalize the measures $dn$ and $dk$ by taking $\meas(N(\O_F))=\meas(K_v)=1$.

\begin{proposition}\label{ellh} For $F$ as above, suppose $\g$ is hyperbolic in $G(F)$.
Assuming $\g\in M(\mfn)$, and letting $\Delta_\g\in\O_F$ be its discriminant, we have
\[\Phi(\g,\fv)=|\Delta_\g|_v^{-1/2}.\]
In particular, if $\Delta_\g$ is a unit, then $\Phi(\g,\fv)=1$.
\end{proposition}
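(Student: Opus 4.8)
The plan is to compute the orbital integral directly using the Iwasawa coordinates set up in \eqref{hypint}. Since $\g$ is hyperbolic, after conjugating we may take $\g=\smat{a}{}{}{d}$ diagonal with $ad\in\mfn$ (up to the central twist, which contributes a harmless factor of $\ol{\w_v(z)}$ that one tracks through). By \eqref{hypint},
\[
\Phi(\g,\fv)=\int_{N(F)}\int_{K_v}\fv(k^{-1}n^{-1}\g n k)\,dn\,dk,
\]
and because $\fv$ is bi-$K_v$-invariant (its support $Z(F)M(\mfn)$ is stable under $K_v$ on both sides, and it is constant up to the unramified central character, which is $K_v$-invariant) the $dk$-integral is trivial and we are left with an integral over $N(F)\cong F$. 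First I would parametrize $n=\smat{1}{x}{}{1}$ and compute
\[
n^{-1}\g n=\mat{a}{(d-a)x}{}{d},
\]
so the integrand is the indicator (times $\ol{\w_v}$ of the scalar) of the condition that this matrix lie in $Z(F)M(\mfn)$.

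The key step is then to identify exactly which $x\in F$ satisfy $n^{-1}\g n\in Z(F)M(\mfn)$ and to measure that set. Pulling out a scalar $z$ amounts to asking: for which $x$ is there $z\in F^*$ with $z^{-1}n^{-1}\g n\in M(\mfn)$, i.e. $z^{-1}a, z^{-1}d, z^{-1}(d-a)x\in\O_F$ and $z^{-2}ad\O_F=\mfn$. The last condition forces $v(z)=\tfrac12(v(a)+v(d)-v(\mfn))$, which is determined, so effectively the set of admissible $x$ is a translate/dilate of a fractional ideal: $x\in \tfrac{z}{d-a}\O_F$ intersected with the constraints coming from integrality of $z^{-1}a$ and $z^{-1}d$ (these just say $z$ is an allowed scalar and play no role once we've fixed $v(z)$, after possibly enlarging the relevant set — here one uses that $\g\in M(\mfn)$ already, so $a,d\in\O_F$ can be arranged with the canonical choice of $z$). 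The measure of $\{x: v(x)\ge v(z)-v(d-a)\}$, with $\meas(\O_F)=1$, is $q_v^{\,v(d-a)-v(z)}$. Since $\Delta_\g=(a-d)^2$ (the discriminant of the characteristic polynomial $X^2-(a+d)X+ad$, whose roots are $a,d$), we have $v(\Delta_\g)=2v(a-d)$, hence $|\Delta_\g|_v^{-1/2}=q_v^{v(a-d)}$; combined with the factor $q_v^{-v(z)}$ coming from the shift and an offsetting contribution when $v(z)\neq 0$, the bookkeeping should collapse to exactly $|\Delta_\g|_v^{-1/2}$. In the case $\g\in M(\mfn)$ with $\Delta_\g$ a unit, $v(a-d)=0$ and the set is simply $\O_F$, of measure $1$.

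The main obstacle will be the scalar bookkeeping: making sure the central character factor $\ol{\w_v(z)}$ and the power of $q_v$ from $v(z)$ are handled consistently, and confirming that the normalization $\Phi$ uses (quotient by $\ol{G_\g(F)}$, with the measure conventions fixed just before the proposition) matches the naive computation over $N(F)$. One subtlety is whether, after conjugating $\g$ to diagonal form, the hypothesis $\g\in M(\mfn)$ is preserved — it is a statement about the original $\g$, and conjugation by an element of $G(F)$ (not necessarily $K_v$) need not preserve $M(\mfn)$; but the discriminant $\Delta_\g$ and the determinant ideal $\mfn$ are conjugation-invariant, and $\Phi(\g,\fv)$ depends only on the $G(F)$-conjugacy class, so one can pass freely to the diagonal representative while remembering $v(ad)=v(\mfn)$. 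Once those points are pinned down the computation is a one-line $p$-adic volume calculation, so I expect the write-up to be short.
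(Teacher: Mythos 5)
Your proposal is correct and takes essentially the same route as the paper: conjugate $\g$ to diagonal form $\smat{\alpha}{}{}{\beta}$ with $\alpha,\beta\in\O_F$, use \eqref{hypint} together with $K_v$-invariance of $\fv$ to reduce to $\int_F \fv\bigl(\smat{\alpha}{t(\alpha-\beta)}{}{\beta}\bigr)\,dt$, and measure the set of $t$ where the argument lands in $\Supp(\fv)=Z(F)M(\mfn)$. The ``scalar bookkeeping'' you flag as a potential obstacle is actually a non-issue once you use the hypothesis $\g\in M(\mfn)$: the determinant $\alpha\beta$ already generates $\mfn$, so any scalar $z$ witnessing membership in $Z(F)M(\mfn)$ satisfies $z^2\in\O_F^*$, hence $z\in\O_F^*$ and $\ol{\w_v(z)}=1$ (as $\w_v$ is unramified), making the integrand exactly the indicator of $t(\alpha-\beta)\in\O_F$, a set of measure $q_v^{v(\alpha-\beta)}=|\alpha-\beta|_v^{-1}=|\Delta_\g|_v^{-1/2}$.
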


\begin{proof}
    We may assume that $\g=\mat \alpha{}{}\beta$   for some distinct $\alpha,\beta\in\O_F$.
    By \eqref{hypint} and the fact that $\fv$ is right $K_v$-invariant,
    \[\Phi(\g,\fv)=\int_{F} \fv(\mat1{-t}{}1\mat \alpha{}{}\beta\mat1t{}1)dt
    =\int_{F}\fv(\mat \alpha{t(\alpha-\beta)}{}\beta)dt.\]
 Choose $j\ge 0$ so that
    $\alpha-\beta\in \varpi^j\O_F^*$.
    By hypothesis, $\alpha,\beta\in\O_F$ and $\alpha\beta\O_F=\mfn$, so the 
  integrand is nonzero if and only if 
   $t(\alpha-\beta)\in\O_F$, which is equivalent to
    $t\in \varpi^{-j}\O_F$. Therefore
\[\Phi(\g,\fv)=\meas(\varpi^{-j}\O_F)=q_v^j=|\alpha-\beta|_v^{-1}.\]
 Now let $D=\det\g$ and $r=\tr\g$.  Note that
\begin{equation}\label{Deltag}
4D=4\alpha\beta=(\alpha+\beta)^2-(\alpha-\beta)^2=r^2-(\alpha-\beta)^2.
\end{equation}
Therefore
\[\Phi(\g,\fv)=|\alpha-\beta|_v^{-1}=|r^2-4D|_v^{-1/2},\]
as claimed.
\end{proof}

\subsection{Local orbital integrals at primes $\ell\nmid N$: elliptic case}\label{gell}

If $\g$ is elliptic over a field $F$ of characteristic $0$,
then $E=F[\g]$ is a quadratic field extension of $F$, and
\[G_\g(F)= E^*\]
(\cite[Prop. 26.1]{KL}).
The center $Z(F)$ is isomorphic to $F^*$.

\begin{proposition}\label{compact}
Let $F$ be a local field of characteristic $0$, and 
   suppose $\g$ is elliptic in $G(F)$. Then $G_\g(F)/Z(F)$ is compact.
\end{proposition}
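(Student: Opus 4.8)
The plan is to show that $\olG_\g(F) = G_\g(F)/Z(F)$ is compact by identifying it with a subgroup of a norm-one torus. Since $\g$ is elliptic, $E = F[\g]$ is a quadratic field extension of $F$ and $G_\g(F) = E^*$ by \cite[Prop. 26.1]{KL}, with $Z(F) = F^*$ sitting inside $E^*$ as scalars. So the claim is that $E^*/F^*$ is compact.

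First I would recall that for a quadratic extension $E/F$ of local fields of characteristic $0$, the norm map $\Norm_{E/F}: E^* \to F^*$ is continuous with closed image of finite index, and more to the point, $E^*/F^*$ can be analyzed via the exact sequence relating it to the kernel $E^1 = \ker(\Norm_{E/F})$. Concretely, the squaring-type map $x \mapsto x/\bar{x}$ (where $\bar{\ \cdot\ }$ is the nontrivial element of $\Gal(E/F)$) sends $E^*$ onto $E^1$ with kernel exactly $F^*$, by Hilbert 90. Thus $E^*/F^* \cong E^1$, and $E^1$ is compact: it is a closed subgroup of the compact group $\O_E^*$ (compact since $\O_E$ is a compact ring and $\O_E^*$ is closed in it), because every element of norm $1$ is a unit (its valuation must be $0$ as $v_F(\Norm(x)) $ is a positive multiple of $v_E(x)$).

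Alternatively, and perhaps more cleanly for the write-up, I would argue directly: $E^*/F^*$ is the product of its "unit part" $\O_E^*/\O_F^*$ and a quotient of the value-group part. The value group $v(E^*) \cong \Z$ and $v(F^*) \cong e\Z$ for the ramification index $e \in \{1,2\}$, so $v(E^*)/v(F^*)$ is finite (trivial if $E/F$ is unramified, of order $2$ if ramified — in either case finite). Since $\O_E^*$ is compact and $\O_F^*$ is a closed subgroup, $\O_E^*/\O_F^*$ is compact, and extending by a finite group keeps it compact. Either route gives that $\olG_\g(F)$ is compact.

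The only mild subtlety — the part to be careful with rather than a genuine obstacle — is making the reduction "$E^*/F^*$ compact $\Leftrightarrow$ finitely many cosets of $\O_E^* F^*$, each compact" fully rigorous, i.e. checking that $\O_E^* F^*$ is an open (hence closed) finite-index subgroup of $E^*$ and that the quotient topology behaves as expected under the isomorphism $G_\g(F) \cong E^*$. This is standard $p$-adic topology; the Hilbert 90 argument via $E^1$ sidesteps even this. I expect no real difficulty: the whole statement follows from the local structure theory of the multiplicative groups of $p$-adic fields, and the proof is a short paragraph.
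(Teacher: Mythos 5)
Your proposal is correct, and in fact it offers two routes, one of which coincides with the paper's argument and one of which is genuinely different. Your second route — decomposing $E^*/F^*$ into the unit quotient $\O_E^*/\O_F^*$ together with a finite value-group contribution, equivalently writing $E^*/F^* = \bigcup_{j=0}^{e-1}\pi^j\O_E^*/\O_F^*$ for the ramification index $e\in\{1,2\}$ — is exactly the paper's nonarchimedean proof of Proposition \ref{compact}. Your first route via Hilbert 90 is different and arguably slicker: the map $x\mapsto x/\bar x$ realizes $E^*/F^*$ as the norm-one group $E^1$, whose compactness is immediate from the fact that norm-one elements lie in $\O_E^*$, and this sidesteps the ramification case distinction entirely. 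The one point you leave implicit is the archimedean case $F=\R$, which the paper handles separately (there $G_\g(\R)/Z(\R)\cong \C^*/\R^*\cong \SO(2)/\{\pm 1\}$). Your Hilbert 90 reduction does cover it — $E^1$ is then the unit circle — but your stated justification for compactness of $E^1$ (``closed subgroup of the compact group $\O_E^*$, since $\O_E$ is a compact ring'') is nonarchimedean language, so a one-line remark that $E^1=S^1$ is compact when $F=\R$ would make the write-up watertight; the value-group version of your argument is likewise nonarchimedean only.
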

\begin{proof}
    If $F=\R$, then $G_\g(\R)=\R[\g]^*\cong \C^*$, and the map $z\mapsto z/|z|$ gives
    rise to $\C^*/\R^*\cong \SO(2)/\{\pm 1\}$, which is compact.
    
    Now suppose that $F$ is nonarchimedean,
    with valuation $v$ and integer ring $\O_F$.
    Let $E=F[\g]$, and choose a prime element $\pi\in\O_E$.
    Then letting $e\in\{1,2\}$ be the ramification index of $E/F$,
    \begin{equation}\label{olGg}
        G_\g(F)/Z(F)\cong E^*/F^*=\bigcup_{j=0}^{e-1} \pi^j\O_E^*/\O_F^*,
    \end{equation}
    which is compact.
\end{proof}

Consider a $p$-adic field $F$, with all notation as in the previous subsection.
For $\g$ elliptic in $G(F)$, the above leads to the following natural choice 
of $G(F)$-invariant measure on the quotient space $\ol{G_\g(F)}\bs \olG(F)$.
We assign the compact group
$\ol{G_\g(F)}$ a total volume of $1$.  We assign $\olG(F)$ the Haar measure for which
$\olG(\O_F)$ has measure $1$.  Together these choices determine the quotient measure via
\[\int_{\ol{G_\g(F)}\bs\olG(F)}\int_{\ol{G_\g(F)}}\phi(xy)dx\,dy=\int_{\olG(F)}\phi(g)dg.\]
In fact, by our normalization, if $\phi$ is left $G_\g(F)$-invariant, then
\begin{equation}\label{ellint}
    \int_{\ol{G_\g(F)}\bs\olG(F)}\phi(y)dy=\int_{\olG(F)}\phi(g)dg
\end{equation}
when $\g$ is elliptic in $G(F)$.

For such $\g$, $E=F[\g]$ is a quadratic extension of $F$.  Fix an $F$-integral basis $\{1,\e\}$
for the ring of integers $\O_E$, so
\begin{equation}\label{O_E}\O_E=\O_F+\O_F\e.
\end{equation}
We will need some facts about orders and lattices in $E$.
Recall that an {\bf $F$-order} in $E$ is a subring containing $\O_F$ which has rank $2$ as an $\O_F$-module.

\begin{proposition}\label{orders}
 Let $\mathfrak{O}_{E/F}$ denote the set of all $F$-orders in $E$.
For $r\ge 0$ and $\e$ as above, define
\[\O_r=\O_F+\p^r\e,\]
where $\p$ is the maximal ideal of $\O_F$, so in particular $\O_0=\O_E$.  Then 
\[\mathfrak{O}_{E/F}=\{\O_r|\, r\ge 0\}.\] 
Furthermore, letting $e=e(E/F)$ be the ramification index, for $r>0$  we have
\begin{equation}\label{index}
[\O_E^*:\O_r^*] 
= \begin{cases}q_v^r&\text{if }e=2\\
q_v^r+q_v^{r-1}&\text{if }e = 1.\end{cases} 
\end{equation}
\end{proposition}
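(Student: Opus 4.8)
The plan is to prove both assertions by a direct analysis of the $\O_F$-module structure of orders in $E$. First I would establish that every $F$-order is of the form $\O_r$. Given an $F$-order $R$, it is an $\O_F$-submodule of $\O_E = \O_F + \O_F\e$ of rank $2$ containing $\O_F$. Since $\O_F$ is a discrete valuation ring (hence a PID), and $R/\O_F$ embeds in $\O_E/\O_F \cong \O_F$ (via the coefficient of $\e$), $R/\O_F$ is a nonzero $\O_F$-submodule of $\O_F$, so it equals $\p^r$ for a unique $r\ge 0$. This forces $R \subseteq \O_F + \p^r\e = \O_r$, and comparing the coefficient modules gives $R = \O_r$ as $\O_F$-modules. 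It remains to check $\O_r$ is actually a ring: this is the one genuine computation, and it works because $(\varpi^r\e)^2 = \varpi^{2r}\e^2$, and $\e^2 = \tr(\e)\e - \Norm(\e)$ with $\tr(\e),\Norm(\e)\in\O_F$, so $\varpi^{2r}\e^2 \in \O_F + \p^{2r}\e \subseteq \O_r$; closure under multiplication then follows by $\O_F$-bilinearity. Conversely each $\O_r$ is an $F$-order, so $\mathfrak{O}_{E/F} = \{\O_r \mid r\ge 0\}$.

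For the index formula \eqref{index}, I would compute $[\O_E^*:\O_r^*]$ by relating it to additive indices. The standard device: the reduction map $\O_E^* \to (\O_E/\varpi^r\O_E)^*$ is surjective, and I claim $\O_r^*$ is the preimage of the subgroup $(\O_F/(\O_F\cap\varpi^r\O_E))^* = (\O_F/\p^{er'})^*$ suitably interpreted — more cleanly, one observes $u\in\O_E^*$ lies in $\O_r^*$ iff $u\in\O_r$ iff the $\e$-coefficient of $u$ lies in $\p^r$. So $[\O_E^*:\O_r^*]$ equals the index of the image of $\O_r^*$ in $(\O_E/\p^r\O_E \cdot \text{something})$; it is cleaner to argue $[\O_E^*:\O_r^*] = [\O_E : \O_r] / [\mathcal{N}_E : \mathcal{N}_r]$ where $\mathcal{N}$ denotes the non-units, but the slickest route is: $[\O_E^* : \O_r^*] = \#(\O_E/\p^r\e)^\times$-type count. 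Concretely, $\O_E^*/\O_r^* \cong (\O_E/\varpi^r\O_E)^* / \mathrm{image}(\O_F^*)$ — no; I would instead directly count cosets using that $\O_E^* = \O_E \setminus \varpi\O_E$ (if $e=2$, i.e. $\varpi$ generates the maximal ideal of $\O_E$) or $\O_E^* = \O_E\setminus\p_E$ with residue field $\F_{q_v^2}$ (if $e=1$).

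So the cleanest execution is to compute $[\O_E:\O_r] = q_v^r$ (since $\O_E/\O_r \cong \O_F/\p^r$) and separately $[\O_E\setminus\O_E^* : \O_r\setminus\O_r^*]$ — the non-invertible elements — and take the quotient. When $e=1$: the maximal ideal of $\O_E$ is $\p_E=\varpi\O_E$ with $\O_E/\p_E\cong\F_{q_v^2}$, and $\p_E\cap\O_r = \varpi\O_E\cap(\O_F+\p^r\e) = \p + \p^{r+1}\e$, so $[\p_E:\p_E\cap\O_r]=q_v^{r-1}$, while $\O_E^*$ has $q_v^{2r}-q_v^{2r-2}$ elements mod $\varpi^r$ and so on — chasing this gives $[\O_E^*:\O_r^*] = (q_v^r\cdot(q_v^2-1))/(q_v^{r-1}(q_v-1)) \cdot (1/q_v)$? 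I will instead use the exact sequence $1\to \O_r^*\to\O_E^*\to \O_E^*/\O_r^*\to 1$ together with $|(\O_E/\varpi^r)^*|/|(\O_r/\varpi^r)^*|$ after checking $\O_r^*$ surjects onto $(\O_r/\varpi^r)^*$ — but $\varpi^r\O_E\subseteq\O_r$ so this is legitimate. Then $|(\O_E/\varpi^r\O_E)^*| = q_v^{2(r-1)}(q_v^2-1)$ (for $e=1$) resp. $q_v^{r-1}(q_v-1)$ counted in $\O_E/\varpi^r$ for $e=2$ where $\varpi^r\O_E=\p_E^{2r}$, and dividing by $|(\O_r/\varpi^r\O_E)^*|$ yields the two stated values. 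The \textbf{main obstacle} is bookkeeping the filtration quotients correctly in the two ramification cases — the ring structure of $\O_r$ and the surjectivity of unit reductions are routine, but getting the right powers of $q_v$ in the $e=1$ versus $e=2$ split requires care, since $\varpi$ has different valuations in $\O_E$ in the two cases.
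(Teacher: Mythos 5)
Your overall plan is sound and is, at bottom, the same strategy the paper takes: classify orders as $\O_r$ first, then compute $[\O_E^*:\O_r^*]$ by reducing modulo $\p^r\O_E$ and comparing unit groups of finite quotient rings. Your argument for the first assertion is a genuine (and arguably cleaner) variant: where the paper locates $\alpha$ with $\O_{r+1}\subsetneq\O\subset\O_r$ and then invokes simplicity of $\O_r/\O_{r+1}\cong\O_F/\p$, you instead identify $R/\O_F$ directly with an ideal $\p^r$ of $\O_F$ via $R/\O_F\hookrightarrow\O_E/\O_F\cong\O_F$ and use that $\O_F$ is a DVR. Both routes are correct; yours bypasses the need to observe that $\O_r/\O_{r+1}$ is a simple module. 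The verification that $\O_r$ is closed under multiplication via $\e^2=\tr(\e)\e-\Norm(\e)$ is also right and is worth including explicitly.

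For the index formula, your eventual reduction to
\[
[\O_E^*:\O_r^*]=\frac{|(\O_E/\varpi^r\O_E)^*|}{|(\O_r/\varpi^r\O_E)^*|}
\]
is equivalent to the paper's exact sequence
$1\to\O_F^*/(1+\p^r)\to\O_E^*/(1+\p^r\O_E)\to\O_E^*/\O_r^*\to 1$
(using $\O_r^*=\O_F^*\cdot(1+\p^r\O_E)$), so the framework works. However, there is a concrete arithmetic error in your $e=2$ count: you write $|(\O_E/\varpi^r\O_E)^*|=q_v^{r-1}(q_v-1)$, but $\varpi^r\O_E=\p_E^{2r}$ with residue field $\F_{q_v}$, so the correct count is $(q_v-1)q_v^{2r-1}$. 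Since $\O_r/\varpi^r\O_E\cong\O_F/\p^r$, the denominator is $(q_v-1)q_v^{r-1}$ in both cases, and the quotient then gives $q_v^r$ for $e=2$, as required. With the count as you wrote it, the quotient would collapse to $1$, which is wrong. Your $e=1$ count $(q_v^2-1)q_v^{2(r-1)}$ is correct and yields $q_v^r+q_v^{r-1}$ upon dividing.
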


\begin{proof}
 See also \cite[\S6.6-6.7]{Mi} for the case $F=\Q_p$.  
Here we loosely follow Okada \cite[\S2.3]{O}.  Clearly $\O_r\in\mathfrak{O}_{E/F}$.  Conversely,
let $\O\in \mathfrak{O}_{E/F}$.  The elements of $\O$ are integral over $E$ (\cite[Prop. I.2.2]{N})
so $\O\subset \O_E$.  
  Hence there exists $\alpha\in\O\subset \O_E$ such that
\[\O=\O_F+\O_F\alpha.\]
Since $\alpha\notin\O_F$, by topological considerations we see that there exists $r\ge 0$
such that 
$\alpha\in \O_F+ \varpi^r\O_E=\O_r$ but $\alpha\notin \O_F+\varpi^{r+1}\O_E=\O_{r+1}$.
Hence
\[\O_{r+1}\subsetneq \O\subset \O_r.\]
We see easily that $\O_r/\O_{r+1}\cong \p^r/\p^{r+1}\cong \O_F/\p$ as $\O_F$-modules.  
Since the latter is 
$1$-dimensional as a vector space over $\O_F/\p$, it has no nonzero proper submodules.
  It follows that $\O=\O_{r}$. 

For the second part, consider the sequence
\[1\longrightarrow \O_F^*/(1+\p^r)\longrightarrow \O_E^*/(1+\p^r\O_E)\longrightarrow \O_E^*/\O_r^*
  \longrightarrow 1,\]
where the maps are the obvious ones.  It is straightforward to check that the sequence
is exact.  Therefore
\[[\O_E^*:\O_r^*] = \frac{|\O_E^*/(1+\p^r\O_E)|}{|\O_F^*/(1+\p^r)|}.\]
Let $e=e(E/F)$, so that $\p\O_E=\mathfrak{P}^e$, where $\mathfrak{P}$ is the maximal ideal
of $\O_E$.  Then
\[|\O_E^*/(1+\p^r\O_E)| =|\O_E^*/(1+\mathfrak{P}^{er})| 
  =[\O_E^*:1+\mathfrak{P}]\prod_{j=2}^{er}[1+\mathfrak{P}^{j-1}:1+\mathfrak{P}^j]
 =(q_E-1)q_E^{er-1}\]
(\cite[p. 139]{N}).  Here,
\[q_E=|\O_E/\mathfrak{P}| =\begin{cases}q_v&\text{if }e=2\\q_v^2&\text{if }e=1.\end{cases}\]
Likewise $|\O_F^*/(1+\p^r)|=(q_v-1)q_v^{r-1}$, and \eqref{index} follows immediately.
\end{proof}
 
For the purposes of this subsection, a {\bf lattice} in $F^2=F\times F$ is an $\O_F$-submodule
of rank 2. The group $F^*$ acts by multiplication on the set of lattices, and
    the orbits are called {\bf lattice classes}.
    The map $g\mapsto L=g\zpvector$ from $G(F)$ to the set of lattices in $F^2$
    induces a bijection between $\olG(F)/\ol{K_v}$ and the
    set of lattice classes, since $K_v$ is the stabilizer of $\zpvector$.

    With notation as in \eqref{O_E}, 
 we may identify a lattice $L\subset F^2$ with
    the lattice $\row 1{\e}L\subset E$, so that in particular $\zpvector$ is identified
    with $\O_E$.
Given $\eta\in E^*$, it acts by scalar multiplication on the set of lattices in $E$, and
  by matrix multiplication (via $E=F[\g]$) on the lattices in $F^2$.  In general, these actions
  are not compatible with the above identification.  
   However, as shown in \cite[Lemma 26.20]{KL}, after possibly replacing $\g$ (or equivalently, $\e$)
   by a $G(F)$-conjugate, these two actions do coincide for all $\eta\in E^*$.
    Explicitly, for any $g\in G(F)$,
    \[\eta \row1{\e}g\zpvector = \row 1\e\eta g\zpvector,\]
    where on the left $\eta$ acts as a scalar via $\eta\row1{\e}=\row{\eta}{\eta\e}$,
  and on the right it is acting by matrix multiplication.
    We will assume that $\g$ is chosen in this way, as we may since the value of the orbital
    integral depends only on $\g$'s conjugacy class in $G(F)$.
    
  We associate to any lattice $L\subset E$ the order
    \[\O_L=\{\mu\in E|\, \mu L\subset L\}.\]
    This depends only on the lattice class to which $L$ belongs.
    Since $E$ is local, every lattice in $E$ is principal in the sense that there 
    exists $y\in E^*$ such that $yL=\O_L$. (One may adapt the proof of 
  \cite[Prop. 26.13]{KL}, which follows \cite{La}).

Given an order $\O$, $L$ is a {\bf proper $\O$-lattice} if $\O_L=\O$.
    Two proper $\O$-lattices $y\O$ and $z\O$ (for $y,z\in E^*$) are equal if and only if
    $y/z\in \O^*$.  Therefore the set of all proper $\O$-lattices corresponds 
bijectively with $E^*/\O^*$.  

\begin{lemma}\label{fnplem}
Suppose $(\det \g)\O_F=\mfn$ and $g\in G(F)$.  Then for $\fv$ given by \eqref{fv},
$\fv(g^{-1}\g g)\neq 0$ if 
and only if $\g\in \O_L$ for $L=g\zpvector$.
\end{lemma}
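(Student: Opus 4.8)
The plan is to unwind the definition of $\fv$ in terms of the matrix $g^{-1}\g g$ and relate the membership condition $g^{-1}\g g\in Z(F)M(\mfn)$ to the statement that $\g$ stabilizes the lattice $L=g\zpvector$. Since $\g$ is elliptic with $\det\g$ generating $\mfn$, write $r=\tr\g$ and $D=\det\g$, so the characteristic polynomial of $\g$ is $X^2-rX+D$ with $D\O_F=\mfn$. The key observation is that $g^{-1}\g g\in M_2(\O_F)$ if and only if $g^{-1}\g g$ preserves the standard lattice $\zpvector$, which is equivalent to $\g$ preserving $g\zpvector=L$, i.e. $\g\in\O_L$ in the notation preceding the lemma.

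First I would make this precise: for any $h\in G(F)$, one has $h\in M_2(\O_F)$ $\iff$ $h\cdot\zpvector\subset\zpvector$, simply because the columns of $h$ are the images of the standard basis vectors. Applying this to $h=g^{-1}\g g$ and using $g\zpvector=L$, we get $g^{-1}\g g\in M_2(\O_F)$ $\iff$ $\g L\subset L$ $\iff$ $\g\in\O_L$. Next I would check the determinant condition: $\det(g^{-1}\g g)=\det\g=D$, so automatically $(\det g^{-1}\g g)\O_F=\mfn$ whenever $g^{-1}\g g\in M_2(\O_F)$. Hence $g^{-1}\g g\in M(\mfn)$ exactly when $\g\in\O_L$. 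Since $g^{-1}\g g$ has determinant $D$ which generates $\mfn\neq\O_F$ unless $D$ is a unit, the central factor $z\in Z(F)$ in the definition \eqref{fv} is forced to be trivial (more precisely, if $g^{-1}\g g=zm$ with $m\in M(\mfn)$ then $z^2\det m\O_F=\mfn$ and $\det m\O_F=\mfn$ force $z\in\O_F^*$, which can be absorbed into $m$); so $g^{-1}\g g\in Z(F)M(\mfn)$ iff $g^{-1}\g g\in M(\mfn)$ outright, and $\fv(g^{-1}\g g)=\ol{\w_v(1)}=1\neq 0$ precisely in that case, while it is $0$ otherwise. Combining these equivalences yields $\fv(g^{-1}\g g)\neq 0\iff\g\in\O_L$.

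I would also note for cleanliness that $\O_L$ depends only on the lattice class of $L$, and that replacing $g$ by $gk$ for $k\in K_v$ changes neither $L$'s class nor the value $\fv(g^{-1}\g g)$ (since $\fv$ is right $K_v$-invariant by \eqref{fv} and $g^{-1}\g g$ vs.\ $(gk)^{-1}\g(gk)=k^{-1}(g^{-1}\g g)k$ have the same property of lying in $M(\mfn)$, as $M(\mfn)$ is $K_v$-bi-invariant); this makes the statement well posed.

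The main obstacle is the bookkeeping around the central character factor: one must verify that the $z\in Z(F)$ appearing in \eqref{fv} cannot be nontrivial here, which uses the hypothesis that $\det\g$ generates $\mfn$ together with the fact that $\w_v$ is a character and $M(\mfn)$ is defined by an \emph{exact} equality of ideals $(\det m)\O_F=\mfn$ rather than a containment — so the possible scalars $z$ are exactly $\O_F^*$, contributing the harmless value $\ol{\w_v(z)}$ with $|\ol{\w_v(z)}|=1$, and in particular nonzero. Everything else is a direct translation between matrices over $\O_F$ and stabilizers of lattices, so no serious computation is needed.
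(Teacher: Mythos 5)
Your proof is correct and follows essentially the same route as the paper: you translate $g^{-1}\g g\in M_2(\O_F)$ into the lattice-stabilization condition $\g L\subset L$, i.e.\ $\g\in\O_L$, and then use the hypothesis $(\det\g)\O_F=\mfn$ to identify membership in $M_2(\O_F)$ with membership in $Z(F)M(\mfn)$. The paper's proof is more terse about the central-factor bookkeeping (it simply invokes $\ord_v(\det\g)=\ord_v(\mfn)$), but the underlying argument — that any $z$ appearing must lie in $\O_F^*$ and so contributes a nonzero value under the unramified $\w_v$ — is the same one you spell out.
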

\begin{proof}
We observe that 
\begin{align*}\g\in \O_L&
\iff \g L \subset L
\iff g^{-1}\g g\zpvector\subset \zpvector\\
&\iff g^{-1}\g g\in M_2(\O_F).
\end{align*}
Given that $\ord_v(\det\g)=\ord_v(\mfn)$, the above is equivalent to $g^{-1}\g g$ belonging
to the support $Z(F) M(\mfn)$ of $\fv$.
\end{proof}

\begin{proposition}\label{elle}
    Let $\fv$ be given by \eqref{fv}.
    Then for $\g\in G(F)$ elliptic, the orbital integral
    \[\Phi(\g,\fv)=\int_{\ol{G_\g(F)}\bs \olG(F)}\fv(g^{-1}\g g)dg\]
    vanishes unless some conjugate of $\g$ lies in $Z(F) M(\mfn)$.
  Taking $\g\in M(\mfn)$, with measure normalized as in \eqref{ellint} we have
    \[\Phi(\g,\fv)=e_\g\sum_{r=0}^{n_\g}[\O_E^*:\O_r^*],\]
    where $E=F[\g]$ is the associated quadratic extension of $F$ with 
    ramification index $e_\g\in\{1,2\}$ and ring of integers $\O_E=\O_F+\O_F\e$,
    \[\O_r=\O_F+\p^r\e\]
    is the order of index $q_v^r$ inside $\O_E$, 
    and $n_\g\ge 0$ is defined by $\O_\g = \O_F+\O_F\g =\O_r$ for $r=n_\g$.
    In particular, if $\O_\g=\O_E$ and $\p$ is inert in $E$, 
    then $\Phi(\g,\fv)=1$.
\end{proposition}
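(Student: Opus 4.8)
The approach is to collapse the orbital integral into a sum over lattice classes in which every term is $0$ or $1$, and then count. The vanishing assertion is immediate: if $\fv(g^{-1}\g g)\neq 0$ for some $g$, then the conjugate $g^{-1}\g g$ of $\g$ lies in the support $Z(F)M(\mfn)$ of $\fv$. So assume from now on that the integral is nonzero. Since $\Phi(\g,\fv)$ depends only on the $G(F)$-conjugacy class of $\g$ (by the change of variables $g\mapsto g_0^{-1}gg_0$), and since $E=F[\g]$, the order $\O_\g=\O_F+\O_F\g$, and the ramification index $e_\g$ are all unchanged up to canonical identification under conjugation, we may and do replace $\g$ by a $G(F)$-conjugate lying in $M(\mfn)$ and chosen so that the scalar action of $E^*$ on lattices in $E$ matches its matrix action on lattices in $F^2$ under $L\leftrightarrow\row1\e L$, as arranged in \cite[Lemma 26.20]{KL}.

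Because the integrand $g\mapsto\fv(g^{-1}\g g)$ is left $\ol{G_\g(F)}$-invariant, the normalization \eqref{ellint} gives $\Phi(\g,\fv)=\int_{\olG(F)}\fv(g^{-1}\g g)\,dg$. This integrand is also right $\ol{K_v}$-invariant, since $Z(F)M(\mfn)$ is stable under $K_v$-conjugation and leaves $\w_v$ unchanged; as $\meas(\ol{K_v})=1$, the integral becomes the sum $\sum_{g\ol{K_v}\in\olG(F)/\ol{K_v}}\fv(g^{-1}\g g)$. By Lemma~\ref{fnplem} the term attached to the coset of $g$ is nonzero exactly when $\g\in\O_L$ for $L=g\zpvector$, and in that case $g^{-1}\g g\in M_2(\O_F)$ has determinant $\det\g$ generating $\mfn$, hence lies in $M(\mfn)$ itself and $\fv(g^{-1}\g g)=\ol{\w_v(1)}=1$. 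Identifying $\olG(F)/\ol{K_v}$ with the set of lattice classes in $E$ via $g\mapsto\row1\e g\zpvector$, and noting that $\g\in\O_L$ is equivalent to $\O_\g\subseteq\O_L$ (as $\O_L$ is an order containing $\O_F$), we obtain
\[\Phi(\g,\fv)=\#\{\text{lattice classes }L\subset E:\ \O_\g\subseteq\O_L\}.\]

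To count these, recall from Proposition~\ref{orders} that the $\O_F$-orders in $E$ form the chain $\O_E=\O_0\supset\O_1\supset\O_2\supset\cdots$, so writing $\O_\g=\O_{n_\g}$, the condition $\O_\g\subseteq\O_L$ holds exactly when $\O_L=\O_r$ for some $r$ with $0\le r\le n_\g$. For a fixed such $r$ I would count the lattice classes of proper order $\O_r$ as follows: by principality of lattices in $E$ and the discussion preceding Lemma~\ref{fnplem}, the proper $\O_r$-lattices are in bijection with $E^*/\O_r^*$, and passing to classes means quotienting further by scalar multiplication by $F^*$, giving a bijection with $E^*/F^*\O_r^*$. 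Its cardinality I would evaluate from the tower $\O_r^*/\O_F^*\subset\O_E^*/\O_F^*\subset E^*/F^*$ (legitimate since $\O_r^*\cap F^*=\O_E^*\cap F^*=\O_F^*$): the first index is $[\O_E^*:\O_r^*]$, while the second is $e_\g$, since the $E$-valuation induces a surjection $E^*/F^*\twoheadrightarrow\Z/e_\g\Z$ with kernel $\O_E^*F^*/F^*$. Hence there are $e_\g[\O_E^*:\O_r^*]$ lattice classes with proper order $\O_r$, and summing over $0\le r\le n_\g$ gives $\Phi(\g,\fv)=e_\g\sum_{r=0}^{n_\g}[\O_E^*:\O_r^*]$; in particular, if $\O_\g=\O_E$ then $n_\g=0$ and the sum is the single term $[\O_E^*:\O_E^*]=1$, which equals $1$ when moreover $\p$ is inert in $E$, so $e_\g=1$. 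The reduction and the collapse of the integral to a sum of $0$'s and $1$'s are routine once Lemma~\ref{fnplem} and \eqref{ellint} are available, so I expect the main work to be this counting step — specifically the bijection between lattice classes of proper order $\O_r$ and $E^*/F^*\O_r^*$ and the index identity $|E^*/F^*\O_r^*|=e_\g[\O_E^*:\O_r^*]$. (One should also note that the normalization $\g\in M(\mfn)$ is consistent with the choice of \cite[Lemma 26.20]{KL}: the embedding $E\hookrightarrow M_2(F)$ fixed there sends $\O_E$ to $\zpvector$, so the elements of $M(\mfn)$ lying in $E$ are precisely the $\g\in\O_E$ with $(\det\g)\O_F=\mfn$, and no conflict arises.)
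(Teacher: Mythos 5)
Your proof is correct and follows essentially the same route as the paper: reduce via \eqref{ellint} to an integral over $\olG(F)$, use right $\ol{K_v}$-invariance to pass to a sum over $\olG(F)/\ol{K_v}$, translate to lattice classes via Lemma~\ref{fnplem}, reduce the condition $\g\in\O_L$ to $\O_L=\O_r$ with $r\le n_\g$, and count proper $\O_r$-lattice classes as $|E^*/F^*\O_r^*|=e_\g[\O_E^*:\O_r^*]$. Your write-up is somewhat more explicit than the paper's (e.g., noting that each nonzero term actually equals $1$ because $\w_v$ is unramified, and spelling out the index tower $\O_r^*/\O_F^*\subset\O_E^*/\O_F^*\subset E^*/F^*$), but the ideas are the same.
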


\noindent{\em Remarks:} 1. Let $P_\g(X)\in \O_F[X]$ be the characteristic polynomial of $\g$.  
If $P_\g$ is irreducible modulo $\p$, then $e_\g=1$ and $\O_\g=\O_E$,
\cite[p. 18]{S}. Hence $\Phi(\g,\fv)=1$ in this case.
\vskip .1cm

2. The index $[\O_E^*:\O_r^*]$ is given explicitly in \eqref{index} when $r>0$ (and is $1$ when $r=0$).

3. Let $\mathfrak{d}_{E/F}=\det\mat1\e1{\ol\e}^2\O_F$ be the relative discriminant (with the bar
denoting Galois conjugation),
write $\g=s+b\e$ for $s,b\in\O_F$, and let $\Delta_\g=r^2-4D$ be the discriminant of $\g$.  Then
\begin{equation}\label{ngb}
n_\g = \ord_v(b) = \frac{\ord_v(\Delta_\g)-\ord_v(\mathfrak{d}_{E/F})}2.
\end{equation}
This follows from the fact that the relative discriminant of 
\[\O_\g=\O_F+\O_F\g=\O_F+\O_Fb\e\] is
given on the one hand by
\[\det\mat 1{b\e}1{b\ol\e}^2\O_F = b^2\mathfrak{d}_{E/F},\] 
and also (using \eqref{Deltag}) by
\[\det\mat 1\g1{\ol\g}^2\O_F =(\g - \ol{\g})^2\O_F =\Delta_\g\O_F.\]
Further, if $F$ is the completion of a number field $L$ at a place $v$, $\{1,\e_L\}$ 
is an integral basis of $L[\g]$ over $L$, and we write $\g=s_L+b_L\e_L$, then equation \eqref{ngb}
also holds with $b_L$ in place of $b$.  Indeed the same argument applies in the global case
to give $b_L^2\mathfrak{d}_{L[\g]/L} =\Delta_\g\O_L$. By the fact
that the global discriminant is the product of the local ones and (due to $\g$ being elliptic in $G(F)$) there
is only one prime of $L[\g]$ lying over $v$, we see that $\ord_v(b_L)=\ord_v(b)$.

4. If $E=\Q_\ell[\sqrt{d}]$ for $d\in\Z$ square-free, then 
(see \cite[\S6.10]{M}, for example)
\begin{equation}\label{OE}
\O_E=\begin{cases}
\Z_2[\frac{1+\sqrt{-3}}2]&\text{if }\ell=2, E=\Q_2[\sqrt{-3}]\\
\Z_\ell[\sqrt{d}]&\text{otherwise.}\end{cases}\end{equation}
In particular, if $\ell>2$ and the valuation $\alpha=v_\ell(\Delta_\g)$
of the discriminant of $\g$ is {\em odd}, then $e_\g=2$, $n_\g=(\alpha-1)/2$, and assuming $\g\in M(\n)_\ell$,
\begin{equation}
\Phi(\g,\fnp)=2\sum_{r=0}^{(\alpha-1)/2}\ell^r.
\end{equation}
\vskip .2cm

\begin{proof}[Proof of Proposition \ref{elle}]
The first statement is clear.  Now suppose $\g\in M(\mfn)$.  By \eqref{ellint},
    \[\Phi(\g,\fv)=\int_{\olG(F)}\fv(g^{-1}\g g)dg.\]
    The integrand is right $\ol{K_v}$-invariant as a function of $g$.
    Since $\ol{K_v}$ is open with
    measure $1$, $\olG(F)/\ol{K_v}$ is discrete with the counting measure.
    Therefore
    \[\Phi(\g,\fv)=\sum_{g\in \olG(F)/\ol{K_v}} \fv(g^{-1}\g g).\]
By our earlier remarks, we can view the sum
as a sum over the lattice classes, and by Lemma \ref{fnplem}, 
$\Phi(\g,\fv)$ is equal to the number of lattice classes preserved by $\g$.

    Since $\g\in E$ is integral over $\O_F$, $\O_\g=\O_F+\O_F\g$ is an order in $E$ 
(cf. \cite[Lemma 26.10]{KL}).  
    We claim that $\g\O_r\subset \O_r$ if and only if
    $0\le r\le n_\g$, where $q_v^{n_\g}$ is the index of $\O_\g$.
    Indeed, 
\[\g\O_r\subset\O_r\iff \g\in \O_r\iff \O_\g\subset\O_r\iff r\le n_\g.\]
    It follows that 
\[\Phi(\g,\fv)=\sum_{r=0}^{n_\g} (\#\text{ of classes of proper $\O_r$-lattices}).\]
  Recall from earlier that the set of proper $\O_r$-lattices corresponds bijectively with 
  $E^*/\O_r^*$. 
Since we are counting $F^*$-classes of lattices
    rather than lattices themselves, we find
    \[\Phi(\g,\fv)=\sum_{0\le r\le n_\g}|E^*/F^*\O_r^*|\qquad (\O_\g=\O_{n_\g}).\]
Because $\O_F^*\subset \O_r^*$, it follows from \eqref{olGg} that
    $|E^*/F^*\O_r^*|=e_\g[\O_E^*:\O_r^*]$, where $e_\g\in\{1,2\}$ is the ramification index
of $E/F$. The result now follows.
\end{proof}

\begin{corollary}\label{g2}
 For $\fv$ as in \eqref{fv}, 
let $\g\in M(\mfn)$ have characteristic polynomial $P_\g(X)=X^2-rX+D\in\O_F[X]$
with discriminant $\Delta_\g=r^2-4D$.
Then if $\g$ is hyperbolic in $G(F)$, $\Phi(\g,\fv)=|\Delta_\g|_v^{-1/2}$.
If $\g$ is elliptic in $G(F)$ and $P_\g(X)$ does not have a double root
in $\O_F/\p$, then $\Phi(\g,\fv)=1$.

Consequently, for $\g\in M(\mfn)$ elliptic or hyperbolic in $G(F)$,
$\Phi(\g,\fv)=1$ if 
\[\Delta_\g\notin\p.\]
\end{corollary}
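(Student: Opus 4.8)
The plan is to derive the corollary directly from Propositions \ref{ellh} and \ref{elle}, which already treat the hyperbolic and elliptic cases separately under exactly the standing hypothesis $\g\in M(\mfn)$. The hyperbolic assertion $\Phi(\g,\fv)=|\Delta_\g|_v^{-1/2}$ is nothing more than Proposition \ref{ellh}, so no further work is required there.

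For the elliptic case I would start from the closed form $\Phi(\g,\fv)=e_\g\sum_{r=0}^{n_\g}[\O_E^*:\O_r^*]$ of Proposition \ref{elle}, and reduce the claim to showing that the hypothesis ``$P_\g$ has no double root in $\O_F/\p$'' forces $n_\g=0$ and $e_\g=1$; the sum then collapses to the single term $[\O_E^*:\O_0^*]=1$. The key point is that $P_\g$, being irreducible over $F$ (since $\g$ is elliptic in $G(F)$) and having a reduction $\ol{P_\g}\in(\O_F/\p)[X]$ with no repeated root, must in fact be irreducible modulo $\p$: if $\ol{P_\g}$ split into two distinct linear factors, each of its roots would be simple and would lift by Hensel's lemma to a root of $P_\g$ in $\O_F$, contradicting irreducibility over $F$. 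Once $\ol{P_\g}$ is irreducible, Remark 1 after Proposition \ref{elle} gives $e_\g=1$ and $\O_\g=\O_E$, i.e.\ $n_\g=0$, whence $\Phi(\g,\fv)=1$. (Alternatively one can argue through \eqref{ngb}, once one knows ``no double root mod $\p$'' is equivalent to $\Delta_\g\notin\p$: then $\ord_v(\Delta_\g)=0$, and since $n_\g=\tfrac12\bigl(\ord_v(\Delta_\g)-\ord_v(\mathfrak{d}_{E/F})\bigr)$ with both subtracted quantities nonnegative, both $n_\g$ and $\ord_v(\mathfrak{d}_{E/F})$ vanish, the latter giving that $E/F$ is unramified, i.e.\ $e_\g=1$.)

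For the final ``Consequently'' statement, suppose $\Delta_\g\notin\p$. In the hyperbolic case $|\Delta_\g|_v=1$, so Proposition \ref{ellh} immediately yields $\Phi(\g,\fv)=|\Delta_\g|_v^{-1/2}=1$. In the elliptic case it remains to verify that $\Delta_\g\notin\p$ is exactly the condition that $\ol{P_\g}$ has no double root: when $\p$ has odd residue characteristic this is clear, since $\ol{\Delta_\g}$ is the discriminant of $\ol{P_\g}$; when $\p\mid 2$ one uses that $\ord_v(4D)\ge 2$, so that $\ol{\Delta_\g}=\ol{r}^2$, while a monic quadratic $X^2-\ol{r}X+\ol{D}$ over the (finite, hence perfect) residue field has a repeated root precisely when $\ol{r}=0$. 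Hence the elliptic case above applies and $\Phi(\g,\fv)=1$.

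I do not anticipate a genuine obstacle here: the statement is a bookkeeping consolidation of the two preceding propositions. The only places needing a little care, which I would spell out in full, are the Hensel-lifting step that rules out a split reduction of $P_\g$, and the verification that ``no double root mod $\p$'' and ``$\Delta_\g\notin\p$'' coincide in residue characteristic $2$.
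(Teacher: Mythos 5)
Your proposal is correct and follows essentially the same route as the paper: the hyperbolic case is quoted from Proposition \ref{ellh}, the elliptic case uses the Hensel-lifting argument to deduce that the reduction of $P_\g$ is irreducible and then invokes the first remark after Proposition \ref{elle}, and the ``Consequently'' part is handled by checking that $\Delta_\g\notin\p$ excludes a repeated root of $\ol{P_\g}$, with the residue characteristic $2$ case treated separately via the derivative. The only cosmetic difference is that the paper phrases the odd/even residue characteristic check by directly computing $P'_\g(r/2)$ and $P_\g(r/2)$, whereas you invoke the perfectness of the residue field to characterize repeated roots when $\p\mid 2$, but these are the same underlying observation.
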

\begin{proof} 
The hyperbolic case is just a restatement of Proposition \ref{ellh}.
Suppose $\g$ is elliptic. If $P_\g$ does not have a double root in $\O_F/\p$,
then it cannot have a simple root either, because otherwise that root would lift to a root
in $F$ by Hensel's Lemma.  By the first remark after Proposition \ref{elle},
$\Phi(\g,\fv)=1$.

Furthermore, suppose $\p\nmid 2$, and note that $P'_\g(X)=2X-r$ vanishes only at
   $r/2 \in \O_F/\p$.
On the other hand,
\[P_\g(r/2)= D-\frac{r^2}4,\]
which shows that $P_\g$ has a repeated root modulo $\p$ 
if and only if $\p|(r^2-4D)$. Hence when $\p\nmid 2$ and $\Delta_\g\notin\p$, $\Phi(\g,\fv)=1$.

If $\p|2$ and $(r^2-4D)\notin\p$, then $r\in\O_F^*$, and therefore $P_\g'(X)=2X-r$ is nonzero
mod $\p$.  Hence $P_\g$ does not have a repeated root, and $\Phi(\g,\fv)=1$ in this case as well.
\end{proof}

Although the result of Proposition \ref{elle} appears complicated, it is not so hard
to evaluate it by hand, using the remarks that follow the proposition and standard results
about quadratic extensions of $p$-adic fields.

\begin{example}\label{Nell}
Let $\ell$ be a prime not dividing $D$, and let 
$\g=\smat0{-D}10$.
Then for $f_\ell$ as in \eqref{fell},
    \[\Phi(\g,f_\ell) = \begin{cases} 2&\text{if }\ell=2\text{ and }D\equiv 1,5,7\mod 8\\
        4&\text{if }\ell=2\text{ and }D\equiv 3\mod 8\\
    1&\text{if $\ell\neq 2$.}\end{cases}\]
\end{example}
\noindent{\em Remark:} Some additional examples are given in \S\ref{ng1}.
\begin{proof}
    First suppose $\ell\neq 2$.  Since the discriminant $-4D$ of $P_\g(X)=X^2+D$ is not
divisible by $\ell$,  
$\Phi(\g,f_\ell)=1$ by Corollary \ref{g2}.

 Now suppose $\ell=2$, so $D$ is odd since $\ell\nmid D$. 
   Recall that the squares of 
    $\Q_2^*$ are exactly the elements of the set $2^{2\Z}(1+8\Z_2)$
    (\cite[Theorem II.4]{Sc}).  Thus
$-D$ is a square in $\Q_2^*$ if and only if
\[D\equiv 7\mod 8.\]
When this congruence is satisfied, $\g$ is hyperbolic, and by Corollary \ref{g2},
\[\Phi(\g,f_2)=|-4D|_2^{-1/2}=2.\]

Now suppose that $-D$ is not a square in $\Q_2$, i.e., it is not $1\mod 8$.
We recall some facts about the quadratic extensions of $\Q_2$ (see e.g. \cite[Ch. 6]{M}).
There are exactly seven such extensions, namely $\Q_2[\sqrt{d}]$ for
\[d=-1, \pm 3, \pm 2,\pm 6,\]
with $\Q_2[\sqrt{-3}]$ being the unique unramified quadratic extension.
With the exception of $d=-3$, the ring of integers is $\Z_2[\sqrt{d}]$.  For $d=-3$,
the ring of integers is $\Z_2[\frac{1+\sqrt{-3}}2]$.
Under the given hypothesis, $-D\equiv d\mod 8$, where $d\in\{-1,\pm3\}$. 
So $-D=dx$ for some $x\in 1+8\Z_2$, and hence
$-D=dy^2$ for some $y\in\Z_2^*$.
Therefore, writing $E=\Q_2[\sqrt{-D}]$, we have $\O_E=\O_\g$ unless $d=-3$.
In the former case, $E/\Q_2$ is ramified, so by Proposition \ref{elle},
\[\Phi(\g,f_2)=2\qquad (D\equiv 1,5 \mod 8).\]
If $D\equiv 3\mod 8$, then 
$\O_E=\Z_2+\Z_2\e$ for $\e=\frac{1+\sqrt{-3}}2$.  Hence
\[\O_\g=\Z_2+\Z_2\sqrt{-D}=\Z_2+\Z_2\sqrt{-3}=\Z_2+\Z_2 2\e.\]
So in the notation of Proposition \ref{elle}, $n_\g=1$. 
Since $E/\Q_2$ is unramified, using \eqref{index}, we have
    \[\Phi(\g,f_2)=[\O_E^*:\O_E^*]+[\O_E^*:\O_\g^*] = 1+3 =4.\qedhere\]
\end{proof}

\subsection{The set of relevant $\g$}\label{relsec}

Here we determine explicitly the finite set of conjugacy classes in $\olG(\Q)$ that can have a nonzero
contribution to the trace of $R(f)$ for $f$ as in \eqref{fn}.
Writing $N=\prod_{p|N}p^{N_p}$, define the square-free integers
\[S=\prod_{p|N,\atop{N_p\text{ even}}}p,\quad T=\prod_{p|N,\atop{N_p\text{ odd}}}p.\]
We say that an elliptic element $\g\in G(\Q_p)$ is {\bf unramified} (at $p$)
if $v_p(\det \g)$ is even, and {\bf ramified} otherwise.

\begin{lemma}\label{detnp}
Let $\g\in G(\Q)$ be elliptic, and suppose $\Phi(\g,f)\neq 0$ for $f=f^\n$ as in \eqref{fn}.
Then there exists a unique positive divisor $M|T$ and a scalar $z\in\Q^*$ such that
$\tr(\g z)\ge 0$ is an integer and
\[\det(z\g)=\n M.\]
In particular, the rational canonical form of $z\g$ lies in $M_2(\Z)$. 
\end{lemma}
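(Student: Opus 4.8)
The plan is to read off the determinant and trace of a suitable rational rescaling $z\g$ of $\g$ directly from the supports of the local factors of $f=\fn$. Since $\Phi(\g,f)\neq 0$, the integrand $g\mapsto f(g^{-1}\g g)=\prod_v f_v(g_v^{-1}\g g_v)$ cannot vanish identically, so I may fix $g=(g_v)_v\in\olG(\A)$ with $f_v(g_v^{-1}\g g_v)\neq 0$ at every place $v$. At $v=\infty$ the explicit formula \eqref{finf} forces $\det\g>0$. At a finite place $v$, the factor $f_v$ is supported on $Z_v$ times a compact-mod-center set which will become integral after rescaling: for $v=\ell\nmid N$ this set is $M(\n)_\ell\subset M_2(\Z_\ell)$, whose elements all have determinant of valuation $v_\ell(\n)$; for $v=p|S$ it is $K_p=\GL_2(\Z_p)$, with unit determinants; and for $v=p|T$ it is $K_0(\p)\cup\smat{}1\varpi{}K_0(\p)$, two subsets of $M_2(\Z_p)$ on which $\det$ has valuation $0$ and $1$ respectively. (These come from \eqref{fnp} and \eqref{max}.)

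First I would pin down $D:=\det\g>0$. Taking determinant valuations in the conditions $g_v^{-1}\g g_v\in Z_v\cdot(\text{support of }f_v)$ shows that $v_\ell(D)$ is even when $\ell\nmid N\n$ or $\ell=p|S$, that $v_\ell(D)\equiv v_\ell(\n)\pmod 2$ when $\ell|\n$, and that $v_p(D)$ is unrestricted for $p|T$. I then set $M=\prod_{p|T,\, 2\nmid v_p(D)}p$, i.e. the product of the primes $p|T$ at which $\g$ is ramified in the sense of \S\ref{relsec}. With this choice $\n M/D$ is a positive rational whose valuation at every prime is even, so $\n M/D=z^2$ for some $z\in\Q^*$, unique up to sign; and the parity list above shows that $M$ is the \emph{only} square-free (hence the only positive) divisor of $T$ for which $\n M/D$ is a square in $\Q^*$. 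After possibly replacing $z$ by $-z$ I may assume $\tr(\g z)=z\tr\g\ge 0$ (automatic if $\tr\g=0$), and then $\det(z\g)=z^2D=\n M$ by construction.

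It remains to see that $\tr(z\g)$ is an integer; since it is rational it suffices to check $\tr(z\g)\in\Z_\ell$ for each prime $\ell$. Writing $g_\ell^{-1}\g g_\ell=z_\ell h$ with $z_\ell\in Z_\ell$ and $h$ in the integral support set listed above for the place $\ell$, and using that the scalar $z$ lies in $Z(\Q_\ell)$, I get $g_\ell^{-1}(z\g)g_\ell=z\,(g_\ell^{-1}\g g_\ell)=(zz_\ell)h$. Comparing determinant valuations, $v_\ell(\n M)=v_\ell(\det(z\g))=2v_\ell(zz_\ell)+v_\ell(\det h)$, and the choice of $M$ (and, at $p|T$, the resulting choice of coset forced by $v_p(\det h)\equiv v_p(D)\bmod 2$) makes $v_\ell(\det h)=v_\ell(\n M)$ in every case, whence $zz_\ell\in\Z_\ell^*$. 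Therefore $g_\ell^{-1}(z\g)g_\ell=(zz_\ell)h\in M_2(\Z_\ell)$, so the characteristic polynomial of $z\g$ has $\Z_\ell$-coefficients and in particular $\tr(z\g)\in\Z_\ell$. Hence $\tr(z\g)\in\Z$, and the rational canonical form of $z\g$, namely $\smat{0}{-\n M}{1}{\tr(z\g)}$, lies in $M_2(\Z)$. The fussiest step should be the bookkeeping at the ramified primes $p|T$: there the support is a union of two $K_0(\p)$-cosets of opposite determinant parity, and one must verify that the parity of $v_p(D)$ simultaneously selects the correct coset, forces the exponent $v_p(M)$, and makes the rescaling factor $zz_p$ a $p$-adic unit so that integrality of $z\g$ survives at $p$.
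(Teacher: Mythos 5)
Your proposal is correct and follows essentially the same path as the paper's proof: identify $M$ as the product of primes $p\mid T$ at which $\g$ is ramified, use the supports of $f_\infty$ and the finite local factors to see that $\n M/\det\g$ is a positive square $z^2$, and then check integrality of the rescaled matrix at every finite place. The only difference is one of presentation — the paper verifies integrality globally by showing a $G(\A_{\fin})$-conjugate of $z\g$ lies in $M_2(\Zhat)$ via the set \eqref{dom}, while you do the equivalent check place by place — and you make explicit the uniqueness of $M$, which the paper leaves to the reader.
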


\begin{proof}
If $p|S$, then $\g$ is unramified at $p$ since $f_p$ is supported in $Z_pK_p$.  For $p|T$, 
the support of $f_p$ has both ramified and unramified elements (cf. \eqref{max}).
Let $M$ be the product of those primes $p|T$ at which $\g$ is ramified.
For each prime $\ell\nmid N$, some conjugate of $\g$ must lie in $\Supp(\fnp)
    =Z_\ell M(\n)_\ell$ since otherwise the integrand of $\Phi(\g,f)$ vanishes.
It follows that $v_p(\frac{\det\g}{\n M})$ is even for {\em all} primes $p$, where $v_p$ is
 the $p$-adic valuation.  Hence
$\det\g\in \pm \n M\Q^{*2}$, where $\Q^{*2}$ is the set of squares in $\Q^*$. 
 Because $f_\infty$ is supported on $G(\R)^+$, there is a
 scalar $z\in\Q^*$ such that $\det(z\g)=\n M$, as claimed.
 Because $\Phi(z\g,f)=\Phi(\g,f)\neq 0$, some $G(\Af)$-conjugate of $z\g$ lies in 
\begin{equation}\label{dom}
\prod_{p|M}\smat{}1p{} K_p\times \prod_{p|\frac {ST}M}K_p\times \prod_{\ell\nmid N}M(\n)_\ell\subset M_2(\Zhat)
\end{equation}
(recall that $f_p$ is supported in the group $J$ of \eqref{max}).
 In particular, $\tr(z\g) \in \Zhat\cap \Q=\Z$.
Scaling $z$ by $-1$ if necessary, we may arrange further that $\tr(z\g)\ge 0$.
\end{proof}

\begin{lemma}\label{pd} Let $F$ be a $p$-adic field, and $\g$ an elliptic element of $G(F)$ 
    with $\tr\g\in\O_F$ and $\det\g\in \p$.
    Then $\tr \g\in \p$. 
\end{lemma}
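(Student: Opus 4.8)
The plan is to argue by contradiction via the characteristic polynomial $P_\g(X)=X^2-rX+D$, where $r=\tr\g$ and $D=\det\g$. By hypothesis $r\in\O_F$ and $D\in\p$, so $P_\g\in\O_F[X]$, and since $\g$ is elliptic in $G(F)$, $P_\g$ is irreducible over $F$; in particular $P_\g$ has no root in $F$. By Hensel's lemma this forces the reduction $\ol{P_\g}\in(\O_F/\p)[X]$ to have no \emph{simple} root in the residue field $\O_F/\p$, since a simple root modulo $\p$ would lift to a root in $\O_F$.

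Now suppose, for contradiction, that $r\notin\p$, i.e.\ $r\in\O_F^*$. Then, using $D\in\p$,
\[\ol{P_\g}(X)=X(X-\ol r)\]
in $(\O_F/\p)[X]$, and since $\ol r\neq 0$ the two roots $0$ and $\ol r$ are distinct, hence both simple. This contradicts the previous paragraph, so $r\in\p$, as claimed.

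The argument presents essentially no obstacle: the only substantive ingredient is the standard form of Hensel's lemma (a monic polynomial over the complete local ring $\O_F$ with a simple root modulo $\p$ has a root in $\O_F$), together with the trivial observation that $X(X-\ol r)$ has distinct roots precisely when $\ol r\neq 0$. One could instead compare $P_\g$ with $P_\g'(X)=2X-r$ as in the proof of Corollary \ref{g2}, but that approach introduces a case split according to whether $\p\mid 2$, whereas the Hensel argument above is uniform in the residue characteristic.
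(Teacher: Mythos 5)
Your proof is correct and follows essentially the same route as the paper: the paper's proof also reduces modulo $\p$, notes that $0$ is a root of $\ol{P_\g}$ with $P_\g'(0)\equiv -\tr\g\mod\p$, and invokes Hensel's lemma to derive a contradiction with ellipticity when $\tr\g\notin\p$. Your explicit factorization $\ol{P_\g}(X)=X(X-\ol r)$ is just a slightly different way of packaging the same simple-root observation.
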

\begin{proof}
    Denote the characteristic polynomial of $\g$ by
    \[P_\g(X)=X^2-dX+\det\g,\]
    where $d=\tr\g$.  Notice that $P_\g(0)\equiv 0\mod \p$.
    Furthermore, $P_\g'(0)\equiv - d\mod \p$.  If $d$ is nonzero modulo $\p$, then by Hensel's
    Lemma, $P_\g$ has a root in $\p$, contradicting the fact that $\g$ is elliptic
    in $G(F)$.  Hence $d\in \p$.
\end{proof}

\begin{proposition}\label{rel1}
 For $\g\in \olG(\Q)$ elliptic, and $f=f^\n$ the test function defined in \eqref{fnt},
 $\Phi(\g,f)=0$ unless the conjugacy class of $\g$ has a representative in $G(\Q)$
 of the form $\mat0{-\n M}1{rM}$ for some $M|T$ and $0\le r<\sqrt{\frac{4\n}M}$.
\end{proposition}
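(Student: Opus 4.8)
The plan is to reduce everything to the lemmas already in hand. Assume $\Phi(\g,f)\neq 0$. First I would apply Lemma~\ref{detnp} to produce a positive divisor $M\mid T$ and a scalar $z\in\Q^*$ with $\tr(z\g)\ge 0$ an integer and $\det(z\g)=\n M$. Replacing $\g$ by $z\g$ costs nothing: the element $z\g$ lies in the same coset $\g Z(\Q)$, hence in the same $\olG(\Q)$-conjugacy class (which is the object named in the statement), and $\Phi(z\g,f)=\Phi(\g,f)$ because $\w$ is trivial on $\Q^*$ (exactly as used in the proof of Lemma~\ref{detnp}). So from now on I may assume $\tr\g=n_0\in\Z_{\ge 0}$ and $\det\g=\n M$, and it suffices to put this normalized $\g$ into the desired form.

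Next I would record the local ellipticity that $\Phi(\g,f)\neq 0$ forces. Proposition~\ref{ellg}(3) and its proof guarantee that $\g$ is elliptic in $G(\R)$ and in $G(\Q_p)$ for every $p\mid N$ (for $p\mid N$ the function $f_p$ is a supercuspidal matrix coefficient, so a hyperbolic $\g$ at $p$ would kill the local orbital integral); in particular $\g$ is elliptic in $G(\Q_p)$ for each $p\mid M$. The key divisibility $M\mid n_0$ then follows prime by prime: fix $p\mid M$. Since $(\n,N)=1$ and $M$ is square-free we have $v_p(\det\g)=v_p(\n M)=1$, so $\det\g\in p\Z_p$; moreover $\tr\g=n_0\in\Z\subset\Z_p$ and $\g$ is elliptic in $G(\Q_p)$. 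Lemma~\ref{pd} (with $F=\Q_p$) then yields $p\mid n_0$. As this holds for each prime factor of the square-free integer $M$, we conclude $M\mid n_0$, say $n_0=rM$ with $r\in\Z_{\ge 0}$.

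Finally I would conclude: $\g$ is conjugate in $G(\Q)$ to its rational canonical form $\mat0{-\det\g}1{\tr\g}=\mat0{-\n M}1{rM}$, so this matrix represents the $\olG(\Q)$-conjugacy class of $\g$; and the bound on $r$ is immediate from ellipticity at the archimedean place, since $\Delta_\g=(rM)^2-4\n M<0$ rearranges to $r<\sqrt{4\n/M}$. I do not expect a genuine obstacle here: the substance is entirely in Lemmas~\ref{detnp} and~\ref{pd} and Proposition~\ref{ellg}. The only point requiring care is the harmlessness of the central rescaling $\g\mapsto z\g$ — one must note both that the orbital integral is unchanged (because $\w|_{\Q^*}=1$) and that the conclusion concerns the $\olG(\Q)$-conjugacy class, in which $\g$ and $z\g$ are literally the same element, so nothing is lost by establishing the normal form for $z\g$ in place of $\g$.
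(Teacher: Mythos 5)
Your proof is correct and takes essentially the same route as the paper's: normalize via Lemma~\ref{detnp} to get $\det(z\g)=\n M$ with nonnegative integral trace, invoke Proposition~\ref{ellg} for ellipticity at $\R$ and at the primes dividing the level, deduce $M\mid\tr(z\g)$ from Lemma~\ref{pd}, and read off the bound $r<\sqrt{4\n/M}$ from ellipticity at the archimedean place before passing to rational canonical form. The extra care you take over the harmlessness of the rescaling $\g\mapsto z\g$ (invariance of $\Phi$ since $\w|_{\Q^*}=1$, and invariance of the $\olG(\Q)$-class) is exactly what the paper uses implicitly inside Lemma~\ref{detnp}, so nothing is missing.
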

\noindent{\em Remark:}
If the characteristic polynomial of $\g=\smat0{-\n M}1{rM}$ has a root in $\Q_p$, then
  $\Phi(\g,f)=0$ by Proposition \ref{ellg}.

\begin{proof}
Let $\mathfrak o$ be an elliptic conjugacy class in $\olG(\Q)$ with
 $\Phi(\mathfrak o,f)\neq 0$. 
By Lemma \ref{detnp},
$\mathfrak o$ has a unique representative $\g\in G(\Q)$ with characteristic polynomial of the form
\[P_\g(X)=X^2-dX+\n M\in \Z[X],\]
where $d=\tr\g\ge 0$ and $M|T$.
By Proposition \ref{ellg}, we know that $\g$ is elliptic in $G(\Q_p)$ for each $p|N$
and also in $G(\R)$. It follows by Lemma \ref{pd} that $M|d$.
Write $d=rM$.
Given that $\g$ is elliptic in $G(\R)$, we have $d^2<4\n M$, i.e., 
\[r^2M < 4\n.\]
So, taking $\g$ in rational canonical form as we may, it has the form
\[\g =\mat 0{-\n M}1{rM},\quad 0\le r<\sqrt{\frac{4\n}{M}}.\qedhere\]
\end{proof}

\subsection{The measure of $\ol{G_\g(F)}\bs\ol{G_\g(\A_F)}$.}\label{msec}

Let $F$ be a number field with adele ring $\A_F$, and let $\g$ be an elliptic element of $G(F)$.
With $G_\g$ the centralizer of $\g$ in $G$, here we will compute the measure of
$\ol{G_\g(F)}\bs \ol{G_\g(\A_F)}$.  The result is given in Theorem \ref{meas} below.
A related discussion can be found in \cite[\S5]{Go}.

The basic idea is straightforward: we know that
 $G_\g(\A_F)=\A_F[\g]^* = \A_E^*$, where $E=F[\g]$ is
 a quadratic extension of $F$. (The proof of this fact given in \cite[Prop. 26.1]{KL} 
for $F=\Q$ applies to any number field.)
The center of $G(\A_F)$ is isomorphic to $\A_F^*$, so 
\begin{equation}\label{Ggisom}
\ol{G_\g(\A_F)} \cong \A_F^*\bs \A_E^*
\end{equation}
topologically and algebraically.  Finally, $G_\g(F)\cong F[\g]^*=E^*$ by the same reference, so
\begin{align*}
    \ol{G_\g(F)}\bs\ol{G_\g(\A_F)}&=\A_F^*E^*\bs\A_E^*\cong (F^*\bs\A_F^*)\bs (E^*\bs \A_E^*)\\
       &\cong (F^*\bs \A_F^1)\bs (E^*\bs \A_E^1),
\end{align*}
where the superscript $1$ indicates ideles of norm 1, and the latter 
isomorphism comes from modding out by an embedded copy of
$\R^+$ in $\A_F^*\subset \A_E^*$.
For any number field $L$ the measure of $L^*\bs \A_L^1$ 
is computed in Tate's thesis under suitable normalization, which we may use with $L=E,F$
 to obtain the measure of the above space.
However, as will be seen, we need to be very careful about the normalization of measures,
particularly in the last step.

\subsubsection{Quotient measure}

Recall that if $H<G$ are unimodular locally compact groups with Haar measures 
  $\mu_H$ and $\mu_G$ and $H$ closed in $G$, there is a unique left $G$-invariant 
 quotient measure $\mu_{G/H}$ on $G/H$ satisfying
\[\int_{G/H} \left[\int_H f(gh)d\mu_H(h)\right] d\mu_{G/H}(g) = \int_G f(g)d\mu_G(g)\]
for all $f\in C_c(G)$.

\begin{lemma}\label{modT}
Let $H,K$ and $T$ be unimodular locally compact groups, with Haar measures $\mu_T,\mu_H,\mu_K$
  respectively.  Assume that $H<K$, and let $G=T\times K$ and $J=T\times H$.
Then relative to the product measures $\mu_G=\mu_T\times \mu_K$ and $\mu_J=\mu_T\times \mu_H$,
we have $\mu_{G/J}=\mu_{K/H}$ on the group $G/J\cong K/H$. 
\end{lemma}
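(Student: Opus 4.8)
The plan is to transport the $K$\nobreakdash-invariant quotient measure $\mu_{K/H}$ to $G/J$ via the natural identification $G/J\cong K/H$, show that the transported measure is $G$\nobreakdash-invariant and satisfies the Weil integration formula characterizing $\mu_{G/J}$, and then conclude by uniqueness of that formula's solution (the normalizations $\mu_G=\mu_T\times\mu_K$ and $\mu_J=\mu_T\times\mu_H$ being fixed, and all groups being unimodular as products of unimodular groups, so an invariant quotient measure exists and is unique up to the built\nobreakdash-in normalization).

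First I would make the identification explicit. Writing a general element of $G=T\times K$ as $(t,k)$ and of $J=T\times H$ as $(s,h)$, one has $(t,k)J=\{(ts,kh):s\in T,\,h\in H\}=T\times kH$, so $(t,k)J$ depends only on the coset $kH$; the assignment $(t,k)J\mapsto kH$ is a homeomorphism $G/J\xrightarrow{\sim}K/H$ intertwining the $G$\nobreakdash-action on $G/J$ with the action of $G=T\times K$ on $K/H$ in which the $T$\nobreakdash-factor acts trivially and $K$ acts by left translation. Consequently the $K$\nobreakdash-invariant measure $\mu_{K/H}$, regarded as a measure on $G/J$, is automatically $G$\nobreakdash-invariant, so it is a legitimate candidate for $\mu_{G/J}$; by uniqueness it therefore suffices to check
\[
\int_{G/J}\Bigl[\int_J f(gj)\,d\mu_J(j)\Bigr]\,d\mu_{K/H}(gJ)=\int_G f(g)\,d\mu_G(g)
\qquad(f\in C_c(G)).
\]

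Next I would reduce to product functions $f(t,k)=\phi(t)\psi(k)$ with $\phi\in C_c(T)$, $\psi\in C_c(K)$: finite sums of such are dense in $C_c(T\times K)$ in the inductive-limit topology (uniform convergence with supports in a fixed compact set), and both sides of the displayed identity are continuous in $f$ for that topology (the inner map $f\mapsto\bigl(gJ\mapsto\int_J f(gj)\,d\mu_J(j)\bigr)$ is the standard continuous surjection $C_c(G)\to C_c(G/J)$, followed by integration against the Radon measure $\mu_{K/H}$). For a product $f$, Fubini and translation-invariance of $\mu_T$ give
\[
\int_J f\bigl((t,k)(s,h)\bigr)\,d\mu_J(s,h)=\Bigl(\int_T\phi(ts)\,d\mu_T(s)\Bigr)\Bigl(\int_H\psi(kh)\,d\mu_H(h)\Bigr)=c_\phi\,\Psi(kH),
\]
where $c_\phi=\int_T\phi\,d\mu_T$ and $\Psi(kH)=\int_H\psi(kh)\,d\mu_H(h)\in C_c(K/H)$. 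Integrating over $G/J\cong K/H$ and using the defining property of $\mu_{K/H}$ yields $c_\phi\int_K\psi\,d\mu_K$, which equals $\int_G f\,d\mu_G=\bigl(\int_T\phi\,d\mu_T\bigr)\bigl(\int_K\psi\,d\mu_K\bigr)$. This proves the formula for product $f$, hence for all $f\in C_c(G)$, and the lemma follows.

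There is no genuine obstacle here; the only points needing a little care are (i) verifying that the $T$\nobreakdash-factor acts trivially on $G/J\cong K/H$, which is what promotes $K$\nobreakdash-invariance of $\mu_{K/H}$ to $G$\nobreakdash-invariance and lets uniqueness apply, and (ii) the density reduction to product functions, which rests on the continuity of both functionals on $C_c(G)$ with respect to uniform convergence on a fixed compact support.
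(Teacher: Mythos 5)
Your proof is correct and takes essentially the same approach as the paper: verify the Weil integration formula characterizing $\mu_{G/J}$ by splitting the $J$-integral with Fubini over $\mu_T\times\mu_H$ and invoking the defining property of $\mu_{K/H}$. The only difference is an unnecessary detour through density of product functions---the paper's computation works directly for arbitrary $f\in C_c(G)$, by applying the $K/H$ quotient formula to $F(k)=\int_T f(kt)\,d\mu_T(t)\in C_c(K)$ and then reassembling the $T$- and $K$-integrals with Fubini.
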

\begin{proof} For $f\in C_c(G)$, 
\[\int_{K/H}\left[\int_Jf(xy)d\mu_J(y)\right]d\mu_{K/H}(x) = 
\int_{K/H}\left[\int_H\int_Tf(xht)d\mu_T(t)d\mu_H(h)\right]d\mu_{K/H}(x)\]
\[= \int_K\left[\int_Tf(kt)d\mu_T(t)\right]d\mu_{K}(k) = \int_Gf(g)d\mu_G(g).\qedhere\]
\end{proof}

\subsubsection{A volume from Tate's thesis}\label{Tatevol}

Let $L$ be a number field with adele ring $\A_L=\prod'_v L_v$, where $v$ ranges over the
places of $L$.  In Tate's thesis, measures $\mu_v$ on the local multiplicative
groups $L_v^*$ are normalized as follows.
If $v$ is real,
\begin{equation}\label{Rmu}
d\mu_v(x)=\frac{dx}{|x|}
\end{equation}
for $x\in \R^*$.  If $v$ is complex,
\begin{equation}\label{Cmu}
d\mu_v(z)=2\frac{dx\,dy}{x^2+y^2}=\frac 2r dr d\theta
\end{equation}
for $z=x+iy=re^{i\theta}\in\C^*$. Finally, at a nonarchimedean place $v$, $\mu_v$ is the
Haar measure on $L_v^*$ satisfying
\begin{equation}\label{ND}
\mu_v(\mathcal{O}_v^*) = (\N\mathfrak{D}_v)^{-1/2},
\end{equation}
where $\O_v$ is the ring of integers of $L_v$, $\mathfrak{D}_v$ is the different 
of $L_v$ and $\N\mathfrak{D}_v=|\O_{v}/\mathfrak{D}_v|$.
Taking the restricted product of the above local measures, we obtain a Haar measure
\[\mu_L={\prod_v}' \mu_v\]
 on $\A_L^*$.  

Let $L_\infty^*=\prod_{v|\infty}L_v^*$; we embed it into $\A_L^*$ by taking $1$'s at the
nonarchimedean components.  We embed $\R^+$ into $L_\infty^*$ and hence into $\A_L^*$ via
\[\lambda(t)= (t^{1/n},t^{1/n},\ldots,t^{1/n}),\]
where $n=n_L=[L:\Q]$.  Then if $L$ has $r_1$ real embeddings and $2r_2$ complex embeddings, 
for $t\in \R^+$ we have
\[|\lambda(t)|_{\A_L}=\prod_{v|\infty}|t|_v^{1/n} = t^{\frac{r_1+2r_2}n} = t\]
(recall that in the ideles we take the square of the usual absolute value at the complex places).

Let $T\cong\R^+$ denote the image of the map $\lambda$.  We give it the Haar measure $dt/t$. 
We have
\begin{equation}\label{AL1}
\A_L^*\cong T\times \A_L^1,
\end{equation}
where $\A_L^1$ is the subgroup consisting of ideles of norm $1$.
There is a unique measure $\mu_L^1$ on $\A_L^1\cong \A_L^*/T$ such that
\[\mu_L = \tfrac{dt}t \times \mu_L^1.\]

The multiplicative group $L^*$ embeds diagonally
in $\A_L^*$ as a discrete subgroup,  and by the product formula, $L^*\subset \A_L^1$.

\begin{theorem}\label{Tm}
\cite[Theorem 4.3.2]{T}. The group $L^*$ is discrete and cocompact in $\A_L^1$.  
Giving $L^*$ the counting measure, for $\mu_L^1$ as above we have
\[\mu_L^1(L^*\bs \A_L^1) = \frac{2^{r_1}(2\pi)^{r_2} h(L) R_L}{|d_L|^{1/2}w_L},\] 
where $h(L), R_L, d_L$ and $w_L$ are the class number, regulator, discriminant, and
number of roots of unity of $L$, respectively.
\end{theorem}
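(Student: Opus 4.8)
\emph{Discreteness.} This is \cite[Theorem 4.3.2]{T}; I would present the classical argument (cf.\ \cite{N}), which is the computation against which the local normalizations \eqref{Rmu}--\eqref{ND} are calibrated. First, $L^*$ is discrete in $\A_L^*$: the map $x\mapsto(x,x^{-1})$ embeds $\A_L^*$ as a closed subgroup of $\A_L\times\A_L$ and carries $L^*$ into the discrete subgroup $L\times L$. Since the product formula gives $L^*\subset\A_L^1$ and $\A_L^1$ is closed in $\A_L^*$, discreteness in $\A_L^1$ follows.

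\emph{Fundamental domain via the class group.} The plan is to build an explicit fundamental domain for $L^*$ on $\A_L^1$. Writing $\A_L^*=\A_L^{*,\mathrm{fin}}\times L_\infty^*$ with $\A_L^{*,\mathrm{fin}}=\prod'_{v<\infty}L_v^*$ and $L_\infty^*=\prod_{v\mid\infty}L_v^*$, the ideal map on $\A_L^{*,\mathrm{fin}}$ is a surjection onto the fractional ideals with kernel $U=\prod_{v<\infty}\O_v^*$, carrying the diagonal $L^*$ onto the principal ideals; composing with the quotient by principal ideals gives a surjection $\A_L^*\to\Cl(L)$ with kernel $L^*UL_\infty^*$. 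Picking finite ideles $a^{(1)},\dots,a^{(h(L))}$ (trivial at $\infty$) representing the ideal classes, and using $L^*\cap UL_\infty^*=\O_L^*$, I obtain a fundamental domain for $L^*$ on $\A_L^*$ of the shape $\bigsqcup_j a^{(j)}\,U\,D_\infty$, where $D_\infty\subset L_\infty^*$ is a fundamental domain for the (injective) action of $\O_L^*$. Since $\O_L^*$ preserves the idelic norm it acts only on the norm-one part $L_\infty^1$, so after rescaling each $a^{(j)}$ at $\infty$ to have norm $1$ (call it $\widetilde a^{(j)}$) this restricts to a fundamental domain $\bigsqcup_j \widetilde a^{(j)}\,U\,\mathcal D$ for $L^*$ on $\A_L^1$, with $\mathcal D\subset L_\infty^1$ a fundamental domain for $\O_L^*$. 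That such $\mathcal D$ may be chosen relatively compact---equivalently, that $\A_L^1/L^*$ is compact---is the Dirichlet unit theorem together with finiteness of the class number (Minkowski); I would simply quote this from \cite{N}, since it is the one nontrivial structural input.

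\emph{Computing the volume.} Now $\mu_L^1(L^*\bs\A_L^1)=h(L)\cdot\mu(U)\cdot\nu(\mathcal D)$, where $\nu$ is the measure on $L_\infty^1$ determined by $\mu_\infty=\frac{dt}{t}\times\nu$ under $L_\infty^*\cong\R^+\times L_\infty^1$ (via $\lambda$ and the norm); I would first check that this $\nu$ is the restriction to $L_\infty^1$ of the measure entering $\mu_L^1$. From \eqref{ND} the finite part gives $\mu(U)=\prod_{v<\infty}(\N\mathfrak{D}_v)^{-1/2}=|d_L|^{-1/2}$, using $\prod_{v<\infty}\N\mathfrak{D}_v=|\O_L/\mathfrak{D}_{L/\Q}|=|d_L|$. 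For $\nu(\mathcal D)$ I would pass to logarithms: $x\mapsto(\log|x_v|_v)_{v\mid\infty}$ sends $\mu_\infty$, via \eqref{Rmu}--\eqref{Cmu}, to Lebesgue measure on $\R^{r_1+r_2}$ times a compact factor of total mass $2^{r_1}(2\pi)^{r_2}$ (two points at each real place; an angular circle of mass $2\pi$ at each complex place, where the $2$ in \eqref{Cmu} cancels the halving coming from $|\cdot|_v=|\cdot|^2$); the norm-one locus becomes the trace-zero hyperplane $H$, the $\frac{dt}{t}$-direction becomes the diagonal line, and removing it identifies $\nu$ with $(\text{Lebesgue on }H)\times(\text{compact factor})$. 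Under this identification $\O_L^*$ maps to its unit lattice in $H$ of covolume exactly $R_L$ (the two stray factors of $\sqrt{r_1+r_2}$, one from slicing $H$ out of $\R^{r_1+r_2}$ and one from the ``omit one place'' definition of the regulator, cancel), while the $w_L$ roots of unity act freely within the compact factor; hence $\nu(\mathcal D)=\frac{2^{r_1}(2\pi)^{r_2}R_L}{w_L}$. Multiplying the three factors yields the asserted value $\frac{2^{r_1}(2\pi)^{r_2}h(L)R_L}{|d_L|^{1/2}w_L}$.

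\emph{Where the difficulty lies.} Not in the topology---discreteness is formal and cocompactness is a quotable theorem---but in the bookkeeping of the last step: verifying that $\lambda$ and the norm split $\A_L^*$ as $T\times\A_L^1$ compatibly with $\mu_\infty=\frac{dt}{t}\times\nu$, and that the precise local recipes \eqref{Rmu}--\eqref{ND} produce exactly the compact mass $2^{r_1}(2\pi)^{r_2}$ and exactly the normalization $R_L/w_L$ of the regulator term, with no residual powers of $2$ or $\pi$. This is standard but easy to get wrong.
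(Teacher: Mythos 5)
Your sketch is a correct outline of the classical argument; the paper itself does not prove this statement but simply cites Tate's thesis, so there is no internal proof to compare against. The one step you assert without proof --- that the factor $\sqrt{r_1+r_2}$ from slicing the trace-zero hyperplane $H$ out of $\R^{r_1+r_2}$ cancels the factor $\sqrt{r_1+r_2}$ relating the covolume of the unit lattice in $H$ to the regulator $R_L$ --- does in fact work out: writing $n=\frac{1}{\sqrt{r}}(1,\dots,1)$ for the unit normal to $H$ and $v_T$ for the log-image of the direction $\lambda'(1)$, one has $v_T\cdot n=1/\sqrt{r}$, so Lebesgue on $\R^r$ decomposes as $\frac{1}{\sqrt{r}}\,ds\times dA$; and adjoining $n$ to a $\Z$-basis of the unit lattice and column-reducing shows $\mathrm{covol}_{dA}(\Lambda)=\sqrt{r}\,R_L$, whence the product is $R_L$ as you claim. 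So the bookkeeping you flag as delicate is indeed the crux, and it does close; your proposal is correct.
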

\noindent{\em Remark:} This is the residue of the Dedekind zeta function of $L$
 at $s=1$.

\subsubsection{Haar measure for orbital integrals}\label{etasec}

Let $\g\in G(F)$ be an elliptic element.  Here we define a Haar measure $\eta$ on 
$\ol{G_\g(\A_F)}$ which is convenient to use for
computing the elliptic orbital integrals.
Given a nonarchimedean place $v$ of $F$, $\g$ is necessarily either elliptic or hyperbolic 
in $G(F_v)$.  
We select a compact open subgroup $H_v$ of $\ol{G_\g(F_v)}=Z(F_v)\bs G_\g(F_v)$
 as follows. 
If $\g$ is elliptic in $G(F_v)$, then the full group is compact by Proposition \ref{compact},  
and we take $H_v=\ol{G_\g(F_v)}$.
If $\g$ is hyperbolic in $G(F_v)$, then $G_\g(F_v)$ is
conjugate to the diagonal subgroup $M(F_v)$.  In this case we define $H_v$ to be
the subgroup of $\ol{G_\g(F_v)}$ taken by this conjugation to $\ol{M}(\O_v)\cong \O_v^*$, where
 $\O_v$ is the ring of integers of $F_v$.

Next, we choose a local Haar measure $\eta_v$ on $\ol{G_\g(F_v)}$ for each place $v$ of $F$
 as follows.
If $v\nmid \infty$, we normalize $\eta_v$ so that $\eta_v(H_v)=1$.  
If $v|\infty$ is a real place of $F$ and $\g$ is elliptic over $F_v$, we take
$\eta_v(\ol{G_\g(F_v)})=1$.  If $v|\infty$ and  $\g$ is hyperbolic over $F_v$, 
then $\ol{G_\g(F_v)}\cong M(F_v)/F_v^*\cong F_v^*$, and we give it the measure 
$d\eta_v(x)=d\mu_v(x)$ for $\mu_v$ as in \eqref{Rmu} or \eqref{Cmu}.\footnote[2]{ 
With $F=\Q$, these are the measures that are used in the local
orbital integral calculations in the present paper.
See \S \ref{ghyp}-\ref{gell} for finite $\ell\nmid N$ and \cite[\S26.2]{KL} for the
  $\ell=\infty$ calculation yielding \eqref{Rell}.  For $\ell|N$, in \S \ref{pN} we 
will use the same measure used in \S\ref{gell}.}

Note that
\[\ol{G_\g(\A_F)}={\prod_{v}}'\,\ol{G_\g(F_v)},\]
where the product is restricted relative to the subgroups $H_v$.
We let $\eta$ denote the Haar measure on $\ol{G_\g(\A_F)}$ which is the restricted product
of the above local measures $\eta_v$.

As explained in \eqref{Ggisom}, for $E=F[\g]$ we have
\[\ol{G_\g(\A_F)} \cong \A_E^*/ \A_F^*.\]
So another natural measure on $\ol{G_\g(\A_F)}$ is the quotient measure $\mu_{E/F}$
  coming from the Haar measures
$\mu_E$ and $\mu_F$ on $\A_E^*$ and $\A_F^*$ obtained by taking 
  $L=E$ and $L=F$ respectively in \S\ref{Tatevol}.

Let us next determine the constant relating the two measures $\eta$ and $\mu_{E/F}$.
For a place $v$ of $F$ and a place $w$ of $E$ lying over $v$, we have defined the 
measures $\mu_v$ and $\mu_w$ on $F_v^*$ and $E_w^*$ in \S\ref{Tatevol}.
We let $\mu_v'=\prod_{w|v}\mu_w$ be the product measure on $E_v^* = \prod_{w|v}E_w^*$,
and define $\ol{\mu}_v'$ to be the corresponding quotient measure on 
  $E_v^*/ F_v^*\cong \ol{G_\g(F_v)}$.
Then $\mu_{E/F}=\prod'_v \ol{\mu}_v'$ where $v$ runs over the places of $F$.
For each $v$ we need to find the constant relating $\eta_v$ to $\ol{\mu}_v'$.

Let $v$ be a nonarchimdean place of $F$.
Suppose $\g$ is hyperbolic in $G(F_v)$, so that $\eta_v(\ol{M}(\O_v))=1$.
Let $w,\ol{w}$ be the primes of $E$ lying over $v$.  Then 
\[E_v := E\otimes F_v\cong E_w\oplus E_{\ol w},\]
and $\mu_v'=\mu_w\times \mu_{\ol{w}}$ on 
\[G_\g(F_v)\cong E_v^*\cong E_w^*\times E_{\ol w}^*\cong F_v^*\times F_v^*.\]
Hence
\[\mu_v'(\O_w^*\times\O_{\ol w}^*)=\mu_w(\O_w^*)\mu_{\ol w}(\O_{\ol w}^*)=(\N\mathfrak{D}_w)^{-1/2}
 (\N\mathfrak{D}_{\ol w})^{-1/2}\]
 by \eqref{ND}.  (This is in fact equal to $\N\mathfrak{D}_v$, but we prefer to leave it
unsimplified for global reasons.)
 Likewise, the diagonally embedded subgroup $F_v^*\subset E_v^*$ has measure 
  $\mu_v(\O_v^*)=(\N\mathfrak{D}_v)^{-1/2}$.
 Therefore the quotient measure $\ol{\mu}_v'$ on $E_v^*/ F_v^*\cong\ol{G_\g(F_v)}$ 
 gives the open subgroup $(\O_w^*\times \O_{\ol w}^*)/\O_v^*\cong \ol{M}(\O_v)$
  the measure $\frac{(\N\mathfrak{D}_w)^{-1/2}(\N\mathfrak{D}_{\ol w})^{-1/2}}
{(\N\mathfrak{D}_v)^{-1/2}}.$
Consequently,
\[\eta_v=\frac{(\N\mathfrak{D}_w)^{1/2}(\N\mathfrak{D}_{\ol w})^{1/2}}
{(\N\mathfrak{D}_v)^{1/2}}\ol{\mu}_v'\]
for such $v$.

Now suppose $\g$ is elliptic in $G(F_v)$ (again with $v$ nonarchimedean).
  Then there is a unique valuation $w$ of $E$ extending $v$, and $E_w=F_v[\g]$ is a 
  quadratic extension of $F_v$.
Let $\O_{w}$ be its ring of integers, with 
a uniformizer $\varpi$.  Then for the ramification index $e_v=e(w/v)\in\{1,2\}$,
\[        \ol{G_\g(F_v)}\cong E_w^*/F_v^*
=\bigcup_{j=0}^{e_v-1} \varpi^j\O_{w}^*/\O_{v}^*\]
as in \eqref{olGg}.
By definition of the local component $\mu_w$ of $\mu_E$,
   $\mu_w(\O_{w}^*)=(\N\mathfrak{D}_w)^{-1/2}$.  
The local component of $\mu_F$ at $v$ gives $\meas(\O_v^*)=(\N\mathfrak{D}_v)^{-1/2}$.  
Therefore
the quotient measure $\ol{\mu}_v'$ satisfies
\[\ol{\mu}_v'(\O_{w}^*/\O_v^*)=\frac{(\N\mathfrak{D}_w)^{-1/2}}{(\N\mathfrak{D}_v)^{-1/2}}.\]
Since $\eta_v(\ol{G_\g(F_v)})=1$, it follows that
\[\eta_v = \frac1{e_v} \frac{(\N\mathfrak{D}_w)^{1/2}}{(\N\mathfrak{D}_v)^{1/2}}
\ol{\mu}_v'\]
 for such $v$.

Suppose now that $F_v=\R$ and $\g$ is elliptic in $G(F_v)$.  Then $E_w=\C^*$ and
$\ol{G_\g(F_v)}=\C^*/\R^*$. A set of representatives in $\C^*$
is $\{e^{i\theta}|\, \theta\in [0,\pi)\}.$  Since the measure $\mu_v(x)=\frac{dx}{|x|}$
 on $\R^*$ matches the factor $\frac{dr}r$ in $\mu_w(z)=\frac{2 dr\,d\theta}r$
 given in \eqref{Cmu}, it follows that
\[\ol{\mu}_v'(\C^*/\R^*) = 2\pi.\]
Since $\eta_v(\ol{G_\g(\R)})=1$,
\[\eta_v=\frac1{2\pi}\ol{\mu}_v'\]
for such $v$.

If $F_v=\R$ or $\C$ and $\g$ is hyperbolic in $G(F_v)$, then as in the analogous nonarchimedean
case, $E_v^*=E_w\times E_{\ol w}\cong F_v^*\times F_v^*$, and the quotient measure
on $\ol{G_\g(F_v)}\cong E_v^*/F_v^*\cong F_v^*$ is $\ol{\mu}_v'(x)=\mu_v(x)$.  In such
cases we have likewise defined $\eta_v=\mu_v$.  So 
$\eta_v=\ol{\mu}_v'$ for such $v$.

Putting everything together, we have shown that
\[\eta =\left[\prod_{v\nmid \infty}\frac1{e_v}\frac{\prod_{w|v}(\N\mathfrak{D}_w)^{1/2}}
{(\N\mathfrak{D}_v)^{1/2}}\right]\left[\prod_{v|\infty,\atop{\g \text{ elliptic in }G(F_v)}}\frac1{2\pi}\right]
\mu_{E/F}.\]
We can simplify using three well known facts from algebraic number theory (see, e.g., \cite[\S III.2]{N}): 
\begin{enumerate}
\item $e_v=2$ if and only if $\p_v|\mathfrak{d}_{E/F}$ where $\mathfrak{d}_{E/F}$ is the relative discriminant;
\item the absolute discriminant of a local field is the absolute norm of the different; 
\item the product of the local discriminants is the global discriminant.
\end{enumerate}
It follows that taking $d_F,d_E\in \Z$ to be the discriminants of $F$ and $E$ respectively,
\begin{equation}\label{etamu}
\eta = \frac{|d_E|^{1/2}}{|d_F|^{1/2}}\frac1{2^{\w_F(\mathfrak{d}_{E/F})}}
  \frac1{(2\pi)^{\alpha_\g}}\mu_{E/F},
\end{equation}
where $\w_F(\mathfrak{d}_{E/F})$ is the number of distinct prime factors of 
$\mathfrak{d}_{E/F}$ in  $\O_F$, and 
$\alpha_\g$ is the number of (real) archimedean places $v$ of $F$ for which $\g$ is elliptic
 in $G(F_v)$.

\subsubsection{The quotient measure}

We turn now to the quotient space whose measure we need to compute, namely
    $\ol{G_\g(F)}\bs\ol{G_\g(\A_F)}\cong E^*\A_F^*\bs \A_E^*\cong \A_E^*/\A_F^*E^*$.
We have defined the quotient measure $\mu_{E/F}$ on $\A_E^*/\A_F^*$.  By \eqref{AL1},
we have
\[\A_F^*= T\times \A_F^1,\qquad \A_E^* = T\times \A_E^1.\]
We regard $\A_F^*$ as a subset of $\A_E^*$, so $T$ is the set
\[T=\{(a,a,\ldots,a)\in E_\infty^* |a>0\}\subset \A_E^*.\]
We will use Lemma \ref{modT} to relate $\mu_{E/F}$ to the quotient measure on $\A_E^1/\A_F^1$
coming from the measures $\mu_E^1$ and $\mu_F^1$ defined below \eqref{AL1}.
Recall that $T$ is given the measure $d\mu_T(t)=\frac{dt}t$, where $t^{1/n_E}=a$ for
$n_E=[E:\Q]$.  In terms of the parameter $a$, 
\[d\mu_T(t)= {n_E} \frac{da}a.\]
Notice that this is {\em not} the measure given to $\R^+$ upon taking $L=F$ in \eqref{AL1},
which is ${n_F}\frac{da}a = \frac{n_F}{n_E}d\mu_T(t)$. In other words, for $\mu_T$ normalized
as above, $\mu_F^1$ is defined by
\[\mu_F =\frac1{[E:F]}\mu_T\times \mu_F^1.\]
Therefore
\[\mu_F =\mu_T\times \frac1{[E:F]}\mu_F^1
=\mu_T\times \frac12\mu_F^1.\]
Hence by Lemma \ref{modT},
the quotient measure $\mu_{E/F}$ on $\A_E^*/\A_F^*\cong \A_E^1/\A_F^1$
is the same as the quotient measure 
coming from $\mu_E^1$ and $\frac12\mu_F^1$.  We denote this quotient measure by
$\mu_{E/F}^1$.

Finally, taking the quotient by the discrete subgroup $E^*$ we have
\begin{equation}\label{muEF}\mu_{E/F}(\A_E^*/E^*\A_F^*) =
\mu_{E/F}^1((\A_E^1/E^*)/(\A_F^1E^*/ E^*))=
\frac{\mu_E^1(\A_E^1/E^*)}{\frac12\mu_F^1(\A_F^1/F^*)}.
\end{equation}
As a technical point, the measure on the disjoint union
\[\A_F^1E^*=\bigcup_{\alpha\in E^*/F^*}\A_F^1\alpha\]
is simply $\tfrac12\mu_F^1$ on each component since $E^*$ is given the counting measure.  This explains
why the quotient measure on $\A_F^1E^*/E^*$ is the same as $\tfrac12\mu_F^1$ on $\A_F^1/F^*$.
Applying Theorem \ref{Tm} and \eqref{etamu} to \eqref{muEF}, we immediately obtain the following.

\begin{theorem}\label{meas}
    Let $\g\in G(F)$ be an elliptic element, and let
    $\eta$ be the measure introduced in \S\ref{etasec}. Then for $E=F[\g]$,
    \[\eta(\ol{G_\g(F)}\bs\ol{G_\g(\A_F)})=\frac{2^{r_1(E)}(2\pi)^{r_2(E)}h(E)R_E}
    {2^{r_1(F)}(2\pi)^{r_2(F)}h(F)R_F}\cdot\frac{w_F}{w_E}\cdot
\frac 2{2^{\w_F(\mathfrak{d}_{E/F})}(2\pi)^{\alpha_\g}}\]
with notation as in Theorem \ref{Tm}, where $\w_F(\mathfrak{d}_{E/F})$ is the number
of distinct prime ideals of $\O_F$ dividing the relative discriminant $\mathfrak{d}_{E/F}$,
and $\alpha_\g$ is the number of (real) archimdean places $v$ of $F$ for which $\g$
is elliptic in $G(F_v)$. 

In the special case where $F=\Q$ and $E=\Q[\g]$ is quadratic imaginary, 
we have $\alpha_\g = 1$, $w_F=2$, $h(F)=R_E=R_F=1$, so
 \begin{equation}\label{etaQ}
\eta(\ol{G_\g(\Q)}\bs\ol{G_\g(\A)})=\frac{2h(E)}{w_E2^{\w(d_E)}}
\end{equation}
where $\w(d_E)$ is the number of distinct prime factors of the discriminant $d_E$.
\end{theorem}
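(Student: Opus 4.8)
The plan is to assemble the statement from three ingredients already established in \S\ref{msec}: the conversion constant \eqref{etamu} relating the orbital-integral measure $\eta$ to the Tate quotient measure $\mu_{E/F}$, the identity \eqref{muEF} expressing $\mu_{E/F}(\A_E^*/E^*\A_F^*)$ as a ratio of norm-one volumes, and Tate's evaluation of those volumes in Theorem \ref{Tm} (which also supplies the cocompactness needed for finiteness, via the quotients $\A_E^1/E^*$ and $\A_F^1/F^*$). No further analysis is required; as the excerpt indicates, the theorem follows immediately once these are combined, and the proof is essentially bookkeeping of normalizing constants.

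First I would substitute Theorem \ref{Tm}, applied with $L=E$ and with $L=F$, into \eqref{muEF} to obtain
\[\mu_{E/F}(\A_E^*/E^*\A_F^*)=2\cdot\frac{2^{r_1(E)}(2\pi)^{r_2(E)}h(E)R_E\,|d_F|^{1/2}w_F}{2^{r_1(F)}(2\pi)^{r_2(F)}h(F)R_F\,|d_E|^{1/2}w_E},\]
the leading $2$ coming from the $\tfrac12\mu_F^1$ in the denominator of \eqref{muEF}. Multiplying by the constant $\tfrac{|d_E|^{1/2}}{|d_F|^{1/2}}2^{-\w_F(\mathfrak{d}_{E/F})}(2\pi)^{-\alpha_\g}$ of \eqref{etamu}, the discriminant factors $|d_E|^{1/2}$ and $|d_F|^{1/2}$ cancel against their reciprocals, and regrouping the remaining terms gives exactly the asserted formula for $\eta(\ol{G_\g(F)}\bs\ol{G_\g(\A_F)})$.

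For the special case $F=\Q$ with $E=\Q[\g]$ imaginary quadratic, I would record the standard values $r_1(F)=1$, $r_2(F)=0$, $h(F)=R_F=1$, $w_F=2$, $d_F=1$, together with $r_1(E)=0$, $r_2(E)=1$, $R_E=1$ (the unit group of $\O_E$ has rank $0$). Since the relative discriminant $\mathfrak{d}_{E/\Q}$ is the ideal $(d_E)$ we have $\w_F(\mathfrak{d}_{E/\Q})=\w(d_E)$, and since $E\otimes_\Q\R\cong\C$ the element $\g$ is elliptic over $\R$, so the unique real place contributes $\alpha_\g=1$. Plugging these into the general formula and simplifying $\frac{2\pi\,h(E)}{2}\cdot\frac{2}{w_E}\cdot\frac{2}{2^{\w(d_E)}\cdot 2\pi}$ yields \eqref{etaQ}.

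The only point demanding care is the tracking of the factor $\tfrac12$ in \eqref{muEF} — equivalently the $[E:F]=2$ appearing in $\mu_F=\mu_T\times\tfrac12\mu_F^1$ — and of the archimedean normalization $\ol{\mu}'_v(\C^*/\R^*)=2\pi$ against $\eta_v(\ol{G_\g(\R)})=1$. Both of these subtleties have, however, already been absorbed into \eqref{etamu}, so in the proof of Theorem \ref{meas} itself there is no genuine obstacle beyond a careful substitution and cancellation.
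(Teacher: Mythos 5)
Your proposal is correct and follows exactly the route the paper itself takes (the paper's proof is a single sentence pointing to Theorem \ref{Tm}, \eqref{etamu}, and \eqref{muEF}); you have simply written out the substitution and cancellation of the discriminant factors explicitly. The special-case bookkeeping for $F=\Q$, including $\alpha_\g=1$ and $\w_F(\mathfrak{d}_{E/\Q})=\w(d_E)$, is also handled correctly.
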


\noindent With the above in place, the proof of Theorem \ref{stf2} is complete.

\section{The case $N=S^2T^3$: Proof of Theorem \ref{mainST}}\label{ex}

Henceforth, we will focus on the case where $N=S^2T^3$ for $S$ and $T$
relatively prime square-free integers. In order to prove Theorem \ref{mainST},
by Theorem \ref{stf2} we just need to compute the orbital integrals at the primes dividing $N$.
We begin in \S\ref{d0}-\ref{ssr}
by reviewing the construction of supercuspidals of conductor $p^2$ (depth zero case)
and of conductor $p^3$ (simple case), giving explicit formulas for 
the local test functions to be used.   In \S\ref{test} we outline the global setup, and then
compute the required orbital integrals in \S\ref{pN} to complete the proof.

\subsection{Depth zero supercuspidal representations}\label{d0}

Let $F$ be a $p$-adic field, with ring of integers $\O$, maximal ideal $\p=\varpi\O$, and
  residue field $\k=\O/\p$ of size $q$.  The supercuspidal representations
of $G(F)$ of minimal conductor are the so-called depth zero supercuspidals, with conductor $\p^2$.
They have the form $\sigma=\cInd_{ZK}^{G(F)}(\rho)$, where $\rho$ is a $(q-1)$-dimensional representation
of $K=G(\O)$ inflated from a cuspidal representation of $G(\k)$, and $\cInd$ denotes
compact induction.  Some of their 
properties are summarized below (see, e.g., \cite{KR} for more detail).  

Temporarily, write $G=G(\k)$.
Let $L$ be the unique quadratic extension of $\k$.
The multiplicative group $L^*$ embeds as a nonsplit torus $\T\subset G$, with $\k^*$
mapping onto the center $Z\subset G$.   A character $\nu:L^*\rightarrow\C^*$ is
{\bf primitive} (or regular) if $\nu\neq \nu^q$, or equivalently, if $\nu$ is not of the form
$\chi\circ N^L_{\k}$ for a character $\chi$ of $\k^*$, where $N_{\k}^L$ is the norm map.
 There are $q(q-1)$ primitive characters of $L^*$.
Given a character $\w$ of $\k^*$, let $[\w]$ denote the set of primitive characters $\nu$ 
satisfying $\nu|_{\k^*}=\w$.  
By \cite[Proposition 2.3]{KR}, the cardinality of $[\w]$ is
\begin{equation}\label{Pw}
P_{\w}=\begin{cases}q-1&\text{if $q$ is odd and $\w^{(q-1)/2}$ is trivial}\\
q+1&\text{if $q$ is odd and $\w^{(q-1)/2}$ is nontrivial}\\
q&\text{if $q$ is even}.\end{cases}
\end{equation}

Let $U=\mat 1\k01$ be the upper triangular unipotent subgroup of $G$.  A representation
of $G$ is {\bf cuspidal} if it does not contain a $U$-fixed vector.
Fix a nontrivial additive character
\[\psi:\k\longrightarrow\C^*.\]
We will always take $\psi(x)=e(\tfrac{x}p)=e^{2\pi i x/p}$ if $\k=\Z/p\Z$.
We may view $\psi$ as a character of $U$ in the obvious way.

Given a primitive character $\nu$ of $\T$, there is a unique irreducible cuspidal representation $\rho_\nu$
of dimension $q-1$ satisfying
\[\Ind_{ZU}^G(\k)(\nu\otimes\psi) =\rho_\nu\oplus \Ind_\T^G\nu.\]
Every cuspidal representation arises in this way, and 
$\rho_\nu\cong \rho_{\nu'}$ if and only if $\nu'\in\{\nu,\nu^q\}$.  

We have the following well-known formula for the character of $\rho_\nu$. For $x\in G(\k)$, 
\begin{equation}\label{trho}\tr\rho_\nu(x)=\begin{cases} (q-1)\nu(x)&\text{if }x\in Z\\
-\nu(z)&\text{if }x=zu, z\in Z, u\in U, u\neq 1\\
-\nu(x)-\nu^q(x)&\text{if }x\in \T, x\not\in Z\\
0&\text{if no conjugate of $x$ belongs to $\T\cup ZU$}.\end{cases}
\end{equation}
Because $\nu(c^{-1}x c)=\nu(x^q)$ for all $c\in N_G(\T)-\T$, there is no ambiguity 
evaluating $\tr\rho_\nu(y)$ using the third row above if $y$ is conjugate in $G(\k)$ to $x\in \T$.

Working now in the group $G(F)$, given the surjection $K\rightarrow G(\k)$
 obtained by reduction modulo $\p$, we may view
$\rho_\nu$ as a representation of $K$.  Its central character is given by $z\mapsto \nu(z(1+\p))$
for $z\in \O^*$.
 By choosing a complex number $\nu(\varpi)$ of norm $1$, we may extend
$\rho_\nu$ to a representation of $ZK$, and then
\[\sigma_\nu=\cInd_{ZK}^{G(F)}(\rho_\nu)\]
is an irreducible unitary supercuspidal representation of conductor $\p^2$. Its formal degree under the
normalization $\meas(K)=1$ is 
\begin{equation}\label{fda}
d_{\sigma_\nu}=\dim\rho_\nu=q-1.
\end{equation}
The only equivalences among the representations $\sigma_\nu$ are $\sigma_\nu\cong \sigma_{\nu^q}$ 
(provided $\nu^q(\varpi)$ is defined to be the same complex number as $\nu(\varpi)$).

We define the test function $\fp:G(F)\rightarrow \C$ by
\begin{equation}\label{fpd0}
\fp(g)=\begin{cases}\overline{\tr\rho_\nu(g)}&\text{if }g\in ZK\\0&\text{otherwise},\end{cases}
\end{equation}
where $\tr\rho_\nu$ is given in \eqref{trho}.

\begin{proposition}\label{rootd0}
Suppose $\sigma_\nu$ has trivial central character.  Then its root number is given by 
\begin{equation}\label{ed0a}
\epsilon_\nu=\epsilon(\tfrac12,\sigma_\nu,\psi)=\begin{cases} -(-1)^{(q+1)/r}&\text{if $q$ is odd}\\
-1&\text{if $q$ is even},\end{cases} 
\end{equation}
where $r$ is the order of $\nu$ in the character group of $L^*$.
Suppose further that $q$ is odd and $4\nmid (q-1)$ so that $\alpha^2=-1$ for some 
  $\alpha\in L^*-\k^*$.  Then
\begin{equation}\label{ed0b}
\epsilon_\nu=-\nu(\alpha).
\end{equation}
\end{proposition}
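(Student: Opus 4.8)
The plan is to express the root number $\epsilon_\nu=\epsilon(\tfrac12,\sigma_\nu,\psi)$ as an explicit normalized Gauss sum over the quadratic residue field $L$, evaluate that Gauss sum by the classical theory of characters whose order divides $q+1$ (``uniform cyclotomy''), and then read off both \eqref{ed0a} and \eqref{ed0b}.

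First I would pass to the Galois side. Under the local Langlands correspondence for $\GL(2)$, the depth zero supercuspidal $\sigma_\nu$ corresponds to $\rho=\Ind_{W_E}^{W_F}\widetilde\nu$, where $E/F$ is the unramified quadratic extension (residue field $L$) and $\widetilde\nu$ is the tamely ramified character of $W_E\cong E^*$ whose restriction to $\O_E^*\twoheadrightarrow L^*$ is $\nu$; this has conductor exponent $1$, and triviality of the central character forces $\widetilde\nu|_{F^*}$ to be the unramified quadratic character, so in particular $\widetilde\nu$ takes the value $-1$ on a uniformizer. Inductivity of local constants, together with (i) the value of the Langlands constant $\lambda(E/F,\psi)$ for the unramified quadratic extension and (ii) the standard reduction of the local constant of a character of conductor exponent one to a finite-field Gauss sum, then combine to give
\[\epsilon_\nu=-\,q^{-1}\tau(\nu,\psi_L),\qquad \tau(\nu,\psi_L)=\sum_{x\in L^*}\nu(x)\,\psi_L(x),\]
where $\psi_L=\psi\circ\mathrm{Tr}_{L/\k}$ is the additive character of $L$ obtained from the $\psi$ fixed in the paper. (Equivalently one may simply quote the explicit $\epsilon$-factor formula for $\sigma_\nu$ established via the Whittaker model in \cite{KR}.) Since $\sigma_\nu$ is self-dual with trivial central character, $\epsilon_\nu\in\{\pm1\}$ a priori, so the whole problem is to pin down the sign of $\tau(\nu,\psi_L)$.

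This is where the work lies. Triviality of the central character means $\nu|_{\k^*}=1$, so $\nu$ factors through the cyclic group $L^*/\k^*$ of order $q+1$; hence $r=\ord(\nu)$ divides $q+1$, and primitivity ($\nu\ne\nu^q$, equivalently $\nu\ne\nu^{-1}$ since $\nu|_{\k^*}=1$) gives $r\ge 3$. Characters of order dividing $q+1$ over $L=\F_{q^2}$ are precisely those whose Gauss sum is ``pure'': $\tau(\nu,\psi_L)$ is real of absolute value $q$, and the classical formula (Stickelberger's congruence, or the Hasse--Davenport relation applied to the tower $\k\subset L$; see any standard treatment of Gauss and Jacobi sums) gives $\tau(\nu,\psi_L)=(-1)^{(q+1)/r}q$ when $q$ is odd and $\tau(\nu,\psi_L)=q$ when $q$ is even. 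Substituting into the displayed identity yields \eqref{ed0a}. The genuinely delicate point is the sign bookkeeping: reconciling the normalizations of $\epsilon(\tfrac12,\cdot,\psi)$, of $\lambda(E/F,\psi)$, of $\widetilde\nu$ on a uniformizer, and of the Gauss sum, and treating the even residue characteristic case on its own. I would fix all conventions at the outset and cross-check against a small example, e.g. $q=3$, where $r=4$, $\tau(\nu,\psi_L)=-3$, and \eqref{ed0a} predicts $\epsilon_\nu=+1$.

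Finally, \eqref{ed0b} is an elementary corollary of \eqref{ed0a}. Assume $q$ is odd and $4\nmid q-1$. Then $L^*$, cyclic of order $q^2-1\equiv 0\bmod 4$, contains an element $\alpha$ with $\alpha^2=-1$, necessarily of order $4$; and $\alpha\notin\k^*$ because $|\k^*|=q-1$ is not divisible by $4$. Hence $\alpha\k^*$ is the unique element of order $2$ in $L^*/\k^*\cong\Z/(q+1)\Z$, i.e. $\alpha\k^*=\tfrac{q+1}{2}g$ for any generator $g$. Choosing $g$ with $\nu(g)=\zeta$ a primitive $r$-th root of unity and writing $q+1=rm$, one gets $\nu(\alpha)=\zeta^{(q+1)/2}=(-1)^m=(-1)^{(q+1)/r}$ (checking the cases $m$ even and $m$ odd, the latter forcing $r$ even); moreover $\nu(\alpha)=\pm1$ because $\nu(-1)=1$, so the choice of square root is irrelevant. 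Therefore $\epsilon_\nu=-(-1)^{(q+1)/r}=-\nu(\alpha)$, as claimed. The main obstacle throughout is getting the sign of the Gauss sum exactly right, especially in residue characteristic $2$; once that is in hand, everything else is routine.
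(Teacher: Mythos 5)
Your proposal is correct in outline, but it takes a genuinely different route from the paper's.

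The paper never touches the Galois side. Instead it cites Schmidt to identify the root number with the Atkin--Lehner eigenvalue of $\sigma_\nu$ on the new vector, and then computes that eigenvalue directly by plugging the element $\smat{}1{\varpi^2}{}$ into the explicit model of the compact induction on $\C[\k^*]$ from \cite{KR}. Unwinding the action of $\rho_\nu(\smat{}1{-1}{})$ on the constant function via the formulas in \cite{KR} produces the same normalized Gauss sum $\epsilon_\nu=-\tfrac1q\sum_{u\in L^*}\psi(\tr^L_\k(u))\nu(u)$ that you arrive at, after which \cite[Theorem 11.6.1]{BEW} (the pure Gauss sum evaluation you invoke) gives \eqref{ed0a}. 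Your route through local Langlands, inductivity, and the Langlands constant $\lambda(E/F,\psi)$ reaches the same Gauss sum, but it pushes all of the sign bookkeeping into conventions for $\lambda$, for $\widetilde\nu$ on a uniformizer, and for the $\epsilon$-factor of a tamely ramified character -- you correctly flag this as the delicate point, but you do not actually pin the signs down, whereas the paper's computation is entirely explicit and self-contained given the model. The trade-off: your approach is more conceptual and would generalize readily (e.g.\ to higher depth or to $\GL_n$), while the paper's is elementary and leaves nothing to check against normalization mismatches. Your aside that one could ``quote the explicit $\epsilon$-factor formula in \cite{KR}'' is slightly off -- \cite{KR} supplies the model, not the $\epsilon$-factor; the paper uses the model to compute the AL sign, not an $\epsilon$-factor formula.

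For \eqref{ed0b}, your argument in $L^*/\k^*$ is a clean rephrasing of the paper's computation with a generator $t$ of $L^*$; both reduce to $\nu(\alpha)=(-1)^{(q+1)/r}$ and the observation that $\nu(\alpha)^2=\nu(-1)=1$ removes the ambiguity in the choice of $\alpha$. This part is essentially the same as the paper's.
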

\noindent{\em Remark:} Under the hypothesis, $\nu|_{\k^*}$ is trivial, which is
equivalent to $r|(q+1)$ when $q$ is odd. 

\begin{proof}
The root number coincides with the Atkin-Lehner sign of the representation (\cite[3.2.2 Theorem]{Sch}).
We will show that it is a Gauss sum for $\nu$, which can be
evaluated explicitly.  The Atkin-Lehner sign $\epsilon_\nu$ is defined by
\[\sigma_\nu(\smat{}1{\varpi^2}{})\varphi =\epsilon_\nu\varphi,\]
where $\varphi$ is a new vector in the space of $\sigma_\nu$.
Note that $\epsilon_\nu^2=1$ since $\sigma(\smat{}1{\varpi^2}{}^2)
=\sigma(\smat{\varpi^2}{}{}{\varpi^2})$ acts trivially under the hypothesis of trivial
  central character.

A model for $\rho_\nu$ on the space $\C[\k^*]$ of complex-valued functions on $\k^*$ 
is described in \cite{KR}, following \cite{PS}.  In terms of this model,
the new space $(\cInd_{ZK}^{G(F)}(\rho_\nu))^{K_1(\p^2)}$ is spanned by the function 
$\varphi:G(F)\rightarrow \C[\k^*]$
supported on the coset $ZK\mat{\varpi}{}{}1K_1(\p^2)$ and defined by
\[\varphi(zk\smat\varpi{}{}1)=\rho_\nu(zk)w\qquad (z\in Z, k\in K),\]
where $w\in \C[\k^*]$ is the constant function $1$  (\cite[Proposition 3.1]{KR}).
In particular 
\[\varphi(\smat{\varpi}{}{}1)(1)=w(1)=1.\]
Therefore the Atkin-Lehner eigenvalue is given by
\[\epsilon_\nu = [\sigma_\nu(\smat{}1{\varpi^2}{})\varphi](\smat\varpi{}{}1)(1)
=\varphi\Bigl(\smat\varpi{}{}1\smat{}1{\varpi^2}{}\Bigr)(1)
=\varphi\Bigl(\smat\varpi{}{}\varpi\smat{}11{}\smat\varpi{}{}1\Bigr)(1)\]
\[=\nu(\varpi)\Bigl(\rho_\nu(\smat{}11{})w\Bigr)(1)=\Bigl(\rho_\nu\bigl(\smat{}1{-1}{}\smat{-1}{}{}1\bigr)w\Bigr)(1),\]
since we are assuming $\nu|_{F^*}=1$.
Let $f_a\in \C[\k^*]$ be the characteristic function of $a\in \k^*$, so that
$w=\sum_{a\in\k^*}f_a$.  Using \cite[(2-11)]{KR} we see that $\rho_\nu(\smat{-1}{}{}1)w=w$, and
just below (2-16) of the same reference, we have
\[\bigl(\rho_\nu(\smat{}1{-1}{})f_a\bigr)(1)=-\frac1q\nu(a^{-1})
\sum_{u\in L^*\atop{N(u)=a}}\psi(\tr^L_{\k}(u))\nu(u)\]
for all $a\in \k^*$.
We are assuming that $\nu|_{\k^*}=1$, so $\nu(a^{-1})=1$, and summing over $a\in \k^*$ we have
\[\epsilon_\nu=-\frac1q\sum_{u\in L^*}\psi(\tr^L_{\k}(u))\nu(u).\]
This Gauss sum can be evaluated explicitly by an elementary calculation, giving
\eqref{ed0a}; see \cite[Theorem 11.6.1]{BEW} for details.

Now suppose $q$ is odd and $4\nmid (q-1)$, and let $t$ be a generator of the cyclic group
  $L^*$, so in particular $t^{\frac{q^2-1}2}=-1$. If $\nu$ has order $r$, there exists $j$ with $\gcd(j,r)=1$ such that
$\nu(t)=e(\frac jr)$.  Taking $\alpha=t^{\frac{q^2-1}4}$, 
\[\nu(\alpha)=e\bigl(\frac{j(q+1)(q-1)}{4r}\bigr)=(-1)^{\frac{j(q+1)}r\frac{q-1}{2}}
=(-1)^{\frac{j(q+1)}r}\]
since $\frac{q-1}2$ is odd by hypothesis.  The above is equal to $(-1)^{\frac{q+1}r}=-\epsilon_\nu$, since
$r$ is odd when $j$ is even, and $2|(q+1)$.  This proves \eqref{ed0b}.
\end{proof}

\begin{corollary}\label{ecount}
Fix $\epsilon\in\{\pm1\}$.
Then the number of depth zero supercuspidal representations of $G(F)$ with trivial central character
and root number $\epsilon$ is
\[\begin{cases}
\frac{q-1}4&\text{if }q\equiv 1\mod 4\\
\frac{q+1}4&\text{if $q\equiv 3\mod 4$ and $\epsilon = 1$}\\
\frac{q-3}4&\text{if $q\equiv 3\mod 4$ and $\epsilon = -1$}\\
0&\text{if $q$ is even and $\epsilon =1$}\\
\frac q2&\text{if $q$ is even and $\epsilon = -1$}.
\end{cases}
\]
\end{corollary}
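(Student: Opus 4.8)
The plan is to reduce the count to a problem about characters of the cyclic group $L^*/\k^*$ of order $q+1$, and then to read off the root numbers from Proposition~\ref{rootd0}. First I would recall that a depth zero supercuspidal $\sigma=\sigma_\nu$ of $G(F)$ with trivial central character is determined up to isomorphism by an unordered pair $\{\nu,\nu^q\}$ of primitive characters $\nu$ of $L^*$ with $\nu|_{\k^*}$ trivial, the scalar value being forced to be $\nu(\varpi)=1$. By \eqref{Pw} with $\w$ trivial there are $q-1$ such characters when $q$ is odd and $q$ when $q$ is even; since primitivity means exactly $\nu\neq\nu^q$, every such pair has size $2$, so the number of these $\sigma_\nu$ is $\tfrac{q-1}2$ (resp.\ $\tfrac q2$). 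When $q$ is even, \eqref{ed0a} gives $\epsilon_\nu=-1$ for all of them, which yields the last two lines of the Corollary, so from now on I would assume $q$ odd.

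For $q$ odd, I would identify the characters of $L^*$ trivial on $\k^*$ with the characters of $L^*/\k^*\cong\Z/(q+1)\Z$. Because $x^q\equiv x^{-1}\pmod{\k^*}$ in this quotient, one has $\nu^q=\nu^{-1}$, so $\nu$ is primitive precisely when its order $r$ satisfies $r>2$, and the identification $\nu\sim\nu^q$ becomes $\nu\sim\nu^{-1}$. By \eqref{ed0a}, $\epsilon_\nu=-(-1)^{(q+1)/r}$, hence $\epsilon_\nu=+1$ iff $(q+1)/r$ is odd iff $2^a\mid r$, where $2^a$ is the exact power of $2$ dividing $q+1$. The key elementary computation is that, writing $q+1=2^a m$ with $m$ odd, the number of characters of $\Z/(q+1)\Z$ of order divisible by $2^a$ equals $\sum_{s\mid m}\varphi(2^a s)=2^{a-1}\sum_{s\mid m}\varphi(s)=2^{a-1}m=\tfrac{q+1}2$.

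It then remains to split by residue class. If $q\equiv1\pmod 4$ then $a=1$, and among the $\tfrac{q+1}2$ characters with $2\mid r$ exactly one, namely the order-$2$ character, is not primitive; discarding it leaves $\tfrac{q-1}2$ primitive characters of root number $+1$, which pair up under $\nu\sim\nu^{-1}$ into $\tfrac{q-1}4$ representations, and hence also $\tfrac{q-1}2-\tfrac{q-1}4=\tfrac{q-1}4$ representations of root number $-1$. If $q\equiv3\pmod 4$ then $a\ge2$, every character counted above has order $\ge4>2$ and is therefore primitive, giving $\tfrac{q+1}4$ representations of root number $+1$ and $\tfrac{q-1}2-\tfrac{q+1}4=\tfrac{q-3}4$ of root number $-1$. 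This matches all five cases. The argument is essentially bookkeeping; the only point requiring care is the non-primitive order-$2$ character in the $q\equiv1\pmod 4$ subcase, which is counted among the characters with $\epsilon_\nu=+1$ but must be removed before passing to isomorphism classes of representations, together with consistent use of the two-to-one map $\nu\mapsto\sigma_\nu$.
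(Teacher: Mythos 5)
Your proposal is correct and follows essentially the same route as the paper: both arguments count the primitive characters of $L^*$ with trivial restriction to $\k^*$ using \eqref{Pw}, read off the root number from \eqref{ed0a}, treat the single imprimitive (order $\le 2$) character separately according to $q\bmod 4$, and finally halve the count via the two-to-one map $\nu\mapsto\sigma_\nu$. The only difference is bookkeeping: the paper fixes a generator $t$ of $L^*$ and sorts the exponents $k$ in $\nu_{k(q-1)}$ by parity, whereas you work intrinsically in $L^*/\k^*\cong\Z/(q+1)\Z$ and count characters whose order is divisible by the full $2$-part of $q+1$ via the $\varphi$-sum identity — an equivalent computation.
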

\begin{proof}
With notation as in \eqref{Pw}, the number of supercuspidals with a given central character $\w$
is $P_\w/2$.  (We divide by $2$ to account for the fact that $\nu$ and $\nu^q$ induce the same
supercuspidal.)  So the assertion for $q$ even is immediate from \eqref{Pw} and \eqref{ed0a}.

Let $q$ be odd, and let $t$ be a generator of the cyclic group $L^*$.  Then $t^{q+1}$ is a generator
of $\k^*$.  The characters of $L^*$ are the maps $\nu_m$ defined by
\[\nu_m(t)=e(\frac{m}{q^2-1}),\]
for $0\le m<q^2-1$.  
We consider only those characters satisfying $\nu_m|_{\k^*}=1$, i.e., $(q-1)|m$.  
Notice that $\nu_m$ is imprimitive if and only if $\nu_m^{q-1}=1$, which
 holds if and only if $(q+1)| m$.  
So we consider the values $m=k(q-1)$ (for $1\le k< (q+1)$) which are not multiples of $q+1$,
i.e., $k\neq \frac{q+1}2$. 

The order of $\nu_m$ is 
\begin{equation}\label{numorder}
\frac{q^2-1}{\gcd(m,q^2-1)} = \frac{q+1}{\gcd(k,q+1)}.
\end{equation}
By \eqref{ed0a}, $\sigma_{\nu_m}$ has root number
\begin{equation}\label{ek}
\epsilon_{\nu_m} = -(-1)^{\gcd(k,q+1)}=-(-1)^k,
\end{equation}
since $q+1$ is even.
 Notice that the removed value $\frac{q+1}2$ of $k$ is 
  odd if and only if $q\equiv 1\mod 4$.
So in this case, among the remaining $q-1$ values of $k$, half are odd and half are even.
If $q\equiv 3\mod 4$, then $\frac{q-1}2+1=\frac{q+1}2$ of the remaining values of $k$ are odd, 
  and $\frac{q-1}2-1=\frac{q-3}2$ are even.

To count supercuspidal representations, we divide the number of relevant $k$'s 
  by 2 since the distinct characters $\nu_m$ and $\nu_m^q$
induce the same representation.
\end{proof}

\subsection{Simple supercuspidal representations}\label{ssr}

With notation as in the previous section,
we recall here the construction of the supercuspidal representations of $G(F)$ of 
conductor $\p^3$.  The central character of any such representation is at most tamely ramified.
So we begin by fixing a character
$\w_\p$ of the center $Z=Z(F)\cong F^*$ of $G(F)$, trivial on $1+\p$.

Define the following compact open subgroup of $G(F)$:
\[K'=\mat{1+\p}{\O}{\p}{1+\p}.\]
Fix a nontrivial character
\[\psi:\k\longrightarrow\C^*,\]
which we also regard as a character of $\O$ trivial on $\p$. 
Given $t\in \k^*$, define a character
$\chi=\chi_t:K'\longrightarrow \C^*$ by 
\begin{equation}\label{chi}
    \chi(\mat ab{c\varpi}d) = \psi(b+tc).
\end{equation}
The matrix
\[g_t=g_\chi=\mat{}t{\varpi}{}\]
normalizes $K'$, and furthermore
\begin{equation}\label{gchi}
    \chi(g_\chi^{-1}kg_\chi)=\chi(k)
\end{equation}
for all $k\in K'$.

Given $\chi$ as above, let
\begin{equation}\label{H'}
H'=ZK'\cup g_\chi ZK'.
\end{equation}
Although it is not reflected in the notation $H'$, this set depends on both $t$ and the fixed
choice of $\varpi$.
Given that $g_\chi^2=t\varpi$,
we may extend $\chi$ to a character $\chi_\zeta$ of $H'$ via
\begin{equation}\label{xzdef}
    \chi_\zeta(g_\chi^d zk)=\zeta^d\w_\p(z)\chi(k)
\end{equation}
for $z\in Z$ and $k\in K'$,
where $\zeta$ is a fixed complex number satisfying 
\begin{equation}\label{zeta2}
\zeta^2=\w_\p(t\varpi).
\end{equation}

\begin{proposition}\label{ssc}
The compactly induced representation $\sigma_\chi^\zeta=\cInd_{H'}^{G(F)}(\chi_\zeta)$ is an
irreducible supercuspidal representation of conductor $\p^3$, with root number 
\[\epsilon(\tfrac12,\sigma_\chi^\zeta,\psi)=\zeta.\]
Conversely, every irreducible admissible representation
of $G(F)$ of conductor $\p^3$ with central character trivial on $1+\p$ 
arises in this way.
\end{proposition}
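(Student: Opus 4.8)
\emph{Proof strategy.} The plan is to establish the direct assertions by the standard Mackey machinery for representations compactly induced from open subgroups, and the converse by combining the classification of supercuspidals of $\GL_2(F)$ with a conductor count. Since $H'$ is open in $G(F)$ with $H'/Z$ compact and $\chi_\zeta$ is one-dimensional, $\sigma_\chi^\zeta=\cInd_{H'}^{G(F)}(\chi_\zeta)$ is admissible, and it is automatically supercuspidal once it is irreducible, because a representation compactly induced from a compact-mod-center open subgroup has compactly supported matrix coefficients (\cite{BH}). For irreducibility I would verify the intertwining criterion: for every $g\in G(F)\setminus H'$ there exists $h\in H'\cap gH'g^{-1}$ with $\chi_\zeta(h)\neq\chi_\zeta(g^{-1}hg)$. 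This I would check by running over $(H',H')$-double cosets via the affine Bruhat decomposition of $G(F)$ relative to $K_0(\p)$; the essential point is that $\chi_t$ is \emph{affine generic}, i.e. the character $\psi(b+tc)$ of $K'=\smat{1+\p}{\O}{\p}{1+\p}$ is nontrivial on both level-one root subgroups (the $b$-entry and the $c$-entry), which forces the full intertwiner of $\chi_\zeta$ to collapse onto the normalizer of $\chi_t$ inside the Iwahori normalizer, and that normalizer is exactly $H'$ by \eqref{gchi}. I expect this intertwining computation — elementary but requiring a careful pass through the affine Weyl group — to be the main obstacle.

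For the conductor, Frobenius reciprocity and Mackey's formula give
\[\dim(\sigma_\chi^\zeta)^{K_1(\p^j)}=\sum_{H'\backslash G(F)/K_1(\p^j)}\dim\chi_\zeta^{\,H'\cap gK_1(\p^j)g^{-1}},\]
each term being $0$ or $1$, and for $j=3$ one checks that exactly one double coset contributes, using that $\chi$ is trivial on $K'\cap K_1(\p^3)$ but nontrivial on $K'\cap K_1(\p^2)$; this yields conductor exactly $\p^3$ in the sense of \S\ref{bg} (cf. \cite{Tu}, \cite{He}) and pins down the new vector explicitly as in \cite{super}. For the root number, Schmidt's theorem \cite[3.2.2]{Sch} identifies $\epsilon(\tfrac12,\sigma_\chi^\zeta,\psi)$ with the Atkin--Lehner eigenvalue, i.e. the scalar by which $\sigma_\chi^\zeta(\smat{}{1}{\varpi^3}{})$ acts on the new vector; writing $\smat{}{1}{\varpi^3}{}$ as $g_\chi$ times a central element times an element of $K_1(\p^3)$, and evaluating the new vector at $g_\chi$ exactly as in the proof of Proposition \ref{rootd0}, one reads off the eigenvalue $\zeta$, using $\zeta^2=\w_\p(t\varpi)$.

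For the converse, let $\sigma$ be irreducible admissible of conductor $\p^3$ with central character trivial on $1+\p$. First, $\sigma$ is supercuspidal: a character $\chi\circ\det$ and a twist $\mathrm{St}\otimes\chi$ have conductor exponent $2a(\chi)$ (or exponent $1$ for $\mathrm{St}$ itself), never $3$; and a principal series $\pi(\chi_1,\chi_2)$ of conductor exponent $a(\chi_1)+a(\chi_2)=3$ has one factor of conductor exponent $\geq 2$, so its central character $\chi_1\chi_2$ is nontrivial on $1+\p$, contradicting the hypothesis. By the classification of supercuspidals of $\GL_2(F)$ (\cite{BH}, or \S\ref{bg}), $\sigma=\cInd_J^{G(F)}(\rho)$ with $J$ the normalizer of the Iwahori $K_0(\p)$ (the ``ramified'' case, the conductor being odd). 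Restricting $\rho$ to the pro-$p$ Iwahori $K'$, which is normal in $J$, the requirement that the conductor be $\p^3$ rather than $\p$ or $\p^2$ forces $\rho|_{K'}$ to contain an affine generic character; by the standard theory of simple supercuspidals (\cite{BH}; see also \cite{super}) such a character is, after conjugation inside $J$, of the form $\chi_t$ for a well-defined $t\in\k^*$, and the extensions of $\cInd_{H'}^J(\chi_t)$ to $J$ are parametrized by the scalar $g_\chi\mapsto\zeta$ with $\zeta^2=\w_\p(t\varpi)$; inducing in stages gives $\sigma=\cInd_J^{G(F)}\cInd_{H'}^J(\chi_\zeta)=\cInd_{H'}^{G(F)}(\chi_\zeta)=\sigma_\chi^\zeta$. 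Finally, the $\sigma_\chi^\zeta$ are pairwise inequivalent ($t$ is recovered from the affine generic character in $\rho|_{K'}$ and $\zeta$ from the Atkin--Lehner sign computed above), so the count matches the known number of conductor-$\p^3$ representations with central character trivial on $1+\p$ (\cite{Tu}); the affine-generic-versus-imprimitive analysis of $\rho|_{K'}$ is the one genuinely nonroutine step here, though it is classical.
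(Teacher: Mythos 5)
The paper's ``proof'' of Proposition \ref{ssc} is entirely by citation: Kutzko \cite{Ku} for the classification of conductor-$\p^3$ representations, the author's own \cite[\S4--5, Prop.\,7.2]{super} for the construction and new-vector computation in this notation, and Adrian--Liu \cite[Cor.\,3.12]{AL} for the root number. Your proposal instead reconstructs a from-scratch argument, and the skeleton is sound: Mackey-theoretic irreducibility via the affine-genericity of $\chi_t$, compactly-supported matrix coefficients giving supercuspidality, a Frobenius/Mackey count of $K_1(\p^j)$-fixed vectors for the conductor, and the depth-$1/2$ analysis of $\rho|_{K'}$ for the converse. This is a legitimate alternative route and fills in the missing detail; what it costs is length and a nontrivial intertwining computation, which is exactly why the paper defers to the literature.

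There is, however, a genuine gap in the root-number step. You invoke Schmidt \cite[3.2.2 Theorem]{Sch} to equate $\epsilon(\tfrac12,\sigma_\chi^\zeta,\psi)$ with the Atkin--Lehner eigenvalue of the new vector, but that theorem is stated (and proved) only for \emph{trivial} central character, where both quantities lie in $\{\pm1\}$. Proposition \ref{ssc} allows any central character $\w_\p$ trivial on $1+\p$, and then $\zeta$ satisfies $\zeta^2=\w_\p(t\varpi)$, which need not be $1$; there is no well-defined ``Atkin--Lehner eigenvalue'' in the involution sense, and the clean identity $\varepsilon=\varepsilon_\pi$ fails. The paper sidesteps this precisely by citing Adrian--Liu, who compute $\epsilon(\tfrac12,\sigma_\chi^\zeta,\psi)$ directly (a Gauss-sum local-constant calculation valid for any tame central character). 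To salvage your route you would either need Schmidt's more precise Proposition relating $\pi(\smat{}1{\varpi^n}{})v_{\mathrm{new}}$ to the $\varepsilon$-factor \emph{and} a central-character correction, or a twisting argument to reduce to trivial central character together with the $\varepsilon$-factor behavior under twists; as written the step does not go through. A smaller slip: the claim that ``$\chi$ is trivial on $K'\cap K_1(\p^3)$'' is false, since for $\smat ab{c}d\in K'\cap K_1(\p^3)$ one has $\chi=\psi(b+tc/\varpi)$ with $b\in\O$ unconstrained; the relevant vanishing happens only after conjugating $K_1(\p^3)$ by a suitable diagonal element (this is what pins down the coset supporting the new vector in \cite[Prop.\,7.2]{super}).
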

\begin{proof} See \cite{Ku}.  
 For a more recent treatment using the above notation (but on $\GL_n$), see
\cite[\S4-5 and Prop.\,7.2]{super}.
The root number is computed in \cite[Corollary 3.12]{AL}.
\end{proof}

We will also use the notation
\[\sigma_t^\zeta = \sigma_\chi^\zeta\]
for $t,\chi$ as in \eqref{chi}, though it should be borne in mind that the representation
depends also on the choice of additive character $\psi$ and uniformizer $\varpi$.
When $F=\Q_p$, we will always take $\varpi=p$ and 
\[\psi(x)=e(\tfrac xp)=e^{2\pi i x/p}\]
 for $x\in \Z/p\Z$.

Henceforth we assume that $\w_\p$, and hence also $\sigma_t^\zeta$, is unitary.
Under the normalization $\meas(\ol{G(\O)})=1$, the formal degree of $\sigma_\chi^\zeta$ is
\begin{equation}\label{fd}
    d_\chi=\frac{q^2-1}2.
\end{equation}
This is seen, for example, from (6.4) of \cite{super} and the last line of the proof 
of Corollary 6.5 of the same paper.

We define the matrix coefficient $f_\p: G(F)\longrightarrow\C$ by
\[f_\p(g)=d_\chi\ol{\sg{\sigma_t^\zeta(g)\frac{\phi}{\|\phi\|},\frac{\phi}{\|\phi\|}}},\]
where $\phi\in \cInd_{H'}^{G(F)}(\chi_\zeta)$ is the function
\begin{equation}\label{phi}
    \phi(g)=\begin{cases}\chi_\zeta(g)&\text{if }g\in H'\\0&\text{otherwise.}
    \end{cases}
    \end{equation}
    Note that 
    \begin{equation}\label{phinorm}
        \|\phi\|^2=\int_{\olG(F)}|\phi(g)|^2dg=\meas(\ol{H'}).
    \end{equation}
    Likewise,
    \[\sg{\sigma_t^\zeta(g)\phi,\phi}=\int_{\olG(F)}\phi(xg)\ol{\phi(x)}dx=
    \int_{\ol{H'}}\phi(xg)\ol{\chi_\zeta(x)}dx\]
 \begin{equation}\label{mc}
     =\begin{cases}
         \meas(\ol{H'})\chi_\zeta(g)&\text{if }g\in H'\\
         0&\text{otherwise.}
    \end{cases}\end{equation}
 By \eqref{fd}, \eqref{phinorm}, and \eqref{mc}, we have
     \begin{equation}\label{fp}
         f_\p(g)=\begin{cases}\frac{q^2-1}2\ol{\chi_\zeta(g)}&\text{if }g\in H'\\0&\text{otherwise.}
   \end{cases}
     \end{equation}

\subsection{Global setup}\label{test}

Fix square-free integers $S,T>0$ with $ST>1$ and $\gcd(S,T)=1$, and let $k>2$. 
Set $N=S^2T^3$, and let 
  $\w'$ be a Dirichlet character of modulus $N$ satisfying
\begin{equation}\label{winf}
\w'(-1)=(-1)^k.
\end{equation} 
Let $\w$ be the Hecke character attached to $\w'$ in \eqref{wfact}.
We assume in addition that for each $p|N$, $\w_p$ is trivial on $1+p\Z_p$, 
since this is true of the central character of every supercuspidal representation 
of conductor $\le p^3$.  Equivalently, the conductor of $\w'$ divides $ST$.

\begin{proposition}\label{k2N2}
If $N=2^2$ or $2^3$ and $k$ is odd, there is no such character.
\end{proposition}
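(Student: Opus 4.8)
The plan is to trace through the constraints imposed on $\w'$ in each of the two cases and observe that they force $\w'$ to be the principal character, whose value at $-1$ is incompatible with \eqref{winf} when $k$ is odd.

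First I would record the arithmetic of the two cases. If $N=2^2$, then in the decomposition $N=S^2T^3$ with $S,T$ square-free and coprime we must have $S=2$, $T=1$, so $ST=2$; if $N=2^3$, then $S=1$, $T=2$, so again $ST=2$. By the standing hypothesis recalled just before the proposition (equivalently by \cite[Proposition 3.4]{Tu}), the conductor of $\w'$ divides $ST=2$ in both cases.

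Next, the key step: a Dirichlet character of conductor dividing $2$ is necessarily the trivial (principal) character, since $(\Z/2\Z)^*$ is the trivial group and hence carries no nontrivial character. Therefore $\w'$ is the principal character modulo $N$. Since $\gcd(-1,N)=1$, this gives $\w'(-1)=1$. But for $k$ odd we have $(-1)^k=-1\neq 1$, so condition \eqref{winf} cannot hold. This contradiction shows no such $\w'$ exists, completing the proof.

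There is no real obstacle here; the only thing to be careful about is confirming that in the form $N=S^2T^3$ with $S,T$ square-free and coprime, $N=2^2$ and $N=2^3$ genuinely force $ST=2$ (so that the conductor bound $\prod_{p\mid N}p^{\lfloor N_p/2\rfloor}=2$ applies), rather than leaving any ambiguity in the factorization.
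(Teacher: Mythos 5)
Your proof is correct and follows essentially the same route as the paper. The paper argues adelically—$\w'(-1)=\w_2(-1)=1$ because $\w_2$ is trivial on $\Z_2^*=1+2\Z_2$—while you phrase it classically via the conductor bound $\operatorname{cond}(\w')\mid ST=2$, so that $\w'$ is principal; these are the same observation in two different dialects. Your extra care in confirming $ST=2$ in both cases is correct and harmless, though the paper's version bypasses that bookkeeping by working directly with the local component $\w_2$.
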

\begin{proof}
If $N$ is a power of $2$, then by \eqref{w'} and \eqref{winf}, $(-1)^k=\w'(-1)=\w_2(-1) =1$ 
since $\w_2$ is trivial on $\Z_2^*=1+2\Z_2$.  So $k$ must be even.
\end{proof}

Under the stated hypotheses, for each $p|S$, $\w_p$ is trivial on $1+p\Z_p$.
We may thus view $\w_p$ as a character of $(\Z_p/p\Z_p)^*=\F_p^*$.
For each such $p$, fix a primitive character $\nu_p$ of $\F_{p^2}^*$ such that $\nu_p|_{\F_p^*}=\w_p$.
Recall that the number $P_{\w_p}>0$ of such primitive characters is given in \eqref{Pw}.
We define $\nu_p(p)=\w_p(p)$ and extend multiplicatively so that $\nu_p$ can also be viewed
as a character of $\Q_p^*$, which allows us to view $\rho_{\nu_p}$ as a representation of $Z_pK_p$
with central character $\w_p$.   We let 
  \[\sigma_p=\sigma_{\nu_p}=\cInd_{Z_pK_p}^{G(\Q_p)}(\rho_{\nu_p})\]
be the associated
supercuspidal representation of $G(\Q_p)$.
The number of isomorphism classes of supercuspidal representations of conductor $p^2$ and central character
  $\w_p$ is $P_{\w_p}/2$.

For each prime $p|T$, fix a
  simple supercuspidal representation $\sigma_p=\sigma_{t_p}^{\zeta_p}$ of $G(\Q_p)$ with
  central character $\w_p$, where $t_p\in (\Z/p\Z)^*$ and $\zeta_p^2=\w_p(t_pp)$.
When the prime $p$ is understood, we sometimes write $t,\zeta$ instead of $t_p,\zeta_p$.
By \eqref{wpp},  
\begin{equation}\label{zeta22}
\zeta_p^2 = \w_p(t_pp)=\w_p(t_p)\prod_{\substack{\ell|N,\\\ell\neq p}}\w_\ell(p^{-1}).
\end{equation}
In particular, when $N=p^3$ for $p$ prime,
$\zeta_p^2=\w_p(t_p)$.

Having made the above choices, we let $\widehat{\sigma}=(\sigma_p)_{p|N}$
denote this tuple of local representations.
Then $S_k(\widehat{\sigma})\subset S_k^{\new} (S^2T^3,\w')$.

Now consider the test function 
\begin{equation}\label{fnt}
f=f^\n=f_\infty\prod_{p|N}f_p\prod_{\ell\nmid N}\fnp
\end{equation}
as in \eqref{fn} with $N=S^2T^3$,
where, for $p|S$ (resp. $p|T$), $f_p$ is the chosen test function given in \eqref{fpd0}
 (resp. \eqref{fp}).

The above setup is slightly different from that used in \eqref{fn} and Proposition
\ref{proj} since for $p|S$,
$f_p$ is not a single matrix coefficient, but a certain sum of matrix coefficients, and without the
formal degree coefficient.  Nevertheless, the conclusions of Proposition \ref{proj} do hold
for the above test function, as the next result shows.

\begin{proposition}\label{projtr}
With $f$ defined above, $\tr(T_\n|S_k(\widehat{\sigma}))=\n^{k/2-1}\tr R(f)$.
\end{proposition}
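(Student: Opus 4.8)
The plan is to reduce the statement to Proposition \ref{proj} by expanding $f$ into a finite sum of test functions each of which is, up to a positive scalar, of the form \eqref{fn}. The key observation is local, at the places $p\mid S$. Writing $\tr\rho_{\nu_p}=\sum_{i=1}^{d_{\sigma_p}}\langle\rho_{\nu_p}(\cdot)e_i,e_i\rangle$ for an orthonormal basis $\{e_i\}$ of the space of $\rho_{\nu_p}$ (here $\dim\rho_{\nu_p}=d_{\sigma_p}$ by \eqref{fda}), and transporting the $e_i$ through the canonical $K_p$-equivariant isometry onto the subspace $V_0\subset\sigma_p$ of vectors whose matrix coefficient is supported in $Z_pK_p$ — an isometry precisely because $\meas(\ol{Z_pK_p})=\meas(\ol{K_p})=1$ with the normalization of \S\ref{notation} — one obtains an orthonormal family $w_p^{(1)},\dots,w_p^{(d_{\sigma_p})}$ of unit vectors of $\sigma_p$ with
\[f_p(g)=\sum_{i=1}^{d_{\sigma_p}}\ol{\langle\sigma_p(g)w_p^{(i)},w_p^{(i)}\rangle}\qquad(g\in G(\Q_p)),\]
each summand supported in $Z_pK_p$. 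Thus $f_p$ at a place $p\mid S$ is a sum of $d_{\sigma_p}$ supercuspidal matrix coefficients, so by Corollary \ref{mc0} it still satisfies \eqref{supercusp} and \eqref{hyp}, and hence $f$ meets the hypotheses of Theorem \ref{stf} (using the archimedean place, or a second $p\mid N$, for the second vanishing).

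Next I would distribute the sums at the places $p\mid S$. Write $f=\sum_{(i_p)_{p\mid S}}f^{(i_p)}$, where in $f^{(i_p)}$ the factor $f_p$ at each $p\mid S$ is replaced by $\ol{\langle\sigma_p(g)w_p^{(i_p)},w_p^{(i_p)}\rangle}$ and all other factors ($f_\infty$, the $f_p$ for $p\mid T$, and the $\fnp$ for $\ell\nmid N$) are unchanged. Then $f^{(i_p)}=\bigl(\prod_{p\mid S}d_{\sigma_p}^{-1}\bigr)\,\widetilde f^{(i_p)}$, where $\widetilde f^{(i_p)}$ is precisely a test function of the form \eqref{fn}: at $p\mid S$ one takes the unit vector $w_p^{(i_p)}$ for the vector $w_p$ fixed there — admissible since $w_p^{(i_p)}$ has matrix coefficient supported in $H_p=Z_pK_p$, and $S_k(\widehat{\sigma})$ does not depend on this choice — while at $p\mid T$ the factor $f_p$ of \eqref{fp} already has the form $d_{\sigma_p}\ol{\langle\sigma_p(g)\phi/\|\phi\|,\phi/\|\phi\|\rangle}$ with matrix coefficient supported in $H'$ by \eqref{mc}. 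Hence Proposition \ref{proj} applies verbatim to each $\widetilde f^{(i_p)}$ and gives $\tr R(\widetilde f^{(i_p)})=\n^{1-k/2}\tr(T_\n\,|\,S_k(\widehat{\sigma}))$, the right-hand side being independent of the tuple $(i_p)$.

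Summing over the $\prod_{p\mid S}d_{\sigma_p}$ choices of $(i_p)$, and using $R(f)=\sum_{(i_p)}R(f^{(i_p)})$, yields
\[\tr R(f)=\Bigl(\prod_{p\mid S}d_{\sigma_p}^{-1}\Bigr)\sum_{(i_p)}\tr R(\widetilde f^{(i_p)})=\Bigl(\prod_{p\mid S}d_{\sigma_p}^{-1}\Bigr)\Bigl(\prod_{p\mid S}d_{\sigma_p}\Bigr)\,\n^{1-k/2}\tr(T_\n\,|\,S_k(\widehat{\sigma}))=\n^{1-k/2}\tr(T_\n\,|\,S_k(\widehat{\sigma})),\]
which is the assertion. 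The main obstacle I anticipate is the bookkeeping in the first step — verifying that $f_p$ at $p\mid S$ carries no spurious normalizing constant relative to a sum of honest unit-vector matrix coefficients — which is exactly where $\meas(\ol{Z_pK_p})=1$ and the explicit model of $\rho_{\nu_p}$ from \cite{KR} enter. One could instead avoid the expansion and argue directly that $\sigma_p(f_p)$ is $d_{\sigma_p}^{-1}$ times the orthogonal projection onto the $\rho_{\nu_p}$-isotypic subspace of $\sigma_p|_{K_p}$, which has dimension $d_{\sigma_p}$ since $(K_p,\rho_{\nu_p})$ occurs there with multiplicity one, so that $\tr\sigma_p(f_p)=1$; then, since $R(f)$ acts on each (multiplicity-one) cuspidal $\pi$ matching $\widehat{\sigma}$ by $\pi(f)=\bigotimes_v\pi_v(f_v)$, one gets $\tr R(f)=\sum_\pi\prod_v\tr\pi_v(f_v)=\sum_\pi\prod_{\ell\mid\n}\lambda_\ell=\n^{1-k/2}\sum_\pi a_\n(h_\pi)$ as in the proof of Proposition \ref{proj}.
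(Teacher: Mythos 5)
Your proof is correct and takes essentially the same approach as the paper's: the paper also writes $f_p$ at $p\mid S$ as a sum of $d_{\sigma_p}=p-1$ unit-vector matrix coefficients supported in $Z_pK_p$ (citing \cite[Prop.\ 1.1]{KR} for this, where you derive it via the isometry onto the $\rho_\nu$-isotypic subspace), so that $\sigma_p(d_{\sigma_p}f_p)$ is the projection onto a $(p-1)$-dimensional space, and concludes that omitting the $d_{\sigma_p}$ factor exactly compensates for the $d_{\sigma_p}$-fold repetition produced by Proposition \ref{proj}. Your expansion of $f$ into a sum of $\prod_{p\mid S}d_{\sigma_p}$ rescaled test functions of the form \eqref{fn} is just a more explicit rendering of the paper's ``block sum, then divide by $p-1$'' step, and your parenthetical alternative — computing $\tr\sigma_p(f_p)=1$ directly — is an equivalent shortcut.
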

\noindent{\em Remark:} This is not special to depth zero supercuspidals.  By \cite[Proposition 1.2]{KR},
  the proof below applies
with any unramified (even power conductor) supercuspidals $\sigma_p$ at $p|S$, using $d_{\sigma_p}=\dim\rho$ in 
place of $p-1$, where $\sigma_p=\cInd_{ZK}^G(\rho)$.  
(Ramified supercuspidals may be induced from a {\em character} of an appropriately chosen open 
compact-mod-center subgroup, so for these, one can use a test function analogous to \eqref{fp}.)

\begin{proof}
In the proof of Proposition \ref{proj}, we used the fact (\cite[Corollary 10.26]{KL})
  that for $\sigma=\sigma_p$,
 the operator $\sigma\bigl(d_\sigma\ol{\sg{\sigma(g)w,w}}\bigr)$ is the orthogonal projection
of the space of $\sigma$ onto $\C w$. 
For $f_p$ in \eqref{fpd0}, by \cite[Proposition 1.1]{KR}, there is an orthonormal set $\{w_1,\ldots,w_{p-1}\}$
of vectors in the space of $\sigma$ such that 
\[f_p(g)=\sum_{j=1}^{p-1}\ol{\sg{\sigma(g)w_j,w_j}}.\]
Therefore $\sigma(d_{\sigma}f_p)=\sigma((p-1)f_p)$ is the orthogonal projection onto 
$\Span\{w_1,\ldots,w_{p-1}\}$.  So using this local test function in the proof of
  Proposition \ref{proj} would give us a block sum of $p-1$ copies of the matrix 
  for $\n^{1-k/2}T_\n$. To get the correct trace, we would need to divide by $p-1$, 
which is achieved by simply taking $f_p$ instead of $(p-1)f_p$.
\end{proof}

Noting that for $p|S$, $f_p(1)=\dim\rho_\nu=p-1=d_{\sigma_p}$, 
  the identity term in the formula for $\tr R(f)$ is
\begin{equation}\label{id}
     \ol{\w'(\n^{1/2})}\frac{k-1}{12}\prod_{p|S}(p-1)\prod_{p|T}\frac{p^2-1}2,
\end{equation}
as seen between the brackets in Theorem \ref{stf2}.
  We remark that this is not always an integer when $\n=1$. For example consider the
case where $S=1$.  For $p\ge 3$ prime,
     \[v_2(p^2-1)=v_2(p-1)+v_2(p+1)\ge 3,\]
     with equality holding precisely when $p\equiv 3,5\mod 8$.  (Here, $v_2$ is the $2$-adic
valuation.)
     It follows easily that when $\n=1$, the identity term $\frac{k-1}{12}\prod_{p|T}\frac{p^2-1}2$
  fails to be an integer in exactly 
the following situations:
\begin{itemize}
\item $T=2$ and $k\not\equiv 1\mod 8$;
\item $T=3$ and $k\not\equiv 1\mod 3$;
\item $T=2p$ for some $p\equiv 3,5\mod 8$, and $k$ is even.
\end{itemize}
  So in such instances, when $S=\n=1$ the elliptic contribution to $|H_k(\widehat{\sigma})|$ 
  in Theorem \ref{stf2} must be nonzero for this simple reason.

The list of relevant matrices in the trace formula of Theorem \ref{stf2} can be refined in certain 
situations.  

\begin{proposition}\label{relevant}
Let $N=S^2T^3$ as above,
let $f=f^\n$ be the test function defined in \eqref{fnt}, let $M|T$, and $0\le r<\sqrt{4\n/M}$. 
Then $\Phi(\mat0{-\n M}1{rM},f)= 0$ in each of the following situations:
\begin{itemize}
\item $r=0$ and $k$ is odd.
\item There exists $p|N$ such that $X^2-rMX+\n M$ has a root in $\Q_p$.
\item There exists $p|M$ such that $-pt_p/\n M$ is not a square modulo $p$, where $t_p$
is the parameter of the local representation $\sigma_{t_p}^{\zeta_p}$.
\item There exists $p|\frac{T}{M}$ such that $X^2-rMX+\n M\equiv (X-z)^2\mod p$ has no solution $z\in(\Z/p\Z)^*$.
\end{itemize}
\end{proposition}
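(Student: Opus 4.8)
The plan is to reduce every case to the factorization \eqref{Phi} of Theorem \ref{stf2}, which — once $\g=\smat0{-\n M}1{rM}$ is elliptic in $G(\R)$ and in each $G(\Q_p)$ with $p\mid N$ — expresses $\Phi(\g,f)$ as a nonzero rational multiple of $\Phi(\g,f_\infty)\prod_{p\mid\Delta_\g N}\Phi(\g,f_p)$; moreover Theorem \ref{stf2} already records that $\Phi(\g,f)=0$ whenever $\g$ fails to be elliptic at some $p\mid N$. So in each of the four situations I will exhibit a vanishing factor on the right-hand side (or an obstruction to ellipticity), and the proposition follows.

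The first bullet is the archimedean factor: when $r=0$ one has $\tr\g=0$, so $\g$ has the form $\smat{}u1{}$ and \eqref{Phiinf} gives $\Phi(\g,f_\infty)=0$ for $k$ odd. The second bullet is immediate: if $X^2-rMX+\n M$ has a root in $\Q_p$ for some $p\mid N$, then — since $\g$ is elliptic over $\Q$, forcing $\Delta_\g\neq0$ and hence distinct eigenvalues — $\g$ is hyperbolic in $G(\Q_p)$, so $\Phi(\g,f)=0$; equivalently, $f_p$ is a supercuspidal matrix coefficient (a single one if $p\mid T$, a sum of $p-1$ of them if $p\mid S$, by \cite[Prop.~1.1]{KR}), whose hyperbolic orbital integral vanishes by Corollary \ref{mc0}.

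For the last two bullets I would argue at the relevant prime $p\mid T$ — whose local factor occurs in \eqref{Phi} since $p\mid N$ — using only the support of $f_p$. By \eqref{fp} this support is $H'=Z_pK'\cup g_\chi Z_pK'$, with $K'=\smat{1+p\Z_p}{\Z_p}{p\Z_p}{1+p\Z_p}$ (so $\overline{K'}=U$ mod $p$) and $g_\chi$ of parameter $t_p$ normalizing $K'$ with $g_\chi^2=t_pp$ scalar; the two cosets of $H'$ are distinguished by the parity of $v_p(\det)$. If $\Phi(\g,f_p)\neq0$, then a $G(\Q_p)$-conjugate of $\g$ lies in $H'$, and since $v_p(\det\g)=v_p(\n M)$ is $0$ for $p\mid T/M$ and $1$ for $p\mid M$ (as $N=S^2T^3$, $T$ square-free, $(\n,N)=1$), this conjugate lies in $Z_pK'$ in the first case and in $g_\chi Z_pK'$ in the second, with central scalar forced to be a unit $z\in\Z_p^*$. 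In the case $p\mid T/M$, $z^{-1}\g$ is then conjugate into $K'$, so its characteristic polynomial reduces mod $p$ to that of an element of $\overline{K'}=U$, i.e.\ to $(X-1)^2$; hence $P_\g\equiv(X-\bar z)^2\bmod p$ with $\bar z\in\F_p^*$, and the contrapositive is the fourth bullet. In the case $p\mid M$, one instead uses $g_\chi^2=t_pp$ to see that $(t_pp\,z^2)^{-1}\g^2$ is conjugate into $K'$, so its characteristic polynomial reduces mod $p$ to $(X-1)^2$; comparing its determinant and, via Cayley--Hamilton, its trace — the $(\tr\g)^2=(rM)^2$-contribution vanishing mod $p$ because $p\mid M$ — forces $-\n M/(t_pp)\equiv z^2\bmod p$, whence $-pt_p/(\n M)$ (the reciprocal residue) is a square mod $p$; the contrapositive is the third bullet. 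In each case the vanishing factor so produced kills the product in \eqref{Phi}.

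The step I expect to be the main obstacle is the ramified case (third bullet): one must run the short computation with $g_\chi^2=t_pp$ and the valuation bookkeeping cleanly, and check that the square condition degenerates correctly at $p=2$, where every nonzero residue is a square and the bullet accordingly becomes vacuous — consistent with the fact that the computation then imposes no constraint. Verifying the determinant-parity decomposition of $H'$ and the identification $\overline{K'}=U$ is routine.
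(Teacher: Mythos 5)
Your proof is correct, and the first, second, and fourth bullets are handled exactly as in the paper: \eqref{Phiinf} for $r=0$, $k$ odd; ellipticity at $p\mid N$ via Proposition~\ref{ellg} (equivalently the vanishing of hyperbolic orbital integrals of supercuspidal matrix coefficients) for the second; and reduction of $K'$ mod $p$ to the unipotent group, forcing $P_\g\equiv(X-z)^2$, for the fourth.

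For the third bullet you take a slightly different route from the paper. The paper conjugates $\g$ directly into the ramified coset $Z g_\chi K'$ of $H'$, writes $g^{-1}\g g = z\smat{}{t}{p}{}\smat ab{pc}d$, and extracts $u\equiv -tz^2\bmod p$ in one line by taking determinants. You instead square, note $g_\chi^2=t_pp$ so $(t_pp z^2)^{-1}\g^2$ is conjugate into $K'$, and then read off the square condition from the reduced characteristic polynomial via Cayley--Hamilton. Both arguments land on the same congruence (your $-\n M/(t_p p)\equiv z^2\bmod p$ is the reciprocal of the paper's $-u/t\equiv z^2$, with $u=\n M/p$, and squares are closed under inversion), so the conclusion matches. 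The paper's direct computation is a bit cleaner and avoids the need to handle $p=2$ separately, though as you observe, the bullet is vacuous there in any case; your squaring trick has the mild virtue of letting you re-use verbatim the same ``conjugate into $K'$ implies $(X-1)^2$'' observation that you already used for the fourth bullet.
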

\noindent{\em Remark:}
For the case $\n=1$, we can refine the list of relevant $\g$ even further 
(see Proposition \ref{glist} below).

\begin{proof}
The first bullet point follows from \eqref{Phiinf}.

Let $\g=\smat{}{-\n M}1{rM}$, and suppose that $\Phi(\g,f)\neq 0$.
Then by Proposition \ref{ellg}, $\g$ is elliptic in $G(\Q_p)$, which gives the second bullet point.

For the third bullet point, suppose $p|M$.  Write $\det\g=up$ for some $u\in \Z_p^*$.
Assuming the local orbital integral $\Phi(\g,f_p)$ is nonzero, $f_p(g^{-1}\g g)\neq 0$ 
for some $g\in G(\Q_p)$.  Then
$g^{-1}\g g$ belongs to the ramified component of $\Supp(f_p)$, i.e., writing $t=t_p$,
\[g^{-1}\g g=z\mat{}t{p}{}\mat ab{pc}d\in Zg_{\chi_p} K'\]
for some $b,c\in\Z_p$, $a,d\in 1+p\Z_p$, and $z\in\Z_p^*$.
Taking determinants, we have
\[up = -tpz^2(ad-pbc),\]
and hence 
\begin{equation}\label{qr}
    u\equiv-tz^2\mod p.
\end{equation}
This shows that $-t/u$ is a quadratic residue modulo $p$.

Finally, if $p|\frac NM$, then $\det\g\in\Z_p^*$ so if $\Phi(\g,f_p)\neq 0$, some conjugate $g^{-1}\g g$ lies in the
unramified component of $\Supp(f_p)$:
\[g^{-1}\g g=z\mat ab{pc}d\in ZK'\]
for $z,a,b,c,d$ as above.  Taking determinants, $\det\g \equiv z^2\mod p$.
Taking the trace, $\tr \g \equiv 2z\mod p$.  Hence $P_\g(X)\equiv X^2-2zX+z^2\equiv (X-z)^2\mod p$. 
\end{proof}

\section{Local orbital integrals at primes $p|N$ for $N=S^2T^3$}\label{pN}

Our goal here is to compute
\[\Phi(\g,f_p)=\int_{\ol{G_\g(\Q_p)}\bs \olG(\Q_p)}f_p(g^{-1}\g g)dg\]
    taking for $f_p$ the test functions given in \eqref{fpd0} and \eqref{fp},
 and for $\g$ the
    matrices given in Theorem \ref{stf2}, and using the quotient measure defined
    in \S\ref{gell}, so
\[\Phi(\g,f_p)=\int_{\olG(\Q_p)}f_p(g^{-1}\g g)dg.\]
With these calculations in hand, Theorem \ref{mainST} will follow immediately from Theorem \ref{stf2}.

We will use the strategy adopted by Palm in \cite[Prop. 9.11.3]{P}
which avoids the use of lattices or buildings.  There are 
errors in the statement and proof of his proposition, so we cannot simply quote 
the result.
However, the basic method is sound and can be adapted to 
give the result in the cases of interest to us here.

The following lemma will allow us to rewrite the integral in such a way as to exploit
the structure of the support of $f_p$.

\begin{lemma}[{\cite[Lemma 6.4.10]{P}}]\label{Ilem}
    Let $G$ be a unimodular locally compact group, and suppose $I_1,I_2$ are two open compact
    subgroups of $G$, each given total Haar measure $1$.  Then for any choice of Haar measure on $G$
    we have
    \begin{equation}\label{dc}
        \int_G \phi(g)dg=\sum_{x\in I_1\bs G/I_2}\meas_G(I_1xI_2)\int_{I_1}\int_{I_2}\phi(i_1xi_2)
    di_2di_1
    \end{equation}
    for all $\phi\in C_c(G)$.
\end{lemma}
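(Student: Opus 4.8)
The plan is to reduce the integral over $G$ to a double integral over the two compact subgroups by slicing $G$ into double cosets $I_1 x I_2$. First I would fix a set of representatives $\{x\}$ for $I_1\bs G/I_2$, so that $G$ is the disjoint union $\bigcup_x I_1 x I_2$, and accordingly
\[
\int_G \phi(g)\,dg = \sum_{x\in I_1\bs G/I_2}\int_{I_1 x I_2}\phi(g)\,dg.
\]
The task is then to evaluate each piece $\int_{I_1 x I_2}\phi(g)\,dg$ in terms of $\int_{I_1}\int_{I_2}\phi(i_1 x i_2)\,di_2\,di_1$.

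The natural approach for a single double coset is to consider the surjective continuous map $\mu_x\colon I_1\times I_2\longrightarrow I_1 x I_2$ given by $(i_1,i_2)\mapsto i_1 x i_2$, and to compute the pushforward of the product Haar measure $di_1\times di_2$ under $\mu_x$. Because $I_1$ and $I_2$ are open and compact, $I_1 x I_2$ is open (hence measurable) and the map $\mu_x$ has fibers that are cosets of the stabilizer $\{(i_1,i_2): i_1 x i_2 = x\}$. Concretely, writing $H_x = I_1 \cap x I_2 x^{-1}$ (an open compact subgroup of $I_1$), the fiber over any point is a translate of the graph $\{(h, x^{-1}h^{-1}x): h\in H_x\}$, which is compact of measure $\meas_{I_1}(H_x)\cdot 1$ computed in $I_1\times I_2$ — here I use that the two "coordinate" Haar measures on $I_1$ and $I_2$ both assign total mass $1$, so the relevant Jacobian factor is $\meas_{I_1}(H_x) = [I_1:H_x]^{-1}$. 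Hence the pushforward of $di_1\times di_2$ is $[I_1:H_x]^{-1}$ times the (unique up to scaling, then normalized by total mass) $I_1$-left- and $I_2$-right-invariant measure on $I_1 x I_2$ of total mass $1$; multiplying by $\meas_G(I_1 x I_2)$ to match the ambient Haar measure gives exactly
\[
\int_{I_1 x I_2}\phi(g)\,dg = \meas_G(I_1 x I_2)\int_{I_1}\int_{I_2}\phi(i_1 x i_2)\,di_2\,di_1,
\]
since $\int_{I_1}\int_{I_2} 1\, di_2\, di_1 = 1$ forces the normalization constant to cancel the factor $[I_1:H_x]$ against the pushforward density. Summing over $x$ then yields \eqref{dc}. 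One must also check the sum is finite (or at least that Tonelli applies): since $\phi\in C_c(G)$ has compact support meeting only finitely many of the open double cosets, only finitely many terms are nonzero, so all interchanges of sum and integral are legitimate.

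The step I expect to be the main technical point is the bookkeeping of the normalization: verifying precisely that the pushforward of $di_1\times di_2$ under $(i_1,i_2)\mapsto i_1 x i_2$, when both source measures have total mass $1$, equals the mass-one $I_1\!-\!I_2$-biinvariant measure on $I_1 x I_2$ — equivalently, that the fiber-volume factor is a constant independent of the target point, which relies on unimodularity of $G$ and on $I_1, I_2$ being \emph{compact} (so the relevant modular characters are trivial on them). Everything else — disjointness of the double coset decomposition, openness of each $I_1 x I_2$, finiteness of the sum on $C_c$ functions — is routine. Alternatively, one can sidestep the pushforward computation entirely by a slicker argument: define $\Psi(g) = \int_{I_1}\int_{I_2}\phi(i_1 g i_2)\,di_2\,di_1$, observe $\Psi$ is biinvariant and supported on the same finite union of double cosets, and check directly that $\int_G \Psi = \int_G \phi$ while $\int_{I_1 x I_2}\Psi = \meas_G(I_1 x I_2)\,\Psi(x)$ by biinvariance; this packages the same normalization fact but may be cleaner to write down.
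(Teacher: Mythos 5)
Your proposal is correct, and it actually contains two arguments. Your primary route (pushforward of $di_1\times di_2$ under $(i_1,i_2)\mapsto i_1 x i_2$) is a genuine alternative to the paper's proof; your ``slicker alternative'' at the end is essentially what the paper does.

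The paper's proof runs as follows: writing $F(g)=\int_{I_1}\int_{I_2}\phi(i_1 g i_2)\,di_2\,di_1$, bi-invariance of $dg$ gives $\int_G\phi=\int_G F$; then $F\in C_c(G)$ is $I_1$-left- and $I_2$-right-invariant, hence a finite linear combination of characteristic functions of double cosets, for which \eqref{dc} is immediate; linearity finishes. This is exactly your $\Psi$. Your main route instead fixes each double coset and transports the product Haar measure of mass $1$ on $I_1\times I_2$ along the orbit map onto $I_1 x I_2$. What this buys is a more ``local'' picture double coset by double coset; the tradeoff is that one has to be a little careful with the normalization. Your fiber-volume discussion is slightly informal: in the continuum (locally profinite) setting the fiber, a coset of the embedded graph $\{(h,\,x^{-1}h^{-1}x):h\in H_x\}$ with $H_x = I_1\cap xI_2x^{-1}$, is a null set of $I_1\times I_2$, so one should not literally speak of its product measure. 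The clean version of your argument is: the pushforward is a $(I_1\times I_2)$-invariant Borel measure on the compact homogeneous space $I_1 x I_2\cong (I_1\times I_2)/\Delta_x$ of total mass $1$, and by uniqueness of such a measure it must agree with $\meas_G(\cdot)/\meas_G(I_1 x I_2)$ restricted to $I_1 x I_2$, which is also bi-invariant of total mass $1$. With that adjustment your primary route is airtight, and your observation about local finiteness (only finitely many double cosets meet $\operatorname{supp}\phi$) is exactly the right remark to legitimize summing and integrating.
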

\begin{proof}
    For $\phi\in C_c(G)$, we see that
    \[\int_G\phi(g)dg=\int_G\int_{I_1}\int_{I_2}\phi(i_1gi_2)di_2di_1dg\]
by changing the order of integration and using the bi-invariance of $dg$.
    The inner double integral defines a compactly supported function $F$
of $g\in G$ which is constant on double cosets $I_1gI_2$, and is therefore a finite linear 
combination of characteristic functions of such double cosets.
    The identity \eqref{dc} clearly holds for the characteristic function of
     a double coset.  By linearity it holds for $F$ as well, so 
    \[\int_G\phi(g)dg=\int_GF(g)dg
     = \sum_{x\in I_1\bs G/I_2}\meas_G(I_1xI_2)\int_{I_1}\int_{I_2}F(i_1xi_2)
    di_2di_1\]
    \[ = \sum_{x\in I_1\bs G/I_2}\meas_G(I_1xI_2)F(x) =
        \sum_{x\in I_1\bs G/I_2}\meas_G(I_1xI_2)\int_{I_1}\int_{I_2}\phi(i_1xi_2)
    di_2di_1.\qedhere\]
\end{proof}

\subsection{Preliminaries when $p|T$}

Throughout much of this section, we will work over a $p$-adic field $F$ with notation
as in \S\ref{ssr}, and write $G$ for $G(F)$, and $\olG$ for $G/Z$.
Having fixed a simple supercuspidal representation $\sigma_t^\zeta$ of $G$ with unitary central
character $\w_\p$, 
we take $\fp$ to be the test function given in \eqref{fp}.

Applying Lemma \ref{Ilem} to \eqref{ellint}, we have
\[\Phi(\g,\fp)=\int_{\olG}\fp(g^{-1}\g g)dg\]
\[
=\sum_{x\in \ol{K'}\bs \olG/\ol{K'}}\meas_{\olG}(\ol{K'}x\ol{K'})\int_{\ol{K'}}\int_{\ol{K'}}
\fp(h_2^{-1}x^{-1}h_1^{-1}\g h_1 xh_2)dh_1dh_2,\]
where each $dh_i$ is normalized to have total measure $1$.
Since $\fp|_{K'}$ is a character,
  $h_2$ has no effect, and we obtain
\begin{equation}\label{Phifp}
    \Phi(\g,\fp)=\sum_{x\in \ol{K'}\bs \olG/\ol{K'}}\meas_{\olG}(\ol{K'}x\ol{K'})\int_{\ol{K'}}
\fp(x^{-1}h^{-1}\g h x)dh.
\end{equation}

In order to compute the above, we need a few preparations.  First, recall the affine
Bruhat decomposition
\[G= K'MK' \cup K'MwK'=K'MK' \cup K'Mg_\chi K',\]
where $w=\smat{}{-1}1{}$ and $M$ is the diagonal subgroup (\cite[Prop. 17.1]{BH}).
Accordingly, we may take as a set of representatives $x\in \ol{K'}\bs\olG/\ol{K'}$
the elements $x=m$ and $x=mg_\chi$ for 
\begin{equation}\label{xreps}
    m\in \left\{\mat y{}{}1,\, \mat y{}{}{\varpi^j},\,
    \mat{\varpi^n}{}{}y |\,j>0,
n>0, y\in (\O/\p)^*\right\}.
\end{equation}
For each such $x$
we need to compute the integral in \eqref{Phifp}, which we denote by
\[ K_\g(x)=   \int_{\ol{K'}} \fp(x^{-1}h^{-1}\g h x)dh.\]
By \eqref{gchi}, 
\[\fp(g_\chi^{-1}g g_\chi)=\fp(g)\]
    for all $g$.  Therefore $K_\g(xg_\chi)=K_\g(x)$. Furthermore, since
    $g_\chi$ normalizes $K'$, the measure of $\ol{K'}x\ol{K'}$ is
    unchanged if $x$ is replaced by $xg_\chi$.
    It follows that
\begin{equation}\label{Phifp2}
 \Phi(\g,\fp)=2\sum_{x\text{ in }\eqref{xreps}}
 \meas_{\olG}(\ol{K'}x\ol{K'})K_\g(x).
\end{equation}

\begin{lemma}
Let $x=\smat{\varpi^n}{}{}y$ or $\smat y{}{}{\varpi^n}$ for $n\ge 0$ and $y\in \O^*$.
Then with measure on $\olG$ normalized so that $\meas(\ol{K})=1$, 
\begin{equation}\label{Knmeas}
\meas_{\olG}(\ol{K'}x\ol{K'})=\frac{q^n}{q^2-1}.
\end{equation}
\end{lemma}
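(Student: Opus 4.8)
The plan is to write $\meas_{\olG}(\ol{K'}x\ol{K'}) = [\ol{K'}:\ol{K'}\cap x^{-1}\ol{K'}x]\cdot\meas_{\olG}(\ol{K'})$ and to evaluate the two factors separately. First I would record $\meas_{\olG}(\ol{K'}) = (q^2-1)^{-1}$: since measure is normalized so that $\meas_{\olG}(\ol K)=1$ for $K = G(\O)$, this amounts to the index $[\ol K:\ol{K'}] = [K : K'(Z\cap K)] = q^2-1$, which is an elementary finite count carried out after reduction modulo $\p$. Indeed $1+\p M_2(\O)\subset K'$, the image of $K'$ in $\GL_2(\k)$ is the upper unipotent subgroup (of order $q$), and adjoining the scalar matrices $\k^*I$ multiplies the order by $q-1$, so dividing into $|\GL_2(\k)| = q(q-1)^2(q+1)$ gives $q^2-1$.

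Next I would reduce to the single representative $a_n := \smat{\varpi^n}{}{}{1}$. The matrices $\smat{y}{}{}{1}$ and $\smat{1}{}{}{y}$ with $y\in\O^*$ normalize $K'$ (conjugation merely rescales the off-diagonal entries by units), and one has $\smat{\varpi^n}{}{}{y} = a_n\smat{1}{}{}{y}$, while modulo the center $\smat{y}{}{}{\varpi^n} \equiv \smat{y}{}{}{1}\,a_n^{-1}$. Since $\olG$ is unimodular, left and right translations and the inversion map $\ol{K'}a\ol{K'}\mapsto\ol{K'}a^{-1}\ol{K'}$ all preserve the measure of a double coset, so every $x$ appearing in \eqref{xreps} satisfies $\meas_{\olG}(\ol{K'}x\ol{K'}) = \meas_{\olG}(\ol{K'}a_n\ol{K'})$ for the appropriate $n\ge 0$.

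Finally, a direct conjugation computation gives $a_n^{-1}K'a_n = \smat{1+\p}{\p^{-n}}{\p^{n+1}}{1+\p}$, hence $K'\cap a_n^{-1}K'a_n = \smat{1+\p}{\O}{\p^{n+1}}{1+\p}$, differing from $K'$ only in the lower-left corner; the matrices $\smat{1}{0}{c}{1}$ with $c$ running over $\p/\p^{n+1}$ form a transversal, so the index is $|\p/\p^{n+1}| = q^n$. Since $(1+\p)I = Z\cap K'$ lies in both groups, this index $q^n$ survives the passage to $\ol{K'}$, and therefore $\meas_{\olG}(\ol{K'}x\ol{K'}) = q^n/(q^2-1)$. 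The only real care needed — the nearest thing to an obstacle in an otherwise routine argument — is bookkeeping the central quotient $\ol{K'} = K'/(1+\p)I$ consistently: one must check that $\ol{K'}\cap a_n^{-1}\ol{K'}a_n$ really is the image of $K'\cap a_n^{-1}K'a_n$ and that the index computed in $K'$ transfers unchanged, which both follow from the observation that any element of $K'$ has diagonal entries in $1+\p$, so a scalar relating two lifts of the same element of $\olG$ already lies in $1+\p$.
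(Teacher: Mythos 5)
Your argument is correct, and it gives the same answer by a route that is equivalent to but organizationally different from the paper's. The paper proves the identity by exhibiting an explicit left-coset decomposition
\[
K'\mat{\varpi^n}{}{}1K' = \bigsqcup_{b\in\O/\p^n}\mat{\varpi^n}b01K',
\]
verifying disjointness and both containments directly, and then counting the $q^n$ cosets. You instead invoke the general double-coset measure formula $\meas(\ol{K'}x\ol{K'}) = [\ol{K'}:\ol{K'}\cap x^{-1}\ol{K'}x]\,\meas(\ol{K'})$ and compute the index by conjugating $K'$ by $a_n$, which is a cleaner, more systematic calculation (the two are of course equivalent: your transversal of lower-unipotent matrices is the right-coset analogue of the paper's upper-unipotent transversal). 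For the reduction to $a_n$, the paper conjugates by $g_\chi$ to swap $\smat{\varpi^n}{}{}1$ with $\smat1{}{}{\varpi^n}$, whereas you use the center plus inversion invariance; both are fine. You also prove $\meas(\ol{K'}) = 1/(q^2-1)$ from scratch by the index count $[\ol K:\ol{K'}] = q^2-1$, where the paper simply cites \cite[Cor.\,6.5]{super}; your self-contained count is correct. Your closing paragraph about the central quotient is the right thing to worry about and your resolution (any lift of an element of $\ol{K'}$ to $K'$ has diagonal in $1+\p$, so the ambiguity lies in $(1+\p)I\subset K'\cap a_n^{-1}K'a_n$) is precisely the point that makes the index transfer.
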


\begin{proof}
We may assume that $y=1$ since, for example,
\[
\meas(\ol{K'}\smat{\varpi^n}{}{}{y} \ol{K'})=
    \meas( \ol{K'}\smat{\varpi^n}{}{}1\ol{K'} \smat1{}{}{y})
    =\meas(\ol{K'}\smat{\varpi^n}{}{}1\ol{K'}).\]
Likewise, since $g_\chi$ normalizes $K'$ and $g_\chi^{-1}\smat{\varpi^n}{}{}1g_\chi
=\smat1{}{}{\varpi^n}$, we may assume that $x=\smat{\varpi^n}{}{}1$. 

We claim that for $n\ge 0$,
\begin{equation}\label{cosets}
    {K'}\mat{\varpi^n}{}{}1{K'}=\bigcup_{b\in \O/\p^n}\mat{\varpi^n} b01K',
\end{equation}
a disjoint union. The union is disjoint since
\[{\mat{\varpi^n} {b_1}01}^{-1}\mat{\varpi^n} {b_2}01=\mat 1{\frac{b_2-b_1}{\varpi^n}}01,\]
which is in $K'$ if and only if $b_1\equiv b_2\mod \p^n$.
The inclusion $\supseteq$ in \eqref{cosets} follows from
\[\mat{\varpi^n} b01=\mat1b01\mat{\varpi^n}{}{}1\in K'\mat{\varpi^n}{}{}1.\]
The reverse inclusion follows from 
\[\mat abcd\mat{\varpi^n}{}{}1=\mat{\varpi^n}{bd^{-1}}01\mat{a-cbd^{-1}}0{c\varpi^n}d.\]

  By the decomposition \eqref{cosets}, 
  \[  \meas(\ol{K'}\mat{\varpi^n}{}{}1\ol{K'})=q^n\meas(\ol{K'})=\frac{q^n}{q^2-1},\]
since $\meas(\ol{K'})=\frac1{q^2-1}$ when $\meas(\ol{K})=1$, 
as shown in the proof of \cite[Cor. 6.5]{super}.
\end{proof}

If $x=\smat y{}{}1$, then $K_\g(x)=\fp(\g^y)$ where $\g^y=\smat{y^{-1}}{}{}1\g\smat y{}{}1$, 
since $\fp$ is a character of $K'$, $\smat y{}{}1$ normalizes $K'$, and we give
    $\ol{K'}$ measure 1. Thus, in view of the above lemma, \eqref{Phifp2} now becomes
\begin{align}\label{Phifp3}
    \Phi(\g,\fp)=\frac2{q^2-1}&\sum_{y\in (\O/\p)^*}f_\p(\g^y)\\
\notag&+2\sum_{n=1}^\infty\frac{q^n}{q^2-1}\sum_{y\in (\O/\p)^*}\left[K_\g(\smat{\varpi^n}{}{}y)
+K_\g(\smat y{}{}{\varpi^n})\right].
\end{align}

To compute $K_\g(x)$, we fix coordinates on $\ol{K'}$ with the following.

\begin{lemma}\label{GHK} Let $G,H,K$ be compact topological groups, with $G=HK$ and $H\cap K=\{1\}$. 
    Let $dh$ and $dk$ be the respective Haar measures on $H,K$ of total measure $1$.
    Then the Haar measure on $G$ of measure $1$ is given by
    \[\int_Gf(g)dg=\int_H\int_Kf(hk)dk\,dh.\]
\end{lemma}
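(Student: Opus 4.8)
The plan is to show that the positive linear functional $\Lambda(f)=\int_H\int_K f(hk)\,dk\,dh$ on $C(G)$ coincides with the normalized Haar measure $\mu$ on $G$; since $\Lambda(1)=1$, this simultaneously gives the displayed formula and the assertion that the resulting measure has total mass $1$. The first step is to make sense of $\Lambda$: the multiplication map $q\colon H\times K\to G$, $(h,k)\mapsto hk$, is continuous, and because $G=HK$ and $H\cap K=\{1\}$ it is a bijection (injectivity: $h_1k_1=h_2k_2$ forces $h_2^{-1}h_1=k_2k_1^{-1}\in H\cap K=\{1\}$). A continuous bijection between compact Hausdorff spaces is a homeomorphism, so $\Lambda(f)=\int_{H\times K}f\circ q\,d(dh\times dk)$ is well defined and, by Fubini, the iterated integral may be computed in either order. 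I will also invoke the standard fact that on a compact group left Haar measure is automatically right-invariant (the modular character is a continuous homomorphism into $\R^+$ with compact image, hence trivial), so $\mu$ is bi-invariant.

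Next I would introduce the averaging operators $(Sf)(g)=\int_H f(hg)\,dh$ and $(S'f)(g)=\int_K f(gk)\,dk$ on $C(G)$, and evaluate $\int_G (S'Sf)\,d\mu$, where $(S'Sf)(g)=\int_H\int_K f(hgk)\,dk\,dh$, in two ways. On the one hand, Fubini plus the bi-invariance of $\mu$ gives $\int_G f(hgk)\,d\mu(g)=\int_G f\,d\mu=\mu(f)$ for each fixed $(h,k)$, so $\int_G(S'Sf)\,d\mu=\int_H\int_K\mu(f)\,dk\,dh=\mu(f)$. On the other hand, for a fixed $g\in G$ write $g=h_gk_g$ with $h_g\in H$, $k_g\in K$ (unique, since $G=HK$ and $H\cap K=\{1\}$); then for each fixed $h$ the substitution $k\mapsto k_g^{-1}k$, using left-invariance of Haar measure on $K$, replaces $\int_K f(hh_gk_gk)\,dk$ by $\int_K f(hh_gk')\,dk'$, and then the substitution $h\mapsto hh_g^{-1}$ in the outer integral, using right-invariance of Haar measure on $H$, yields $(S'Sf)(g)=\int_H\int_K f(h'k')\,dk'\,dh'=\Lambda(f)$, independent of $g$. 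Thus $S'Sf$ is the constant function $\Lambda(f)$, so $\int_G(S'Sf)\,d\mu=\Lambda(f)$. Comparing the two evaluations gives $\Lambda(f)=\mu(f)$ for all $f\in C(G)$, i.e. $\Lambda$ is integration against $\mu$; taking $f=1$ confirms total measure $1$.

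The only genuine obstacle is the second evaluation, namely verifying that $S'Sf$ is constant on $G$; this is exactly the point where the hypotheses $G=HK$, $H\cap K=\{1\}$, and the translation-invariance of $dh$ and $dk$ in the appropriate directions are used, and everything else is bookkeeping with Fubini and the uniqueness of Haar measure. One should simply perform the $K$-substitution (for each fixed $h$) before the $H$-substitution, after which no further subtlety arises.
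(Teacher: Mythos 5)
Your proof is correct and self-contained. The paper's ``proof'' consists only of a citation to \cite[Lemma 7.13]{KL}, which treats a more general factorization of Haar measure, so there is no in-text argument to compare against; your argument fills that gap directly. The ``sandwich'' technique---evaluating $\int_G(S'Sf)\,d\mu$ once via Fubini and bi-invariance of $\mu$ on the compact group $G$ to get $\mu(f)$, and once by using the unique $HK$-decomposition of each $g$ together with left-invariance of $dk$ and right-invariance of $dh$ to show $S'Sf$ is the constant $\Lambda(f)$---is a standard and efficient route to this kind of factorization, and you have applied it correctly (the order of the two substitutions, $K$ before $H$, is exactly the order that makes the cancellations work). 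One small remark: the observation that $q\colon H\times K\to G$ is a homeomorphism, while true, is not actually needed to make $\Lambda$ well-defined or to justify Fubini, since $(h,k)\mapsto f(hk)$ is already a continuous function on the compact space $H\times K$ and hence bounded and jointly measurable; that step is harmless but inessential, and the same applies to the triple integral over $H\times K\times G$, where continuity and compactness alone license Fubini.
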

\begin{proof} This is a special case of \cite[Lemma 7.13]{KL}.
\end{proof}

We will use the Iwahori decomposition \cite[(7.3.1)]{BH} of $K'$.
Letting $M(1+\p)=\smat{1+\p}{}{}{1+\p}$, $N(\O)=\smat1\O{0}1$, and $N'(\p)=\smat{1}{0}{\p}1$, 
the decomposition 
\[K'=N(\O)\cdot N'(\p)\cdot M(1+\p)\]
is a (topological) direct product, and the same is true for any ordering of the three factors.
We will take $\meas(\ol{K'})=\meas(K')=1$, so that
applying the above lemma, this Haar measure on $\ol{K'}$ is given by both of the following:
\begin{align}\label{b1}\int_{\ol{K'}}\phi(k)dk 
    &= \int_{\O}\int_{\O}\int_{M(1+\p)}
\phi(\mat 10{\varpi c}1\mat 1b01 m)dm\, db\, dc\\
    \label{c1}    &= \int_{\O}\int_{\O}\int_{M(1+\p)}\phi(\mat1b01\mat10{\varpi c}1m)dm\,dc\,db,
\end{align}
where $dm,db,dc$ each have total measure 1.

\subsection {The case where $p|T$ and $\g$ is ramified}

The aim here is to compute $\Phi(\g,f_p)$ when $p|N$ and $\g$ is ramified at $p$, i.e., 
$v_p(\det\g)$ is odd.
As above we work over a $p$-adic field $F$ with uniformizer $\varpi$, and a fixed 
supercuspidal representation $\sigma_t^\zeta$ of $G(F)$ as in \S\ref{ssr}. 
We can assume that $v_\p(\det\g)=1$, and further, by Lemma \ref{pd}, that $v_\p(\tr\g)\ge 1$.
So we will consider matrices in the $F$-rational canonical form:
\begin{equation}\label{gp}
    \g=\mat 0 {-u\varpi}1{v\varpi}=w\mat1{v\varpi}0{u\varpi}
\end{equation}
for $u\in \O^*$, $v\in \O$, and $w=\smat{}{-1}1{}$. 

\begin{proposition}  \label{Phiell}
    For $\g$ as in \eqref{gp} and $\fp$ as in \eqref{fp}, 
$\Phi(\g,\fp)=0$ unless $-t/u$ is a square modulo $\p$.
    If the latter condition holds and $y^2\equiv -t/u\mod \p$, then
 \[\Phi(\mat0{-u\varpi}1{v\varpi},\fp)=\ol{\zeta}\Bigl(\ol{\psi(yv)}\w_\p(y)+\delta(\p\nmid 2)\,
  \ol{\psi(-yv)}\w_\p(-y)\Bigr),\]
where $\psi$ is the nontrivial character of $\O/\p$ used in \eqref{chi}.
    Thus, in the case of trivial central character (so $\zeta^2=1$), we have
 \[\Phi(\g,\fp)=\begin{cases} 2\zeta\Re(\psi(yv))&\text{if }\p\nmid 2\\{\ol{\psi(yv)}\zeta}&\text{if }\p|2.
    \end{cases}\]
When the central character is trivial, $F=\Q_p$, and $v\in\Z$, this gives
 \begin{equation}\label{PhiQp}
\Phi(\g,f_p)=\begin{cases} 2\zeta\cos(\frac{2\pi yv}{p})&\text{if }p\neq 2
\\{(-1)^v\zeta}&\text{if }p=2.  \end{cases}
\end{equation}
\end{proposition}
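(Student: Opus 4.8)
The plan is to start from the expansion \eqref{Phifp3} of $\Phi(\g,\fp)$ and first dispose of its leading term. A direct computation gives $\g^y=\smat{0}{-u\varpi y^{-1}}{y}{v\varpi}$ and hence $g_\chi^{-1}\g^y=\smat{\varpi^{-1}y}{v}{0}{-t^{-1}u\varpi y^{-1}}$, whose two diagonal entries have valuations $-1$ and $1$; since the diagonal entries of any element of $ZK'$ have a common valuation, no central translate of this matrix lies in $K'$, so $\g^y\notin H'$ and $\fp(\g^y)=0$ for every $y$. Thus
\[
\Phi(\g,\fp)=2\sum_{n=1}^\infty\frac{q^n}{q^2-1}\sum_{y\in(\O/\p)^*}\Bigl[K_\g(\smat{\varpi^n}{}{}{y})+K_\g(\smat{y}{}{}{\varpi^n})\Bigr],
\]
and it remains to evaluate the $K_\g(x)=\int_{\ol{K'}}\fp(x^{-1}h^{-1}\g h x)\,dh$ for $x$ as in \eqref{xreps}.

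For the vanishing criterion I would note that $\det\g=u\varpi$ has odd valuation, so if $x^{-1}h^{-1}\g h x$ meets the support $H'$ at all it must lie in the ramified coset $g_\chi ZK'$; writing $g_\chi^{-1}x^{-1}h^{-1}\g h x\in ZK'$ and comparing determinants modulo $\p$ produces a congruence forcing $-t/u$ to be a square in $(\O/\p)^*$ --- this is precisely the local content of the third bullet of Proposition \ref{relevant}, which I can simply quote. From here on I assume $-t/u$ is a square, say $y_0^2\equiv -t/u$.

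The heart of the proof is the explicit evaluation of the surviving $K_\g(x)$. Using the Iwahori factorization $K'=N(\O)\,N'(\p)\,M(1+\p)$ and the coordinates \eqref{b1}--\eqref{c1}, I would conjugate $\g$ by $hx$ for each representative $x=\smat{\varpi^n}{}{}{y}$ or $\smat{y}{}{}{\varpi^n}$, determine the subset of Iwahori parameters $(c,b,m)$ for which the result lands in $g_\chi ZK'$, compute the measure of that subset, and read off the value of $\ol{\chi_\zeta}$ there, which by \eqref{chi} and \eqref{xzdef} equals $\ol\zeta\,\ol{\w_\p(z)}\,\ol{\psi(b'+tc')}$ for the central part $z$ and unipotent/Iwahori part $\smat{a'}{b'}{c'\varpi}{d'}$ of the element. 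I expect that only very small values of $n$ contribute, that the quadratic congruence on the relevant coordinate has exactly the two roots $\pm y_0$ (which collapse to one when $\p\mid2$), and that after the weights $\tfrac{2q^n}{q^2-1}$ cancel against these measures one is left with exactly $\ol\zeta\,\ol{\psi(y_0v)}\,\w_\p(y_0)+\delta(\p\nmid2)\,\ol\zeta\,\ol{\psi(-y_0v)}\,\w_\p(-y_0)$. This bookkeeping --- matching the double-coset representatives to the contributing $h$-strata, keeping the measures straight, and handling the coincidence $y_0=-y_0$ in residue characteristic $2$ --- is the main obstacle, and is exactly the point at which Palm's argument \cite[Prop. 9.11.3]{P} must be corrected.

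The two displayed specializations are then routine. If the central character is trivial then $\w_\p\equiv1$ and $\zeta^2=1$, so $\ol\zeta=\zeta$; since $\psi$ is a unitary character, $\ol{\psi(-y_0v)}=\psi(y_0v)$, giving $2\zeta\,\Re(\psi(y_0v))$ when $\p\nmid2$ and $\zeta\,\ol{\psi(y_0v)}$ when $\p\mid2$. Taking $F=\Q_p$ with $\psi(x)=e(x/p)$ turns $\Re(\psi(y_0v))$ into $\cos(2\pi y_0v/p)$ for $p\ne2$; for $p=2$ one has $(\O/\p)^*=\{1\}$, so $y_0=1$ and $\psi(y_0v)=(-1)^v$, yielding $(-1)^v\zeta$.
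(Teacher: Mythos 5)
Your strategy is the same as the paper's: reduce to the expansion \eqref{Phifp3}, dispose of the $\fp(\g^y)$ term, and evaluate the remaining $K_\g(x)$ via the Iwahori decomposition, reading off $\ol{\chi_\zeta}$ on the ramified coset $g_\chi ZK'$. Your valuation argument for $\fp(\g^y)=0$ is a clean alternative to the paper's (which computes $g_\chi^{-1}\g^y$ more explicitly), and your derivation of the two specializations at the end is correct. However, the core of the argument is missing: you describe what you \emph{would} do with the surviving $K_\g(x)$, say that you \emph{expect} only small $n$ to contribute and the quadratic congruence to have the roots $\pm y_0$, and then write down the anticipated answer. That is precisely the content of the proposition, and it cannot be taken on faith. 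What actually has to be proved is sharper than ``small $n$'': one must show (i) $K_\g(\smat y{}{}{\varpi^j})=0$ for \emph{every} $j>0$ (the paper does this by observing the conjugate would land in $g_\chi\smat{ay/\varpi^{j+1}}{*}{*}{*}$, which forces the impossible $j+1=0$); (ii) $K_\g(\smat{\varpi^n}{}{}y)=0$ for every $n>1$ (in the paper the conjugate is $g_\chi\smat{\varpi^{n-1}/y}{*}{*}{*}$, whose $(1,1)$-entry must be a unit, forcing $n=1$); and (iii) for $n=1$, after the substitution $b=c\varpi$, the integrand is supported exactly on $y^2\equiv -t/u\bmod\p$ and has constant value $\tfrac{q^2-1}{2q}\,\w_\p(y)\,\ol{\zeta\psi(yv)}$, which combines with the weight $\tfrac{2q}{q^2-1}$ to yield the stated formula.

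Flagging the bookkeeping as ``the main obstacle'' is accurate, but it means the proposal is an outline rather than a proof. In particular, nothing in your writeup rules out contributions from larger $n$ or from $\smat y{}{}{\varpi^j}$, and nothing pins down the measure of the contributing subset to exactly cancel the double-coset weight. Those are the steps that make the proposition true; without them one could just as plausibly guess a different normalization or an extra geometric-series tail in $n$. I would suggest actually carrying out the conjugation $x^{-1}h^{-1}\g hx$ in coordinates \eqref{b1} and \eqref{c1} for the two families of representatives, as the paper does, since the rigidity of the $(1,1)$-entry of the $K'$-part is what kills all but $(n,j)=(1,0)$.
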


\begin{proof}
We need to compute each term of \eqref{Phifp3}.
First, note that for $y\in (\O/\p)^*$,
    \[\g^y=\mat{0}{-u\varpi/y}y{v\varpi}
    =\mat{}t\varpi{}\mat{y/\varpi}{v}{}{-u\varpi/ty}\]
    does not belong to the support of $\fp$.  Hence $\fp(\g^y)=K_\g(\smat y{}{}1)=0$.

Next, suppose $x=\smat y{}{}{\varpi^j}$ with $j>0$ and $y\in\O^*$.  
 Then we use the measure in \eqref{b1}:
\[  K_\g(x)=
     \int_{\O}\int_{\O}\int_{M(1+\p)} \fp(x^{-1}m^{-1}\smat 1{-b}01\smat10{-\varpi c}1\g
     \smat10{\varpi c}1\smat 1b01 mx)dm\, db\, dc.\]
     Note that $m$ commutes with $x$, and lies in the kernel of $\fp$.
     Therefore the integration over $M(1+\p)$ has no effect, and
\[  K_\g(x)=
     \int_{\O}\int_{\O} \fp(x^{-1}\mat1{-b}01\mat10{-\varpi c}1\g
     \mat10{\varpi c}1 \mat1b01x)db\, dc.\]
     Likewise
\[\mat1b01x=
\mat1b01\mat{y}{}{}{\varpi^j}=
\mat{y}{}{}{\varpi^j}\mat1{b\varpi^j/y}{}1.
\]
Note that the right-hand matrix lies in $K'$ since $j>0$, and in fact it is in the kernel
of $\fp$.  Therefore the integral over $b$ also has no effect, and
    \begin{equation}\label{Kj}
     K_\g(x)= \int_{\p}\fp(x^{-1}\mat10{-r}1\g \mat10{r}1 x) dr,
    \end{equation}
     where $dr$ gives $\p$ the measure $1$.

     Taking $\g=w\mat1{v\varpi}0{u\varpi}$ as in \eqref{gp},
\[\mat10{-r}1w\mat1{v\varpi}0{u\varpi}\mat10r1=w\mat1r01\mat{1+rv\varpi}{v\varpi}{ru\varpi}{u\varpi}
\]
\[
=w\mat{1+r^2u\varpi+rv\varpi}{(v+ru)\varpi}{ru\varpi}{u\varpi}.\]
Writing the above as $w\mat abcd \in w\mat{1+\p^2}{\p}{\p^2}{\varpi\O^*}$,
\[x^{-1}\mat10{-r}1\g\mat10r1x=\mat{y^{-1}}{}{}{\varpi^{-j}}w
\mat abcd \mat y{}{}{\varpi^j}\]
\[=w\mat{\varpi^{-j}}{}{}{y^{-1}}\mat{ay}{b\varpi^j}{cy}{d\varpi^j}
=w\mat{ay/\varpi^{j}}bc{d\varpi^j/y}\]
\[     =g_\chi\mat{\varpi^{-1}}{}{}{-t^{-1}}\mat{ay/\varpi^{j}}bc{d\varpi^j/y}
    =g_\chi\mat{ay/\varpi^{j+1}}{b/\varpi}{-c/t}{-d\varpi^j/ty}.\]
 Since the determinant is $u\varpi$, this belongs to the support of $\fp$ 
    if and only if the matrix on the right belongs to $\O^* K'$.
     But this would require $j+1=0$, which is impossible since $j>0$.  Hence 
     \[K_\g(\mat y{}{}{\varpi^j})=0\]
     for all $j>0$ and all $y\in \O^*$.

     Lastly, consider $x=\smat{\varpi^n}{}{}y$ for $n>0$ and $y\in \O^*$.
     We proceed in just the same way, only this time using the 
     coordinates given in \eqref{c1}.  Taking $-b$ in place of $b$ for convenience, and eliminating
the integral over $M(1+\p)$ with the same justification as before,
\[  K_\g(x)=
     \int_{\O}\int_{\O} \fp(x^{-1}
     \mat10{-\varpi c}1 \mat 1{b}01 \g \mat 1{-b}01 \mat10{\varpi c}1 x) db\, dc.\]
     Now
\[\mat10{c\varpi}1x=
\mat10{c\varpi}1 \mat{\varpi^n}{}{}{y}=
\mat{\varpi^n}{}{}{y}\mat1{}{c\varpi^{1+n}/y}1.
\]
The matrix on the right lies in the kernel of $\fp$.
Therefore
\begin{equation}\label{Kk}
    K_\g(x)= \int_{\O}\fp(x^{-1} \mat 1{b}01 \g \mat 1{-b}01 x) db.
\end{equation}
      Taking $\g=w\smat1{v\varpi}{0}{u\varpi}$, we have
\[ \mat 1{b}01 w\mat1{v\varpi}{}{u\varpi} \mat 1{-b}01 = w\mat 10{-b}1\mat{1}{-b+v\varpi}0{u\varpi}
=w\mat 1{-b+v\varpi}{-b}{b^2-v\varpi b+u\varpi}.
\]
Thus letting $P_\g(X)$ denote the characteristic polynomial of $\g$, 
\[x^{-1}\mat1{b}01\g\mat1{-b}01x=
\mat{\varpi^{-n}}{}{}{y^{-1}}w \mat 1{-b+v\varpi}{-b}{P_\g(b)}\mat{\varpi^n}{}{}y\]
\begin{equation}\label{wPg}
 =w\mat{y^{-1}}{}{}{\varpi^{-n}}\mat{\varpi^n}{y(-b+v\varpi)}{-b\varpi^n}{yP_\g(b)}
=w\mat{\varpi^n/y}{-b+v\varpi}{-b}{yP_\g(b)/\varpi^n}
\end{equation}
\[=g_\chi \mat{\varpi^{-1}}{}{}{-t^{-1}}\mat{\varpi^n/y}{-b+v\varpi}{-b}{yP_\g(b)/\varpi^n}
=g_\chi \mat{\varpi^{n-1}/y}{v-b/\varpi}{b/t}{-yP_\g(b)/t\varpi^n}.\]
Since the determinant is $u\varpi$, the above belongs to the support of $\fp$
if and only if the matrix on the right belongs to $\O^* K'$. This means in particular
that $n=1$ and $b\in\p$.  Make the change of
variables $b=c\varpi$, $db=|\varpi|dc=q^{-1}dc$.
Then 
\[P_\g(b)=\varpi(u-vc\varpi+ c^2\varpi),\]
and
\[K_\g(\smat{\varpi}{}{}y)=q^{-1}\int_{\O} 
\fp(g_\chi y^{-1}\mat{1}{yv-cy}{cy\varpi/t}{-y^2(u-vc\varpi +c^2\varpi)/t})dc.\]
From the definition of $K'$, the integrand is nonzero 
if and only if $y^2\equiv -t/u\mod\p$. 
(We have already seen in Proposition \ref{relevant} that $-t/u$ must be a square mod $\p$.)
Assuming this to be the case, then
from \eqref{chi}, \eqref{xzdef}, and \eqref{fp}, we have
\[K_\g(x)=q^{-1} \ol{\w_\p(y^{-1})\zeta}d_\chi\int_{\O}\ol{\psi(yv-cy +cy)}dc
=q^{-1}{\w_\p(y)}\ol{\zeta}d_\chi\ol{\psi(yv)}\]
\begin{equation}\label{K1}
=\frac{q^2-1}{2q}{\w_\p(y)}\ol{\zeta \psi(yv)}.
\end{equation}

To recap, for $\g=\smat0{-u\varpi}1{v\varpi}$, $K_\g(x)=0$ unless $x=\smat{\varpi}{}{}y$ for $y^2\equiv -t/u\mod\p$, 
so \eqref{Phifp3} becomes
\[\Phi(\g,\fp)=\frac{2q}{q^2-1}\sum_{y\in (\O/\p)^*}K_\g(\smat{\varpi}{}{}y)
=\ol{\zeta}\sum_{\e\in\{\pm 1\mod \p\}}\ol{\psi(\e y_0 v)}\w_\p(\e y_0),\]
where $y_0$ is any fixed solution to
$y_0^2\equiv -t/u\mod\p$.  Note that when $\p|2$, we can take $\e=1$, and if $F=\Q_2$ we can
also take $y_0=1$.
\end{proof}

\subsection{The case where $p|T$ and $\g$ is unramified}

We adopt the same notation used in the previous subsection.
Suppose $\g$ is unramified, i.e., $\ord_\p(\det\g)$ is even.  Scaling if needed, we may assume that
$\det\g\in\O^*$.
For the nonvanishing of $\Phi(\g,f_\p)$, it is necessary that some conjugate of
$\g$ belong to the unramified component of the support of $\f_\p$, namely $ZK_\p'$.
Given that $u=\det\g\in\O^*$, this means that $\tr\g$ must also be integral.  So we may take $\g$ in
rational canonical form
\begin{equation}\label{grat}
\g=\mat0{-u}1v
\end{equation}
for some $u\in\O^*$ and $v\in \O$.

\begin{proposition}\label{gunram}
For $\g$ elliptic in $G(F)$ and of the form \eqref{grat},
$\Phi(\g,f_\p)=0$ unless the characteristic polynomial $P_\g$ has a nonzero double
root modulo $\p$: 
\begin{equation}\label{Pgz}P_\g(X)\equiv (X-z)^2\mod \p\end{equation}
for some $z\in(\O/\p)^*$.
  Under this condition, 
\begin{equation}\label{phiunram}
\Phi(\g,\fp) =\frac{\ol{\w_\p(z)}}q\sum_{n=1}^\infty \sum_{c\mod\p}
\mathcal{N}_\g(c,n)
\sum_{y\in(\O/\p)^*}\psi(\frac{yc}z)
\psi(-\frac{t}{yz})^{\delta(n=1)}
\end{equation}
where $\psi$ is the nontrivial character of $\O/\p$ used in \eqref{chi},
$t\in (\O/\p)^*$ is the parameter of $\sigma_t^\zeta$, and 
\[\mathcal{N}_\g(c,n)=\#\{b\mod \p^{n+1}|\, P_\g(b)\equiv c\varpi^n\mod \p^{n+1}\}.\]
\end{proposition}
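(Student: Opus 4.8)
The plan is to start from the double-coset expansion \eqref{Phifp3} and carry out the same case analysis on the representatives $x$ in \eqref{xreps} that was used in Proposition \ref{Phiell}, but now with $\det\g\in\O^*$ rather than $\det\g\in\varpi\O^*$. First I would treat $x=\smat y{}{}1$: then $K_\g(x)=\fp(\g^y)$, and since $\g^y=\smat0{-u/y}y{v}$ has integral entries with $\det=u\in\O^*$, the matrix lies in $K=G(\O)$ but \emph{not} in $K'=\smat{1+\p}{\O}{\p}{1+\p}$ unless its reduction mod $\p$ lies in $\smat{1+\p}{*}{0}{1+\p}$, i.e.\ unless $\g^y$ reduces to an upper-triangular unipotent-times-center element mod $\p$. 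This forces $P_\g$ to have the repeated root $z$ mod $\p$ (the ``only if'' direction of \eqref{Pgz}), and gives the first block of terms. Then for $x=\smat{\varpi^n}{}{}y$ and $x=\smat y{}{}{\varpi^n}$ with $n>0$ I would repeat the manipulations leading to \eqref{Kj} and \eqref{Kk}: the integration over $M(1+\p)$ drops out (central, in $\ker\fp$), one of the two unipotent integrations drops out because the relevant factor lands in $\ker\fp$, and we are left with a single integral over $\O$ (or $\p$) of $\fp$ evaluated at an explicit matrix of the form $w\smat{*}{*}{*}{*}$, which I rewrite as $g_\chi$ times a matrix via $w=g_\chi\smat{\varpi^{-1}}{}{}{-t^{-1}}$.

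The heart of the computation is then to decide, for each $n\ge 1$, for which $x$ and which values of the integration variable the resulting matrix lands in the support $\O^*K'$ of $\fp$. As in the ramified case, conjugating by $\smat{\varpi^n}{}{}y$ shifts valuations: in \eqref{wPg}-style computations the $(2,2)$-entry becomes $yP_\g(b)/\varpi^n$ and the off-diagonal entries pick up or lose powers of $\varpi$, so membership in $\O^*K'$ forces precise valuation conditions on $b$ and pins down which $n$ contribute. I expect that (unlike the ramified case, where only $n=1$ survived) here \emph{all} $n\ge 1$ can contribute, with the $b$-integral localizing to $b$ in a coset $c\varpi^n+\p^{n+1}$, and the count of admissible $b\bmod\p^{n+1}$ being exactly $\mathcal N_\g(c,n)=\#\{b\bmod\p^{n+1}:P_\g(b)\equiv c\varpi^n\bmod\p^{n+1}\}$. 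The Jacobi-symbol-type condition $y^2\equiv$ something, which in the ramified case became $-t/u$ being a square, should here instead be replaced by the condition that the reduction of $\g$ conjugated appropriately match the shape of $K'$, which will introduce the factor $\ol{\w_\p(z)}$ (coming from the central/diagonal part $\w_\p$ evaluated at $z$) and the character values $\psi(yc/z)$ from the $N(\O)$-part and $\psi(-t/(yz))^{\delta(n=1)}$ from the $N'(\p)$-part of $K'$ — the latter appearing only when $n=1$ because only then does the lower-left entry reach the ``$c\varpi$ with $c$ a unit'' regime that $\chi$ sees via the $tc$ term in \eqref{chi}.

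Concretely, the steps in order: (1) reduce \eqref{Phifp3} to the three families of $x$; (2) dispatch $x=\smat y{}{}1$, extracting the necessity of \eqref{Pgz} and a contribution to the $n=0$-like term; (3) for $x=\smat y{}{}{\varpi^n}$, show via the $g_\chi$-rewriting that the matrix never lands in $\O^*K'$ (the valuation of the $(1,1)$-entry after the $w\mapsto g_\chi$ step is $-n-1<0$), so these vanish; (4) for $x=\smat{\varpi^n}{}{}y$, perform the $b$-integral, substitute $b=c\varpi^n$ or track $b\bmod\p^{n+1}$, identify the admissible $b$'s with $\mathcal N_\g(c,n)$, and read off the character values; (5) assemble, using $\meas_{\olG}(\ol{K'}x\ol{K'})=q^n/(q^2-1)$ from \eqref{Knmeas} and the factor $\frac{2q^n}{q^2-1}$ in \eqref{Phifp3}, and check that $d_\chi=\frac{q^2-1}2$ cancels to leave the clean prefactor $\frac{\ol{\w_\p(z)}}q$. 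The main obstacle I anticipate is step (4): correctly bookkeeping, for each $n$, exactly which residues $b\bmod\p^{n+1}$ make \emph{all four} entries of $g_\chi^{-1}\cdot(\text{conjugate})$ integral with the lower-left in $\p$ and the diagonal in $1+\p$ — i.e.\ matching the Iwahori-level structure of $K'$ entry by entry — and verifying that the resulting exponential sum collapses to the stated double sum over $c\bmod\p$ and $y\in(\O/\p)^*$ rather than a more complicated sum over $\O/\p^{n+1}$. This is where Palm's stated proposition has errors, so it needs to be done carefully from scratch.
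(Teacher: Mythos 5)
Your overall strategy matches the paper's — start from the double-coset expansion \eqref{Phifp3}, run the same case analysis on the representatives in \eqref{xreps} as was used for Proposition \ref{Phiell}, but now with $\det\g\in\O^*$. Steps (3)--(5) are sound modulo small imprecisions. But step (2) contains a genuine error.

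You claim $x=\smat y{}{}1$ yields a nonzero ``first block of terms'' and is what forces \eqref{Pgz}. In fact $f_\p(\g^y)=0$ for every $y$, unconditionally, so these terms all vanish and there is no $n=0$-type contribution in \eqref{phiunram}. To see it: $\g^y=\smat{y^{-1}}{}{}1\smat0{-u}1v\smat y{}{}1=\smat0{-u/y}yv$ has $(1,1)$-entry $0$ and $(2,1)$-entry $y\in\O^*$. Membership in $ZK'$ would require some central $z$ with $z^{-1}\cdot 0\in 1+\p$, which is impossible; membership in $g_\chi ZK'$ is ruled out because the determinant of $\g^y$ is the unit $u$ while elements of $g_\chi ZK'$ have determinant of odd valuation. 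So these terms contribute identically zero, and they cannot be used to extract the necessity of \eqref{Pgz}. The paper instead obtains the necessity from the direct argument of Proposition \ref{relevant} (a conjugate of $\g$ in $ZK'$ forces $\tr\g\equiv 2z$, $\det\g\equiv z^2\pmod\p$); alternatively it falls out of the $x=\smat{\varpi^n}{}{}y$ computation, where the integrand is nonzero only when $b$ and $v-b$ both reduce to the same unit $z$ mod $\p$, i.e.\ $z$ is a double root. Either route works, but not the one you describe.

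Two minor points. In step (3) the ``$g_\chi$-rewriting with $(1,1)$-entry of valuation $-n-1$'' is an artifact of the ramified case; here $\det\g\in\O^*$, so only the unramified component $ZK'$ of $\Supp f_\p$ is relevant, and the vanishing of $K_\g(\smat y{}{}{\varpi^n})$ is because the $(2,1)$-entry $(1+rv+r^2u)y/\varpi^n$ (with $r\in\p$) has negative valuation $-n$. In step (4), the substitution ``$b=c\varpi^n$'' is wrong — here $b$ ranges over the unit coset $z+\p$; what is being parametrized by $c\in\O/\p$ is $P_\g(b)\equiv c\varpi^n\pmod{\p^{n+1}}$, and $\mathcal N_\g(c,n)$ counts the admissible $b$ mod $\p^{n+1}$.
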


\noindent{\em Remarks:} 1. Since $P_\g$ is irreducible over $F$, 
there exists $r$ such that $P_\g(X)\equiv 0
\mod \p^{r}$ has no solution, and hence $\mathcal{N}_\g(c,n)=0$ for all pairs $(c,n)$ with $n\ge r$.
  So the series is actually a finite sum.
\vskip .1cm

2. When $n=1$ the sum over $y$ is a Kloosterman sum.  When $n>1$, 
\[\sum_y\psi(\frac{yc}z)=\begin{cases}q-1&\text{if }c\equiv 0\mod \p\\-1&\text{otherwise.}\end{cases}\]
\vskip .1cm

3.  When $F=\Q_p$, the integer $\mathcal{N}_\g(c,n)$ is given explicitly in \cite[Lemma 9.6]{ftf}, 
  and presumably there is a version of that lemma for an arbitrary $p$-adic field.
In particular, $N_\g(c,n)=0$ unless $n\le \ord_p(\Delta_\g)-1$, and for such $n$,
\[ N_\g(c,n) \le p^{\lfloor\frac{n+1}2\rfloor}\]
assuming $\g$ is elliptic in $G(\Q_p)$ and satisfies \eqref{Pgz}.
This gives the following bound for the orbital integral: setting $\delta=\ord_p(\Delta_\g)$,
\[
|\Phi(\g,f_p)|\le \sum_{n=1}^{\delta-1}(p-1)(p^{1/2})^{n+1}
=p(p-1)\sum_{n=0}^{\delta-2}(p^{1/2})^n =p(p-1)\frac{(p^{1/2})^{\delta-1}-1}{p^{1/2}-1}\]
\[=p(p^{1/2}+1)\left(p^{-1/2}p^{\delta/2} -1\right) \le 2p|\Delta_\g|_p^{-1/2}.\]
This illustrates the general bound given in \cite[(1.8) and Theorem 3.11]{KST}, according to which
\[
|\Phi(\g,f_\p)| \le C\cdot(d_{\sigma_\p})^\eta |\Delta_\g|_\p^{-1/2},
\]
where $C>0$ and $\eta<1$ depend only on $G(F)$.

\begin{proof} [Proof of Proposition \ref{gunram}]
The first statement was proven in Proposition \ref{relevant}. 
Suppose $\Phi(\g,\fp)\neq 0$ for $\g$ as in \eqref{grat}.
We will compute each term of \eqref{Phifp3}.
It is not hard to check that $\fp(\g^y) =0$ and $K_\g(\smat y{}{}{\varpi^n})=0$,
 since the matrices involved do not 
intersect the support of $\fp$.  Therefore
\begin{equation}\label{Phifp4}\Phi(\g,\fp)=2\sum_{n=1}^\infty\frac{q^n}{q^2-1}
\sum_{y\in (\O/\p)^*}K_\g(\mat{\varpi^n}{}{}y).\end{equation}
Now fix $n\ge 1$ and $y\in\O^*$ and let $x=\smat{\varpi^n}{}{}y$.  As in \eqref{Kk},
we have
 \[   K_\g(x)= \int_{\O}\fp(x^{-1} \mat 1{b}01 \g \mat 1{-b}01 x) db.\]
By a quick calculation (see \eqref{wPg}
 with $u,v$ in place of $u\varpi,v\varpi$),
\begin{equation}\label{xconj}
x^{-1}\mat1b01\g\mat1{-b}01x = \mat b{-yP_\g(b)/\varpi^n}{\varpi^n/y}{v-b}.
\end{equation}
Since the determinant is $u\in\O^*$, this belongs to the support of $f_\p$ only if
it belongs to $\O^*K'$. In particular, $b\in \O^*$ and 
$P_\g(b)\equiv 0\mod \p^n$. 
Therefore $b\in z+\p$ for $z$ as in \eqref{Pgz}.
From \eqref{Pgz} we see that $v\equiv 2z\mod\p$ so $v-b\in z+\p$ as well.
Therefore, pulling out a factor of $z$ from the above matrix,
\[K_\g(\smat{\varpi^n}{}{}y) = \ol{\w_\p(z)}\int_{z+\p} f_\p(\mat1{-yP_\g(b)/z\varpi^n}{\varpi^n/yz}1)db.\]
Writing
$P_\g(b)\equiv c\varpi^n\mod\p^{n+1}$ for some $c\in \O/\p$, by \eqref{fp} the integrand becomes
\[f_\p(\mat1{-yc/z}{\varpi^n/yz}1)=\frac{q^2-1}2\ol{\psi(-\frac{yc}z+\frac{t\varpi^{n-1}}{yz})}.\]
This depends (via $c$) only on the coset $b+\p^{n+1}$ (in fact it depends only on $b+\p^n$ but 
we will not use this).  Each such coset has measure $q^{-(n+1)}$.   Therefore if we let
\[\mathcal{N}_\g(c,n)=\#\{b\mod \p^{n+1}|\, P_\g(b)\equiv c\varpi^n\mod \p^{n+1}\}\]
for $c\in\O/\p$, we find that
\[K_\g(x)=\ol{\w_\p(z)}\frac{q^2-1}{2q^{n+1}}\sum_{c\mod \p}\psi(\frac{yc}z)
\psi(-\frac{t}{yz})^{\delta(n=1)}\mathcal{N}_\g(c,n).\]
Inserting this into \eqref{Phifp4} gives the result.
\end{proof}

\begin{example}\label{Phi2a}  
 For $M\in \Z_2^*$ 
and $f_2$ as in \eqref{fp},
 \[\Phi(\mat{}{-M}1{},f_2)=\begin{cases}1&\text{if }M\equiv 1\mod 4\\
  -3&\text{if } M\equiv 3\mod 8\\
0&\text{if }M\equiv 7\mod 8.\end{cases}\]
\end{example}

\begin{proof}
First, $\g=\smat{}{-M}1{}$ is hyperbolic in $G(\Q_2)$ if and only if $-M$ is a square in $\Q_2$, 
which holds if and only if $M\equiv 7\mod 8$.  In this case, $\Phi(\g, f_2)=0$ by \eqref{hyp}.

 Assuming $M\not\equiv 7\mod 8$, we may apply Proposition \ref{gunram}. We need to determine 
\[\mathcal{N}_\g(0,n) =\text{ number of solutions to }x^2\equiv -M\mod 2^{n+1},\]
 and 
\[\mathcal{N}_\g(1,n) =\text{ number of solutions to }x^2\equiv 2^n-M\mod 2^{n+1}.\]
Given any odd integer $D$, the number of solutions to $x^2\equiv D\mod 2^j$ is
\[\begin{cases}1, & j=1\\
2,&j=2, D\equiv 1\mod 4\\
0,& j=2, D\equiv 3\mod 4\\
4,& j>2, D\equiv 1\mod 8\\
0,&j>2, D\not\equiv 1\mod 8\end{cases}\]
(\cite[Theorem 87]{Land}).
Therefore
\[\hskip -.5cm \mathcal{N}_\g(0,n) =\begin{cases} 
2 &\text{if }n=1\text{ and }M\equiv 3\mod 4\\
0 &\text{if }n=1\text{ and }M\equiv 1\mod 4\\
4 &\text{if }n\ge 2\text{ and }M\equiv 7\mod 8\\
0&\text{if }n\ge 2\text{ and }M\not \equiv 7\mod 8,
\end{cases}\qquad
\mathcal{N}_\g(1,n)=\begin{cases}
0 &\text{if }n=1\text{ and }M\equiv 3\mod 4\\
2 &\text{if }n=1\text{ and }M\equiv 1\mod 4\\
4 &\text{if }n= 2\text{ and }M\equiv 3\mod 8\\
0 &\text{if }n=2\text{ and }M\not\equiv 3\mod 8\\
4&\text{if }n\ge 3\text{ and }M\equiv 7\mod 8\\
0&\text{if }n\ge 3\text{ and }M\not \equiv 7\mod 8.
\end{cases}
\]
By definition, $\psi_2(x)=(-1)^x$ for $x\in \Z$, and $\w_2$ is trivial on $1+2\Z_2=\Z_2^*$.  
So by \eqref{phiunram} and the above,
\[\Phi(\g,f_2)=\frac12[\mathcal{N}_\g(0,1)\psi_2(0)\psi_2(1)+\mathcal{N}_\g(1,1)\psi_2(1)^2]
+\frac12[\mathcal{N}_\g(0,2)\psi_2(0)+\mathcal{N}_\g(1,2)\psi_2(1)]\]
\[=\begin{cases}\frac12[0+2]+\frac12[0+0]=1&\text{if }M\equiv 1\mod 4\\
\frac12[-2+0]+\frac12[0-4]=-3&\text{if }M\equiv 3\mod 8.\end{cases}\qedhere\]
\end{proof}

\begin{example}\label{g3b}
For $f_3$ as in \eqref{fp} and any $m\in \Z_3^*$,
 \[\Phi( \mat0{-m^2}1m, f_3)=\w_3(-m)t\cdot 2_{t=1},\]
 where $2_{t=1}$ is a factor of $2$ which is present only when $t=1$.
Here, $t\in \{\pm 1\}=(\Z/3\Z)^*$ is the parameter of the fixed 
simple supercuspidal representation $\sigma_t^\zeta$ of $G(\Q_3)$.
\end{example}
\noindent{\em Remark:} When $N=3$, $\w_3(-1)=\w'(-1)=(-1)^k$, so
\[\w_3(-m)=\begin{cases}(-1)^k&\text{if }m\in 1+3\Z_3\\1&\text{if }m\in -1+3\Z_3.\end{cases}\]

\begin{proof}
We will apply Proposition \ref{gunram}.  First note that
\[P_\g(X)=X^2-mX+m^2\equiv (X+m)^2\mod 3,\]
so we can take $z=-m$ in \eqref{Pgz}.
We need to find $\mathcal{N}_\g(c,n)=\#\{b|\, b^2-mb+m^2\equiv c3^n\mod 3^{n+1}\}$. 
If $b\in\Z_3^*$ is a double root of $P_\g$ modulo $3$, then by the above we may write $b=-m+3d$, so
\[P_\g(b)=(-m+3d)^2-m(-m+3d)+m^2 = 3m^2 + 9(d^2-md)\in 3\Z_3^*.\]
Thus, $\ord_3(P_\g(b))=1$, which means that $\mathcal{N}_\g(c,n)=0$ for all $n\ge 2$, and also
$\mathcal{N}_\g(0,1)=0$.  Some elementary calculations show that independently of $m$,
$\mathcal{N}_\g(-1,1)=0$ and $\mathcal{N}_\g(1,1)=3$. 
In view of \eqref{phiunram}, this means
\[
\Phi(\g,f_3)= \frac{\ol{\w_3(-m)}}3\mathcal{N}_\g(1,1)\Bigl(\psi_3(\tfrac1{-m})\psi_3(\tfrac{-t}{-m})
+\psi_3(\tfrac1m)\psi_3(\tfrac{-t}m)\Bigr)
=\ol{\w_3(-m)}\Bigl(e(\tfrac{1-t}{-3m})+e(\tfrac{1-t}{3m})\Bigl)\]
\[=\w_3(-m)\Bigl[e(\frac{t-1}3)+e(\frac{1-t}3)\Bigr].\]
When $t=1$ (resp. $t=-1$), the expression in the brackets equals $2$ (resp. $-1$).
\end{proof}

\subsection{The case where $p|S$}

When $p|S$, the support of $f_p$ is contained in $Z_pK_p$, so the orbital integral vanishes unless $\g$
is unramified.
We again work over a $p$-adic field $F$, with the usual notation, and fix a depth zero
supercuspidal representation $\sigma_{\nu}$ of $G=G(F)$ for $\nu$ a primitive character of
$\F_{q^2}^*$.

\begin{proposition}\label{elld0}
Let $f_\p$ be the test function defined in \eqref{fpd0}, and let $\g=\mat{}{-u}1v$ be an elliptic
element of $G(F)$, where $u\in\O^*$ and $v\in \O$.  If there exists $z\in (\O/\p)^*$ such that
\[  P_\g(X)\equiv (X-z)^2\mod \p,\]
then
\begin{equation}\label{ZU}
\Phi(\g,\fp)= -\ol{\w_\p(z)}+\frac{\ol{\w_\p(z)}}q\sum_{n=1}^\infty\left[
(q-1)\mathcal{N}_\g(0,n)-\sum_{c\in (\O/\p)^*}\mathcal{N}_\g(c,n)\right],
\end{equation}
where $\mathcal{N}_\g(c,n)=\#\{b\mod \p^{n+1}|\, P_\g(b)\equiv c\varpi^n\mod \p^{n+1}\}$. 

If $P_\g(X)$ is irreducible modulo $\p$, then
\begin{equation}\label{Tcase}
\Phi(\g,\fp)=-\ol{\nu(\g)}-\ol{\nu^q(\g)},
\end{equation}
where we interpret the above to mean $-\ol{\nu(x)}-\ol{\nu^q(x)}$ if $x\in \F_{q^2}^*$ has the
 same minimum polynomial over $\F_q$ as the reduction of $\g$ mod $\p$, i.e., $\g$ is conjugate
to $x\in \T$.
\end{proposition}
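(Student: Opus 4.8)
The plan is to mimic the method of Propositions \ref{Phiell} and \ref{gunram}, replacing the Iwahori-level group $K'$ by the full maximal compact $\ol K=\olG(\O)$ and the character $\chi_\zeta$ of $K'$ by the class function $\overline{\tr\rho_\nu}$ on $K$. Since $\tr\rho_\nu$ is a class function on $G(\k)$ and $f_\p$ transforms correctly under the center, $f_\p$ is invariant under $\ol K$-conjugation; hence $g\mapsto f_\p(g^{-1}\g g)$ is right $\ol K$-invariant, and applying Lemma \ref{Ilem} with $I_1=I_2=\ol K$ (the inner integration collapsing by this invariance) together with the Cartan decomposition $\olG(F)=\bigsqcup_{n\ge 0}\ol K\smat{\varpi^n}{}{}1\ol K$ gives
\[\Phi(\g,f_\p)=\sum_{n\ge 0}\meas_{\olG}\!\big(\ol K\smat{\varpi^n}{}{}1\ol K\big)\int_{\ol K}f_\p\!\big(\smat{\varpi^{-n}}{}{}1\,h^{-1}\g h\,\smat{\varpi^n}{}{}1\big)\,dh.\]

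The summand $n=0$ is simply $f_\p(\g)=\overline{\tr\rho_\nu(\bar\g)}$, the reduction $\bar\g$ of $\g=\smat 0{-u}1v$ being non-scalar (it is in rational canonical form), hence conjugate in $G(\k)$ either to $z$ times a nontrivial unipotent, when $P_\g\equiv(X-z)^2\bmod\p$, or into the nonsplit torus $\T$, when $P_\g$ is irreducible mod $\p$ (these being the only possibilities for $\g$ elliptic over $F$). The character formula \eqref{trho} then evaluates $f_\p(\g)$ to $-\overline{\w_\p(z)}$ in the first case and to $-\overline{\nu(\bar\g)}-\overline{\nu^q(\bar\g)}$ in the second (unambiguous by the remark after \eqref{trho}). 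This produces the stand-alone term of \eqref{ZU} and all of \eqref{Tcase}, once the terms $n\ge 1$ are taken care of.

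For $n\ge 1$ the integrand at $h$ is nonzero only when $\smat{\varpi^{-n}}{}{}1 h^{-1}\g h\smat{\varpi^n}{}{}1\in ZK$, equivalently (its determinant $u$ being a unit) in $K=\GL_2(\O)$, equivalently when $\g$ stabilizes the lattice $L_h=h\smat{\varpi^n}{}{}1\zpvector$, a sublattice of $\zpvector$ of colength $n$ with cyclic quotient. Such $L_h$ is a module over $\O_\g=\O_F+\O_F\g$, hence a fractional ideal of some $\O_r$ with $r\le n_\g$, in the notation of Proposition \ref{orders} (so $\O_\g=\O_{n_\g}$). When $P_\g$ is irreducible mod $\p$, $\O_\g=\O_E$ is the maximal order in the \emph{inert} quadratic extension, so $L_h$ is a fractional $\O_E$-ideal $\mathfrak{P}^j$ and $\zpvector/\mathfrak{P}^j\cong(\O/\varpi^j)^2$, which is never cyclic of order $q^n$ for $n\ge 1$; hence no such $L_h$ exists, every term $n\ge 1$ vanishes, and \eqref{Tcase} follows. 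When $P_\g\equiv(X-z)^2\bmod\p$ (so automatically $v\equiv 2z$, $u\equiv z^2\bmod\p$), I would run the matrix manipulation from the proof of Proposition \ref{gunram}: by \eqref{xconj} with $y=1$, $\smat{\varpi^{-n}}{}{}1\smat 1b01\g\smat 1{-b}01\smat{\varpi^n}{}{}1=\smat b{-P_\g(b)/\varpi^n}{\varpi^n}{v-b}$ lies in $K$ iff $P_\g(b)\equiv 0\bmod\p^n$, and, writing $P_\g(b)\equiv c\varpi^n\bmod\p^{n+1}$, its reduction mod $\p$ equals $z$ times the unipotent $\smat 1{-c/z}01$, which is the scalar $zI$ exactly when $c\equiv 0$. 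Thus \eqref{trho} assigns this conjugate the value $(q-1)\overline{\w_\p(z)}$ when $c\equiv 0$ and $-\overline{\w_\p(z)}$ otherwise, with multiplicities counted by $\mathcal{N}_\g(c,n)$; tallying the Haar measures of $\ol K\smat{\varpi^n}{}{}1\ol K$ and of the contributing locus in $\ol K$ collects these into $\tfrac{\overline{\w_\p(z)}}{q}\sum_{n\ge 1}\big[(q-1)\mathcal{N}_\g(0,n)-\sum_{c\neq 0}\mathcal{N}_\g(c,n)\big]$, the remaining term of \eqref{ZU}; finiteness of the sum is Remark 1 after Proposition \ref{gunram}.

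The main obstacle will be this last measure bookkeeping: unlike the $K'$-computation of \S\ref{ssr}--\ref{pN}, the group $K=\GL_2(\O)$ admits no single Iwahori factorization simultaneously adapted to all cosets $\ol K\smat{\varpi^n}{}{}1\ol K$, so one must decompose $\ol K$ (into Iwahori double cosets, or equivalently work on the tree of $\PGL_2$) and carefully control the contribution of the torus and lower-triangular directions; it is here that $\meas_{\olG}(\ol K\smat{\varpi^n}{}{}1\ol K)=q^{n-1}(q+1)$ and the coset measures conspire to produce the factor $1/q$. A clean cross-check, which also explains structurally why only cyclic-quotient ideals contribute, is the alternative evaluation $\Phi(\g,f_\p)=\sum_{[L]}f_\p(g_L^{-1}\g g_L)$ over the $\g$-stable lattice classes $[L]$ in $F^2$: grouping by $\O_L=\O_r$ ($0\le r\le n_\g$), using that $\overline{g_L^{-1}\g g_L}=zI$ when $\O_L\supsetneq\O_\g$ and $=\bar\g$ when $\O_L=\O_\g$, together with $|E^*/F^*\O_r^*|=e_\g[\O_E^*:\O_r^*]$ and \eqref{index}, recovers the same value and verifies the combinatorial identity implicit in \eqref{ZU}.
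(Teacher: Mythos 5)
Your overall strategy — Lemma~\ref{Ilem} applied with $I_1=I_2=\ol K$, the Cartan decomposition, the $n=0$ term evaluated by the finite-field character formula \eqref{trho}, and the elimination of $n\ge 1$ terms in the irreducible case via the observation that $P_\g(b)\notin\p$ for all $b$ — is exactly the proof in the paper, and your alternate explanation via cyclic-quotient $\O_E$-ideals for why the irreducible case has no $n\ge 1$ contribution is a correct structural gloss on the same fact. The $n=0$ evaluation of $f_\p(\g)=\overline{\tr\rho_\nu(\bar\g)}$ is also right in both cases.

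However, you have flagged, without resolving, the one step that actually carries the weight of the computation: turning the integral over $\ol K$ at each $n\ge 1$ into the claimed count $\tfrac1q\bigl[(q-1)\mathcal N_\g(0,n)-\sum_{c\neq 0}\mathcal N_\g(c,n)\bigr]$. That is a genuine gap, and your diagnosis of \emph{why} it is hard ("no single Iwahori factorization of $K$ adapted to all cosets; decompose $\ol K$ into Iwahori double cosets or work on the tree") overestimates the difficulty. The paper does not decompose $K$ at all. Writing $h=\smat abcd$ with $\det h=1$ and computing the upper-right entry $-b^2+abv-a^2u$ of $h\g h^{-1}$, one sees that if $a\in\p$ then (forcing $b\in\O^*$) this entry is a unit, so the integrand vanishes; hence the integral is over the open subset $A=\smat{\O^*}{*}{*}{*}\cap K$. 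One then computes $\meas(A)=\tfrac q{q+1}$ directly by counting mod $\p$, and uses the exact factorization $A=\smat{\O^*}{}{}{\O^*}\,\smat1{}{\O}1\,\smat1{\O}{}1$ (which, unlike any factorization of $K$, \emph{does} exist) together with Lemma~\ref{GHK}. Since $f_\p$ is a class function on $ZK$, the diagonal factor drops out, and the lower-triangular factor is absorbed into $x=\smat{\varpi^n}{}{}1$ because $\smat1{}{-c}1 x= x\smat1{}{-\varpi^n c}1$ with the rightmost matrix in $K$, leaving exactly the one-dimensional $b$-integral you wrote; the factors $\tfrac q{q+1}$, $q^{n-1}(q+1)$, and $q^{-(n+1)}$ (the measure of a coset mod $\p^{n+1}$) then combine to the single $1/q$ in \eqref{ZU}. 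Your lattice-class ``cross-check'' is a plausible alternative route consonant with Proposition~\ref{elle}, but the key claim that $\overline{g_L^{-1}\g g_L}$ is central for $\O_L\supsetneq\O_\g$ and conjugate to $\bar\g$ for $\O_L=\O_\g$, and the resulting combinatorial identity matching \eqref{ZU}, are asserted rather than proved, so as written it does not close the gap either.
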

\noindent{\em Remarks:}  1. The remaining 
possibility where $P_\g(X)$ has two distinct roots mod $\p$ cannot
occur due to Hensel's Lemma, since we are assuming that $\g$ is elliptic in $G(F)$.
\vskip .1cm

2. See the remarks after Proposition \ref{gunram} regarding $\mathcal{N}_\g(c,n)$. 
In particular, the sum in \eqref{ZU} is finite, and when $F=\Q_p$ we find
$|\Phi(\g,f_p)|\le 1+4|\Delta_\g|_p^{-1/2}$.

\begin{proof}
In this proof we write $\olG$ for $\olG(F)$, $Z$ for $Z(F)$, and $K$ for $G(\O)$. 
  By Lemma \ref{Ilem},
\[\Phi(\g,\fp)=\int_{\olG}\fp(g^{-1}\g g)dg
=\sum_{x\in \ol{K}\bs \olG/\ol{K}}\meas_{\olG}(\ol{K}x\ol{K})\int_{\ol{K}}\int_{\ol{K}}
\fp(h_2^{-1}x^{-1}h_1^{-1}\g h_1 xh_2)dh_1dh_2,\]
with $dh_1$ and $dh_2$ each having total measure $1$.
The integrand is nonzero only if $x^{-1}h_1^{-1}\g h_1x\in ZK$.  Therefore,
 since $\fp$ is a trace, $h_2$ has no effect, so
\[\Phi(\g,\fp)=\sum_{x\in \ol{K}\bs \olG/\ol{K}}\meas_{\olG}(\ol{K}x\ol{K})\int_{\ol{K}}
\fp(x^{-1}h\g h^{-1} x)dh.\]
(For convenience in what follows, we have set $h=h_1^{-1}$.)

By the Cartan decomposition of $G$, a set of representatives for
$\ol{K}\bs \olG/\ol{K}$ is given by
\[\left\{\mat{\varpi^n}{}{}1|\, n\ge 0\right\}.\]
Arguing as in \cite[Lemma 4.5.6(2)]{M}, for $x=\smat{\varpi^n}{}{}1$,
\[|K\bs KxK| =\begin{cases}q^{n-1}(q+1)&\text{if } n>0\\ 1&\text{if }n=0.\end{cases}\]
Therefore
  $\meas_{\olG}(\ol{K}x\ol{K})=q^{n-1}(q+1)$ when $n>0$, so
\begin{equation}\label{Phign}\Phi(\g,\fp)=\fp(\g) + \sum_{n=1}^\infty q^{n-1}(q+1)K_\g(n),
\end{equation}
where
\[K_\g(n)=\int_{K}\fp(\mat{\varpi^{-n}}{}{}1h\g h^{-1} \mat{\varpi^n}{}{}1)dh\qquad(n>0).\]
(We may integrate over $K$ since $K$ and $\ol{K}$ both have measure 1.)
Write $h\g h^{-1}=\smat wxyz \in K$.  Then
\begin{equation}\label{hcong}
\mat{\varpi^{-n}}{}{}1 \mat wxyz \mat{\varpi^n}{}{}1=\mat w{x/\varpi^n}{y\varpi^n}z.
\end{equation}
This belongs to the support of $\fp$ only if $x\in \p^n$. 

In the integrand above, we can freely multiply $h$ by a diagonal element of $K$ since
such an element commutes with $x$ and can be eliminated since $\fp$ is a trace.
In particular, we can assume $\det h=1$.  Write $h=\smat abcd$, $\det h=1$. Then
\[h\g h^{-1} = \mat *{-b^2+abv-a^2u}**.\]
If $a\in \p$ then we must have $b\in \O^*$ since $\smat abcd\in K$.  But then the upper
right entry above cannot belong to $\p^n$, so the integrand vanishes by \eqref{hcong}.
Therefore we may assume $a\in\O^*$, i.e., $h\in A$, where
\[A=\mat{\O^*}***\cap K.\]
Let's find the measure of $A$.  Let $K(\p)=1+M_2(\p)\subset A$.  This is the kernel of the reduction
 mod $\p$ map $K\rightarrow G(\O/\p)$.  
Since $|G(\O/\p)|=(q^2-1)(q^2-q)$, we see that $\meas(K(\p))=\frac1{(q^2-1)(q^2-q)}$. 
Let $\ol A = A \mod K(\p)$. Thinking of $\ol A$ as a set of matrices in $G(\O/\p)$, we see that
\[|\ol{A}| = (q-1)q(q^2-q).\]
(There are $(q-1)q$ possible top rows, and then $q^2-q$ remaining choices for the bottom row.)
Hence
\[\meas(A)=\frac{(q-1)q(q^2-q)}{(q^2-1)(q^2-q)}=\frac q{q+1}.\]

It is not hard to show that 
\[A=\mat{\O^*}{}{}{\O^*}\mat 1{}\O1\mat1\O{}1,\]
and that the corresponding decomposition of any element of $A$ is unique.  
Therefore by Lemma \ref{GHK} we can use the above as a coordinate system for integration over $A$.
Since, as noted above, the diagonal component has no effect on the value of the integral, we have
\[K_\g(n)=\frac q{q+1}\int_\O\int_\O \fp(x^{-1}\mat1{}{c}1\mat1b{}1\g\mat1{-b}{}1\mat1{}{-c}1 x)
db\,dc,\]
where $x=\smat{\varpi^n}{}{}1$, and $db$ and $dc$ each have total measure $1$.
The integral over $c$ can be eliminated, since $\smat1{}{-c}1\smat{\varpi^n}{}{}1=\smat{\varpi^n}{}{}1
\smat 1{}{-\varpi^n c}1$, and the rightmost matrix belongs to $K$.
Therefore
\[K_\g(n)=\frac q{q+1}\int_\O \fp(\mat{\varpi^{-n}}{}{}1\mat1b{}
  1\mat{}{-u}1v\mat1{-b}{}1 \mat{\varpi^n}{}{}1) db\]
\[=\frac q{q+1}\int_{\O^*} \fp(\mat{b}{-P_\g(b)/\varpi^n}0{v-b})db\]
as in \eqref{xconj}.  (As a reminder, $db$ is additive measure.)
We have replaced the lower left entry by $0$, using the fact that by definition (see \eqref{fpd0}),
  $\fp$ is sensitive only to the reduction of its argument mod $\p$.
Further, the integrand is nonzero only if $P_\g(b)\in\p^n$.  Under this condition, 
given that the characteristic polynomial of the matrix in the integrand is $P_\g(X)$, and this 
cannot have distinct roots mod $\p$ as $\g$ is elliptic in $G(\Q_p)$,
   there exists $z\in (\O/\p)^*$ such that 
  $b\equiv v-b\equiv z \mod \p$.
In particular, the matrix (viewed modulo $\p$) belongs to $ZU$, with notation as in \eqref{trho}.
  Write $P_\g(b)\equiv c\varpi^n\mod\p^{n+1}$, for $c\in \O/\p$. 
The integrand becomes
\[\ol{\w_\p(z)}\fp(\mat{1}{-c/z}0{1})=\begin{cases}\ol{\w_\p(z)}(q-1)&\text{if }c=0\\
  -\ol{\w_\p(z)}&\text{if }c\in (\O/\p)^*.\end{cases}\]
This depends (via $c$) only on the coset $b+\p^{n+1}$, which has measure $q^{-(n+1)}$. 
Therefore
\[K_\g(n)=\ol{\w_\p(z)}\cdot \frac q{q+1}\cdot\frac1{q^{n+1}}\left[(q-1)\mathcal{N}_\g(0,n)
-\sum_{c\in(\O/\p)^*}\mathcal{N}_\g(c,n)\right].\]
 Plugging the above into \eqref{Phign}, \eqref{ZU} follows.
(Note that $f_\p(\g)=-\ol{\w_\p(z)}$ in this case, since
$\g - z =\smat{-z} {-u}1{v-z}\not\equiv 0\mod \p$, so $\g$ is conjugate mod $\p$ to $zu$ for 
some $1\neq u\in U$).

By the above discussion $K_\g(n)=0$ for all $n>0$ if $P_\g(X)$ is irreducible mod $\p$.  So
in this case \eqref{Phign} gives $\Phi(\g,\fp)=\fp(\g) = -\ol{\nu(\g)}-\ol{\nu^q(\g)}$ by \eqref{trho}
and \eqref{fpd0}.
\end{proof}

\section{General dimension formula, and examples with $N=S^2T^3$}\label{dimex}

When $\n=1$, the list of relevant $\g$ in Theorem \ref{stf2}
can be simplified.  The result is the following general dimension formula. 

\begin{theorem}\label{dimST}
Let $N=\prod_{p|N}p^{N_p}>1$ with $N_p\ge 2$ for all $p|N$.  Fix $k>2$ and 
  a tuple $\widehat{\sigma}=(\sigma_p)_{p|N}$
of supercuspidal representations so that $S_k(\widehat{\sigma})\subset S_k^{\new}(N,\w')$ for a Dirichlet
character $\w'$, as detailed at the beginning of \S\ref{count}. 
Let $T$ be the product of all primes $p|N$ with $N_p$ odd.  
 Let $f=f^1$ be the test function defined in \eqref{fn} with $\n=1$ but with $f_p$ chosen 
  as in \eqref{fpT} below for all $p|T$.
Then 
\[\dim S_k(\widehat{\sigma}) =
 \frac{k-1}{12}\prod_{p|N}d_{\sigma_p}
+\frac12\Phi(\mat{}{-T}1{},f)+\frac{\delta_{T\in 2\Z^+}}2\Phi(\mat{}{-T/2}1{},f)\]
\[+\delta_{T=2}\Phi(\mat{0}{-2}12,f)+\delta_{T=3}\Phi(\mat{0}{-3}13,f)+\delta_{T\in\{1,3\}}
\Phi(\mat{0}{-1}11,f).\]
Here, $d_{\sigma_p}$ is the formal degree of $\sigma_p$ relative to the Haar measure fixed in \S\ref{notation},
and the orbital integrals $\Phi(\g,f)$ are given as in Theorem \ref{stf2}.
\end{theorem}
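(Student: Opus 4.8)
The plan is to read the statement off Theorem~\ref{stf2} with $\n=1$ and then discard the orbital terms that vanish identically, using Proposition~\ref{relevant}. First I would observe that for $\n=1$ the operator $T_1$ is the identity, so Theorem~\ref{stf2} computes $\dim S_k(\widehat{\sigma})$ itself; moreover $\n^{k/2-1}=1$ and $\ol{\w'(\n^{1/2})}=\ol{\w'(1)}=1$, so the main term is exactly $\frac{k-1}{12}\prod_{p\mid N}d_{\sigma_p}$. There is one preliminary bookkeeping point: the formula is stated for the test function of \eqref{fn} in which $f_p$ at each $p\mid T$ has been replaced by \eqref{fpT}. I would check, exactly as in Proposition~\ref{projtr} and the remark following it, that this replacement only rescales each local operator $\sigma_p(f_p)$ by a constant whose overall product is $1$, so $\tr R(f)$ (hence the whole identity) is unchanged and $f_p(1)=d_{\sigma_p}$ still holds. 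With this, the geometric side of Theorem~\ref{stf2} for $\n=1$ is
\[
\tfrac12\sum_{M\mid T}\Phi(\mat{}{-M}1{},f)
+\sum_{M\mid T}\sum_{1\le r<2/\sqrt M}\Phi(\mat0{-M}1{rM},f),
\]
and in the second sum the inequality $1\le r<2/\sqrt M$ forces $r=1$ and $M\in\{1,2,3\}$. Thus the only $r\ge1$ matrices that can occur are $\smat0{-1}11$, $\smat0{-2}12$, $\smat0{-3}13$, each present only when the corresponding value of $M$ divides $T$.

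The remaining work is to decide which of these $\Phi(\g,f)$ are automatically zero, and for this I would use only the fourth bullet of Proposition~\ref{relevant}: if some prime $p$ divides $\tfrac TM$, then $\Phi(\g,f)=0$ unless $P_\g$ reduces mod $p$ to $(X-z)^2$ with $z\in(\Z/p\Z)^{*}$. For the $r=0$ terms, $P_\g(X)=X^2+M$, and $X^2+M\equiv(X-z)^2\pmod p$ forces $2z\equiv0$, i.e.\ $z\equiv0$, for every odd $p$; hence $\Phi(\mat{}{-M}1{},f)=0$ whenever $\tfrac TM$ has an odd prime factor, which happens for every divisor $M\mid T$ with $M\notin\{T,T/2\}$ (then $T/M$ is square-free, $>1$, and $\ne2$). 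This leaves precisely $M=T$ and, when $T$ is even, $M=T/2$. For the $r=1$ terms, $P_\g(X)=X^2-MX+M$; the congruence $P_\g\equiv(X-z)^2\pmod p$ forces $2z\equiv M$ and $z^2\equiv M$, and squaring the first and substituting gives $4M\equiv M^2$, i.e.\ $p\mid M(M-4)$, whence $p\mid M-4$ since $p\nmid M$. For $M=1$ this means $p=3$; for $M=2$ it would force $p\mid2$, contradicting $p\nmid M$; for $M=3$ it would force $p\mid1$; and when $M$ is odd the prime $p=2$ is excluded anyway, since $2z\equiv M$ is then unsolvable mod $2$. Consequently $\smat0{-2}12$ is killed by any prime dividing $T/2$ and so contributes only when $T=2$; $\smat0{-3}13$ is killed by any prime dividing $T/3$ and so contributes only when $T=3$; and $\smat0{-1}11$ is killed by any prime factor of $T$ other than $3$, hence contributes only when every prime factor of $T$ equals $3$, i.e.\ $T\in\{1,3\}$. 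Assembling the survivors, and retaining the $\tfrac12$ from the first sum, produces exactly the formula in the statement.

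I expect the only genuine obstacle to be keeping the congruence analysis of the last paragraph honest at the small primes $p=2$ and $p=3$: these are precisely the places where the square-double-root criterion of Proposition~\ref{relevant} can be met (for $M=1$, $p=3$) or where the step $2z\equiv M\Rightarrow z\equiv M/2$ is unavailable, and it is exactly these exceptional cases that produce the extra terms $\delta_{T=2}\Phi(\mat{0}{-2}12,f)$, $\delta_{T=3}\Phi(\mat{0}{-3}13,f)$, and $\delta_{T\in\{1,3\}}\Phi(\mat{0}{-1}11,f)$. For that reason I would run $M=1,2,3$ as three explicit cases rather than try to treat them uniformly, so that no surviving term is discarded and no dead term is retained.
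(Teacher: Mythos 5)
Your proposal is correct, and it reaches the same list of surviving matrices, but it takes a genuinely different route for the culling step. The paper proves the reduction via Propositions~\ref{unram} and \ref{glist}: it first shows, by a global compactness argument, that a $\g$ contributing nontrivially must be a torsion element of $\olG(\Q)$, and that if $\g$ is unramified at some $p\mid T$ then $\g$ has $p$-power order there, forcing $p\in\{2,3\}$; the classification of finite-order elements of $\olG(\Q)$ then pins down the exceptional $\g$'s at $T\in\{1,2,3\}$. You instead discard orbital integrals using only the purely local congruence criterion in the fourth bullet of Proposition~\ref{relevant}: for any $p\mid T/M$, the unramified component of $\Supp(f_p)$ forces $P_\g\equiv(X-z)^2\pmod p$ with $z\in(\Z/p\Z)^*$, and working this out case by case for $r=0$ and for $r=1$, $M\in\{1,2,3\}$ (including the $p=2,3$ exceptions you flag) kills exactly the same terms. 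Your route is more elementary — no appeal to the torsion classification in $\olG(\Q)$ and no global compactness input beyond what Proposition~\ref{rel1} already uses — whereas the paper's route explains \emph{why} the exceptional $T$'s are small (they are torsion constraints) and isolates the specific canonical forms $\smat0{-1}11$, $\smat0{-2}12$ rather than just ruling out the complement. Two minor cautions: (i) the bookkeeping remark about the replacement of $f_p$ by \eqref{fpT} at $p\mid T$ is phrased as a ``rescaling,'' but for ramified supercuspidals \eqref{fpT} already \emph{is} the matrix coefficient times the formal degree supported on $J_n$, so no rescaling occurs — the point is simply that $J_n\subset H'$ and the conclusions of Propositions~\ref{proj} and \ref{rel1}--\ref{relevant} carry over; (ii) Propositions~\ref{rel1} and \ref{relevant} are stated for $N=S^2T^3$, so strictly you are applying their conclusions (which do hold, since the support of $f_p$ at $p\mid T$ is still contained in $ZK'\cup g_\chi ZK'$) in the slightly more general setting of arbitrary odd $N_p\ge 3$, and you should say a word to justify this.
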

\begin{proof}
The case where $T=1$ is already contained in Theorem \ref{stf2} by taking $\n=1$.  The simplifications
when $T>1$ are proven in Proposition \ref{glist} below.
\end{proof}

As with Theorem \ref{mainST}, using the results of \S\ref{pN}
we can compute the above explicitly in any case of interest when $N=S^2T^3$ with $S$ and $T$
square-free relatively prime positive integers.
Although there is not a particularly nice formula for all such levels, as an illustration
we will work everything out in the two special cases where $N=S^2$ and $N=T^3$. These results are
stated in \S\ref{dimS} and \S\ref{dim} respectively.  In \S\ref{ng1} we give some examples
to illustrate Theorem \ref{mainST} with $\n>1$.

First, we highlight the following consequence of Theorem \ref{dimST}.

\begin{corollary}\label{kodd} In the setting of Proposition \ref{dimST} above, 
suppose that the weight $k$ is odd, so $\w'(-1)=-1$.
For $T$ as in Theorem \ref{dimST}, if $T> 3$
    the elliptic terms vanish, so 
        \[\dim S_k(\widehat{\sigma})
=\frac{k-1}{12} \prod_{p|N}d_{\sigma_p}
 \qquad(k>2\text{ odd, } T>3).\]
\end{corollary}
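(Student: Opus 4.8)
The plan is to invoke Theorem \ref{dimST} and kill each of the elliptic terms that appears there, using the parity hypothesis $k$ odd (equivalently $\w'(-1)=-1$) together with the archimedean vanishing already recorded in \eqref{Phiinf} and the structural constraints on relevant $\g$ from Proposition \ref{relevant}. Since $T>3$, the only elliptic terms in Theorem \ref{dimST} are $\tfrac12\Phi(\smat{}{-T}1{},f)$ and, when $T$ is even, $\tfrac12\Phi(\smat{}{-T/2}1{},f)$ --- all the $\delta_{T=2}$, $\delta_{T=3}$, $\delta_{T\in\{1,3\}}$ terms are absent because their indicators vanish. So it suffices to show that each surviving orbital integral is zero.

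Both surviving terms are of the shape $\Phi(\smat{}{-M}1{},f)$ with $M=T$ or $M=T/2$, i.e. $\g=\smat0{-M}1{0}$ has trace zero. By Proposition \ref{ellg}(3) and the factorization \eqref{Phi}, $\Phi(\g,f)$ contains the archimedean factor $\Phi(\g,f_\infty)$, which for a trace-zero elliptic $\g$ is given by \eqref{Phiinf}: it equals $(-1)^{k/2}$ if $k$ is even and $0$ if $k$ is odd. Hence for $k$ odd the entire product \eqref{Phi} vanishes, so $\Phi(\smat{}{-T}1{},f)=\Phi(\smat{}{-T/2}1{},f)=0$. (Alternatively, one can quote the first bullet of Proposition \ref{relevant}, which states exactly that $\Phi(\smat0{-\n M}1{rM},f)=0$ when $r=0$ and $k$ is odd; here $r=0$ since the trace is zero.) Plugging these zeros into the formula of Theorem \ref{dimST} leaves only the main term $\tfrac{k-1}{12}\prod_{p|N}d_{\sigma_p}$, which is the claim.

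There is essentially no obstacle here: the content is bookkeeping, namely checking that for $T>3$ no other elliptic term can appear in Theorem \ref{dimST} and that the single family of trace-zero matrices is annihilated by the archimedean orbital integral when $k$ is odd. The one point worth stating carefully is why, for $T>3$, the relevant $\g$'s in the $\n=1$ trace formula are confined to $M\in\{T,T/2\}$ with $r=0$ --- but this is precisely the statement of Proposition \ref{glist} invoked in the proof of Theorem \ref{dimST}, so it may simply be cited. I would therefore write the proof as: restrict to the two terms allowed by Theorem \ref{dimST} when $T>3$; observe both have the form $\Phi(\smat{}{-M}1{},f)$; apply \eqref{Phiinf} (or Proposition \ref{relevant}, first bullet) to see each vanishes for $k$ odd; conclude.

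\begin{proof}
Since $k$ is odd we have $\w'(-1)=(-1)^k=-1$, and the hypothesis $T>3$ forces all of the indicators $\delta_{T\in 2\Z^+}$ with $T/2$ small, $\delta_{T=2}$, $\delta_{T=3}$, and $\delta_{T\in\{1,3\}}$ to behave as follows: the last three are $0$ outright, and the only elliptic contributions to the formula of Theorem \ref{dimST} are
\[\tfrac12\Phi(\mat{}{-T}1{},f)+\tfrac{\delta_{T\in 2\Z^+}}2\Phi(\mat{}{-T/2}1{},f).\]
Each of these orbital integrals is attached to a matrix of the form $\g=\smat0{-M}10$ with $\tr\g=0$. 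By Proposition \ref{ellg}(3), $\Phi(\g,f)$ vanishes unless $\g$ is elliptic in $G(\R)$, in which case the factorization \eqref{Phi} expresses $\Phi(\g,f)$ as a product containing the archimedean factor $\Phi(\g,f_\infty)$. For a trace-zero elliptic element this factor is computed in \eqref{Phiinf}:
\[\Phi(\mat{}{-M}1{},f_\infty)=\begin{cases}(-1)^{k/2}&\text{if $k$ is even,}\\0&\text{if $k$ is odd.}\end{cases}\]
As $k$ is odd, this factor is $0$, hence the whole product \eqref{Phi} vanishes, giving $\Phi(\smat{}{-T}1{},f)=\Phi(\smat{}{-T/2}1{},f)=0$. (Equivalently, this is the first bullet of Proposition \ref{relevant} with $r=0$.) Substituting these vanishing values into the formula of Theorem \ref{dimST} leaves only the main term, so
\[\dim S_k(\widehat{\sigma})=\frac{k-1}{12}\prod_{p|N}d_{\sigma_p},\]
as claimed.
\end{proof}
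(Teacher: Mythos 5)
Your proof is correct and follows essentially the same route as the paper: for $T>3$ the only elliptic terms in Theorem \ref{dimST} are the two trace-zero matrices $\smat{}{-T}1{}$ and $\smat{}{-T/2}1{}$, and \eqref{Phiinf} kills their archimedean orbital integrals when $k$ is odd. The paper's proof is a one-liner citing exactly this; your version spells out the bookkeeping and adds the alternative citation of Proposition \ref{relevant}, but the argument is the same.
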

\noindent{\em Remark:} If $N=2^2$ or $N=2^3$, then $S_k(\widehat{\sigma})$ is undefined when $k$ is odd
since by Proposition \ref{k2N2} there is no appropriate nebentypus.
\begin{proof}
    If $\g=\smat{}{-T}1{}$ or $\smat{}{-T/2}1{}$, then $\Phi(\g,f_\infty)=0$ when $k$ is odd, by
\eqref{Phiinf}.
\end{proof}

\subsection{Dimension formula and root number bias when $N=S^2$}\label{dimS}

When we set $T=1$ and take $N=S^2$, the formula in Theorem \ref{dimST} gives the following.

\begin{theorem}\label{d0dim}
Let $N=S^2$ for $S$ squarefree, $k>2$, $\w'$ a Dirichlet character of modulus $N$ and conductor
dividing $S$, and let $\widehat{\sigma}=(\sigma_p)_{p|S}$ be a tuple of depth zero
supercuspidal representations chosen compatibly with $\w'$ as in \S \ref{test}, with $T=1$.  
Then the subspace $S_k(\widehat{\sigma})\subset S_k^{\new}(S^2,\w')$ has dimension
\[\dim S_k(\widehat{\sigma}) = \frac{k-1}{12}\prod_{p|N}(p-1) + A_1 + A_2,\]
where
\begin{equation}\label{A1}
A_1 = \frac14(-1)^{S+1+k/2}\delta_{k\in2\Z}\prod_{\text{odd }p|N}(-\ol{\nu_p(\alpha)}
-\ol{\nu_p^p(\alpha)})\delta_{p\equiv 3\mod 4}
\end{equation}
where $\nu_p$ is the primitive character of $\F_{p^2}^*$ defining $\sigma_p$
and $\alpha\in \F_{p^2}^*$ satisfies $\alpha^2=-1$, and
\begin{equation}\label{A2}
A_2 =
\frac{\delta_{k\equiv 0,2\mod 3}}{3}(-1)^{\delta_{k\equiv 2,3\mod 6}}
\bigl(-\w_3(-1)\bigr)^{\delta(3|N)}\prod_{p|N,\atop{p\neq 3}}(-\ol{\nu_p(\beta)}-\ol{\nu_p^p(\beta)})
  \delta_{p\equiv 2\mod 3},
\end{equation}
where $\beta\in\F_{p^2}^*$ satisfies $\beta^2-\beta+1=0$. 
\end{theorem}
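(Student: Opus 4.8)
The plan is to specialize Theorem~\ref{dimST} to $T=1$, where the formula reduces to the main term plus two elliptic orbital integrals, and then evaluate those two integrals explicitly using the factorization \eqref{Phi} together with the local computations of \S\ref{gell} and \S\ref{pN}. When $T=1$ we have $\delta_{T\in 2\Z^+}=\delta_{T=2}=\delta_{T=3}=0$ and $\delta_{T\in\{1,3\}}=1$, so Theorem~\ref{dimST} gives
\[\dim S_k(\widehat{\sigma})=\tfrac{k-1}{12}\prod_{p|N}d_{\sigma_p}+\tfrac12\Phi(\g_1,f)+\Phi(\g_2,f),\]
where $\g_1=\smat{0}{-1}{1}{0}$ and $\g_2=\smat{0}{-1}{1}{1}$. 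Since each $\sigma_p$ is depth zero, $d_{\sigma_p}=p-1$ by \eqref{fda}, which produces the stated main term $\tfrac{k-1}{12}\prod_{p|N}(p-1)$; it then remains to identify $A_1=\tfrac12\Phi(\g_1,f)$ and $A_2=\Phi(\g_2,f)$ with \eqref{A1} and \eqref{A2}.

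Next I would apply \eqref{Phi} to each $\g_i$. Here $P_{\g_1}(X)=X^2+1$ has discriminant $-4$, so $E=\Q[\g_1]=\Q(i)$ with $h(E)=1$, $w_E=4$, $d_E=-4$, $\w(d_E)=1$, and $\theta_{\g_1}=\pi/2$ because $\tr\g_1=0$; while $P_{\g_2}(X)=X^2-X+1$ has discriminant $-3$, so $E=\Q[\g_2]=\Q(\sqrt{-3})$ with $h(E)=1$, $w_E=6$, $d_E=-3$, $\w(d_E)=1$, and $\theta_{\g_2}=\arctan\sqrt3=\pi/3$. The archimedean factor in \eqref{Phi} is $-\sin((k-1)\pi/2)/\sin(\pi/2)=(-1)^{k/2}\delta_{k\in 2\Z}$ for $\g_1$ by \eqref{Phiinf}, and for $\g_2$ one checks on residues modulo $6$ the identity $-\sin((k-1)\pi/3)/\sin(\pi/3)=\delta_{k\equiv 0,2\bmod 3}(-1)^{\delta_{k\equiv 2,3\bmod 6}}$. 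Combining with the measure factors $\tfrac{2}{4\cdot2}=\tfrac14$ and $\tfrac{2}{6\cdot2}=\tfrac16$, and using $N=S^2$ together with the fact that $\Phi(\g,f_p)=1$ for $p\nmid\Delta_\g N$, this gives $\Phi(\g_1,f)=\tfrac14(-1)^{k/2}\delta_{k\in 2\Z}\prod_{p|2S}\Phi(\g_1,f_p)$ and $\Phi(\g_2,f)=\tfrac16\,\delta_{k\equiv 0,2\bmod 3}(-1)^{\delta_{k\equiv 2,3\bmod 6}}\prod_{p|3S}\Phi(\g_2,f_p)$.

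The remaining local factors I would handle as follows. For an odd prime $p\mid S$, Proposition~\ref{elld0} shows $\Phi(\g_i,f_p)=0$ unless $P_{\g_i}$ is irreducible modulo $p$; for $\g_1$ this means $p\equiv 3\bmod 4$, for $\g_2$ it means $p\equiv 2\bmod 3$, and when it holds \eqref{Tcase} gives $-\ol{\nu_p(\alpha)}-\ol{\nu_p^p(\alpha)}$ with $\alpha^2=-1$ (resp. $-\ol{\nu_p(\beta)}-\ol{\nu_p^p(\beta)}$ with $\beta^2-\beta+1=0$); otherwise $\g_i$ is split in $G(\Q_p)$, so the whole integral vanishes by Theorem~\ref{stf2}, which matches the $\delta$-factors in \eqref{A1},\eqref{A2}. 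For $p=2$ (when $2\mid S$) or $p=3$ (when $3\mid S$) the polynomial $P_{\g_i}$ is a square modulo $p$, so I would use \eqref{ZU} instead: a short enumeration of the counts $\mathcal{N}_{\g_i}(c,n)$ — both discriminants have $p$-valuation $1$, so only $n=1$ contributes and $\mathcal{N}_{\g_i}(0,1)=0$ — yields $\Phi(\g_1,f_2)=-2$ and $\Phi(\g_2,f_3)=-2\,\ol{\w_3(-1)}=-2\,\w_3(-1)$. When instead $2\nmid S$ (resp. $3\nmid S$), the extra factor $\Phi(\g_1,f_2)$ (resp. $\Phi(\g_2,f_3)$) is taken with the unramified test function \eqref{fell}, and since $-4$ and $-3$ have odd valuation and generate ramified quadratic extensions of $\Q_2$ and $\Q_3$, Example~\ref{Nell} and remark~4 after Proposition~\ref{elle} give the value $2$ in each case.

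Assembling, in the case $2\nmid S$ the scalar is $\tfrac14\cdot2=\tfrac14$, and in the case $2\mid S$ it is $\tfrac14\cdot(-2)=-\tfrac14$; since $(-1)^{S+1}=-1$ exactly when $2\mid S$, the two regimes fuse into $\tfrac14(-1)^{S+1}$, producing \eqref{A1}. Likewise the scalar is $\tfrac16\cdot2=\tfrac13$ when $3\nmid S$ and $\tfrac16\cdot(-2\w_3(-1))=\tfrac13(-\w_3(-1))=\tfrac13(-\w_3(-1))^{\delta(3|N)}$ when $3\mid S$, producing \eqref{A2}. The main obstacle is not conceptual but bookkeeping: one must carry the sign conventions through \eqref{Phi}, \eqref{Phiinf}, \eqref{Tcase} and \eqref{ZU} consistently, and in particular treat the primes $2$ and $3$ — which appear both as divisors of $\Delta_{\g_1}=-4$, $\Delta_{\g_2}=-3$ and possibly as divisors of $S$ — so that the ``$p\nmid S$'' regime (unramified test function, local value $2$) and the ``$p\mid S$'' regime (depth-zero test function, local value $-2$ times a central-character value) combine into the single parity factors $(-1)^{S+1}$ and $(-\w_3(-1))^{\delta(3|N)}$.
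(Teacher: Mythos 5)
Your proposal follows the same route as the paper's proof: specialize Theorem~\ref{dimST} to $T=1$, identify $A_1=\tfrac12\Phi(\smat{}{-1}1{},f)$ and $A_2=\Phi(\smat{}{-1}11,f)$, factorize each via~\eqref{Phi}, and evaluate local factors using Example~\ref{Nell}, Proposition~\ref{elle}, and Proposition~\ref{elld0}. All the local computations arrive at the same values as the paper ($\Phi_2\in\{2,-2\}$, $\Phi_3\in\{2,-2\w_3(-1)\}$, \eqref{Tcase} for odd $p\mid S$), and the final assembly matches \eqref{A1} and \eqref{A2}.

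A few small justifications are misstated, though they do not affect the conclusions. First, you write that ``both discriminants have $p$-valuation $1$''; in fact $v_2(\Delta_{\g_1})=v_2(-4)=2$, not $1$. The correct reason only $n=1$ contributes in \eqref{ZU} is that for $b\equiv z\bmod 2$ one has $\ord_2(P_{\g_1}(b))=1$ (you can check $b^2+1\equiv 2\bmod 4$ for $b$ odd), not that the discriminant valuation is $1$. Second, for the case $2\nmid S$ you appeal to the odd-valuation/ramified-extension argument (remark~4 after Proposition~\ref{elle}), but that remark requires $\ell>2$ and $v_2(-4)=2$ is even, so it does not apply at $p=2$; only Example~\ref{Nell} (with $D=1\equiv 1\bmod 8$) gives $\Phi_2=2$ here, while remark~4 is the right tool for $\g_2$ at $p=3$. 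Third, in the final paragraph the arithmetic ``$\tfrac14\cdot 2=\tfrac14$'' and ``$\tfrac14\cdot(-2)=-\tfrac14$'' is off as written; you have silently absorbed the overall prefactor $\tfrac12$ in $A_1=\tfrac12\Phi(\g_1,f)$ into these equations. The correct bookkeeping is $\tfrac12\cdot\tfrac14\cdot(\pm2)=\pm\tfrac14$. None of these slips is fatal, but the $p=2$ discussion should be cleaned up to cite Example~\ref{Nell} (or \eqref{ZU} when $2\mid S$) and not rely on the odd-valuation criterion.
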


\noindent{\em Remarks:}  1. Note that $A_1=A_2=0$ in each of the following situations:
(i) $k\equiv 1\mod 6$, (ii) there exist primes $p,q|N$ (which could be equal) such that 
 $p\equiv 1\mod 4$ and $q\equiv 1\mod 3$, 
(iii) $k$ is odd and $p\equiv 1\mod 3$ for some $p|N$, 
(iv) $k\equiv 1\mod 3$ and $p\equiv 1\mod 4$ for some $p|N$. 
\vskip .1cm

 2. By summing the above formula over all tuples $\widehat{\sigma}$, one obtains a formula
for the dimension of the space $S_k^{\min}(S^2,\w')$ of twist-minimal newforms. See Proposition
\ref{Smin}.
\vskip .1cm

 3. Theorem \ref{d0thm} from the introduction follows from the above by taking $\w'$ trivial.
  We will prove this after first proving the above result.

\begin{proof}
Taking $T=1$ in Theorem \ref{dimST}, we have
\[\dim S_k(\widehat{\sigma}) = \frac{k-1}{12}\prod_{p|N}(p-1)+\frac12\Phi(\mat{}{-1}1{},f)
+\Phi(\mat{}{-1}11,f).\]
Consider $\g=\smat{}{-1}1{}$. Its discriminant is $\Delta_\g=-4$, and we adopt the shorthand
\[\Phi(\g)=m\Phi_\infty\Phi_2\prod_{\text{odd }p|N}\Phi_p\]
 for \eqref{phifact}, where $m=\frac{2h(E)}{w(E)2^{\w(d_E)}}$ for
  $E=\Q[\g]$.  We find that $m=\tfrac14$ and $\Phi_\infty=(-1)^{k/2}\delta_{k\in 2\Z}$.
If $S$ is odd, then $\Phi_2 = 2$ by Example \ref{Nell}.
If $S$ is even, $\Phi_2$ is given by \eqref{ZU}. Here, 
$\mathcal{N}_\g(c,n)=0$ for all $n\ge 2$, $\mathcal{N}_\g(0,1)=0$ and $\mathcal{N}_\g(1,1)=2$. 
So $\Phi_2 = -1 +\frac12(-2) = -2$.  Thus in both cases, $\Phi_2=2(-1)^{S+1}$. 
Finally, for odd $p|S$, $\g$ is elliptic in $G(\Q_p)$ if and only if $-1$ is not a square in $\Q_p$,
 i.e., $p\equiv 3\mod 4$.  In such cases, $P_\g(X)$ is irreducible modulo $p$, so by \eqref{Tcase}
$\Phi_p = -\ol{\nu_p(\g)}-\ol{\nu_p^p(\g)}$. 
Multiplying everything together, we see that
$\frac12\Phi(\smat{}{-1}1{},f)$ gives \eqref{A1}.

Now consider $\g=\smat{}{-1}11$. Then $\Delta_\g=-3$, so 
\[\Phi(\g)=m\Phi_\infty\Phi_3\prod_{p|N,\atop{p\neq 3}}\Phi_p.\]
We find that $m=\frac16$, and 
\[\Phi_\infty = -\frac{\sin((k-1)\pi/3)}{\sin(\pi/3)}=\begin{cases}0&\text{if }k\equiv 1\mod 3\\
1&\text{if }k\equiv 0,5\mod 6\\-1&\text{if }k\equiv 2,3\mod 6.\end{cases}\]
If $3\nmid N$, then by Proposition \ref{elle}, $\Phi_3=2$ since $3$ is ramified
 in $\Q_3[\g]=\Q_3[\sqrt{-3}]$ 
 and $\O_\g=\Z_3[\frac{1+\sqrt{-3}}2] =\Z_3[\sqrt{-3}]$ is the full ring of integers.
If $3|N$, then $\Phi_3$ is given by \eqref{ZU} with $z=-1$.  We find that
$\mathcal{N}_\g(c,n)=0$ for all $n\ge 2$, $\mathcal{N}_\g(1,1)=3$, and 
$\mathcal{N}_\g(0,1)=\mathcal{N}_\g(2,1)=0$.  So
\[\Phi_3=-\ol{\w_3(-1)}+\frac{\ol{\w_3(-1)}}3(-3)=-2\w_3(-1).\]
For $p|N$ with $p\neq 3$, $P_\g(X)=X^2-X+1$ is irreducible in $\Q_p$ if and only if 
  $-3$ is not a square in $\Q_p$, or equivalently, $p\equiv 2\mod 3$ (cf. \cite[Lemma 27.4]{KL}).
For such $p$, $\Phi_p$ is given by \eqref{Tcase}.
Multiplying these factors together gives \eqref{A2}, and the theorem follows.
\end{proof}

Now suppose $\w'$ is trivial, so $k>2$ is even.
In this case we can simplify the expressions for $A_1$ and $A_2$
to obtain Theorem \ref{d0thm}, as follows.

\begin{proof}[Proof of Theorem \ref{d0thm}]
Recall that by \eqref{ed0b}, when $p\equiv 3\mod 4$ and $\w_p$ is trivial,
 $-\nu_p(\alpha)=-\nu_p^p(\alpha) =\epsilon_p$
is the root number of $\sigma_p$.  Likewise, by \eqref{ed0a}, $(-1)^{S+1}=\epsilon_2$ 
when $S$ is even (and $1$ otherwise).
So in this case, we simply have
\begin{equation}\label{A1b}
A_1=\frac{\epsilon(k,\widehat{\sigma})}4 D_4(S)\prod_{\text{odd }p|S}2,
\end{equation}
where $\epsilon(k,\widehat{\sigma})=(-1)^{k/2}\prod_{p|S}\epsilon_p$ is the common global root number
of the newforms in $S_k(\widehat{\sigma})$, and $D_4(S)\in\{0,1\}$ vanishes exactly when $S$ is
divisible by a prime $p\equiv 1\mod 4$. 

Turning to \eqref{A2}, if $p\equiv 2\mod 3$,
the polynomial $X^2-X+1$ is irreducible over $\F_p$.  So $L=\F_{p^2}$ has a root $\beta\in L^*-\F_p^*$. 
Let $t$ be a generator of the cyclic group $L^*$.
The dual group of $L^*$ is the set $\{\nu_m|\, 1\le m\le p^2-1\}$, where 
$\nu_m=\nu_{p,m}$
 is defined by
\[\nu_m(t)=\nu_{p,m}(t)=e(\frac {m}{p^2-1}).\]
Suppose $p$ is odd.
As shown in the proof of Corollary \ref{ecount}, the list of depth zero supercuspidal representations
of $G(\Q_p)$ with trivial central character is $\{\sigma_{\nu_{p-1}},\sigma_{\nu_{2(p-1)}},\ldots
\sigma_{\nu_{\frac{p-1}2(p-1)}}\}$.
So
there exists $m=k(p-1)$ such that 
the primitive character $\nu_p$ of $\F_{p^2}^*$ fixed in Theorem \ref{d0thm} is given by
\[\nu_p = \nu_{p,m}=\nu_m.\]
Hopefully this conflict of notation ($\nu_p=\nu_m$) 
will cause no confusion, since $m$ cannot equal $p$.

Noting that $\beta^3=-1$, we can take $\beta = t^{\frac{p^2-1}6}$. 
Then for $m=k(p-1)$, 
\[\nu_m(\beta)=e(\frac{k(p-1)(p^2-1)}{6(p^2-1)})=e(\frac{k(p-1)}6)=\begin{cases} 1&\text{if }3|k\\
-\frac12\pm i\frac{\sqrt3}2&\text{otherwise.}\end{cases}\]
Therefore
\begin{equation}\label{A2b}
B(\nu_p):=-\ol{\nu_p(\beta)}-\ol{\nu_p(\beta^p)} = -2\Re(\nu_m(\beta)) =\begin{cases}-2&\text{if }3|k\\
1&\text{if }3\nmid k.\end{cases}
\end{equation}
 When $p=2$, there is only one supercuspidal, corresponding to $m=k=1$, we can take $t=\beta$,
and \eqref{A2b} holds as well.
By \eqref{numorder}, $3|k$ if and only if the order of
$\nu_m$ divides $\frac{p+1}3$.  So the above coincides with $B(\nu_p)$ defined in Theorem \ref{d0thm},
and the theorem follows from \eqref{A1b} and \eqref{A2b}.
\end{proof}

Next we will use Theorem \ref{d0thm} to count the locally supercuspidal newforms 
  of level $S^2$ with a given global root number.  (What we will actually compute is the bias in global
root number, but the count for each sign could be determined easily by following the proof 
of Proposition \ref{Sbias}.)

To understand the impact of the local root numbers on the product of $B(\nu_p)$ in \eqref{d0dima},
 the primes of interest are equivalent to $2\mod 3$, 
so aside from $p=2$, we have $p\equiv 5\mod 6$.
It is helpful to look at two typical examples:
\begin{equation}\label{B17}        \begin{array}{l|c||c|c|c|c|c|c|c|c}
            \rule[-3mm]{0mm}{8mm}
p=11&\nu&\nu_{10}&\nu_{20}&\nu_{30}&\nu_{40}& \nu_{50}\\
\cline{2-7}
            \rule[-3mm]{0mm}{8mm}
&AL&+&-&+&-&+\\
\cline{2-7}
            \rule[-3mm]{0mm}{8mm}
&B(\nu)&1&1&-2&1&1\\
     \hline
     \hline
            \rule[-3mm]{0mm}{8mm}
p=17&\nu&\nu_{16}&\nu_{2\cdot 16}&\nu_{3\cdot16}&\nu_{4\cdot16}&\nu_{5\cdot 16}&\nu_{6\cdot 16}
&\nu_{7\cdot 16}&\nu_{8\cdot16}\\
\cline{2-10}
            \rule[-3mm]{0mm}{8mm}
&AL&+&-&+&-&+&-&+&-\\
\cline{2-10}
            \rule[-3mm]{0mm}{8mm}
&B(\nu)&1&1&-2&1&1&-2&1&1
 \end{array}
\end{equation}
The Atkin-Lehner sign in the second row comes from \eqref{ek}, and the third row is from \eqref{A2b}.

\begin{lemma}\label{B}
Given $S>1$ square-free, let $H_S^+$ (resp. $H_S^-$) 
 denote the set of tuples $\widehat{\sigma}=(\sigma_p)_{p|S}$ satisfying:
\begin{itemize}
\item For each $p|S$, $\sigma_p$ has trivial central character and conductor $p^2$
\item $\prod_{p|S}\epsilon_p = 1$ (resp. $-1$), where $\epsilon_p$ is the root number of $\sigma_p$.
\end{itemize}
For $\nu_p$ the primitive character of $\F_{p^2}^*$ attached to $\sigma_p$, and $B(\nu_p)$
defined in \eqref{A2b}, define
\[\mathcal{B}(S)^\pm = \sum_{\widehat{\sigma}\in H_S^\pm}\prod_{p|S,\atop{p\neq3}}B(\nu_p).\]
Suppose $D_3(S)=1$ (in the notation of Theorem \ref{d0thm}), and let
$\w(S)$ denote the number of prime factors of $S$.  Then if $\gcd(S,6)=1$,
\begin{equation}\label{BS}
\mathcal{B}(S)^+ =\begin{cases}
2^{\w(S)-1}&\text{if there exists $p|S$ with }p\equiv 5\mod 12,\\
2^{\w(S)}&\text{if $\w(S)$ is even and }p\equiv 11\mod 12\text{ for all $p|S$,}\\
0&\text{if $\w(S)$ is odd and }p\equiv 11\mod 12\text{ for all $p|S$,}
\end{cases}
\end{equation}
and $\mathcal{B}(S)^-$ is the same but with ``even" and ``odd" interchanged,
 i.e., $\mathcal{B}(S)^-=2^{\w(S)}-\mathcal{B}(S)^+$.

If $S$ is odd and $3|S$, then $\mathcal{B}(S)^\pm =\mathcal{B}(\frac S3)^\pm$ if $S>3$,
 and $\mathcal{B}(3)^+=1$, $\mathcal{B}(3)^-=0$.

If $S$ is even, then $\mathcal{B}(S)^\pm=\mathcal{B}(\frac S2)^\mp$ if $S>2$, and $\mathcal{B}(2)^+ =0$,
$\mathcal{B}(2)^-=1$.  
\end{lemma}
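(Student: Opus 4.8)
The plan is to reduce everything to a clean combinatorial statement about signed products over the primes dividing $S$. Write $S = \prod_{p|S} p$ and recall from \eqref{ek} that for an odd prime $p \equiv 2 \bmod 3$ the supercuspidal $\sigma_{\nu_{k(p-1)}}$ has root number $\epsilon_p = -(-1)^k$, while \eqref{A2b} gives $B(\nu_{k(p-1)}) = -2$ exactly when $3 \mid k$ and $B = 1$ otherwise. So for each such $p$ the $\frac{p-1}{2}$ choices of $\sigma_p$ split according to the parity of $k$ (controlling $\epsilon_p$) and the divisibility $3|k$ (controlling $B$). The key observation is that the pair $(\epsilon_p, B(\nu_p))$ ranges over a fixed multiset depending only on $p \bmod 12$: when $p \equiv 5 \bmod 12$ one has $\frac{p-1}{2}$ odd, so among the $k$'s in $\{1,\dots,\frac{p-1}{2}\}$... actually one must count $k \in \{1, \dots, p\}$ with $\gcd(k, p+1) $ controlling primitivity — I would instead work directly from the enumeration in the proof of Corollary \ref{ecount}, where the relevant representations correspond to $k = 1, \dots, \frac{p-1}{2}$ (for $p$ odd) or $k=1$ (for $p=2$). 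I would first record, for a single prime $p \equiv 5 \bmod 12$ and for $p \equiv 11 \bmod 12$, the quantity $\sum_{\sigma_p} \epsilon_p^{\delta} B(\nu_p)$ separately over $\epsilon_p = +1$ and $\epsilon_p = -1$; these are small finite sums that I can evaluate using \eqref{ek}, \eqref{A2b}, and the fact that $3|k$ occurs for exactly a $\frac13$-fraction of the $k$ (with the boundary behaviour for small $p$ checked against the examples in \eqref{B17}).

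The next step is to multiply out. Since $\epsilon_{\fin} = \prod_{p|S}\epsilon_p$ and $\prod_{p|S, p\ne 3} B(\nu_p)$ both factor over primes, the generating identity
\[
\mathcal{B}(S)^+ + \mathcal{B}(S)^- = \sum_{\widehat\sigma}\prod_{p|S,\,p\neq 3} B(\nu_p) = \prod_{p|S,\,p\neq 3}\Bigl(\sum_{\sigma_p} B(\nu_p)\Bigr)\cdot\#\{\sigma_3\},
\]
and
\[
\mathcal{B}(S)^+ - \mathcal{B}(S)^- = \prod_{p|S}\Bigl(\sum_{\sigma_p}\epsilon_p\, B(\nu_p)^{\delta(p\neq 3)}\Bigr)
\]
will let me solve for $\mathcal{B}(S)^\pm$. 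Here I expect $\sum_{\sigma_p} B(\nu_p) = 2$ and $\sum_{\sigma_p} \epsilon_p B(\nu_p)$ to be $0$ when $p \equiv 11 \bmod 12$ and $\pm 1$ (with a definite sign) when $p \equiv 5 \bmod 12$; the precise values are exactly what makes the three cases of \eqref{BS} emerge, via $2\mathcal{B}(S)^\pm = 2^{\w(S)} \pm (\text{signed product})$. The hypothesis $D_3(S) = 1$ guarantees every $p|S$ with $p\ne 2,3$ satisfies $p \equiv 2 \bmod 3$, so these per-prime sums are the only ones in play.

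For the recursive clauses I would handle $p = 3$ and $p = 2$ as exceptional factors. The prime $3$ contributes no $B$-factor (by the $p\neq 3$ restriction) but does contribute to $\epsilon_{\fin}$: there is a unique $\sigma_3$ (since $\frac{3-1}{2} = 1$), and its root number is $-(-1)^1 = +1$... I would check from \eqref{ek} that in fact $\epsilon_3 = +1$, giving $\mathcal{B}(3)^+ = 1, \mathcal{B}(3)^- = 0$ and, for $S > 3$ with $3|S$, $\mathcal{B}(S)^\pm = \mathcal{B}(S/3)^\pm$ since multiplying every tuple's sign by $\epsilon_3 = +1$ is the identity. Similarly for $p=2$: the unique depth-zero supercuspidal of conductor $4$ has $\epsilon_2 = -1$ by \eqref{ed0a} and $B(\nu_2) = 1$ by \eqref{A2b}, so $\mathcal{B}(2)^+ = 0$, $\mathcal{B}(2)^- = 1$, and adjoining the factor $2$ swaps the two sign classes, giving $\mathcal{B}(S)^\pm = \mathcal{B}(S/2)^\mp$.

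The main obstacle I anticipate is getting the per-prime sums $\sum_{\sigma_p}\epsilon_p B(\nu_p)$ exactly right for $p \equiv 5 \bmod 12$ versus $p \equiv 11 \bmod 12$, including the correct overall sign and the off-by-one subtleties in how many of the $k \in \{1,\dots,\frac{p-1}{2}\}$ are divisible by $3$ and how many are odd — precisely the interaction already visible in the contrast between the $p=11$ and $p=17$ rows of \eqref{B17}. Once those finitely many residue-class computations are pinned down (four small cases: $p\equiv 5$ vs.\ $11 \bmod 12$, crossed with the $\epsilon_p = +1$ vs.\ $-1$ bookkeeping), the rest is the multiplicative assembly described above, which is routine.
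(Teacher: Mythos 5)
Your framework is sound and genuinely different from the paper's: instead of an induction on $\w(S)$ (the paper builds $\mathcal{B}(S\ell)^\pm$ from $\mathcal{B}(S)^\pm$ and $\mathcal{B}(\ell)^\pm$ by the recursion $\mathcal{B}(S\ell)^+ = \mathcal{B}(S)^+\mathcal{B}(\ell)^+ + \mathcal{B}(S)^-\mathcal{B}(\ell)^-$, etc.), you pass to the Fourier-inverted quantities $\mathcal{B}(S)^+ + \mathcal{B}(S)^-$ and $\mathcal{B}(S)^+ - \mathcal{B}(S)^-$, each of which factors cleanly as a product of per-prime sums, and then solve. This is equivalent in content but avoids the case-checking inherent in the induction, and it makes the multiplicative structure more transparent. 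Your handling of the exceptional primes $2$ and $3$ (via the unique $\sigma_2$ with $(\epsilon_2, B) = (-1,1)$ and the unique $\sigma_3$ with $\epsilon_3 = +1$ and no $B$-factor) is correct, as are the resulting recursive clauses.

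However, the per-prime expectations you state for $\gcd(S,6)=1$ are backward and quantitatively off, and since you correctly identify these as the crux, this needs to be flagged. The actual values (verified against the tables in \eqref{B17} and the counts in the paper's base case) are: $\sum_{\sigma_p} B(\nu_p) = 2$ for every $p\equiv 5\bmod 6$ as you expect, but
\[
\sum_{\sigma_p}\epsilon_p B(\nu_p) = \mathcal{B}(p)^+ - \mathcal{B}(p)^- = \begin{cases} 0 & \text{if } p\equiv 5\bmod 12,\\ -2 & \text{if } p\equiv 11\bmod 12.\end{cases}
\]
You predicted the vanishing for $p\equiv 11\bmod 12$ and a value of $\pm 1$ for $p\equiv 5\bmod 12$ — exactly opposite, and the nonzero value is $-2$, not $\pm 1$. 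To see this concretely: for $p=11$ the table \eqref{B17} gives $(\epsilon_p, B) = (+,1),(-,1),(+,-2),(-,1),(+,1)$, so $\mathcal{B}(11)^+ = 0$ and $\mathcal{B}(11)^- = 2$, while for $p=17$ it gives $\mathcal{B}(17)^+ = \mathcal{B}(17)^- = 1$. The source of the asymmetry is the parity of $\lfloor (p-1)/6 \rfloor$: when $p\equiv 5 \bmod 12$ there are equally many odd and even $k\in\{1,\ldots,\frac{p-1}{2}\}$ divisible by $3$, whereas for $p\equiv 11\bmod 12$ there is one more odd such $k$. Plugging the corrected values into your identity $2\mathcal{B}(S)^\pm = 2^{\w(S)} \pm \prod_{p|S}(\sum_{\sigma_p}\epsilon_p B(\nu_p))$ does recover \eqref{BS} exactly: the signed product is $0$ if any $p\equiv 5\bmod 12$, and equals $(-2)^{\w(S)}$ otherwise, giving the three cases via $1 \pm (-1)^{\w(S)}$. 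So your method works once the per-prime sums are corrected; the error is in the stated guess, not in the strategy.
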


\begin{proof}
Suppose $\gcd(S,6)=1$.  We prove \eqref{BS} by induction on $\w(S)$.  For the base case, we take $S=p$ for 
a prime $p\equiv 5\mod 6$.  As in \eqref{B17}, there are $\frac{p+1}3$ representations with
 $B(\nu_p)=1$, of which
$\frac{p+1}6$ have $\epsilon_p=1$ and $\frac{p+1}6$ have $\epsilon_p=-1$.  There are 
$\frac{p-5}6$ representations with $B(\nu_p)=-2$, of which $\lceil\frac{p-5}{12}\rceil$ have $\epsilon_p=1$, and
$\lfloor \frac{p-5}{12}\rfloor$ have $\epsilon_p=-1$.
Therefore
\[\mathcal{B}(p)^+=\sum_{\sigma_p\in H_p^+}B(\nu_p) = \frac{p+1}6-2\Bigl\lceil\frac{p-5}{12}\Bigr\rceil
=\begin{cases}1&\text{if }p\equiv 5\mod 12\\0&\text{if }p\equiv 11 \mod 12.\end{cases}\]
Likewise
\[\mathcal{B}(p)^-=\sum_{\sigma_p\in H_p^-}B(\nu_p) = \frac{p+1}6-2\Bigl\lfloor\frac{p-5}{12}\Bigr\rfloor
=\begin{cases}1&\text{if }p\equiv 5\mod 12\\2&\text{if }p\equiv 11 \mod 12.\end{cases}\]
This proves the base case.  Now suppose \eqref{BS} holds for some $S>1$ with $\gcd(S,6)=1$, and
$\ell\equiv 5\mod 6$ is a prime not dividing $S$.  Then the result follows, by considering cases,
 from the fact that
\[\mathcal{B}(S\ell)^+=\mathcal{B}(S)^+\cdot\mathcal{B}(\ell)^++\mathcal{B}(S)^-\cdot\mathcal{B}(\ell)^-\]
and 
\[\mathcal{B}(S\ell)^-=\mathcal{B}(S)^+\cdot\mathcal{B}(\ell)^-+\mathcal{B}(S)^-\cdot\mathcal{B}(\ell)^+.\]

When $3|S$, the claim follows from the fact that there is a unique depth zero supercuspidal 
representation of $\PGL_2(\Q_3)$, and it has root number $+1$ (see Corollary \ref{ecount}).  When $2|S$,
 the claim follows from the fact that there is a unique depth zero supercuspidal representation 
$\sigma_\nu$ of $\PGL(\Q_2)$, and it has $B(\nu)=1$ and root number $-1$.
\end{proof}

\begin{lemma}\label{globalecount}
Let $H_S^+$ and $H_S^-$ be defined as in Lemma \ref{B} above.  As in Theorem \ref{d0dima},
define $D_4(S)\in\{0,1\}$ to be $0$ if and only if $S$ is
divisible by a prime $p\equiv 1\mod 4$.  Then
\[|H_S^\pm|=\begin{cases} \frac12\prod_{\text{odd }p|S}\frac{p-1}2&\text{if }D_4(S)=0\\
 \frac12\prod_{\text{odd }p|S}\frac{p-1}2\pm \frac{(-1)^{\delta(2|S)}}2&\text{if }D_4(S)=1.
\end{cases}
\]
\end{lemma}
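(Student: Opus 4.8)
The plan is to reduce the count to a prime-by-prime calculation and then apply the standard parity identity. For a prime $p$, let $n_p^+$ (resp.\ $n_p^-$) be the number of isomorphism classes of depth zero supercuspidal representations of $\GL_2(\Q_p)$ with trivial central character and root number $+1$ (resp.\ $-1$). These are exactly the values furnished by Corollary \ref{ecount} with residue field size $q=p$. First I would record the two combinations that matter:
\[
n_p^++n_p^-=\begin{cases}\tfrac{p-1}2&p\text{ odd}\\ 1&p=2,\end{cases}\qquad
n_p^+-n_p^-=\begin{cases}0&p\equiv 1\bmod 4\\ 1&p\equiv 3\bmod 4\\ -1&p=2,\end{cases}
\]
both of which follow by adding and subtracting the cases listed in Corollary \ref{ecount} (for $p\equiv 3\bmod 4$, $\tfrac{p+1}4\pm\tfrac{p-3}4$).

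Next, observe that choosing a tuple $\widehat{\sigma}=(\sigma_p)_{p|S}$ of the type occurring in $H_S^\pm$ is the same as choosing, independently for each $p|S$, one of the admissible local representations $\sigma_p$, subject only to the global constraint $\prod_{p|S}\epsilon_p=\pm1$ on the product of their root numbers. Writing the indicator of $\prod_{p|S}\epsilon_p=1$ as $\tfrac12\bigl(1+\prod_{p|S}\epsilon_p\bigr)$ and summing the corresponding products of local counts, I obtain the generating-function identity
\[
|H_S^{\pm}|=\frac12\left(\prod_{p|S}(n_p^++n_p^-)\;\pm\;\prod_{p|S}(n_p^+-n_p^-)\right).
\]

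It then remains to evaluate the two products. For the first: the factor at $p=2$ (when $2\mid S$) equals $1$, and each odd $p\mid S$ contributes $\tfrac{p-1}2$, so $\prod_{p|S}(n_p^++n_p^-)=\prod_{\text{odd }p|S}\tfrac{p-1}2$ in all cases. For the second: if $D_4(S)=0$ then some odd $p\mid S$ satisfies $p\equiv 1\bmod 4$, contributing a factor $0$, so the whole product vanishes; if $D_4(S)=1$ then every odd $p\mid S$ satisfies $p\equiv 3\bmod 4$ and contributes $1$, while a factor $2\mid S$ contributes $-1$, giving $\prod_{p|S}(n_p^+-n_p^-)=(-1)^{\delta(2|S)}$. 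Substituting these into the identity above yields exactly the two cases of the stated formula. The only thing requiring care is the bookkeeping at the prime $2$ — it does not affect the leading term (its total count is $1$) but it does flip the bias term via $\delta(2|S)$ — together with the consistency check that these inputs agree with Corollary \ref{ecount}; there is no substantive obstacle beyond that.
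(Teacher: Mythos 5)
Your proof is correct, and it takes a genuinely different route from the paper's. The paper handles the two cases separately: when $D_4(S)=0$ it fixes a prime $p_0\equiv 1\bmod 4$, observes that the $\tfrac{p_0-1}4$ admissible choices of $\sigma_{p_0}$ can independently realize either target sign, and when $D_4(S)=1$ it proves $|H_S^+|-|H_S^-|=(-1)^{\delta(2|S)}$ by induction on the number of prime factors. Your proof instead packages everything into the single ``orthogonality'' identity $\mathbbm{1}\{\prod_p\epsilon_p=\pm1\}=\tfrac12\bigl(1\pm\prod_p\epsilon_p\bigr)$, which factorizes the count as $|H_S^\pm|=\tfrac12\bigl(\prod_p(n_p^++n_p^-)\pm\prod_p(n_p^+-n_p^-)\bigr)$; after that, both cases fall out simultaneously from the local data $n_p^+\pm n_p^-$ read off Corollary \ref{ecount}. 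This is cleaner: it avoids induction entirely, treats the $D_4(S)=0$ and $D_4(S)=1$ cases in one stroke, and makes the mechanism behind the bias transparent (the sign-difference product vanishes exactly when some local factor $n_p^+-n_p^-$ vanishes, i.e.\ when some $p\equiv 1\bmod 4$ divides $S$). The paper's inductive argument buys nothing additional here; your version is shorter and more conceptual, and the local inputs you record are verified correctly against Corollary \ref{ecount} (in particular the sign flip at $p=2$ coming from $n_2^+-n_2^-=-1$).
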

\begin{proof}
For each odd prime $p$, there are $\frac{p-1}2$ depth zero supercuspidals with trivial central 
character (see \S\ref{d0}).  For $p=2$, there is only one.  Therefore for all square-free
 $S>1$, the total number of tuples $\widehat{\sigma}=(\sigma_p)_{p|S}$ with each $\sigma_p$
having depth zero and trivial central character is
\begin{equation}\label{sigtot}
|H_S^+|+|H_S^-|=\prod_{\text{odd }p|S}\frac{p-1}2.
\end{equation}

Now suppose $S$ is divisible by a prime $p_0\equiv 1\mod 4$.  Fix $\epsilon_{\fin}=\pm 1$.
By the above, the number of tuples $(\sigma_p)_{p|\frac{S}{p_0}}$
is $\prod_{\text{odd }p|\frac{S}{p_0}}\frac{p-1}2$.
 Having fixed one such tuple, by Corollary \ref{ecount}
 there are then $\frac{p_0-1}4$
choices for $\sigma_{p_0}$ subject to $\prod_{p|S}\epsilon_{\sigma_p}=\epsilon_{\fin}$.
This proves the result when $D_4(S)=0$.

Now suppose $p\equiv 3\mod 4$ for all odd $p|S$.  For this case, in view of \eqref{sigtot},
 the given formula is equivalent to
$|H_S^+|-|H_S^-|=(-1)^{\delta(2|S)}$.   We will prove the latter by induction
on the number $\w(S)$ of primes dividing $S$.  If $S=2$, the given formula holds since there is just 
one representation $\sigma_2$, and it has $\epsilon_{\sigma_2}=-1$.  If $S=p\equiv 3\mod 4$, 
the given formula holds by Corollary \ref{ecount}.
Having established the base case, suppose now that the given formula holds for some 
$S$ satisfying $D_4(S)=1$, and that $p_0\nmid S$ is a prime satisfying $p_0\equiv 3\mod 4$.  
We construct a tuple $\widehat{\sigma}=(\sigma_p)_{p|Sp_0}$
by first choosing the components at $p|S$, and then at $p_0$.  
Let $P=|H_S^+|$ and $Q=|H_{p_0}^+|$, so $|H_{S}^-|=P-(-1)^{\delta(2|S)}$ and $|H_{p_0}^-|=Q-1$
by the inductive hypothesis.
Then
\[|H_{Sp_0}^+|=PQ+(P-(-1)^{\delta(2|S)})(Q-1)=2PQ-P-(-1)^{\delta(2|S)}Q+(-1)^{\delta(2|S)},\]
and
\[|H_{Sp_0}^-|=P(Q-1)+(P-(-1)^{\delta(2|S)})Q=2PQ-P-(-1)^{\delta(2|S)}Q.\]
Subtracting, 
\[|H_{Sp_0}^+|-|H_{Sp_0}^-| = (-1)^{\delta(2|Sp_0)},\]
as needed.
\end{proof}

\begin{proposition}\label{Sbias}
For $S>1$ square-free, let $\Delta(S^2,k)^{\min}=\dim S_k^{\min}(S^2)^+-\dim S_k^{\min}(S^2)^-$. Then
for $k>2$ even,
\begin{equation}\label{Deltasum}
\Delta(S^2,k)^{\min}=\Delta_M+\Delta_{A_1}+\Delta_{A_2},
\end{equation}
where
\[
\Delta_M = D_4(S)(-1)^{k/2+\delta(2|S)}\frac{k-1}{12}\prod_{p|S}(p-1),\qquad
\Delta_{A_1} = \frac{D_4(S)}4\prod_{p|S}(p-1)
\]
for $D_4(S)$ as in Lemma \ref{globalecount} above, and
\[\Delta_{A_2}= \delta(k\equiv 0,2\mod 6)\frac{D_3(S)}3(-1)^{\delta(k\equiv 6,8\mod 12)}\mu(S)\Omega_0(S')\]
where $D_3(S)\in\{0,1\}$ is $0$ if and only if $p\equiv 1\mod 3$ for some $p|S$, 
$\mu(S)=\prod_{p|S}(-1)$ is the M\"obius function, 
and for $S'=\frac S{\gcd(S,6)}$,
\[\Omega_0(S')=\begin{cases} 0&\text{if there exists $p|S'$ such that }p\equiv 5\mod 12\\
2^{\w(S')}&\text{if }p\equiv 11\mod 12\text{ for all $p|S'$},
\end{cases}
\]
where 
$\w(S')=\sum_{p|S'}1$.  (Note that $\Omega_0(1)=1$.)
\end{proposition}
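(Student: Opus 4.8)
The plan is to start from the orthogonal decomposition $S_k^{\min}(S^2)=\bigoplus_{\widehat\sigma}S_k(\widehat\sigma)$, where $\widehat\sigma$ runs over the tuples $(\sigma_p)_{p|S}$ of depth zero supercuspidals with trivial central character. Since by Theorem \ref{d0thm} every newform in $S_k(\widehat\sigma)$ has the one global root number $\epsilon(k,\widehat\sigma)=(-1)^{k/2}\prod_{p|S}\epsilon_p$, we get
\[\Delta(S^2,k)^{\min}=\sum_{\widehat\sigma}\epsilon(k,\widehat\sigma)\dim S_k(\widehat\sigma).\]
Substituting the three-term formula of Theorem \ref{d0thm} for $\dim S_k(\widehat\sigma)$ splits this into three sums, which I claim equal $\Delta_M$, $\Delta_{A_1}$, $\Delta_{A_2}$ respectively. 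Each sum factors over the primes $p\mid S$, and the three arithmetic inputs needed are the local root-number counts of Corollary \ref{ecount}, the global count of Lemma \ref{globalecount}, and the weighted count $\mathcal{B}(S)^\pm$ of Lemma \ref{B}.

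For the main term, $\sum_{\widehat\sigma}\epsilon(k,\widehat\sigma)\cdot\frac{k-1}{12}\prod_{p|S}(p-1)$, I pull out the constant and are left with $(-1)^{k/2}\sum_{\widehat\sigma}\prod_{p|S}\epsilon_p=(-1)^{k/2}\bigl(|H_S^+|-|H_S^-|\bigr)$, which Lemma \ref{globalecount} evaluates to $(-1)^{k/2}D_4(S)(-1)^{\delta(2|S)}$; this is exactly $\Delta_M$. For the $A_1$-sum I use $\epsilon(k,\widehat\sigma)^2=1$, so it equals $D_4(S)\cdot\tfrac14\cdot 2^{\#\{\mathrm{odd}\ p|S\}}$ times the total number of tuples, which by \eqref{sigtot} is $\prod_{\mathrm{odd}\ p|S}\frac{p-1}2$; collapsing the factors of $2$ and noting $\prod_{p|S}(p-1)=\prod_{\mathrm{odd}\ p|S}(p-1)$ gives $\Delta_{A_1}$.

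The $A_2$-sum is the crux. If $D_3(S)=0$ both this sum and $\Delta_{A_2}$ vanish, so assume $D_3(S)=1$. Factoring out $D_3(S)\,b(k)\,\frac{(-1)^{\delta_{3|S}}}3$ and again writing $\epsilon(k,\widehat\sigma)=(-1)^{k/2}\prod_p\epsilon_p$, the remaining sum is $(-1)^{k/2}\bigl(\mathcal{B}(S)^+-\mathcal{B}(S)^-\bigr)$. I would combine the three cases of \eqref{BS} with $\mathcal{B}(S)^-=2^{\w(S)}-\mathcal{B}(S)^+$ to obtain $\mathcal{B}(S)^+-\mathcal{B}(S)^-=\mu(S)\,\Omega_0(S)$ when $\gcd(S,6)=1$, then peel off the primes $2$ and $3$ of $S$ using the recursions in Lemma \ref{B} (stripping $2$ flips the sign of the difference, stripping $3$ leaves it unchanged) to get $\mathcal{B}(S)^+-\mathcal{B}(S)^-=(-1)^{\delta(2|S)}\mu(S')\Omega_0(S')$ for $S'=S/\gcd(S,6)$, checking the base values $\mathcal{B}(2)^\pm,\mathcal{B}(3)^\pm$ separately. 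Rewriting $\mu(S')=\mu(S)(-1)^{\delta(2|S)+\delta(3|S)}$ cancels the two $2$-adic signs and produces a factor $(-1)^{\delta(3|S)}$ that cancels the ambient $(-1)^{\delta_{3|S}}$, leaving $\frac{D_3(S)}{3}b(k)(-1)^{k/2}\mu(S)\Omega_0(S')$. A final case check on $k\bmod 12$ gives the identity $b(k)(-1)^{k/2}=\delta(k\equiv 0,2\bmod 6)\,(-1)^{\delta(k\equiv 6,8\bmod 12)}$, producing $\Delta_{A_2}$ in the stated form.

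The main obstacle is entirely the sign bookkeeping in the $A_2$-term: aligning $\mu(S)$, $(-1)^{\delta_{3|S}}$, $(-1)^{\delta(2|S)}$ and the two different parity-of-$k$ expressions, and verifying that the definition of $\Omega_0$ in terms of $p\equiv 5,11\bmod 12$ matches the output of Lemma \ref{B}, which is phrased in terms of $p\equiv 1,3\bmod 4$ among primes $p\equiv 2\bmod 3$. No genuinely new computation is required beyond what is packaged in Corollary \ref{ecount} and Lemma \ref{B}.
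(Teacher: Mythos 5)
Your proposal is correct and follows essentially the same route as the paper: split $\Delta(S^2,k)^{\min}$ into the three contributions from Theorem \ref{d0thm}, use the fact that the archimedean factor of the root number is $(-1)^{k/2}$ to reduce to the quantities $|H_S^+|\pm|H_S^-|$ and $\mathcal{B}(S)^+-\mathcal{B}(S)^-$, and then invoke Lemma \ref{globalecount}, \eqref{sigtot}, and Lemma \ref{B} respectively. You spell out the verification that $\mathcal{B}(S)^+-\mathcal{B}(S)^-=(-1)^{\delta(2|S)}\mu(S')\Omega_0(S')$ and the final parity-of-$k$ identity in more detail than the paper (which just asserts them), but the underlying argument is the same.
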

\noindent{\em Remark:} Proposition \ref{Sbiasa}, which  summarizes the conditions under which 
$\Delta(S^2,k)^{\min}$ vanishes, is positive, or is negative, follows easily.
 The claim in the third paragraph of Proposition \ref{Sbiasa}
is due to the fact that when $D_4(S)=1$,
\[|\Delta_{A_1}+\Delta_{A_2}|\le \frac14\prod_{p|S}(p-1)+\frac13\prod_{p|S'}2
=\left[\tfrac14+\tfrac13\prod_{p|S'}\tfrac{2}{p-1}\prod_{p|\gcd(S,6)}\tfrac1{p-1}\right]\prod_{p|S}(p-1)
\le \frac{7}{12}\prod_{p|S}(p-1)\]
where the last inequality is strict if $S>2$. 
So if $k\ge 10$, or $k=8$ and $S>2$, it follows that $|\Delta_{A_1}+\Delta_{A_2}|<|\Delta_M|$, 
  and hence the sign of $\Delta_M$ is the sign of the bias.  One checks by hand (or LMFDB)
that $S_8^{\min}(2^2)=0$.  
The case $k=6$ follows similarly, replacing the rightmost inequality by $<\frac5{12}\prod_{p|S}(p-1)$ 
when $S>6$ and $D_4(S)=1$, and checking the $S|6$ cases by hand.

\begin{proof}[Proof of Proposition \ref{Sbias}]
We have
\begin{equation}\label{Dminsum}
\Delta(S^2,k)^{\min}=
\sum_{\widehat{\sigma}:\epsilon(k,\widehat{\sigma})=1}
\dim S_k(\widehat{\sigma})
-\sum_{\widehat{\sigma}:\epsilon(k,\widehat{\sigma})=-1}
\dim S_k(\widehat{\sigma}).
\end{equation}
Applying Theorem \ref{d0thm} to each summand, we get a sum of three terms as in \eqref{Deltasum}.
Since the archimedean factor of the global root number is $(-1)^{k/2}$ 
(cf. \cite[Theorem 14.17]{IK} and \cite{C}),  the set of tuples
$\widehat{\sigma}$ with global root number $\epsilon$ is $H_S^{(-1)^{k/2}\epsilon}$, with
notation as in Lemma \ref{B}.  Therefore the contribution of the main term is
\[\Delta_M = \frac{k-1}{12}\prod_{p|S}(p-1)\left(\Bigl|H_S^{(-1)^{k/2}}\Bigr|-\Bigl|H_S^{-(-1)^{k/2}}\Bigr|\right),\]
and using Lemma \ref{globalecount} we obtain the formula given for $\Delta_M$.

Likewise, the contribution of the $A_1$ term of Theorem \ref{d0thm} to \eqref{Dminsum} is
\[\Delta_{A_1} = \Bigl|H_S^{(-1)^{k/2}}\Bigr|\frac{D_4(S)\cdot 1}4\prod_{\text{odd }p|S}2
 - \Bigl|H_S^{-(-1)^{k/2}}\Bigr|\frac{D_4(S)\cdot (-1)}4\prod_{\text{odd }p|S}2
\]
\[= \frac{D_4(S)}4\left(|H_S^+|+|H_S^-|\right)\prod_{\text{odd }p|S}2,\]
and the given formula follows from \eqref{sigtot}.

In the notation of Theorem \ref{d0thm} and Lemma \ref{B}, the contribution of $A_2$ to \eqref{Dminsum} is
\[\Delta_{A_2}
=\frac{D_3(S)b(k)(-1)^{\delta(3|S)}}3\left(\mathcal{B}(S)^{(-1)^{k/2}}-
\mathcal{B}(S)^{-(-1)^{k/2}}\right)\]
\[
=\frac{D_3(S)b(k)(-1)^{\delta(3|S)+k/2}}3\left(\mathcal{B}(S)^+-
\mathcal{B}(S)^{-}\right).\]
By considering possibilities for $\gcd(6,S)$, it is easy to check using Lemma \ref{B} that
\[\mathcal{B}(S)^+-\mathcal{B}(S)^- = (-1)^{\delta(2|S)}\mu(S')\Omega_0(S').\]
The result then follows from $(-1)^{\delta(2|S)+\delta(3|S)}\mu(S')=\mu(S)$ and the fact that
\[(-1)^{k/2}b(k) = \begin{cases} 1&\text{if }k\equiv 0,2\mod 12,\\
-1&\text{if }k\equiv 6,8\mod 12\\
0&\text{if }k\equiv 4\mod 6.\end{cases}\qedhere\]
\end{proof}

By similar arguments, we obtain the dimension of the space of twist-minimal forms of level $S^2$.

\begin{proposition}\label{Smin}
For $S>1$ square-free and $k>2$ even, 
\begin{align*}
\dim S_k^{\min}(S^2) =\frac{k-1}{12}\prod_{\text{odd }p|S}\frac{(p-1)^2}2
&+\frac{D_4(S)}4 (-1)^{\delta(2|S)+k/2}\prod_{\text{odd }p|S}2\\
&+\frac{D_3(S)b(k)}3(-1)^{\delta(3|S)}
\prod_{p|\frac{S}{\gcd(6,S)}}2
\end{align*}
for 
$b(k)=\begin{cases}1&\text{if }6|k\\-1&\text{if }k\equiv 2\mod 6\\0&\text{otherwise.}\end{cases}$
\end{proposition}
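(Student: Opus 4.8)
The plan is to exploit the orthogonal decomposition
\[S_k^{\min}(S^2)=\bigoplus_{\widehat{\sigma}}S_k(\widehat{\sigma}),\]
where $\widehat{\sigma}=(\sigma_p)_{p|S}$ ranges over all tuples of depth zero supercuspidals of conductor $p^2$ with trivial central character, so that $\dim S_k^{\min}(S^2)=\sum_{\widehat{\sigma}}\dim S_k(\widehat{\sigma})$. I would then apply Theorem~\ref{d0thm} to each summand and add up the three resulting contributions, treating them one at a time.

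For the main term, the number of tuples $\widehat{\sigma}$ is $\prod_{\text{odd }p|S}\tfrac{p-1}2$ by \eqref{sigtot} (the component at $p=2$, if present, being unique), and $\prod_{p|S}(p-1)=\prod_{\text{odd }p|S}(p-1)$; multiplying these gives $\frac{k-1}{12}\prod_{\text{odd }p|S}\frac{(p-1)^2}{2}$, the first term claimed. For the $A_1$ contribution, $\epsilon(k,\widehat{\sigma})=(-1)^{k/2}\prod_{p|S}\epsilon_p$, so summing over $\widehat{\sigma}$ introduces the factor $(-1)^{k/2}(|H_S^+|-|H_S^-|)$ with $H_S^\pm$ as in Lemma~\ref{B}; when $D_4(S)=1$, Lemma~\ref{globalecount} gives $|H_S^+|-|H_S^-|=(-1)^{\delta(2|S)}$, and when $D_4(S)=0$ the entire term vanishes, so in either case one obtains $\frac{D_4(S)}{4}(-1)^{k/2+\delta(2|S)}\prod_{\text{odd }p|S}2$ (using $D_4(S)^2=D_4(S)$).

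The $A_2$ contribution is the one requiring a little care: summing over $\widehat{\sigma}$ turns $\prod_{p|S,\,p\neq 3}B(\nu_p)$ into $\mathcal{B}(S)^+ +\mathcal{B}(S)^-$ in the notation of Lemma~\ref{B} (this is the sum of that product over \emph{all} tuples, with no root-number constraint). When $D_3(S)=1$, a short case check of the three formulas in Lemma~\ref{B}, together with the recursions $\mathcal{B}(S)^\pm=\mathcal{B}(S/3)^\pm$ when $3|S$ and $\mathcal{B}(S)^\pm=\mathcal{B}(S/2)^\mp$ when $2|S$, and the base values $\mathcal{B}(2)^++\mathcal{B}(2)^-=\mathcal{B}(3)^++\mathcal{B}(3)^-=1$, shows that $\mathcal{B}(S)^+ +\mathcal{B}(S)^-=\prod_{p\mid S/\gcd(6,S)}2$ in all cases; when $D_3(S)=0$ the term vanishes. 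Plugging this in yields the third term, completing the proof.

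I expect no genuine obstacle beyond this bookkeeping; the only thing to watch is that Lemmas~\ref{B} and~\ref{globalecount} are stated under the hypotheses $D_3(S)=1$ and (implicitly) $D_4(S)=1$, but these are precisely the situations in which the corresponding terms are nonzero, so the remaining cases are trivial. Alternatively, one can bypass Lemma~\ref{B} entirely by factoring $\sum_{\widehat{\sigma}}\prod_{p|S,\,p\neq 3}B(\nu_p)=\prod_{p|S}\big(\sum_{\sigma_p}(\,\cdot\,)\big)$ and evaluating each local factor directly from \eqref{A2b} and the enumeration of depth zero supercuspidals in the proof of Corollary~\ref{ecount}, obtaining $2$ at each odd prime $p\neq 3$ dividing $S$ (given $D_3(S)=1$) and $1$ at $p=2,3$.
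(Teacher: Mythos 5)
Your proposal is correct and follows essentially the same route as the paper: decompose $S_k^{\min}(S^2)$ as a direct sum over tuples $\widehat{\sigma}$, apply Theorem~\ref{d0thm} termwise, and evaluate the three resulting sums using \eqref{sigtot}, Lemma~\ref{globalecount}, and the identity $\mathcal{B}(S)^++\mathcal{B}(S)^-=\prod_{p\mid S/\gcd(6,S)}2$ from Lemma~\ref{B}. The alternative local factorization you sketch at the end for the $A_2$ term is a small simplification but amounts to the same bookkeeping.
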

\noindent{\em Remarks:} Although we have assumed $k>2$, the
above formula is valid when $k=2$ as well.
 More generally, the dimension of $S_k^{\min}(N,\chi)$ has been computed by Child,
\cite[\S5.1]{Ch}.

\begin{proof}
We have 
\[\dim S_k(S^2)^{\min}= d_M + d_{A_1}+d_{A_2},\]
where
\[d_M= 
 \frac{k-1}{12}\prod_{p|S}(p-1)\left(\Bigl|H_S^{(-1)^{k/2}}\Bigr|+\Bigl|H_S^{-(-1)^{k/2}}\Bigr|\right),\]
\[d_{A_1}= \frac{D_4(S)}4(-1)^{k/2}\left(|H_S^+|-|H_S^-|\right)\prod_{\text{odd }p|S}2,\]
and
\[d_{A_2}= \frac{D_3(S)b(k)(-1)^{\delta(3|S)}}3\left(\mathcal{B}(S)^++ \mathcal{B}(S)^{-}\right).\]
The result follows upon applying \eqref{sigtot} to $d_M$, Lemma \ref{globalecount} to $d_{A_1}$,
and the fact that $\mathcal{B}(S)^++\mathcal{B}(S)^- = 2^{\w(S')}=\prod_{p|S'}2$, for $S'=S/\gcd(6,S)$.
\end{proof}

\subsection{Simplification when $\n=1$ and $T>1$}

We return to the general setting of Theorem \ref{stf2} with no constraint on the 
conductor exponents of the $\sigma_p$.  Our aim here is to cull the list of matrices that appear in
Theorem \ref{stf2} when $\n=1$ and $T>1$.
The result is Proposition \ref{glist}, from which Theorem \ref{dimST} follows.

Recall that for $p|T$, $\sigma_p$ is a supercuspidal representation whose conductor is of 
the form $p^{n}$ with $n\ge 3$ odd.  It is well known (see, e.g., \cite[\S A.3.8]{He}) that there is a
ramified quadratic extension $E/\Q_p$ with $E^*$ embedded in $G(\Q_p)$ such that
$\sigma_p$ is compactly induced from a character $\chi$ of $J_n=E^* U^{(n-1)/2}$, 
where $U^r=1+\smat{p\Z_p}{\Z_p}{p\Z_p}{p\Z_p}^r$ is an open compact subgroup of $G(\Q_p)$ and
$\chi|_{F^*}=\w_p$. 
In the notation of \S\ref{ssr}, $U^1$ coincides with $K'$, $J_3$ with $H'$, and in general
\[J_n\subset H'.\]
We use the local test function defined for $g\in G(\Q_p)$ by
\begin{equation}\label{fpT}
f_p(g)=\begin{cases}d_{\sigma_p}\ol{\chi(g)}&\text{if }g\in J_{n}\\0&\text{otherwise,}\end{cases}
\end{equation}
 where $d_{\sigma_p}$ is the formal degree (depending only on the conductor).
  This coincides with \eqref{fp} when $n=3$.

If $p|T$, the support of $f_p$ is the disjoint union of its unramified and ramified elements:
\begin{equation}\label{suppfp}
\Supp(f_p)=J_n= (J_n\cap ZK') \bigcup (J_n\cap \pi_E ZK'),
\end{equation}
where $\pi_E$ is a prime element of $E$ whose square is a prime element of $\Q_p$.
We may decompose $f_p$ as $f_p=f_u + f_r$, a sum of two functions supported
on the unramified and ramified elements of $J_n$ respectively.
In the paper of Gross discussed in \S\ref{bias}, $n=3$ and the local test function used 
is a multiple of $f_u$, \cite[p. 1240]{G}.
The following is largely contained in \cite[Prop. 5.1]{G}.

\begin{proposition}\label{unram}
Let $f^1=f^\n$ for $\n=1$.
    Suppose $\g$ is elliptic in $G(\Q)$ and unramified at some prime $p|T$.
    Then either $\g$ has $p$-torsion in $\olG(\Q)$ and $p\in\{2,3\}$, or $\Phi(\g,f^1)=0$.    
    As a result, $\Phi(\g,f^1)=0$ in each of the following situations:
    \begin{enumerate}
        \item $\g$ is unramified at some prime $p|T$ with $p>3$;
        \item $\g$ is unramified at $3|T$ and $T\neq3$.
    \end{enumerate}
\end{proposition}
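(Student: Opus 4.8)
The goal is to show that if $\g\in G(\Q)$ is elliptic and \emph{unramified} at some prime $p\mid T$, then $\Phi(\g,f^1)=0$ unless $\g$ has $p$-torsion modulo center and $p\in\{2,3\}$. The strategy is to analyze the local factor $\Phi(\g,f_p)$ (or rather the contribution of the unramified part $f_u$ of $f_p$, since unramifiedness of $\g$ at $p$ kills the ramified part by the usual determinant-valuation argument as in Lemma \ref{detnp}) and show it vanishes except in the exceptional torsion situation. First I would put $\g$ in rational canonical form and scale so that $\det\g\in\Z_p^*$; by Lemma \ref{pd} we then have $\tr\g\in\Z_p$, so $\g=\smat0{-u}1v$ with $u\in\Z_p^*$, $v\in\Z_p$, and $P_\g(X)=X^2-vX+u$. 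Following the Bruhat/Cartan-coset computation used in Propositions \ref{gunram} and \ref{elld0} (adapted to the subgroup $J_n$ and the character $\chi$ rather than $K'$ and $\chi_\zeta$), the nonvanishing of $\Phi(\g,f_u)$ forces some conjugate of $\g$ into $J_n\cap ZK'$, which in particular forces $P_\g(X)\equiv(X-z)^2\bmod p$ for some $z\in\Z_p^*$, and then the structure of $J_n$ (which contains the embedded torus $E^*$ for $E/\Q_p$ ramified) forces $\g$ to actually lie in the image of $E^*$, i.e.\ $\g$ generates a ramified quadratic order locally.

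\textbf{Key steps in order.} (1) Reduce to the unramified component: since $\g$ is unramified at $p$, $f_r(g^{-1}\g g)=0$ for all $g$ (determinant valuations don't match the ramified coset $\pi_E ZK'$), so $\Phi(\g,f_p)=\Phi(\g,f_u)$. (2) Run the coset decomposition: express $\Phi(\g,f_u)$ as a sum over $\ol{ZK'}\backslash\olG(\Q_p)/\ol{ZK'}$ (or over $\ol K\backslash\olG/\ol K$ as in Proposition \ref{elld0}) of measures times local integrals $K_\g(x)$, and show the integrand is supported on $x$ with $\g$ conjugated into $J_n$. (3) Use the conductor-$p^n$ structure: because $\sigma_p$ has conductor $p^n$ with $n\ge3$, the character $\chi$ on $J_n=E^*U^{(n-1)/2}$ is ``deep,'' and nonvanishing of the local integral forces $\g$ (after conjugation) to lie in $E^*\cdot U^{(n-1)/2}$ — combined with $\g$ having the right reduction mod $p$, this forces $\g\in E^*$ mod center, i.e.\ the subfield $\Q_p[\g]$ equals the prescribed ramified $E$ and $\g$ is a unit times a power of $\pi_E$; but $\g$ unramified means $\g\in\O_E^*$, and the depth condition then constrains $\g$ to be a root of unity in $\O_E^*$. (4) Translate ``$\g$ is a root of unity in $\O_E^*$ with $E/\Q_p$ ramified'': the only roots of unity in a ramified quadratic extension of $\Q_p$ of order $>1$ that can occur here have order dividing $p$ times something coprime to $p$, and matching against the global constraint ($\g$ elliptic in $G(\R)$, so $\g$ has finite order in $\olG(\Q)$ iff it does, forcing $\det\g=1$ after scaling and $\tr\g\in\{0,\pm1\}$) pins the order of $\g$ in $\olG(\Q)$ to be divisible by $p$ only when $p\in\{2,3\}$. (5) Conclude: for $p>3$ there is no $p$-torsion, giving (1); for $p=3$ and $T\ne3$, there is a second prime $q\mid N$, and 3-torsion elements of $\olG(\Q)$ have $\tr\g=\pm1,\det\g=1$, hence discriminant $-3$, which then must also be handled at $q$ — but the finitely many such $\g$ up to conjugacy are exactly $\smat0{-1}11$ and its scalings, and these are excluded unless $T\in\{1,3\}$ by the relevance conditions of Proposition \ref{relevant} (local behavior at $q\mid T$ with $q\ne3$), giving (2).

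\textbf{Main obstacle.} The delicate part is Step (3)–(4): carefully extracting from the non-vanishing of the $J_n$-integral the conclusion that $\g$ must be conjugate into $E^*$ (not merely that $P_\g$ has a double root mod $p$), and then correctly identifying which roots of unity in $\O_E^*$ survive. This is essentially where Gross's argument in \cite[Prop.\ 5.1]{G} lives, but one must check it goes through uniformly for all odd $n\ge3$ (the paper's $f_p$ in \eqref{fpT} is defined for general such $n$) and track the central character $\w_p$ correctly. I would lean on \cite[Prop.\ 5.1]{G} for the $n=3$ case and argue that increasing $n$ only shrinks the support, so non-vanishing becomes strictly harder, preserving the conclusion. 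The archimedean and other-prime bookkeeping in Step (5) is routine given Propositions \ref{ellg} and \ref{relevant}: once $\g$ is forced to have discriminant $-3$ (the $p=3$ case) or $-4$ or similar small discriminant, only $\smat0{-1}11$, $\smat0{-1}10$-type classes remain, and these are governed by the already-established relevance list, so the divisor conditions $T>3$ (resp.\ $T\ne3$, $3\mid T$) immediately kill them.
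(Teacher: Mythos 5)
Your approach differs from the paper's and has a real gap. You try to establish the vanishing \emph{locally} at $p$: Steps (3)--(4) assert that nonvanishing of $\Phi(\g,f_u)$ forces (a conjugate of) $\g$ into $E^*$ modulo center, where $E/\Q_p$ is the prescribed ramified quadratic extension, from which you would deduce $\g$ is a root of unity. This claim is false. Having a conjugate in $J_n\cap ZK'$ does not place $\g$ in $E^*Z/Z$---that set contains the open subgroup $U^{(n-1)/2}\cap ZK'$, which is far larger than the torus---and the local orbital integral is generically nonzero for $\g$ unramified and elliptic at $p|T$ regardless of whether $\Q_p[\g]$ is ramified. Proposition~\ref{gunram} gives the explicit (nonzero) formula; Example~\ref{Phi2a} has $\Phi(\smat{}{-M}1{},f_2)=-3\ne0$ for $M\equiv 3\mod 8$, where $\Q_2[\g]=\Q_2[\sqrt{-3}]$ is \emph{unramified}; and indeed the whole of \S\ref{pN} and Theorem~\ref{mainST} rely on these local integrals being nonzero once $\n>1$. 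There is no local vanishing to exploit.

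The paper's argument is global and uses the hypothesis $\n=1$ in a way you never invoke. Since $\n=1$, the support of $f_\ell$ is compact modulo center at \emph{every} finite place $\ell$, and $\g$ elliptic in $G(\R)$ puts it in a compact-mod-center subgroup there as well; hence $\g\in\olG(\Q)\cap\prod_v\ol{U_v}$, a \emph{finite} group by discreteness of $\olG(\Q)$ in $\olG(\A)$, and so $\g$ is torsion in $\olG(\Q)$. (Ellipticity in $G(\R)$ alone does not yield finite order---for $\n>1$ the relevant $\g$ of Proposition~\ref{rel1} are typically nontorsion---and your Step~(4) only gestures at ``the global constraint'' without ever establishing compactness of all local supports.) Once torsionness is in hand, the rest is short: unramified at $p|T$ forces a conjugate of $\g$ into the unramified part of $\Supp f_p\subset\ol{K'}$, a pro-$p$ group, so the order of $\g$ in $\olG(\Q)$ is a $p$-power; since torsion elements of $\olG(\Q)$ have order in $\{1,2,3,4,6\}$, this forces $p\le3$, and the explicit $3$- and $4$-torsion conjugacy classes (the former everywhere unramified, the latter ramified at $2$) yield cases (1) and (2). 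You would need the discreteness-plus-compactness step and the pro-$p$ structure of $\ol{K'}$; neither appears in your plan, and the local vanishing you substitute for them does not hold.
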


\begin{proof}
Write $f=f^1$. Suppose $\Phi(\g,f)\neq 0$.
      By Proposition \ref{ellg}, $\g$ is elliptic in $G(\R)$ and $\det\g>0$. Hence it belongs to a 
compact-mod-center subgroup $U_\infty$ of $G(\R)$
    ($U_\infty$ being some conjugate of $\R^*\cdot\SO(2)$).
Likewise, at every finite place $v$, the support of $f_v$ is a 
compact-mod-center subgroup $J_v$ of $G_v$ (here is where we use $\n=1$), and $\g$ belongs to some
    conjugate $U_v$ of $J_v$.  (In fact since $\g\in K_v$ a.e., we can take $U_v=K_v$ a.e.)
 Hence $\g$ belongs to a compact-mod-center subgroup $\prod_v U_v$ of $G(\A)$.
 Identifying $\g$ with its image modulo the center, we have
\[\g\in \olG(\Q)\cap \prod_v \ol{U_v}.\]
    This is a {\em finite} group since $\olG(\Q)$ is discrete in $\olG(\A)$ (\cite[\S7.11]{KL}).
In particular, $\g$ is a torsion element of $\olG(\Q)$, i.e.,
some power of $\g$ lies in the center $Z(\Q)$.

 Since $\g$ is unramified at $p|T$, some conjugate of $\g$ belongs to the unramified part of
the support of $f_p$, which is a subset of the pro-$p$ group $\ol{K'}$.
    (Recall that $K'$ is the pro-$p$-Sylow subgroup of the Iwahori subgroup of $G(\Q_p)$).
    It follows that the order of $\g$ in $\olG(\Q)$ is a power of $p$.
    However, it is known that any torsion element of $\olG(\Q)$ has order $1, 2, 3, 4,$ or $6$,
 \cite[Lemma 1]{D}.
    Since $\g\neq 1$, we conclude that $p\le 3$.  This proves (1).
    
    The $3$-torsion elements of $\olG(\Q)$ comprise a single conjugacy
    class containing $\smat 0{-1}11$ (\cite[Lemma 1]{D}). 
    Therefore if $p=3$, $\g$ is conjugate in $G(\Q)$
    to a matrix of the form $\smat 0{-z}zz$ and is hence everywhere unramified.
    By the above, this means $T$ is not divisible by any prime $p>3$.  
  It is also odd, because otherwise $\g$ would somehow simultaneously have $3$-torsion and
  $2$-power torsion.  Hence $T=3$, which proves (2).

    By the same reference, the $4$-torsion elements of $\olG(\Q)$ are all conjugate to 
    $\smat 1{-1}11$. But such an element is ramified at $2$.  Hence if $p=2$,
    $\g$ has $2$-torsion.
\end{proof}

\begin{proposition}\label{glist}
With notation as in \S\ref{spectral}, let $T$ be the product
of the primes $p$ for which $\ord_p(N)$ is odd, and for $p|T$ take $f_p$ as in \eqref{fpT}.
  Then for $\g\in \olG(\Q)$,
 $\Phi(\g,f^1)=0$ unless either $\g=1$ or the conjugacy class of $\g$ has a representative in $G(\Q)$
 of one of the forms given in the table below:
        \[        \begin{array}{l|c}
            \rule[-3mm]{0mm}{8mm}
     \text{Form of $T$}&\text{List of relevant elliptic $\g$ for $\n=1$}\\
     \hline
     \hline
            \rule[-3mm]{0mm}{8mm}
            \text{even }  T\neq 2 &\smat{}{-T}1{},\smat{}{-T/2}1{}\\
     \hline
            \rule[-3mm]{0mm}{8mm}
            T=2&\smat{}{-2}1{},\smat{}{-1}1{},\smat0{-2}12\\
     \hline
            \rule[-3mm]{0mm}{8mm}
            \text{odd }  T> 3 &\smat{}{-T}1{}\\
     \hline
            \rule[-3mm]{0mm}{8mm}
            T=3&\smat{}{-3}1{},\smat0{-3}13,\smat{0}{-1}1{1}\\
     \hline
            \rule[-3mm]{0mm}{8mm}
            T=1&\smat{}{-1}1{},\smat{0}{-1}1{1}.
 \end{array}
    \]
\end{proposition}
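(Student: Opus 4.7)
The plan is to combine two previously established results: Proposition \ref{rel1}, which with $\n=1$ forces any relevant $\g$ to be $G(\Q)$-conjugate to $\smat{0}{-M}{1}{rM}$ for some $M|T$ and $0\le r<\sqrt{4/M}$, and Proposition \ref{unram}, which says that $\Phi(\g,f^1)=0$ whenever $\g$ is unramified at some prime $p|T$, unless $p\in\{2,3\}$ and $\g$ represents a $p$-torsion class in $\olG(\Q)$. Everything else is bookkeeping on the divisor structure of $T$.

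First I would unpack the candidate list from Proposition \ref{rel1}: the inequality $r<\sqrt{4/M}$ permits $r\in\{0,1\}$ only when $M\in\{1,2,3\}$, and otherwise forces $r=0$. Two distinguished candidates play a special role: $\smat{0}{-1}{1}{0}=\smat{}{-1}{1}{}$ satisfies $\g^2=-I$ and hence has order $2$ in $\olG(\Q)$, while $\smat{0}{-1}{1}{1}$ satisfies $\g^3=-I$ with $\g^2\notin Z(\Q)$ and so has order $3$. By the classification of torsion in $\olG(\Q)$ invoked in the proof of Proposition \ref{unram} (via \cite{D}), these are the only classes of order $2$ or $3$ that appear among the candidates above.

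The rest is a straightforward case analysis on $T$, guided by the rule that for each $p|T$ we need either $p|M$ (ramification at $p$) or, for $p\in\{2,3\}$, that $\g$ be the torsion class of order $p$ just identified. When $T$ is odd with $T>3$, every prime of $T$ is $\ge 5$, so the torsion alternative is unavailable and $M=T$; then $T\ge 5$ forces $r=0$, giving only $\smat{}{-T}{1}{}$. When $T$ is even with $T\ne 2$, every odd prime of $T$ still forces divisibility into $M$ (primes $\ge 5$ have no torsion alternative, and the $p=3$ alternative would require $M=1$ which clashes with the other odd primes of $T$), so $M\in\{T,T/2\}$; with $M=T/2$ the matrix is unramified at $2$, forcing the 2-torsion condition and hence $r=0$, so one gets $\smat{}{-T}{1}{}$ and $\smat{}{-T/2}{1}{}$. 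The small cases $T\in\{1,2,3\}$ are then read directly off the candidate list via the same torsion rule, noting in particular that for $T=3$ the matrix $\smat{}{-1}{1}{}$ is excluded because it has $2$-torsion rather than the required $3$-torsion, while $\smat{0}{-1}{1}{1}$ survives as the unique $3$-torsion class.

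I do not anticipate any serious obstacle; the mild care-points are verifying the torsion orders of the two distinguished candidates and, for each $T$, correctly tracking the simultaneous interaction of the ramification and torsion conditions at all primes of $T$.
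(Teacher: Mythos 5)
Your proposal follows essentially the same route as the paper: Proposition \ref{rel1} to reduce to candidates $\smat0{-M}1{rM}$ with $M|T$ and $0\le r<\sqrt{4/M}$, Proposition \ref{unram} to force $p|M$ at each $p|T$ apart from the $2$- and $3$-torsion escapes, and the same case analysis on $T$; every row of the table comes out correctly, and the key computations (candidate $r$-range, $\g^2$ scalar iff $r=0$, the $3$-torsion class forcing $M=1$) match the paper's.

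Two points in the write-up need repair, though neither is fatal. First, the claim that $\smat{}{-1}1{}$ and $\smat0{-1}11$ are the \emph{only} order-$2$ or order-$3$ classes among the candidates is false for order $2$: every trace-zero candidate $\smat0{-M}1{}$ is an involution in $\olG(\Q)$, and these lie in distinct conjugacy classes (the determinant is well defined modulo squares), so only the $3$-torsion elements form a single class by \cite{D}. Consequently your stated ``rule'' --- that the unramified escape at $p=2$ requires $\g$ to be \emph{the} identified class of $\smat{}{-1}1{}$ --- would, taken literally, wrongly discard $\smat{}{-T/2}1{}$ in the even case; in the actual case analysis you instead use the correct criterion ($2$-torsion $\iff$ $\g^2$ scalar $\iff$ $r=0$), exactly as the paper does, so nothing downstream breaks, but the preliminary claim should be corrected. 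Second, in the even $T\neq 2$ case your exclusion of the $p=3$ escape (which indeed forces $M=1$) is justified only by a clash ``with the other odd primes of $T$''; when $T=6$ there is no other odd prime, and the clash is instead with the $2$-torsion requirement at $p=2$, which a $3$-torsion element cannot satisfy. This is precisely the parity point built into part (2) of Proposition \ref{unram}, which you could also quote directly to get $3|M$ whenever $3|T$ and $T\neq3$, closing that sub-case without further argument.
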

\noindent{\em Remark:} When $T/2\equiv 7\mod 8$, the matrix $\smat{}{-T/2}1{}$ is hyperbolic
(rather than elliptic) in $G(\Q_2)$, so its orbital integral vanishes.
All other entries in the above table are elliptic in $G(\Q_p)$ for each $p|T$, but for $p|S$
 this needs to be checked on a case-by-case basis.

\begin{proof}
The case where $T=1$ is already contained in Theorem \ref{stf2}, taking $\n=1$.
So suppose $T>1$ and $\Phi(\g,f)\neq 0$.
By Proposition \ref{rel1}, we may take $\g=\mat{0}{- M}1{rM}$ for some $M|T$ and
$0\le r< \sqrt{4/M}$.  Notice that if $M>3$ then $r=0$.  Suppose first that $T\neq 3$.  
By Proposition \ref{unram}, $\g$ must be ramified at all odd
primes dividing $T$, so $M=T$ or $M=T/2$.  If $T$ is odd, this means $M=T$ and we obtain the third
row of the above table.
Suppose $T$ is even and $M=T/2$. By Proposition \ref{unram}, $\g$ has $2$-torsion in $\olG(\Q)$.
Note that
\[\g^2=\mat{- M}{-r M^2}{rM}{r^2M^2- M}\]
is a scalar matrix if and only if $r=0$.  Therefore
$\g=\smat0{- M}10$.  This establishes the top two rows of the table.
(When $M=T=2$, $r=1$ is admissible, and for $\g = \smat0{-2}12$, 
$P_\g(X)= X^2-2X+2$ is
an Eisenstein polynomial for the prime $2$, which is indeed irreducible in $\Q_2[X]$,
\cite[p. 19]{S}.)

Now suppose $T=3$. Then $M=1$ or $M=3$.  In the latter case,
$\g=\mat0{-3}1{3r}$ for $r=0,1$.
If $M=1$, then $\g=\smat0{-1}1r$ for $r=0,1$, and $\g$ is unramified at $3$. 
  If $r=0$, this matrix has 2-torsion, in violation of Proposition \ref{unram}.
Hence $\g=\smat0{-1}1{1}$. 
(In this case, $P_\g(X)=X^2-X+1$ has discriminant $-3$, which is
is not a square in $\Q_3$, and hence $\g$ is indeed elliptic in $G(\Q_3)$.)
\end{proof}

\subsection{Global orbital integrals for $\n=1$, $N=T^3$.}\label{global}

Here we will evaluate the global elliptic orbital integrals of Theorem \ref{dimST} 
explicitly when $N=T^3>1$ for $T$ square-free.
 We must consider
  \[\g=\smat{}{-T}1{},\, \smat{}{-T/2}1{}_{(T\text{ even})},\, 
  \smat0{-2}1{2}_{(T=2)}
  ,\, 
  \smat0{-1}1{1}_{(T=3)},\,\smat0{-3}1{3}_{(T=3)}
  \]
  as appearing in Proposition \ref{glist}.

We introduce some notation before stating the global results.
    Given our tuple $\widehat{\sigma}=(\sigma_{t_p}^{\zeta_p})_{p|T}$ of simple
    supercuspidal representations, for $k>2$ define
    \begin{equation}\label{epsilon}
    \epsilon(k,\widehat{\sigma})=i^k\prod_{p|N}\zeta_p.
\end{equation}
This is the common global root number of the cusp forms comprising $H_k(\widehat{\sigma})$
(see Proposition \ref{ssc}, \cite[Theorem 14.17]{IK}, and \cite{C}).
Throughout this section $f=f^1$ as in \eqref{fnt}.

\begin{proposition}\label{e1}
    For $N=T^3$, with notation as above, suppose that for each odd prime factor $p$ of the square-free 
    integer $T>1$,
    $-pt_p/T$ is a square modulo $p$.  Then for $k\ge 4$ even,
 \[\Phi(\mat{}{-T}1{},f)=
    \frac {\ol{\epsilon(k,\widehat{\sigma})}\,2_7\,4_3\, h(-T)} {3_{T=3}\,2^{\w(T)}}\sum_{y}\w'(y),\]
    where numbers with subscripts are present only when $T$ falls into the subscript's equivalence
class modulo 8,
$3_{T=3}$ is a factor of 3 which is
    present only when $T=3$, 
and $y$ ranges over all integers modulo $T$ that satisfy $y^2\equiv -pt_p/T\mod p$ for all $p|T$.
If the central character is trivial, the above simplifies to
\begin{equation}\label{e1w1}
\Phi(\mat{}{-T}1{},f)=    \frac {{\epsilon(k,\widehat{\sigma})} h(-T) w_T} {3_{T=3}},
\end{equation}
where
    \[w_T=\begin{cases}1/2&\text{if $T$ is even}\\
        1&\text{if }T\equiv 1\mod 4\\
        2&\text{if }T\equiv 7\mod 8\\
    4&\text{if }T\equiv 3\mod 8.\end{cases}\]
\end{proposition}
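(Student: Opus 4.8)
The plan is to evaluate the global orbital integral $\Phi(\g,f)$ for $\g=\smat{}{-T}1{}$ via the factorization \eqref{phifact} (equivalently \eqref{Phi} of Theorem \ref{stf2}), computing each local factor. First I note $E=\Q[\g]=\Q[\sqrt{-T}]$, which is imaginary quadratic; its discriminant $d_E$ is $-T$ or $-4T$ according to $T\bmod 4$, and the number of prime factors $\w(d_E)$ and roots of unity $w_E$ are read off accordingly (with $T=3$ the exceptional case $w_E=6$, responsible for the $3_{T=3}$ denominator). The measure factor $\tfrac{2h(E)}{w_E 2^{\w(d_E)}}$ is then explicit. The archimedean factor is $\Phi_\infty(\g)=-\sin((k-1)\tfrac\pi2)/\sin(\tfrac\pi2)$ since $\tr\g=0$, which by \eqref{Phiinf} equals $(-1)^{k/2}$ for $k$ even (here is where the even-$k$ hypothesis enters). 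For the finite primes away from $N=T^3$: the discriminant is $\Delta_\g=-4T$, so the only such primes that can contribute are $\ell=2$ and the odd $\ell\mid T$ — but those odd $\ell$ all divide $N$, so the only extra prime is $2$ when $T$ is odd, handled by Example \ref{Nell}: $\Phi_2(\g)=2$ if $T\equiv1,5,7\bmod 8$ and $4$ if $T\equiv 3\bmod 8$. This is the source of the $2_7$ and $4_3$ subscripted factors together with the $2^{\w(T)}$ normalization; one checks the bookkeeping of $|\Delta_\g|_2$ versus ramification in $E$ carefully.

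Next I compute the factors $\Phi_p(\g)$ at $p\mid T$ using Proposition \ref{Phiell}. Here $\g=\smat0{-T}10$ is ramified at $p$ (since $v_p(\det\g)=v_p(T)=1$), of the shape \eqref{gp} with $u=T/p$ (a unit times), $v=0$. Proposition \ref{Phiell} says $\Phi_p(\g)=0$ unless $-pt_p/T$ (i.e. $-t_p/u$) is a square mod $p$ — precisely the standing hypothesis — and in that case, since $v=0$ makes $\psi(yv)=1$, the formula collapses to $\Phi_p(\g)=\ol{\zeta_p}\bigl(\w_p(y_p)+\delta(p\neq 2)\w_p(-y_p)\bigr)$ for $y_p$ a square root of $-pt_p/T$ mod $p$. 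At $p=2$ (when $2\mid T$) the factor is just $\ol{\zeta_2}\w_2(1)=\ol{\zeta_2}$. The global root number $\epsilon(k,\widehat\sigma)=i^k\prod_{p\mid N}\zeta_p$ combines with $(-1)^{k/2}=i^k/(-1)^{?}$... more precisely $(-1)^{k/2}\prod_p\ol{\zeta_p}=\ol{i^k\prod\zeta_p}\cdot(-1)^{k/2}i^{-k}\cdot(\text{sign})$; I will pin down the exact identity $(-1)^{k/2}=i^{-k}$ for even $k$ so that $\Phi_\infty\prod_{p\mid T}\ol{\zeta_p}=\ol{\epsilon(k,\widehat\sigma)}$, which is what produces the $\ol{\epsilon(k,\widehat\sigma)}$ in the stated formula.

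Finally I assemble: the product of $\w_p(y_p)$ (and $\w_p(-y_p)$ for odd $p$) over $p\mid T$ is, by CRT and \eqref{w'}, exactly $\w'(y)$ summed over $y\bmod T$ with $y^2\equiv -pt_p/T\bmod p$ for all $p$ — the $\pm y_p$ choices at each odd $p$ giving independent sign choices, hence the full sum over such $y$; at $p=2$ there is a unique choice. Collecting the measure factor, $\Phi_\infty$, the prime-$2$ factor, and the $p\mid T$ factors yields the displayed formula with the subscripted powers of $2$ and the $3_{T=3}$. For trivial central character, $\w'(y)=1$ for every admissible $y$, and the number of such $y$ is $2^{\w_{\mathrm{odd}}(T)}$ (two sign choices per odd prime), so the sum collapses to a power of $2$; combining with $2^{\w(T)}$ in the denominator and the $2_7,4_3$ in the numerator gives the clean coefficient $w_T$. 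I expect the main obstacle to be the careful accounting of the powers of $2$ — reconciling $|\Delta_\g|_2^{-1/2}$, the $2^{\w(d_E)}$ and $w_E$ in the measure, the Example \ref{Nell} value, and the $2^{\w(T)}$ normalization constant — along with verifying the sign identity that turns $(-1)^{k/2}\prod\ol{\zeta_p}$ into $\ol{\epsilon(k,\widehat\sigma)}$; the rest is routine local computation.
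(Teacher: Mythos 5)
Your proposal is correct and follows essentially the same route as the paper: factorize via \eqref{phifact}, take the measure factor $\tfrac{2h(E)}{w_E2^{\w(d_E)}}$ for $E=\Q[\sqrt{-T}]$, use \eqref{Phiinf} for $\Phi_\infty=(-1)^{k/2}$, Example \ref{Nell} (resp. Proposition \ref{Phiell}) for $\Phi_2$ when $T$ is odd (resp. even), Proposition \ref{Phiell} with $v=0$ at the odd $p\mid T$, and CRT together with \eqref{w'} to assemble $\sum_y\w'(y)$ and the identity $(-1)^{k/2}\prod_{p\mid T}\ol{\zeta_p}=\ol{\epsilon(k,\widehat{\sigma})}$ for even $k$. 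The only loose end is the tentatively phrased sign bookkeeping, which resolves exactly as you indicate since $i^{-k}=(-1)^{k/2}$ for $k$ even.
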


\noindent{\em Remark:} If the first hypothesis is not satisfied or $k$ is odd,
   then $\Phi(\g,f)=0$; see Proposition \ref{relevant}.

\begin{proof}
 Take $\g=\smat{}{-T}1{}$, $\Delta_\g=-4T$, and let $M$ be the odd part of $T$, so that $T=2^aM$ for some $a\in \{0,1\}$.
 Corresponding to \eqref{phifact}, write
\[\Phi(\g,f)=m\Phi_\infty\Phi_2\prod_{p|M}\Phi_p
    =m (-1)^{k/2}\Phi_2
  \prod_{p|M}\ol{\zeta_p}\sum_{y_p}\w_p(y_p),\]
where we have applied \eqref{Phiinf} and Proposition \ref{Phiell},
    with $y_p$ running over the two (since $p$ is odd) solutions to 
 $y_p^2\equiv -pt_p/T\mod p$.
We can exchange the sum and product.
To each of the $2^{\w(M)}$ tuples $(y_p)_{p|M}$, 
the Chinese remainder theorem assigns a unique integer $y$ modulo $T$
  satisfying $y\equiv y_p\mod p$  for all $p|T$, where we take $y_2=1$ if $T$ is even.  Further,
\[\w'(y)=\prod_{p|T}\w_p(y)=\prod_{p|T}\w_p(y_p)=\prod_{p|M}\w_p(y_p).\]
The first equality holds because $\gcd(y,T)=1$ (see \cite[(12.4)]{KL}); the second holds
since each $\w_p$ is trivial on $1+p\Z_p$.
 By Example \ref{Nell} (for $T$ odd) or Proposition \ref{Phiell} (for $T$ even), 
    \[\Phi_2=\begin{cases}\ol{\zeta_2}&\text{if $T$ is even}\\
    2&\text{if }T\equiv 1,5,7\mod 8\\4&\text{if }T\equiv 3\mod 8.\end{cases}\]
    It follows that
    \[\Phi(\g,f)= \frac{2h(E)}{w_E 2^{\w(d_E)}}
 \ol{\epsilon(k,\widehat{\sigma})} a_T \sum_y \w'(y),\]
for $y$ as in the statement of the proposition, 
    \[a_T=\begin{cases}1&\text{if $T$ is even,}\\2&\text{if }T\equiv 1,5,7\mod 8,\\
    4&\text{if }T\equiv 3\mod 8,\end{cases}\]
and
$E=\Q(\sqrt{-T})$.
 Since $T>1$, we know that 
 \[w_E=|\O_E^*|=\begin{cases} 6&\text{if }T=3\\ 2&\text{otherwise.}\end{cases}\]
So $w_E/2=3_{T=3}$, and $\frac{2h(E)}{w_E}=\frac{h(-T)}{3_{T=3}}$. 
Recall that
\[d_E=\begin{cases}-4T,&-T\equiv 2,3\mod 4\\-T,&-T\equiv 1\mod 4.\end{cases}\]
    Therefore, placing the congruence condition on $T$ rather than $-T$,
    \[2^{\w(d_E)}=\begin{cases}2\cdot 2^{\w(T)}&\text{if }T\equiv 1\mod 4\\
     2^{\w(T)}&\text{if }T\equiv 2,3\mod 4.
    \end{cases}\]
Hence using the definition of $a_T$ in the following numerator,
    \[\Phi(\g,f)=\ol{\epsilon(k,\widehat{\sigma})}h(-T)
\frac{2_{1,5,7}\cdot 4_3}{3_{T=3}\cdot 2_{1,5}\cdot 2^{\w(T)}}\sum_y\w'(y),\]
    where numbers with subscripts are only present when $T$ falls into one of the
    subscript equivalence classes modulo $8$.  The general result now follows.

If $\w'$ is trivial, the sum over $y$ equals the number of terms, namely $2^{\w(M)}$.   
Equation \eqref{e1w1} then follows from
    \[\frac{2^{\w(M)}}{2^{\w(T)}}=\begin{cases}1&\text{if $T$ is odd}\\1/2&\text{if $T$ is even}
    \end{cases}\]
and the fact that $\epsilon(k,\widehat{\sigma})\in \{\pm 1\}$ is real in this case.
\end{proof}

\begin{proposition}\label{e2}
    For $N=T^3$, suppose that the square-free integer $T=2M$ is even, and that for each prime factor $p$ of $T$,
    $-pt_p/M$ is a square modulo $p$.  Then for even $k\ge 4$,
 \[\Phi(\mat{}{-M}1{},f)=
h(-M)\frac{\ol{\epsilon(k,\widehat{\sigma})}}{\zeta_2}
 \cdot\frac{z_M}{2_{M=1}\, 3_{M=3}\, 2^{\w(M)}}\sum_y\w'(y),\]
 where $2_{M=1}$ is a factor of 2 which is present only when $M=1$, $3_{M=3}$ is defined
 similarly, 
 \[z_M=\begin{cases}\frac12&\text{if }M\equiv 1\mod 4\\
  -3&\text{if }M\equiv 3\mod 8\\
 0&\text{if }M\equiv 7\mod 8,\end{cases}\]
and $y$ ranges over all elements modulo  $M$ that satisfy $y^2\equiv -pt_p/M\mod p$ for each $p|M$.
If $\w'$ is trivial, the sum over $y$ simply cancels with the factor of $2^{\w(M)}$. 
 (Again, if the condition on the $t_p$ fails to hold or $k$ is odd,
 the orbital integral vanishes.)
\end{proposition}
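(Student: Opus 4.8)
\textbf{Proof proposal for Proposition \ref{e2}.}

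The plan is to proceed exactly as in the proof of Proposition \ref{e1}, but with the matrix $\g=\smat{}{-M}1{}$ of discriminant $\Delta_\g=-4M$, where now $M$ is odd (since $T=2M$ is square-free). The primes dividing $N=T^3=2^3M^3$ are $2$ and the odd primes $p\mid M$, and $2\nmid M$, so the factorization \eqref{phifact} reads
\[
\Phi(\g,f)=m\,\Phi_\infty\,\Phi_2\prod_{p\mid M}\Phi_p,
\]
with $m=\tfrac{2h(E)}{w_E 2^{\w(d_E)}}$ for $E=\Q[\g]=\Q(\sqrt{-M})$. First I would compute the archimedean factor: since $\tr\g=0$, \eqref{Phiinf} gives $\Phi_\infty=(-1)^{k/2}$ for $k$ even (and $0$ for $k$ odd, whence the vanishing claim). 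Next, the factor at $2$: here $2\mid T$, so $\sigma_2$ is a simple supercuspidal of conductor $2^3$ and $\g$ is \emph{unramified} at $2$ (as $\det\g=M\in\Z_2^*$); thus $\Phi_2$ is computed by Proposition \ref{gunram} (i.e.\ equation \eqref{phiunram}). This is precisely the content of Example \ref{Phi2a}, which gives
\[
\Phi_2=\begin{cases}1&M\equiv 1\bmod 4\\-3&M\equiv 3\bmod 8\\0&M\equiv 7\bmod 8,\end{cases}
\]
after noting $\w_2$ is trivial on $\Z_2^*$ — but one must be careful: the statement of Proposition \ref{e2} allows nontrivial nebentypus, so $\sigma_2$ has central character $\w_2$ which, being a character of $\Q_2^*$ trivial on $1+2\Z_2=\Z_2^*$, is determined by $\w_2(2)=\prod_{p\mid M}\w_p(2^{-1})$ via \eqref{wpp}; I would check that the computation in Example \ref{Phi2a} goes through with $\w_2$ inserted, contributing only an overall unit. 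Then the odd factors $\Phi_p$ for $p\mid M$ are handled by Proposition \ref{Phiell}: $\g$ \emph{is} ramified at each $p\mid M$ (since $v_p(\det\g)=v_p(M)=1$), so $\Phi_p=\ol{\zeta_p}\bigl[\psi_p\text{-terms}\bigr]$; exactly as in the proof of Proposition \ref{e1}, the two solutions $y_p$ to $y_p^2\equiv -pt_p/M\bmod p$ get glued by the Chinese Remainder Theorem (using the condition $-pt_p/M$ square mod $p$ from the hypothesis, the convention $y_2=1$, and that each $\w_p$ is trivial on $1+p\Z_p$) into a single $y\bmod M$ with $\w'(y)=\prod_{p\mid M}\w_p(y_p)$.

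Assembling the pieces: the product $\prod_{p\mid M}\ol{\zeta_p}$ combines with $(-1)^{k/2}=i^k$ (for $k$ even, since $i^k=(-1)^{k/2}$) to form $\ol{\epsilon(k,\widehat\sigma)}\,\zeta_2$ by the definition \eqref{epsilon} of $\epsilon(k,\widehat\sigma)=i^k\prod_{p\mid N}\zeta_p=i^k\zeta_2\prod_{p\mid M}\zeta_p$; rearranging gives $\ol{\epsilon(k,\widehat\sigma)}/\zeta_2$ (using $|\zeta_p|=1$). For the measure constant $m$: since $M$ is odd, $d_E=-4M$ if $M\equiv 1\bmod 2$ with $-M\equiv 2,3\bmod 4$, i.e.\ $d_E=-4M$ when $M\equiv 1\bmod 4$ and $d_E=-M$ when $M\equiv 3\bmod 4$ (placing the congruence on $M$ rather than $-M$), so $2^{\w(d_E)}$ equals $2\cdot 2^{\w(M)}$ or $2^{\w(M)}$ accordingly; and $w_E=6$ iff $M=3$, else $2$, so $\tfrac{2h(E)}{w_E}=\tfrac{h(-M)}{3_{M=3}}$. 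Folding the power-of-$2$ discrepancy between $\Phi_2\in\{1,-3,0\}$ and the target $z_M\in\{\tfrac12,-3,0\}$ — the discrepancy is exactly the factor $\tfrac12$ when $M\equiv 1\bmod 4$ coming from the extra $2$ in $2^{\w(d_E)}$ — together with the $2_{M=1}$ factor (present since $w_E=2$ but $h(-1)$ conventions, as in Proposition \ref{e1}'s handling of small discriminant) yields the stated formula. When $\w'$ is trivial the sum $\sum_y\w'(y)$ is just the count $2^{\w(M)}$ of solutions, cancelling that factor, and $\epsilon(k,\widehat\sigma),\zeta_2\in\{\pm1\}$.

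The main obstacle I anticipate is the careful bookkeeping of the powers of $2$ and the small-field correction factors ($2_{M=1}$, $3_{M=3}$) — matching $\Phi_2$ from Example \ref{Phi2a} against the target $z_M$ requires tracking how the discriminant $d_E$ of $\Q(\sqrt{-M})$ depends on $M\bmod 4$ (affecting $\w(d_E)$) and how $w_E$ behaves at $M=3$ and the degenerate case $M=1$ (where $E=\Q(\sqrt{-1})$ has $w_E=4$, forcing the $2_{M=1}$ factor). A secondary subtlety is verifying that Example \ref{Phi2a}, stated for trivial $\w_2$, extends to the at-most-tamely-ramified $\w_2$ forced by \eqref{wpp} when $M>1$ has odd prime factors — but since $\w_2$ is trivial on $\Z_2^*$ and $\g$'s relevant conjugates land in $ZK'$ with the relevant scalar in $\Z_2^*$, this should contribute nothing beyond what is already absorbed, and the argument of Proposition \ref{gunram} applies verbatim with $\w_\p=\w_2$.
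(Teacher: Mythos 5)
Your proposal is correct and follows essentially the same route as the paper's own proof, which is simply the proof of Proposition \ref{e1} adapted by replacing $\Phi_2$ with Example \ref{Phi2a}, recomputing $2^{\w(d_E)}$ for $E=\Q(\sqrt{-M})$ according to $M\bmod 4$, and folding in $w_E\in\{2,4,6\}$ (the source of the $2_{M=1}$ and $3_{M=3}$ factors), with the CRT glueing and the combination of $(-1)^{k/2}$ with $\prod_{p\mid M}\ol{\zeta_p}$ handled exactly as you describe. The only point worth flagging is that your "rearranging gives $\ol{\epsilon}/\zeta_2$" step implicitly identifies $\zeta_2$ with $\ol{\zeta_2}$, which holds when the central character is trivial (so $\zeta_2=\pm1$) but is formally an abuse for nontrivial $\w'$; the paper's stated formula carries the same latent ambiguity, so this is not a gap in your argument relative to the target.
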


\begin{proof}
We use the same proof as for the previous proposition, with minor modifications.  First,
 by Example \ref{Phi2a},
 \[\Phi(\smat{}{-M}1{},f_2)=\begin{cases}
1&\text{if }M\equiv 1\mod 4
  \\-3 &\text{if }M\equiv 3\mod 8\\
0&\text{if }M\equiv 7\mod 8.
\end{cases}\]
  Taking $E=\Q[\sqrt{-M}]$ we have
  \[2^{\w(d_E)}=\begin{cases} 2\cdot 2^{\w(M)}&\text{if }M\equiv 1\mod 4\\
  2^{\w(M)}&\text{if }M\equiv 3\mod 4\end{cases}\]
  as in the previous proof, and $\frac{2h(E)}{w_E}=\frac{h(-M)}{3_{M=3}\,2_{M=1}}$ since $\Q[\sqrt{-1}]$ has
 unit group of order 4 when $M=1$.
  Hence (assuming $M\not\equiv 7\mod 8$)
 \[\Phi(\smat{}{-M}1{},f)= \frac{h(-M)\,(-3)_3}{3_{M=3}\,2_{M=1}\,2_{1,5}\,2^{\w(M)}}
\frac{\ol{\epsilon(k,\widehat{\sigma})}}{\zeta_2}\sum_y\w'(y)
    \]
 where numerical subscripts refer to the congruence class of $M$ modulo 8.
\end{proof}

\begin{proposition}\label{2T}  Suppose $N=2^3$, $\zeta\in \{\pm 1\}$ and $\sigma=\sigma^\zeta$ is our 
  fixed simple supercuspidal
representation of $G(\Q_2)$ (the parameter $t$ must equal $1$ when $p=2$).  Then
 \[\Phi(\mat0{-2}12,f)=\frac{\epsilon(k,{\sigma})}4g_8(k),\]
 where 
 $g_8(k)=-1$ if $k\equiv 0,2\mod 8$, and 
 $g_8(k)=1$ if $k\equiv 4,6\mod 8$.
\end{proposition}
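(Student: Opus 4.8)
The plan is to apply the factorization \eqref{phifact} to $\g=\smat0{-2}12$. Its characteristic polynomial $P_\g(X)=X^2-2X+2$ is Eisenstein at $2$, hence irreducible over $\Q_2$ (so $\g$ is elliptic in $G(\Q_2)$), and has discriminant $\Delta_\g=-4$; from $(\g-1)^2=-1$ we get $E=\Q[\g]=\Q(i)$, so $h(E)=1$, $w(E)=4$, $d_E=-4$ and $\w(d_E)=1$. Thus the global measure factor in \eqref{phifact} is $\tfrac{2h(E)}{w(E)2^{\w(d_E)}}=\tfrac14$, and the only rational prime dividing $\Delta_\g$ is $2$, which divides $N=2^3$; so there are no auxiliary factors at primes $\ell\nmid N$, and \eqref{phifact} collapses to $\Phi(\smat0{-2}12,f)=\tfrac14\,\Phi_\infty(\g)\,\Phi_2(\g)$.

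First I would evaluate the archimedean factor. With $\tr\g=2$ and $|\Delta_\g|=4$ the angle is $\theta_\g=\arctan(1)=\pi/4$, so Proposition \ref{ellg} gives $\Phi_\infty(\g)=-\sin\!\big((k-1)\pi/4\big)/\sin(\pi/4)$. Here $k$ is necessarily even (Proposition \ref{k2N2}), and evaluating the sine at $(k-1)\pi/4$ for $k\equiv 0,2,4,6\pmod 8$ yields $\Phi_\infty(\g)=1,-1,-1,1$ respectively. Comparing with the values of $i^k$ and of $g_8(k)$ on these residues, this is exactly the identity $\Phi_\infty(\g)=-i^k g_8(k)$, valid for all even $k$; establishing this clean reformulation is what makes the rest of the computation mechanical.

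Next I would compute the local factor $\Phi_2(\g)$. Over $\Q_2$ with uniformizer $\varpi=2$, the matrix $\g$ has the form $\smat0{-u\varpi}1{v\varpi}$ of \eqref{gp} with $u=v=1$, and the parameter of $\sigma=\sigma_t^\zeta$ is $t=1$ (forced when $p=2$, since $\F_2^*=\{1\}$). Then $-t/u=1$ is a square modulo $2$ with solution $y_0=1$, and $\w_2$ is trivial on $\Z_2^*=1+2\Z_2$, so the central character is trivial; Proposition \ref{Phiell}, in the shape \eqref{PhiQp}, then gives $\Phi_2(\g)=(-1)^v\zeta=-\zeta$, independent of $k$.

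Assembling the pieces, $\Phi(\smat0{-2}12,f)=\tfrac14\big(-i^k g_8(k)\big)(-\zeta)=\tfrac14 i^k\zeta\, g_8(k)=\tfrac{\epsilon(k,\sigma)}{4}\,g_8(k)$, using $\epsilon(k,\sigma)=i^k\zeta$ from \eqref{epsilon} for the unique prime dividing $N$. I do not expect a genuine obstacle: once \eqref{phifact}, Proposition \ref{ellg}, and Proposition \ref{Phiell} are in hand this is a finite verification. The step requiring the most care is the sign bookkeeping, namely confirming $\Phi_\infty(\g)=-i^k g_8(k)$ by the check over $k\bmod 8$ and verifying that the Haar-measure normalization on $\ol{G_\g(\Q_2)}$ used by Proposition \ref{Phiell} (that of \S\ref{etasec}, as in \S\ref{gell}) is the one underlying the factorization in Theorem \ref{stf2}.
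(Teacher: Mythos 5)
Your proof is correct and follows the same route as the paper's: factor via \eqref{phifact} with measure constant $1/4$, use \eqref{Rell} at $\theta=\pi/4$ for the archimedean factor, and Proposition \ref{Phiell} (in the form \eqref{PhiQp} with $v=1$) to get $\Phi_2=-\zeta$. The only cosmetic difference is that you package the case check mod $8$ as $\Phi_\infty=-i^k g_8(k)$, while the paper writes $-\Phi_\infty=(-1)^{k/2}g_8(k)$; since $k$ is even these are identical, so the argument matches the paper's essentially line for line.
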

\noindent{\em Remark:} In view of Proposition \ref{k2N2}, we assume that $k$ is even.

\begin{proof}
 Given that $\g$ has characteristic polynomial $X^2-2X+2$ with discriminant $\Delta_\g=-4$,
   we find $E=\Q[\g]=\Q[i]$.
Hence $h(E)=1$, $w_E=|\O_E^*|=4$, and $d_E=-4$.
 By \eqref{phifact},
 \[\Phi(\g,f)=m\Phi_\infty\Phi_2=
 \frac14\Phi_\infty\Phi_2.\]
Applying Proposition \ref{Phiell} with $p=2$ and $v=1$, $\Phi_2=-\zeta$.  So
 \begin{equation}\label{Pi2}
 \Phi(\g,f)=-\frac{\Phi_\infty\,\zeta}4.
 \end{equation}
 The complex eigenvalues of $\g$ are $1\pm i$, so we apply \eqref{Rell} with $\theta=\pi/4$ to get
 \[ \Phi_\infty=
 -\sqrt2\,\sin\Bigl(\frac{(k-1)\pi}4\Bigr)
 =\begin{cases}
 1&\text{if }k\equiv 0,6\mod 8\\
  -1&\text{if }k\equiv 2,4\mod 8.\end{cases}\]
 Multiplying this by $-1$ as in \eqref{Pi2} yields $(-1)^{k/2}g_8(k)$ with $g_8$ as given.
\end{proof}

\begin{proposition}\label{G3a} Suppose $T=3$ so $N=3^3$, and let $\sigma=\sigma_t^\zeta$ be our
 fixed simple supercuspidal representation of $G(\Q_3)$, for $t=\pm 1$.  Then
 \[\Phi(\mat0{-1}11,f)= \frac{t}{2_{t=-1}}c_3(k),\]
 where
 $c_3(k) =\frac13+\lfloor\frac{k}3\rfloor-\frac k3$.
\end{proposition}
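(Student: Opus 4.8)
The plan is to apply the factorization \eqref{phifact} to $\g=\smat0{-1}11$ (the $\n=1$, $M=1$, $r=1$ case of Theorem \ref{mainST}) and compute each of its three factors. First I would record the arithmetic: $P_\g(X)=X^2-X+1$, so $\Delta_\g=-3$ and $E=\Q[\g]=\Q[\sqrt{-3}]=\Q(\zeta_3)$, whence $h(E)=1$, $w_E=6$, and $d_E=-3$ has a single prime factor. Thus the global measure constant is $\tfrac{2h(E)}{w_E2^{\w(d_E)}}=\tfrac{2}{6\cdot2}=\tfrac16$. Moreover the only prime dividing $\Delta_\g$ is $3$, which divides $N=27$, so the product over $\ell\mid\Delta_\g$, $\ell\nmid N$ in \eqref{phifact} is empty and $\prod_{p\mid N}\Phi_p(\g)=\Phi_3(\g)$. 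This reduces everything to
\[\Phi(\g,f)=\tfrac16\,\Phi_\infty(\g)\,\Phi_3(\g).\]

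Next I would evaluate the archimedean factor via Proposition \ref{ellg}: since $\tr\g=1$ and $\Delta_\g=-3$ we have $\theta_\g=\arctan\sqrt3=\pi/3$, so $\Phi_\infty(\g)=-\sin((k-1)\pi/3)/\sin(\pi/3)$, which I would tabulate against $k\bmod6$ (it equals $1,0,-1,-1,0,1$ for $k\equiv0,1,2,3,4,5$ respectively). For the factor at $3$ I would observe that $\g$ is unramified at $3$ (its determinant is a unit) and, with $M=1$, that this is precisely the situation of Example \ref{g3b}; setting $m=1$ there gives $\Phi_3(\g)=\w_3(-1)\,t\cdot 2_{t=1}$. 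I would then note that the nebentypus $\w'$ has conductor dividing $T=3$, so $\w_3$ is its only local constituent and $\w_3(-1)=\w'(-1)=(-1)^k$ by \eqref{winf}, giving $\Phi_3(\g)=(-1)^k\,t\cdot 2_{t=1}$.

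Assembling these yields $\Phi(\g,f)=\tfrac16\,(-1)^k\,\Phi_\infty(\g)\,t\cdot 2_{t=1}$, and the final step is bookkeeping: rewrite $t\cdot 2_{t=1}=2\cdot\tfrac{t}{2_{t=-1}}$ (valid for both $t=\pm1$) and check the elementary identity $\tfrac13(-1)^k\Phi_\infty(\g)=c_3(k)$, where $c_3(k)=\tfrac13+\lfloor k/3\rfloor-k/3$ takes the values $\tfrac13,0,-\tfrac13$ according as $k\equiv0,1,2\pmod3$; this is a mechanical comparison across the six residue classes $k\bmod6$, matching the $\Phi_\infty$ table above. I do not expect a genuine obstacle: the proposition is a direct specialization of Example \ref{g3b} together with Proposition \ref{ellg}, and the only things requiring care are the character normalization $\w_3(-1)=(-1)^k$ (which also shows, pleasantly, that $\zeta$ plays no role since $\g$ is unramified at $3$) and keeping the conditional factors $2_{t=1}$ versus $2_{t=-1}$ straight.
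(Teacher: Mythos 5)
Your proof is correct and mirrors the paper's argument essentially step for step: the same use of the factorization \eqref{phifact} with $m=\tfrac16$ (from $h(E)=1$, $w_E=6$, $\w(d_E)=1$), the same appeal to Example \ref{g3b} with $m=1$ and its remark $\w_3(-1)=(-1)^k$ to get $\Phi_3(\g)=(-1)^k t\cdot 2_{t=1}$, and the same reduction of $\tfrac13(-1)^k\Phi_\infty(\g)$ to $c_3(k)$ via \eqref{Rell}. The bookkeeping identity $t\cdot 2_{t=1}=2\cdot\tfrac{t}{2_{t=-1}}$ is exactly the algebraic step the paper performs implicitly in passing from $\tfrac{(-1)^k t\cdot 2_{t=1}\,\Phi_\infty}{6}$ to $\tfrac{(-1)^k\Phi_\infty}{3}\cdot\tfrac{t}{2_{t=-1}}$.
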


\begin{proof}
 Let $E=\Q[\g]=\Q[\sqrt{-3}]$.  Then $h(E)=1$, $d_E=-3$, and $w_E=|\O_E^*|=6$.
 By \eqref{phifact} and taking $m=1$ in Example \ref{g3b} and its remark,
 \[\Phi(\g,f)=\Phi_\infty \cdot \frac{(-1)^k t\cdot 2_{t=1}}{6}
  =\frac{(-1)^k\Phi_\infty}3\frac t{2_{t=-1}}.\]
By \eqref{Rell}, we find that
 \begin{equation}\label{g3k}
(-1)^k\Phi(\g,f_\infty)=(-1)^{k+1}\frac{\sin(\frac{(k-1)\pi}3)}{\sin(\pi/3)}
 =\begin{cases} 1&\text{if }k\equiv 0\mod 3\\
  0&\text{if }k\equiv 1\mod 3\\
 -1&\text{if }k\equiv 2\mod 3.
\end{cases}
 \end{equation}
Using the above, we see that $\frac{(-1)^k\Phi(\g,f_\infty)}3 = 
 \frac13+\lfloor\frac{k}3\rfloor-\frac k3$.
\end{proof}

\begin{proposition}\label{G3b} Suppose $N=3^3$, and let $\sigma=\sigma_t^\zeta$ be a
 fixed simple supercuspidal representation of $G(\Q_3)$.  Then 
 \[\Phi(\mat0{-3}13,f)= \begin{cases}0&\text{if }t=1\\
{\epsilon(k,\sigma)}g_{6}(k)&\text{if }t=-1,\end{cases}\]
 where
 \[g_{6}(k)=\begin{cases}0&\text{if }k\equiv 1\mod 6\\
  -1/6&\text{if }k\equiv 0,2\mod 6\\
1/2&\text{if }k\equiv 3\mod 6\\
 1/3 &\text{if }k\equiv 4\mod 6\\
 -1/2 &\text{if }k\equiv 5\mod 6.
\end{cases}\]
\end{proposition}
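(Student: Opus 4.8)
The plan is to feed $\g=\smat{0}{-3}{1}{3}$ into the factorization \eqref{phifact} with $N=3^3$. Its characteristic polynomial $X^2-3X+3$ is Eisenstein at $3$ (hence $\g$ is elliptic in $G(\Q_3)$) and has discriminant $\Delta_\g=-3$, so $E=\Q[\g]=\Q[\sqrt{-3}]$, giving $h(E)=1$, $d_E=-3$ (one prime factor), and $w_E=6$; the eigenvalues $\tfrac{3\pm i\sqrt3}{2}$ are complex so $\g$ is elliptic in $G(\R)$ with positive determinant. Since the only prime dividing $\Delta_\g$ also divides $N$, \eqref{phifact} collapses to $\Phi(\g,f)=\tfrac{2h(E)}{w_E 2^{\w(d_E)}}\Phi_\infty(\g)\Phi_3(\g)=\tfrac16\Phi_\infty(\g)\Phi_3(\g)$.

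For the archimedean factor, the complex eigenvalue has argument $\theta_\g=\arctan(\sqrt3/3)=\pi/6$, so \eqref{Rell} gives $\Phi_\infty(\g)=-\sin((k-1)\pi/6)/\sin(\pi/6)=-2\sin((k-1)\pi/6)$. For the factor at $3$, note $v_3(\det\g)=1$, so $\g$ is ramified at $3$ and has the form $\smat{0}{-u\varpi}{1}{v\varpi}$ with $\varpi=3$, $u=v=1$; thus Proposition \ref{Phiell} applies. Its nonvanishing requires $-t/u=-t$ to be a square modulo $3$, i.e. $t\equiv-1$, which immediately yields $\Phi_3(\g)=0$, hence $\Phi(\g,f)=0$, when $t=1$ — the first case of the statement.

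When $t=-1$, pick $y=1$ with $y^2\equiv-t\equiv1\pmod 3$. With $\psi(x)=e(x/3)$ and, for $N=3^3$, $\w_3(-1)=\w'(-1)=(-1)^k$, Proposition \ref{Phiell} gives $\Phi_3(\g)=\overline{\zeta}\bigl(e(-\tfrac13)+(-1)^k e(\tfrac13)\bigr)$, so $\Phi(\g,f)=-\tfrac13\overline{\zeta}\sin((k-1)\pi/6)\bigl(e(-\tfrac13)+(-1)^k e(\tfrac13)\bigr)$. Now split on the parity of $k$, using $\zeta^2=\w_3(t)=\w_3(-1)=(-1)^k$ from \eqref{zeta22}: for $k$ even, $\overline{\zeta}=\zeta$ and $e(-\tfrac13)+e(\tfrac13)=2\cos(2\pi/3)=-1$, so $\Phi(\g,f)=\tfrac{\zeta}{3}\sin((k-1)\pi/6)$; for $k$ odd, $\overline{\zeta}=-\zeta$ and $e(-\tfrac13)-e(\tfrac13)=-i\sqrt3$, so $\Phi(\g,f)=-\tfrac{i\zeta}{\sqrt3}\sin((k-1)\pi/6)$. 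Dividing by $\epsilon(k,\sigma)=i^k\zeta$ as in \eqref{epsilon} (with $i^k=(-1)^{k/2}$ for $k$ even and $i^k=i(-1)^{(k-1)/2}$ for $k$ odd) eliminates $\zeta$ and leaves a pure function of $k$, namely $\tfrac13(-1)^{k/2}\sin((k-1)\pi/6)$ or $-\tfrac{1}{\sqrt3}(-1)^{(k-1)/2}\sin((k-1)\pi/6)$; evaluating the sine over the residues of $k$ modulo $6$ produces precisely $g_6(k)$.

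I expect the main obstacle to be the sign bookkeeping: tracking the interplay of $\zeta$ versus $\overline{\zeta}$, the Gauss-sum-type combination $e(-\tfrac13)\pm e(\tfrac13)$, and the archimedean factor $i^k$ of the global root number, so that a single clean formula $\epsilon(k,\sigma)g_6(k)$ emerges for both parities of $k$ (the odd case being genuinely complex, with $\zeta$ a primitive fourth root of unity). A minor point to verify is that the value is independent of the choice of square root $y$ modulo $3$, which is automatic since the formula in Proposition \ref{Phiell} already symmetrizes over $\pm y$.
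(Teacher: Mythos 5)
Your proposal is correct and follows essentially the same route as the paper: factor via \eqref{phifact} with measure constant $1/6$, compute $\Phi_\infty=-2\sin((k-1)\pi/6)$ from $\theta_\g=\pi/6$, apply Proposition \ref{Phiell} with $u=v=1$ to get vanishing when $t=1$ and $\Phi_3=\ol\zeta\bigl(e(-\tfrac13)+(-1)^k e(\tfrac13)\bigr)$ when $t=-1$, then use $\zeta^2=(-1)^k$ and divide out $\epsilon(k,\sigma)=i^k\zeta$ to land on $g_6(k)$. The only cosmetic difference is that the paper first tabulates $\Phi(\g,f)$ as a function of $k\bmod 12$ before factoring out $\epsilon(k,\sigma)$, whereas you divide first; both arrive at the same place.
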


\begin{proof}
Let $\g=\smat0{-3}13$, so $\Delta_\g=-3$.  
  We have $E=\Q[\g]=\Q[\sqrt{-3}]$, so the measure factor is $1/6$ as in the previous prooof. 
Therefore as in \eqref{phifact}, we may write
\begin{equation}\label{Pg3}
\Phi(\g,f)=\frac 16\Phi_\infty\Phi_3.
\end{equation}
By Proposition \ref{Phiell},  $\Phi_3=0$ unless $-t$ is a square modulo $3$, i.e.,
unless $t=-1$. Assuming this holds, we have
\begin{align*}
\Phi_3&=\ol{\zeta}\cdot\Bigl(\ol{\psi(1)}\w_3(1)+\ol{\psi(-1)}\w_3(-1)\Bigr)\\
&=\ol{\zeta}\cdot(e^{-2\pi i/3}+(-1)^ke^{2\pi i/3})=
-\ol{\zeta}\,[i\sqrt 3]_{k\text{ odd}},\end{align*}
where the factor of $i\sqrt 3$ is present only when $k$ is odd.

By \eqref{wp} with $N=3$, $\w_3(3)=1$.  So $\zeta^2=\w_3(t)=\w_3(-1)=(-1)^k$, so $\zeta = \pm (i^k)$. 
In particular, the global root number
$\e(\sigma,\zeta)=i^k \zeta$
is real and $\ol{\zeta}=(-1)^k\zeta$.

The complex roots of $P_\g(X)=X^2-3X+3$ are $\frac{3\pm i\sqrt{3}}2=\sqrt3(\frac{\sqrt 3\pm i}2)$, so in \eqref{Rell} we can take
$\theta=\pi/6$ and $\Phi(\g,f_\infty)=-2\sin\Bigl(\frac{(k-1)\pi}{6}\Bigr)$.
Hence \eqref{Pg3} becomes
\[\Phi(\g,f)=\frac{{(-1)^k\zeta}}3\sin\Bigl(\frac{(k-1)\pi}6\Bigr)[i\sqrt 3]_{k\text{ odd}}
=\begin{cases}
\zeta/3&\text{if }k\equiv 4\mod 12\\
-i\zeta/2&\text{if }k\equiv 3,5\mod 12\\
\zeta/6&\text{if }k\equiv 2,6\mod 12\\
0&\text{if }k\equiv 1,7\mod 12\\
-\zeta/6&\text{if }k\equiv 0,8\mod 12\\
i\zeta/2&\text{if }k\equiv 9,11\mod 12\\
-\zeta/3&\text{if }k\equiv 10\mod 12.\end{cases}\]
Upon factoring out $\epsilon(k,\sigma)=i^k\zeta$, we obtain $g_6(k)$ as given.
\end{proof}

\subsection{Dimension formulas when $N=T^3$}\label{dim}

Here we put everything together to compute $|H_k(\widehat{\sigma})|=\dim S_k(\widehat{\sigma})$ for
   $\widehat{\sigma}=(\sigma_p)_{p|N}$ a tuple of 
simple supercuspidal representations of $G(\Q_p)$ as in Theorem \ref{dimST} with $S=1$.

We begin with the case $N=2^3$, where the central character is necessarily trivial
 due to \eqref{wpp} and Proposition \ref{k2N2}.

\begin{theorem}\label{N2dim} Let $N=2^3$, fix $\zeta\in \{\pm1\}$, and let $\sigma=\sigma_\zeta$ be the
 associated simple supercuspidal representation of $G(\Q_2)$ with trivial central character.  Then
 \[|H_k(\sigma)|=\begin{cases}0&\text{if $k$ is odd}\\
\lfloor\frac k8\rfloor &\text{if }k\equiv 0,2\mod 8\\
 \lfloor\frac k8\rfloor +\frac{1+\epsilon(k,\sigma)}2 
  &\text{if }k\equiv 4,6\mod 8,\end{cases}\]
where $\epsilon(k,\sigma)=(-1)^{k/2}\zeta$ is the global root number.
\end{theorem}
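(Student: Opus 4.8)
The plan is to specialize the general dimension formula of Theorem \ref{dimST} to $N=2^3$ and then evaluate the three surviving elliptic orbital integrals using the local computations of \S\ref{pN} together with the global evaluations of \S\ref{global}. First, the case of odd $k$ is vacuous: since the central character of $\sigma$ is trivial, the attached Dirichlet character $\w'$ of modulus $8$ is trivial, and by Proposition \ref{k2N2} there is no such character with $\w'(-1)=(-1)^k$ when $k$ is odd, so $H_k(\sigma)=\emptyset$ and $|H_k(\sigma)|=0$. Assume henceforth $k$ is even. Here $T=2$, $S=1$, $d_{\sigma_2}=(2^2-1)/2=3/2$, and the relevant $\g$ listed in Proposition \ref{glist} are $\smat{}{-2}1{}$, $\smat{}{-1}1{}$, and $\smat0{-2}12$, so with $\widehat{\sigma}=(\sigma)$ and $f=f^1$ Theorem \ref{dimST} reads
\[|H_k(\sigma)|=\dim S_k(\widehat{\sigma})=\frac{k-1}{8}+\frac12\Phi(\smat{}{-2}1{},f)+\frac12\Phi(\smat{}{-1}1{},f)+\Phi(\smat0{-2}12,f),\]
using $|H_k(\sigma)|=\dim S_k(\widehat{\sigma})$ as recorded in \S\ref{spectral}.

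Next I would evaluate each orbital integral. The integral $\Phi(\smat0{-2}12,f)$ is exactly Proposition \ref{2T}: it equals $\tfrac14\epsilon(k,\sigma)g_8(k)$, where by \eqref{epsilon} and $k$ even one has $\epsilon(k,\sigma)=i^k\zeta=(-1)^{k/2}\zeta$. For the other two I would apply the factorization \eqref{phifact}. For $\g=\smat{}{-1}1{}$ we have $E=\Q[\g]=\Q(\sqrt{-1})$ with $h(E)=1$, $w_E=4$, $d_E=-4$, so the measure factor is $\tfrac14$; the archimedean factor is $(-1)^{k/2}$ by \eqref{Phiinf}; and since the discriminant $\Delta_\g=-4$ is supported only at the prime $2\mid N$, the sole remaining local factor is $\Phi_2=1$ by Example \ref{Phi2a} (as $1\equiv1\bmod4$), giving $\Phi(\smat{}{-1}1{},f)=\tfrac14(-1)^{k/2}$ — this is also the $M=1$ instance of Proposition \ref{e2}. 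For $\g=\smat{}{-2}1{}$ we have $E=\Q(\sqrt{-2})$ with $h(E)=1$, $w_E=2$, $d_E=-8$, so the measure factor is $\tfrac12$; the archimedean factor is again $(-1)^{k/2}$; and $\Phi_2=\zeta$ by Proposition \ref{Phiell} (in its notation $u=1$, $v=0$, and $-t/u=-1$ is a square mod $2$), so $\Phi(\smat{}{-2}1{},f)=\tfrac12(-1)^{k/2}\zeta$, which also follows from \eqref{e1w1} with $T=2$. Writing $\epsilon=\epsilon(k,\sigma)=(-1)^{k/2}\zeta$ and substituting,
\[|H_k(\sigma)|=\frac{k-1+(-1)^{k/2}}{8}+\frac{\epsilon\bigl(1+g_8(k)\bigr)}{4}.\]

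Finally I would reduce to the four residue classes $k\equiv0,2,4,6\bmod8$. When $k\equiv0,2\bmod8$ one has $g_8(k)=-1$, so the second term vanishes and a direct check gives $\tfrac{k-1+(-1)^{k/2}}{8}=\lfloor k/8\rfloor$. When $k\equiv4,6\bmod8$ one has $g_8(k)=1$, so the second term is $\tfrac\epsilon2$ and $\tfrac{k-1+(-1)^{k/2}}{8}=\lfloor k/8\rfloor+\tfrac12$, yielding $\lfloor k/8\rfloor+\tfrac{1+\epsilon}2$; this matches the stated formula. No step here is a genuine obstacle, since all the hard local work is already contained in Propositions \ref{2T}, \ref{e1}, \ref{e2} and Example \ref{Phi2a}; the only real care required is the sign bookkeeping — in particular keeping straight the three parities $(-1)^{k/2}$, $g_8(k)$, and $\epsilon(k,\sigma)=(-1)^{k/2}\zeta$ across the residues mod $8$ — which is the one place a careless computation could slip.
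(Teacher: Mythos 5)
Your proof is correct and follows essentially the same route as the paper: specialize Theorem \ref{dimST} to $S=1$, $T=2$, evaluate the three orbital integrals via Propositions \ref{e1}/\ref{e2}/\ref{2T} (your extra check via the factorization \eqref{phifact} together with Example \ref{Phi2a} and Proposition \ref{Phiell} is just an independent verification of the same quantities), and simplify by residue class mod $8$. The only difference is that you spell out the intermediate arithmetic in more detail than the paper's terse "applying the results of \S\ref{global}".
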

\begin{proof}
 When $k$ is odd, the assertion follows from Proposition \ref{k2N2}.
Suppose $k$ is even.  By Theorem \ref{dimST},
 \[|H_k(\sigma)|=\frac{k-1}{12}\cdot \frac32 +\frac12\Phi(\smat{}{-2}1{},f)
  +\frac12\Phi(\smat{}{-1}1{},f)
 +\Phi(\smat0{-2}12,f).\]
 Applying the results of \S\ref{global} using $h(-2)=h(-1)=1$, we find
 \[|H_k(\sigma)|=\frac{k-1}{8} +\frac{(-1)^{k/2}\zeta}4+\frac{(-1)^{k/2}}{8}
 +\frac{(-1)^{k/2}\zeta}4g_8(k)\]
for
$g_8(k)=\begin{cases} -1&\text{if }k\equiv 0,2\mod 8
\\ 1&\text{if }k\equiv 4,6\mod 8.\end{cases}$
The result follows upon simplifying each of the cases.
\end{proof}

\begin{theorem}\label{N3dim} Let $N=3^3$, fix $t\in \{\pm 1\}$, a character $\w_3$ of $\Q_3^*$
trivial on $1+3\Z_3$, $\zeta\in \C$ with $\zeta^2=\w_3(t)$ (cf. \eqref{wpp}),
  and let $\sigma=\sigma_t^\zeta$ be the associated simple 
  supercuspidal representation of $G(\Q_3)$ with central character $\w_3$.
Then for $k>2$, setting $\epsilon = i^k\zeta$, we have
 \[|H_k(\sigma)|=\begin{cases}
\lfloor\tfrac k3\rfloor +\frac{\epsilon-1}2&\text{if }k\equiv 0 \mod 3\text{ and }t=-1\\
\lfloor\tfrac k3\rfloor &\text{if }k\equiv 1 \mod 6\text{ or }t=1\\
\lfloor\tfrac k3\rfloor +\frac{\epsilon+1}2&\text{if }k\equiv 2 \mod 6\text{ and }t=-1\\
\lfloor\tfrac k3\rfloor+\epsilon &\text{if }k\equiv 4 \mod 6\text{ and }t=-1\\
\lfloor\tfrac k3\rfloor +\frac{1-\epsilon}2&\text{if }k\equiv 5 \mod 6\text{ and }t=-1.
\end{cases}\]
\end{theorem}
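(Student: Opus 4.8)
The plan is to apply Theorem \ref{dimST} directly with $N=27$, so that $S=1$ and $T=3$. The main term is $\tfrac{k-1}{12}\cdot\tfrac{3^2-1}{2}=\tfrac{k-1}{3}$, and by Proposition \ref{glist} the only relevant elliptic conjugacy classes are those of $\smat{}{-3}1{}$, $\smat0{-3}13$, and $\smat0{-1}11$. Hence
\[|H_k(\sigma)|=\tfrac{k-1}{3}+\tfrac12\Phi(\smat{}{-3}1{},f)+\Phi(\smat0{-3}13,f)+\Phi(\smat0{-1}11,f),\]
and it only remains to insert the three orbital integrals, each of which is evaluated in \S\ref{global}.

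First I would record the parameter relations special to $N=3^3$: by \eqref{wp} we have $\w_3(3)=1$, so \eqref{zeta22} gives $\zeta^2=\w_3(t)$; since $\w_3(-1)=\w'(-1)=(-1)^k$, the global root number $\epsilon=i^k\zeta$ of \eqref{epsilon} is real with $\overline{\epsilon}=\epsilon$, and $\w_3$ is trivial precisely when $k$ is even. Then I would substitute the three evaluations. Proposition \ref{e1} (with $T=3$) gives $\Phi(\smat{}{-3}1{},f)$, which vanishes when $k$ is odd (by \eqref{Phiinf}) and when $t=1$ (since then $-t$ is a nonsquare mod $3$, cf.\ Proposition \ref{relevant}), and equals $\tfrac{4\epsilon}{3}$ when $t=-1$ and $k$ is even (using $h(-3)=1$, $T\equiv 3\bmod 8$, $\w(T)=1$, and $\sum_y\w'(y)=1+(-1)^k=2$). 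Proposition \ref{G3b} gives $\Phi(\smat0{-3}13,f)=\epsilon\,g_6(k)$ when $t=-1$ and $0$ when $t=1$. Proposition \ref{G3a} gives $\Phi(\smat0{-1}11,f)=\tfrac{t}{2_{t=-1}}c_3(k)$ with $c_3(k)=\tfrac13+\lfloor k/3\rfloor-\tfrac k3$.

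Assembling these, the case $t=1$ is immediate: the first two orbital integrals vanish, the third equals $c_3(k)$, and $\tfrac{k-1}{3}+c_3(k)=\lfloor k/3\rfloor$, which is valid for every admissible $k$ (including the odd $k$ permitted when $\w_3$ is the quadratic character mod $3$). For $t=-1$ I would split on $k\bmod 6$: for $k$ even the total is $\tfrac{k-1}{3}+\tfrac{2\epsilon}{3}+\epsilon g_6(k)-\tfrac12 c_3(k)$, and for $k$ odd it is $\tfrac{k-1}{3}+\epsilon g_6(k)-\tfrac12 c_3(k)$. In each residue class one inserts the explicit values of $g_6$ and $c_3$ and checks that the fractional pieces collapse to $\lfloor k/3\rfloor$ plus a correction in $\{\tfrac{\epsilon-1}{2},\tfrac{\epsilon+1}{2},\epsilon\}$, producing the five rows of the stated table.

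The main obstacle is entirely the bookkeeping in this last step: $\tfrac{k-1}{3}$, $g_6(k)$, and $c_3(k)$ individually carry denominators up to $6$, and one must verify in each class mod $6$ (for $t=-1$) that the fractional parts cancel and leave the asserted integer-plus-$\tfrac{1\pm\epsilon}{2}$ shape. There is no conceptual difficulty, but one must keep the identity $\epsilon=i^k\zeta$ and the sign conventions of \eqref{epsilon}, Proposition \ref{G3a}, and Proposition \ref{G3b} consistent — in particular for odd $k$, where $\zeta$ is imaginary while $\epsilon$ is real — and make sure the weight-$k$ factors $\Phi_\infty$ entering those propositions are the ones recorded in Proposition \ref{ellg}.
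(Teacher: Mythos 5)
Your proposal is correct and is essentially the paper's own proof: both apply Theorem \ref{dimST} with $T=3$, identify the same three conjugacy classes, substitute the evaluations from Propositions \ref{e1}, \ref{G3a}, and \ref{G3b} (noting the vanishing conditions for $t=1$ and for $k$ odd), and then verify the result residue class by residue class mod $6$. The arithmetic you leave implicit is precisely the bookkeeping the paper carries out.
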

\noindent{\em Remarks:} (1)  If $t=-1$, then $\zeta^2=\w_3(-1)=(-1)^k$, so $\zeta=\pm i^k$, as
noted earlier.  Therefore $\epsilon \in\{\pm 1\}$ when $t=-1$.  When $t=1$ and $k$ is odd,
$\epsilon =\pm i$. 

(2) There is one more newform with $\epsilon=-1$ than with $\epsilon=1,i,$ or $-i$ when
$k\equiv 5\mod 6$, i.e., the root number has a slight bias toward $-1$ in this case.
For example, when $k=5$ and $\w'$ is the Dirichlet character
of conductor 3, there are five newforms of 
  level 27, with respective root numbers $1, -1, -1, i, -i$.
These newforms are discussed further in \S\ref{ng1}.

\begin{proof}
By Theorem \ref{dimST},
 \begin{align*}|H_k(\sigma)|&=\frac{k-1}{12}\cdot \frac82 +\frac12\Phi(\smat{}{-3}1{},f)
 +\Phi(\smat0{-1}11,f)+\Phi(\smat0{-3}13,f)\\
 &=\frac{k-1}{3} +
\frac{2\epsilon}3\delta_{t=-1}\cdot\delta_{k\in2\Z}
+\frac{t}{2_{t=-1}}c_3(k)
+\epsilon {g_{6}(k)}\delta_{t=-1},
\end{align*}
where we have applied Propositions \ref{e1}, \ref{G3a}, and \ref{G3b}, and $c_3(k), g_6(k)$ are
recalled below.  (For nonvanishing of $\Phi(\smat{}{-3}1{},f)$, the hypothesis in 
  Proposition \ref{e1} requires that $-t$ be a square modulo $3$, i.e., $t=-1$,
 and $k$ even.  Then $\ol{\epsilon}=\epsilon$ and the sum over $y$ in that result is $1+(-1)^k=2$.)

If $t=1$, then because $c_3(k)=\frac{1-k}3+\lfloor\frac k3\rfloor$, the above simplifies to 
  $\lfloor\frac k3\rfloor$, as needed.

Now suppose $t=-1$, and write $k=a+6c$ for some $0\le a\le 5$.  If $k$ is odd, then
\[|H_k(\sigma)|=\tfrac{k-1}3 -\tfrac12(\tfrac{1-k}3+\lfloor\tfrac k3\rfloor)+\epsilon g_6(k)
=\tfrac{k-1}2-\tfrac12\left\lfloor\tfrac k3\right\rfloor+\epsilon g_6(k).\] 
Using the fact that $g_6(k)=0,\tfrac12, -\tfrac12$ when $a=1,3,5$ respectively,
we get
\[|H_k(\sigma)|=\begin{cases}2c=\lfloor\tfrac k3\rfloor&\text{if }a=1\\
2c+1+\frac{\epsilon -1}2=\lfloor\tfrac k3\rfloor+\frac{\epsilon-1}2
&\text{if }a=3\\
2c+1+\frac{1-\epsilon}2=\lfloor \frac k3\rfloor+\frac{1-\epsilon}2&\text{if }a=5.
\end{cases}\]
If $k$ is even, then there is one extra term, namely $\frac{2\epsilon}3$, so
\[|H_k(\sigma)|=\tfrac{k-1}2 -\tfrac12\lfloor\tfrac k3\rfloor+\epsilon(\tfrac23+ g_6(k)).
\]
Here, $g_6(k)=-\frac16,-\frac16,\frac13$ when $a=0,2,4$ respectively.  Upon simplifying,
\[|H_k(\sigma)|=\begin{cases}
2c+\frac{\epsilon -1}2=\lfloor\tfrac k3\rfloor+\frac{\epsilon-1}2&\text{if }a=0\\
2c+\frac{1+\epsilon }2=\lfloor\tfrac k3\rfloor+\frac{1+\epsilon}2&\text{if }a=2\\
2c+1+\epsilon=\lfloor\tfrac k3\rfloor+\e&\text{if }a=4.
\end{cases}\qedhere\]
\end{proof}

\begin{theorem}\label{main}
    Suppose $N=T^3$ with $T>3$ square-free, $M=T/2$, $k\ge 4$ is even, and 
    $\widehat{\sigma}=(\sigma_{t_p}^{\zeta_p})_{p|N}$ is a tuple of simple supercuspidal
    representations with trivial central characters.
Then 
    \begin{equation}\label{mainf}
        |H_k(\widehat{\sigma})|=\frac{k-1}{12}\prod_{p|T}\frac{p^2-1}{2}
    +\Delta_1(\widehat{t})\epsilon(k,\widehat{\sigma}) b_Th(-T)
  +\Delta_2(\widehat{t})
\frac{\epsilon(k,\widehat{\sigma})j_Mh(-M)}{\zeta_2\,3_{M=3}},
    \end{equation}
    where $\epsilon(k,\widehat{\sigma})\in\{\pm 1\}$ is the 
    common global root number of the newforms
    in $H_k(\widehat{\sigma})$ given in \eqref{epsilon}, 
    \[b_T=\begin{cases}1/4&\text{if $T$ is even}\\
        1/2&\text{if }T\equiv 1\mod 4\\
        1&\text{if }T\equiv 7\mod 8\\
    2&\text{if }T\equiv 3\mod 8,\end{cases}\]
 \[j_M=\begin{cases}1/4&\text{if }M\equiv 1\mod 4\\
  -3/2&\text{if }M\equiv 3\mod 8\\
 0&\text{if }M\equiv 7\mod 8,\end{cases}\]
$h(d)$ is the class number of $\Q[\sqrt{-d}]$, 
and $\Delta_i(\widehat{t})\in\{0,1\}$ is nonzero if and only 
  if (1) $T$ is even in the case $i=2$, and
   (2)
 $-2^{i-1}pt_p/T$ is a square modulo $p$ for each odd $p|T$.
\end{theorem}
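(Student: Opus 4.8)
The plan is to derive \eqref{mainf} directly from the general dimension formula in Theorem \ref{dimST} by substituting the explicit values of the global orbital integrals that were computed in \S\ref{global}. Since $T>3$, Theorem \ref{dimST} reduces to
\[
\dim S_k(\widehat{\sigma}) = \frac{k-1}{12}\prod_{p|T}\frac{p^2-1}{2}
+\frac12\Phi(\mat{}{-T}1{},f)+\frac{\delta_{T\in2\Z^+}}2\Phi(\mat{}{-T/2}1{},f),
\]
using that $d_{\sigma_p}=\frac{p^2-1}{2}$ by \eqref{fd}, and that the remaining terms ($\delta_{T=2}$, $\delta_{T=3}$, $\delta_{T\in\{1,3\}}$) all vanish when $T>3$. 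So the only work is to identify the two elliptic contributions.

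First I would handle the $\g=\smat{}{-T}1{}$ term. By Proposition \ref{relevant}, this orbital integral vanishes unless $-pt_p/T$ is a square modulo $p$ for every odd $p|T$ (this is precisely the condition defining $\Delta_1(\widehat{t})$) and $k$ is even (which is assumed). When it is nonzero, Proposition \ref{e1} with trivial central character gives $\Phi(\smat{}{-T}1{},f)=\epsilon(k,\widehat{\sigma})h(-T)w_T/3_{T=3}$, and since $T>3$ the factor $3_{T=3}$ is absent, so $\Phi(\smat{}{-T}1{},f)=\epsilon(k,\widehat{\sigma})h(-T)w_T$ with $w_T$ exactly as displayed; multiplying by $\tfrac12$ turns $w_T$ into $b_T$. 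This yields the second term of \eqref{mainf}. Next, for even $T=2M$, I would treat $\g=\smat{}{-M}1{}$: again Proposition \ref{relevant} shows the integral vanishes unless $-pt_p/M$ is a square mod $p$ for each $p|M$, which (since $M=T/2$ and $-pt_p/M$ differs from $-2pt_p/T$ by the unit $2$) matches the $i=2$ condition in the statement; note that $M$ ranges over odd divisors of $T$ minus the prime $2$, but actually here $M=T/2$ so $M$ can be even only if $4|T$, which is impossible as $T$ is square-free, hence $M$ is odd and Proposition \ref{e2} applies directly, giving $\Phi(\smat{}{-M}1{},f)=h(-M)\frac{\ol{\epsilon(k,\widehat{\sigma})}}{\zeta_2}\cdot\frac{z_M}{2_{M=1}\,3_{M=3}}$ after the sum over $y$ cancels (trivial central character). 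Since $T>3$ forces $M>1$, the $2_{M=1}$ factor disappears; multiplying by $\tfrac12$ converts $z_M$ into $j_M$, and $\ol{\epsilon(k,\widehat{\sigma})}=\epsilon(k,\widehat{\sigma})$ as the root number is real. This produces the third term.

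Assembling the three pieces gives \eqref{mainf} verbatim, with the indicator functions $\Delta_i(\widehat t)$ encoding exactly the nonvanishing conditions from Proposition \ref{relevant} and Propositions \ref{e1}--\ref{e2} (namely $T$ even for $i=2$, and $-2^{i-1}pt_p/T$ a square mod $p$ for each odd $p|T$). I expect the only real subtlety — the ``main obstacle,'' though it is minor — to be bookkeeping the factors of $2$: verifying that the square-class of $-pt_p/M$ versus $-pt_p/T$ agrees with the $2^{i-1}$ in the statement, confirming that $3_{T=3}$ and $2_{M=1}$ are indeed absent under $T>3$, and checking that $M=T/2$ is forced to be odd so that Proposition \ref{e2} (which assumes $M$ odd implicitly through its use of Example \ref{Phi2a}) is applicable. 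Everything else is direct substitution.
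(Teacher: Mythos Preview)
Your proposal is correct and follows exactly the paper's own argument: the paper's proof is the single line ``This follows from Theorem \ref{dimST} and Propositions \ref{e1} and \ref{e2},'' and you have unpacked precisely that, correctly tracking the halving $b_T=w_T/2$, $j_M=z_M/2$ and the disappearance of $3_{T=3}$ and $2_{M=1}$ under $T>3$.
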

\noindent{\em Remarks:} 
To keep the formula simple, we have restricted 
ourselves to the case of trivial central character; the general case is obtained similarly.
Even in the general case, one may restrict to $k$ even because by Corollary \ref{kodd}, 
\begin{equation}\label{Hkodd}
|H_k(\widehat{\sigma})|=\frac{k-1}{12}\prod_{p|N}\frac{p^2-1}2\qquad(T>3,\, k\text{ odd}).
\end{equation}

\begin{proof}
 This follows from Theorem \ref{dimST} and Propositions \ref{e1} and \ref{e2}.
\end{proof}

As a corollary, we recover the following dimension formulas of \cite{GMar}.
\begin{corollary}
For $T=2,3$ and $k\ge 4$ even,
\[\dim S_k^{\new}(8)=\lfloor\tfrac k4\rfloor,
\qquad \dim S_k^{\new}(27)=k-1+\lfloor\tfrac k3\rfloor.\]
    For $T>3$ square-free, and $k\ge 4$ even,
    \begin{equation}\label{dimnew}
        \dim S_k^{\new}(T^3)=\frac{k-1}{12}\prod_{p|T}(p-1)^2(p+1).
    \end{equation}
\end{corollary}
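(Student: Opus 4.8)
The plan is to derive all three identities by summing the explicit dimension formulas already established in this section over the relevant tuples $\widehat\sigma$. Here $S_k^{\new}(T^3)$ is the new subspace of $S_k(\Gamma_0(T^3))$, so the nebentypus is trivial (forcing $k$ even), and by \eqref{Skdim} it is the direct sum of the spaces $S_k(\widehat\sigma)$ over the tuples $\widehat\sigma=(\sigma_p)_{p\mid T}$ of simple supercuspidal representations of conductor $p^3$ having \emph{trivial} central character at every $p\mid T$.

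First I would treat $T>3$ square-free, using Theorem \ref{main}. At each prime $p\mid T$ there are $p-1$ choices for the parameter $t_p$, and then, for each, two choices for $\zeta_p$ (the two square roots of $\w_p(t_pp)=1$); so there are $\prod_{p\mid T}2(p-1)$ tuples $\widehat\sigma=(\sigma_{t_p}^{\zeta_p})_{p\mid T}$. Summing the main term of \eqref{mainf} over all of them produces
\[
\Bigl(\prod_{p\mid T}2(p-1)\Bigr)\cdot\frac{k-1}{12}\prod_{p\mid T}\frac{p^2-1}{2}=\frac{k-1}{12}\prod_{p\mid T}(p-1)(p^2-1)=\frac{k-1}{12}\prod_{p\mid T}(p-1)^2(p+1),
\]
the right-hand side of \eqref{dimnew}. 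So the entire content of the proof is that the two class-number correction terms of \eqref{mainf} cancel once summed, and this is where the one real idea enters. Fix the tuple $\widehat t=(t_p)$. Every factor appearing in those two correction terms \emph{other than} $\epsilon(k,\widehat\sigma)$ (respectively $\epsilon(k,\widehat\sigma)/\zeta_2$, when $T$ is even) — namely $\Delta_1(\widehat t)$, $\Delta_2(\widehat t)$, $b_T$, $j_M$, $h(-T)$, $h(-M)$, $3_{M=3}$ — is independent of the signs $\widehat\zeta=(\zeta_p)\in\{\pm1\}^{\w(T)}$. By \eqref{epsilon}, $\epsilon(k,\widehat\sigma)=i^k\prod_{p\mid T}\zeta_p$, so $\sum_{\widehat\zeta}\epsilon(k,\widehat\sigma)=i^k\prod_{p\mid T}\bigl(\sum_{\zeta_p=\pm1}\zeta_p\bigr)=0$; and since $T>3$ forces $M=T/2>1$ whenever $T$ is even, and $\zeta_2^{-1}=\zeta_2$, similarly $\sum_{\widehat\zeta}\epsilon(k,\widehat\sigma)/\zeta_2=2\,i^k\prod_{p\mid M}\bigl(\sum_{\zeta_p=\pm1}\zeta_p\bigr)=0$. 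Hence both correction terms sum to zero and \eqref{dimnew} follows.

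For $N=8$ and $N=27$ I would specialize Theorems \ref{N2dim} and \ref{N3dim}. When $N=8$ the tuples are $\sigma_\zeta$, $\zeta\in\{\pm1\}$, and $\epsilon(k,\sigma)=(-1)^{k/2}\zeta$ runs over $\{\pm1\}$; adding the two values from Theorem \ref{N2dim} gives $2\lfloor k/8\rfloor$ for $k\equiv0,2\pmod 8$ and $2\lfloor k/8\rfloor+1$ for $k\equiv4,6\pmod 8$, both of which equal $\lfloor k/4\rfloor$ (check via $k=8m+a$). When $N=27$ the tuples are $\sigma_t^\zeta$ with $t,\zeta\in\{\pm1\}$; the two $t=1$ tuples contribute $2\lfloor k/3\rfloor$, while summing the $t=-1$ cases of Theorem \ref{N3dim} over $\zeta$, using $\sum_\zeta\epsilon(k,\sigma)=0$ and $\sum_\zeta\tfrac{\epsilon(k,\sigma)-1}{2}=-1$, $\sum_\zeta\tfrac{\epsilon(k,\sigma)+1}{2}=1$, contributes $2\lfloor k/3\rfloor-1$, $2\lfloor k/3\rfloor+1$, $2\lfloor k/3\rfloor$ according as $k\equiv0,2,4\pmod 6$. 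In each residue class the total comes to $k-1+\lfloor k/3\rfloor$ (again via $k=6m+a$).

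The bookkeeping throughout is routine; the only genuinely substantive step is the sign cancellation over the parameters $\zeta_p$ in the $T>3$ case — this is exactly the mechanism by which the class numbers $h(-T)$ and $h(-M)$, which govern the individual dimensions $\dim S_k(\widehat\sigma)$ and the root-number biases discussed in \S\ref{bias}, disappear from the aggregate dimension of $S_k^{\new}(T^3)$. I would also confirm that the hypothesis $M=T/2>1$ is actually invoked when $T>3$ is even, and spot-check the small-$k$ values against \cite{GMar}.
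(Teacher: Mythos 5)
Your proposal is correct and takes essentially the same approach as the paper: sum the explicit dimension formulas of Theorems \ref{N2dim}, \ref{N3dim}, and \ref{main} over all tuples $\widehat\sigma$ with trivial central character, identify the main term, and observe that the class-number correction terms cancel because $\sum_{\widehat\zeta}\epsilon(k,\widehat\sigma)=0$ (and likewise $\sum_{\widehat\zeta}\epsilon(k,\widehat\sigma)/\zeta_2=0$ when $T$ is even). The arithmetic in the $T=2,3$ cases also matches the paper's casework.
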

\noindent {\em Remarks:} As shown in \cite{GMar}, the formula is also valid for 
$k= 2$.  When $k$ is odd and $\w'$ has conductor dividing $T$, $\dim S_k^{\new}(T^3,\w')$
  is also equal to \eqref{dimnew}. This follows from \eqref{Hkodd}.
\begin{proof}
For $T=2$, by Theorem \ref{N2dim},
\[|H_k(2^3)|=|H_k(\sigma^+)|+|H_k(\sigma^-)|=\begin{cases}2\lfloor\frac k8\rfloor&
\text{if }k\equiv 0,2\mod 8\\
2\lfloor\frac k8\rfloor+1&\text{if }k\equiv 4,6\mod 8.
\end{cases}\]
This is easily seen to be the same as $\lfloor k/4\rfloor$. 

For $T=3$, for fixed $k$ we add the formula in Theorem \ref{N3dim} over all 
  $t,\zeta\in\{\pm 1\}$, or equivalently, $t,\epsilon\in \{\pm 1\}$.  Writing the 
$t=1$ contribution first, we obtain
\[|H_k(3^3)|=2\lfloor\tfrac k3\rfloor +\begin{cases}
2\lfloor\tfrac k3\rfloor -1&\text{if }k\equiv 0\mod 3\\
2\lfloor\tfrac k3\rfloor &\text{if }k\equiv 1\mod 3\\
2\lfloor\tfrac k3\rfloor +1&\text{if }k\equiv 2\mod 3.
\end{cases}\]
The above is easily seen to equal $k-1+\lfloor \tfrac k3\rfloor$, as required.

For $T>3$ we have
    \[\dim S_k^{\new}(T^3)=|H_k(T^3)|=\sum_{\widehat{\sigma}}|H_k(\widehat{\sigma})|,\]
    where $\widehat{\sigma}$ ranges over the $\prod_{p|T}2(p-1)$ tuples $(t_p,\zeta_p)$, with
trivial central character.
    By \eqref{mainf}, this is
    \[=\frac{k-1}{12}\prod_{p|T}\frac{p^2-1}22(p-1) + \sum_{\widehat{\sigma}}
    \Delta_1(\widehat{t})\epsilon(k,\widehat{\sigma})b_T h(-T)
+\sum_{\widehat{\sigma}}\Delta_2(\widehat{t})\frac{\epsilon(k,\widehat{\sigma})j_Mh(-M)}
{\zeta_23_{M=3}}.\]
    It is clear from \eqref{epsilon}
    that exactly half of the $\widehat{\sigma}$ satisfying $\Delta_1(\widehat{t})=1$
  have $\epsilon(k,\widehat{\sigma})=+1$,
    and half have $\epsilon(k,\widehat{\sigma})=-1$.
    So the first sum over $\widehat{\sigma}$ vanishes.
  Likewise if $T$ is even, $\frac{\epsilon(k,\widehat{\sigma})}{\zeta_2}=+1$ (resp. $-1$)
  exactly half of the time since $T$ is divisible by at least one prime different from $2$,
 so the second sum also vanishes.
\end{proof}

Next, we compute the dimension of the subspace of forms with a given root number, which recovers the main
result \eqref{pq} of \cite{PQ}.

\begin{corollary}[\cite{PQ}]
    For $T>3$ square-free and $k\ge 4$ even, the subspace of $S^{\new}_k(T^3)$ with root number $\pm 1$ 
  has dimension
    \[|H^\pm_k(T^3)|=\frac{k-1}{24}\prod_{p|T}(p-1)^2(p+1)\pm \frac{c_Th(-T)}2\prod_{p|T}(p-1),\]
    where $c_T=b_T$ if $T$ is odd, and $c_T=2b_T$ if $T$ is even, i.e.,
\begin{equation}\label{cN}
c_T=\begin{cases}1/2&\text{if }T\equiv 1,2\mod 4\\1&\text{if }T\equiv 7\mod 8\\
    2&\text{if }T\equiv 3\mod 8.\end{cases}
\end{equation}
\end{corollary}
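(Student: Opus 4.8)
The plan is to derive this corollary directly from Theorem \ref{main} by summing $|H_k(\widehat{\sigma})|$ over all tuples $\widehat{\sigma}=(\sigma_{t_p}^{\zeta_p})_{p|T}$ whose common global root number $\epsilon(k,\widehat{\sigma})$ equals a fixed sign $\pm 1$. First I would recall that for $T>3$ square-free there are $\prod_{p|T}2(p-1)$ such tuples with trivial central character, parametrized by the pairs $(t_p,\zeta_p)\in(\Z/p\Z)^*\times\{\pm1\}$, and that by \eqref{epsilon} the root number is $\epsilon(k,\widehat{\sigma})=i^k\prod_{p|T}\zeta_p=(-1)^{k/2}\prod_{p|T}\zeta_p$ for $k$ even. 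Since fixing the $t_p$ and all but one $\zeta_p$ still leaves both values of the last $\zeta_p$ available, exactly half of all tuples have $\epsilon=+1$ and half have $\epsilon=-1$; moreover this is true even after restricting to the tuples with $\Delta_1(\widehat t)=1$ (the $\zeta_p$ are unconstrained by the condition defining $\Delta_1$, which only involves the $t_p$).

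Next I would split the sum $|H^\pm_k(T^3)|=\sum_{\widehat{\sigma}:\,\epsilon(k,\widehat{\sigma})=\pm1}|H_k(\widehat{\sigma})|$ into three pieces according to the three terms of \eqref{mainf}. The main-term piece contributes $\frac{k-1}{12}\prod_{p|T}\frac{p^2-1}2\cdot\prod_{p|T}(p-1)=\frac{k-1}{24}\prod_{p|T}(p-1)^2(p+1)$, since there are $\tfrac12\prod_{p|T}2(p-1)=\prod_{p|T}(p-1)$ tuples of each sign. For the $\Delta_1$ term, I would note that the sign $\epsilon(k,\widehat{\sigma})$ is simply $\pm1$ on the selected tuples, so the contribution is $\pm b_T h(-T)\cdot\#\{\widehat{t}:\Delta_1(\widehat t)=1\}\cdot\tfrac12\cdot 2^{\w(T)}$ divided suitably — more carefully, $\Delta_1(\widehat t)=1$ requires $-pt_p/T$ to be a square mod $p$ for each odd $p|T$, which holds for exactly $\tfrac{p-1}2$ choices of $t_p$ at each such $p$, and each chosen $t_p$ admits $2$ choices of $\zeta_p$; for $T$ even the prime $2$ contributes one $t_2=1$ and two values of $\zeta_2$. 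One then checks that among the tuples with $\Delta_1(\widehat t)=1$, exactly half have each sign, and so the net contribution to $|H^\pm_k(T^3)|$ is $\pm b_Th(-T)\cdot\tfrac12\prod_{\text{odd }p|T}(p-1)\cdot(\text{factor of }1\text{ or }2\text{ from }p=2)$, which I would reconcile with the stated $\pm\tfrac{c_Th(-T)}2\prod_{p|T}(p-1)$ using the definition $c_T=b_T$ ($T$ odd), $c_T=2b_T$ ($T$ even). The $\Delta_2$ term, present only when $T$ is even, I would show contributes $0$ to each of $|H^\pm_k(T^3)|$: it carries the factor $1/\zeta_2$, and since $T$ is divisible by an odd prime, for each fixed choice of all $t_p$, $\zeta_2$, and all $\zeta_p$ except one odd $p$, flipping that remaining $\zeta_p$ flips $\epsilon(k,\widehat{\sigma})$ but leaves $\Delta_2(\widehat t)$ and $1/\zeta_2$ unchanged, so the contributions of these paired tuples cancel.

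Assembling the three pieces gives the claimed formula, and I would finish by double-checking the constant: the $\Delta_1$ contribution, after accounting for the number of admissible $\widehat t$ and the $2^{\w(T)}$ or $2^{\w(T-\text{odd part})}$ counting of $\zeta$'s versus the $2^{\w(d_E)}$ in the denominator hidden inside $b_T$, must collapse exactly to $\pm\tfrac{c_T}2 h(-T)\prod_{p|T}(p-1)$. The main obstacle I anticipate is precisely this bookkeeping of powers of $2$ — reconciling the number of tuples with $\Delta_1=1$ (which introduces factors $\prod_{\text{odd }p|T}\tfrac{p-1}2$ and a $2$ per odd prime from the $\zeta_p$, plus special behavior at $p=2$) against the definitions of $b_T$ and $c_T$ and the $2^{\w(T)}$ appearing in Proposition \ref{e1} — so that the sum over $y$ in that proposition and the $\w'$ being trivial produce exactly the advertised $h(-T)$-multiple with no stray factor of $2$. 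Everything else is routine counting.
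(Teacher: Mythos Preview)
Your approach is essentially identical to the paper's: sum \eqref{mainf} over all tuples $\widehat\sigma$ with a fixed global sign, and handle the three terms separately. The main-term and $\Delta_1$ bookkeeping you outline is exactly what the paper does, and the power-of-$2$ reconciliation you flag as the main obstacle is indeed the only nontrivial point (it comes out cleanly once you note that for $T$ even the count of $\widehat\sigma$ with $\Delta_1(\widehat t)=1$ and fixed sign is $\prod_{p|T}(p-1)$ rather than $\tfrac12\prod_{p|T}(p-1)$, which is precisely why $c_T=2b_T$ in that case).

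There is one genuine slip, in your treatment of the $\Delta_2$ term. The pairing you propose (fix $\zeta_2$ and flip one odd $\zeta_p$) \emph{flips} $\epsilon(k,\widehat\sigma)$, so the two paired tuples lie in different sets $A_k^+$ and $A_k^-$. Their $\Delta_2$ contributions therefore go into \emph{different} sums and do not cancel within either $|H_k^+|$ or $|H_k^-|$; your pairing only shows that the $\Delta_2$ contribution to $|H_k^+|+|H_k^-|$ vanishes, which is not what you need. The fix is immediate: pair instead by flipping $\zeta_2$ and one odd $\zeta_p$ \emph{simultaneously}. This preserves $\epsilon$ (two signs flip) while negating $1/\zeta_2$, so the paired contributions cancel inside each $|H_k^\pm|$. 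Equivalently (and this is how the paper phrases it), observe that among the $\widehat\zeta$ with fixed $\epsilon$, exactly half have $\zeta_2=+1$ and half have $\zeta_2=-1$, since $T>3$ even guarantees an odd prime divisor whose $\zeta_p$ can absorb the sign constraint; hence $\sum_{\widehat\zeta\in A_k^\pm}\zeta_2=0$.
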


\begin{proof}
    Given $\widehat{\sigma}=(\sigma_{t_p}^{\zeta_p})_{p|T},$ let $\widehat{t}=(t_p)_{p|T}$
    and $\widehat{\zeta}=(\zeta_p)_{p|T}$.  The root number is determined by $\widehat{\zeta}$
    and $k$.
    Let $A_k^\pm$ be the set of all tuples $\widehat{\zeta}$ for which 
$(-1)^{k/2}\prod_{p|T}\zeta_p =\pm 1$.  Then
    \begin{equation}\label{A+}
        |A_k^+|=|A_k^-|=\frac12\prod_{p|T}2.
    \end{equation}
    By \eqref{mainf}, we see that
    \begin{align*}
|H^\pm_k(T^3)|=\sum_{\widehat{\zeta}\in A_k^\pm}\sum_{\widehat{t}}|H_k(\widehat{\sigma})|
    =\sum_{\widehat{\zeta}\in A_k^\pm}\sum_{\widehat{t}}&\left(\frac{k-1}{12}\prod_{p|T}\frac{p^2-1}2
    \pm b_T h(-T)\Delta_1(\widehat{t})\right.\\
&\left.\pm \zeta_2 \frac{j_Mh(-M)}{3_{M=3}}\Delta_2(\widehat{t})\right),
\end{align*}
where $M$ is the odd part of $T$.
Recal that $\Delta_2(\widehat{t})=0$ if $T$ odd. If $T$ is even, upon summing over $\zeta_2=\pm1$ the last
term will be eliminated, so we can ignore it henceforth.
    For any given odd prime $p$, exactly half of
    the elements $t_p\in(\Z/p\Z)^*$ have the property that $-pt_p/T$ is a square.
    Therefore, the number of tuples $\widehat{t}$ for which $\Delta_1(\widehat{t})\neq 0$
    is $\prod_{p|M}\frac{p-1}2$.
The total number of tuples $\widehat{t}$ is
    $\prod_{p|T}(p-1)=\prod_{p|M}(p-1)$.  It follows that
    \[|H^\pm_k(T^3)|
    =\sum_{\widehat{\zeta}\in A_k^\pm}\left(\frac{k-1}{12}\prod_{p|T}\frac{p^2-1}2(p-1)
    \pm b_T h(-T)\prod_{p|M}\frac{p-1}2\right).\]
    By \eqref{A+}, we obtain
    \[|H^\pm_k(T^3)|
    =\frac{k-1}{24}\prod_{p|T}(p-1)^2(p+1)
    \pm \frac{2_Tb_T h(-T)}2\prod_{p|T}(p-1),\]
    where $2_T$ is a factor of $2$ which is only present when $T$ is even.
We see immediately that $2_Tb_T=c_T$ as given.
\end{proof}

\subsection{Some examples with $\n>1$}\label{ng1}

In this section we illustrate Theorem \ref{mainST} with some examples.  (A different set of
examples is given in the earliest version of this paper posted on the arxiv.)
We will compare with the Galois orbits of newforms tabulated in the \cite{LMFDB}.  Though  
$S_k(\widehat{\sigma})$ occasionally forms a Galois orbit, typically the orbit is a
direct sum of more than one such space.  It also happens that a
space $S_k(\widehat{\sigma})$ decomposes as a direct sum of more than one Galois orbit.
Examples of these phenomena can be found in $S_4^{\min}(23^2)$, where Theorem \ref{d0thm}
gives $\dim S_4(\widehat{\sigma})=\frac{11+\epsilon}2\in\{5,6\}$, but the twist-minimal
Galois orbits can have dimensions $1,2,5,6,12$ or $24$. 

\subsubsection{}
We first consider an example with odd weight.
Take $N=3^3$, $k=5$, and $\w'$ the Dirichlet character of modulus 27 and conductor $3$.
We consider simple supercuspidal representations $\sigma_t^\zeta$, where $t\in \{\pm 1\}$ 
and $\zeta^2 = \w'(t)$.
In the \cite{LMFDB} we find the following data for the space $S_5(27,\w')$:
\[        \begin{array}{c|c|c|c|c||c}
            \rule[-3mm]{0mm}{8mm}
     \text{LMFDB label}&\epsilon &\text{dim}&\tr T_4&\tr T_7&(\zeta,t)\\
     \hline
     \hline
            \rule[-3mm]{0mm}{8mm}
27.5.b.a&1&1&16&71&(-i,-1)\\
     \hline
            \rule[-3mm]{0mm}{8mm}
27.5.b.b&-1&2&-76&34&(i,-1)\\
     \hline
            \rule[-3mm]{0mm}{8mm}
27.5.b.c&\pm i&2&14&-38&(1,1)\oplus (-1,1)\\
 \end{array}
\]
The final column, using the shorthand $(\zeta,t)=S_5(\sigma_t^\zeta)$, 
is immediate upon comparing Theorem \ref{N3dim} with the $\epsilon$ and dim columns.
Using Theorem \ref{main} we find the following, which refines the above.

\begin{example}\label{N3T4}
With notation as above,
\[\tr(T_4|S_5(\sigma_t^\zeta))=\frac{37t-23}2+46i\zeta  \cdot \delta_{t=-1},\]
\[\tr(T_7|S_5(\sigma_t^\zeta))=\frac{67-143t}4+\frac{37 i\zeta}2 \delta_{t=-1}.\]
\end{example}

We will give an indication of the proof of the above formulas.  
  The calculations for $\n=7$ are a little bit more interesting, so we start with this case.
By Theorem \ref{mainST}, 
\[\tr(T_7|S_5(\sigma_t^\zeta))=7^{3/2}\Bigl[\Phi(\mat{}{-21}13)+\Phi(\mat{}{-21}16)+\Phi(\mat{}{-21}19)
+\sum_{r=1}^5 \Phi(\mat{}{-7}1r)\Bigr].\]
We have used \eqref{Phiinf} to eliminate the trace zero matrices, since $k$ is odd.
The matrix $\smat{}{-7}13$ is unramified at $p=3$ but has no double characteristic root mod 3.  So its orbital integral
vanishes by Proposition \ref{relevant}.  The first three integrals vanish unless 
\[y^2\equiv -t/7 \equiv -t\mod 3\]
has a solution, i.e., $t=-1$.  In this case, applying Proposition \ref{Phiell} to 
$\g=\smat{}{-21}19$ and $p=3$, we see that $v=3$ so the local integral has the value
$\ol{\zeta_5}(\w_3(1)+\w_3(-1))=0$.   Hence this $\g$ can be discarded.
We compute the remaining orbital integrals locally as summarized in the following table, where
$m=\frac{2h(E)}{w(E)2^{\w(d_E)}}$ is the global measure factor for $E=\Q[\g]$, and
$\ell$ denotes a prime factor of the discriminant $\Delta_\g$ other than $3$ (if such exists).  
The global orbital integral is then $\Phi=m\Phi_\infty\Phi_3\Phi_\ell$.  The factor 
$\Phi_\infty=-\frac{\sin(4\arctan(\frac{\sqrt{|\Delta_\g|}}{\tr\g}))}{\sin(\arctan(\frac{\sqrt{|\Delta_\g|}}{\tr\g}))}$
 was computed using software.
  \[        \begin{array}{c|c|c||c|c|c|c}
            \rule[-3mm]{0mm}{8mm}
     \g&\Delta_\g&\ell&m&\Phi_\infty&\Phi_3&\Phi_\ell\\
     \hline
     \hline
            \rule[-3mm]{0mm}{8mm}
\smat{}{-21}1{3}&-3\cdot 5^2&5&\frac16&11\sqrt3\cdot 7^{-3/2}&-i\ol{\zeta}\sqrt 3\cdot\delta_{t=-1}&7\\
     \hline
            \rule[-3mm]{0mm}{8mm}
\smat{}{-21}1{6}&-2^4\cdot 3&2&\frac16&4\sqrt3\cdot 7^{-3/2}&i\ol{\zeta}\sqrt 3\cdot\delta_{t=-1}&10\\
     \hline
            \rule[-3mm]{0mm}{8mm}
\smat{}{-7}1{1}&-3^3& &\frac16&13\cdot 7^{-3/2}&4& \\
     \hline
            \rule[-3mm]{0mm}{8mm}
\smat{}{-7}1{2}&-2^3\cdot 3&2&\frac12&20\cdot 7^{-3/2}&\frac{1-3t}2&2\\
     \hline
            \rule[-3mm]{0mm}{8mm}
\smat{}{-7}1{4}&-2^2\cdot 3&2 &\frac16&-8\cdot 7^{-3/2}&-\frac{3t+1}2&4\\
     \hline
            \rule[-3mm]{0mm}{8mm}
\smat{}{-7}1{5}&-3& &\frac16&-55\cdot 7^{-3/2}&\frac{3t+1}2& \\
 \end{array}
    \]
The formula for $\tr T_7$ in Example \ref{N3T4} follows upon simplifying. Most of the entries
in the above table are straightforward, but we highlight a few.
 For example,
$\g=\smat{}{-21}16$ is elliptic in $G(\Q_2)$, and by the quadratic formula,
   \[\Z_2[\g]=\Z_2[\tfrac{6+2^2\sqrt{-3}}2]
=\Z_2+\Z_22^2\varepsilon,\]
 where $\varepsilon=\frac{1+\sqrt{-3}}2$.  So $n_\g=2$ and $\Phi_2(\g)=1+(2+1)+(4+2)=10$ by
  Proposition \ref{elle} and \eqref{index}.

The matrix $\g=\smat{}{-7}11$ is unramified at $p=3$, so $\Phi_3(\g)$ is computed using Proposition \ref{gunram}.
We find (using software) that $\mathcal{N}_\g(0,1)=\mathcal{N}_\g(0,2)=3$, $\mathcal{N}_\g(1,2)=6$, 
  $\mathcal{N}_\g(1,3)=9$, and $\mathcal{N}_\g(c,n)=0$ for all other pairs $(c,n)$.  Since $P_\g(X)\equiv (X+1)^2\mod 3$, 
we take $z=-1$, so, using the third remark after Proposition \ref{gunram}, for $t=\pm 1$ we have
\[\Phi_3(\smat{}{-7}11)=\frac{-1}3\Bigl[3\bigl(e(\tfrac t3)+e(\tfrac{-t}3)\bigr)+3(2)+6(-1)+9(-1)\Bigr]=4.\]

Finally, $\g=\smat{}{-7}12$ is unramified at $p=3$ and $\mathcal{N}_\g(-1,1)=3$ is the only nonzero value of
  $\mathcal{N}_\g(c,n)$.  We take $z=1$ in Proposition \ref{gunram} to get
\[\Phi_3(\smat{}{-7}12)=\frac13\cdot 3\Bigl[e(\frac{-1-t}3)+e(\frac{1+t}3)\Bigr]=2\cos(\frac{2\pi(1+t)}3)
=\begin{cases}-1&\text{if }t=1\\2&\text{if }t=-1.\end{cases}\]
This equals $\frac{1-3t}2$ for $t=\pm1$. 
The remaining entries in the above $T_7$ table are found in a similar fashion.

For $\tr T_4$, in the identity term we have $\w'(\sqrt{4})=-1$.  So 
\[\tr(T_4|S_5(\sigma_t^\zeta))=8\Bigl[-\tfrac43+\Phi(\smat{}{-12}13)+\Phi(\smat{}{-12}16)
+\Phi(\smat{}{-4}11)+\Phi(\smat{}{-4}12)+\Phi(\smat{}{-4}13)\Bigr].\]
The last term can be eliminated since it is unramified at $p=3$ and it has no characteristic root modulo $3$.
The remaining orbital integrals are computed locally as follows, and the formula for $\tr T_4$ in
Example \ref{N3T4} follows upon simplification.
  \[        \begin{array}{c|c|c||c|c|c|c}
            \rule[-3mm]{0mm}{8mm}
     \g&\Delta_\g&\ell&m&\Phi_\infty&\Phi_3&\Phi_\ell\\
     \hline
     \hline
            \rule[-3mm]{0mm}{8mm}
\smat{}{-12}1{3}&-3\cdot 13&13&1&5\sqrt3\cdot 8^{-1}&-i\ol{\zeta}\sqrt 3\cdot\delta_{t=-1}&2\\
     \hline
            \rule[-3mm]{0mm}{8mm}
\smat{}{-12}1{6}&-2^2\cdot 3&2&\frac16&-\sqrt3&i\ol{\zeta}\sqrt 3\cdot\delta_{t=-1}&4\\
     \hline
            \rule[-3mm]{0mm}{8mm}
\smat{}{-4}1{1}&-3\cdot 5&5&\frac12&7\cdot 8^{-1}&\frac{3t-1}2&2\\
     \hline
            \rule[-3mm]{0mm}{8mm}
\smat{}{-4}1{2}&-2^2\cdot 3&2&\frac16&1&\frac{3t+1}2&4\\
 \end{array}
\]

\subsubsection{} Let $N=2^311^2$ and $k=6$, and let $\sigma^{\zeta}$ be a simple supercuspidal
representation of $\PGL_2(\Q_2)$ and $\sigma_{\nu}$ a depth zero supercuspidal representation
of $\PGL_2(\Q_{11})$.  Here, $\zeta\in\{\pm 1\}$, and $\nu$ is one of the five primitive
characters of $L^*$ listed in \eqref{B17}, where $L=\F_{11^2}$ and we take the generator
$t$ of $L^*$ to be a root of the polynomial $X^2+7X+2\in \F_{11}[X]$. 
Let $\widehat{\sigma}$ be the associated tuple. Then by Theorem \ref{dimST}, 
\[\dim S_6(\widehat{\sigma})= \frac{25}4 +\frac12\Phi(\mat{}{-2}1{},f^1)+\frac12\Phi(\mat{}{-1}1{},f^1)
+\Phi(\mat{}{-2}12,f^1).\]
Over $\F_{11}$, $X^2+2=(x+3)(x-3)$, so $\smat{}{-2}1{}$ is hyperbolic in $G(\Q_{11})$ by Hensel's
Lemma, and therefore its orbital integral vanishes.  
Using Example \ref{Phi2a} and the argument at \eqref{A1b}, 
\[\frac12\Phi(\mat{}{-1}1{})= \frac12m\Phi_\infty\Phi_2\Phi_{11} = \frac12\cdot\frac14\cdot(-1)^{6/2}\cdot 1\cdot 2\epsilon_{11}
=-\frac{\epsilon_{11}}4.\]

Taking $\g=\smat{}{-2}12$, $P_\g(X)=X^2-2X+2$ is irreducible over $\F_{11}$, so 
by \eqref{Tcase}, 
\[\Phi_{11}=-\ol{\nu(\g)}-\ol{\nu^{11}(\g)}.\]
  For $L^*=\sg{t}$ as 
above, we find (using software) that $t^{51}$ has minimum polynomial $P_\g(X)$.
Therefore, if $\nu=\nu_m$ for $m=10w\in \{10,20,30,40,50\}$ as in \eqref{B17}
where $\mu_m(t)=e(m/120)$, we have
\[\nu_m(\g)=e(\tfrac{51m}{120})=e(\tfrac{17w}4)=e(\tfrac{w}4)=i^w.\]
Using this, $\Phi_{11}(\g)$ is given by
\begin{equation}\label{nusign}
\begin{array}{l||c|c|c|c|c}
            \rule[-2mm]{0mm}{6mm}
\nu&\nu_{10}&\nu_{20}&\nu_{30}&\nu_{40}& \nu_{50}\\
\hline
            \rule[-2mm]{0mm}{6mm}
\epsilon_{11}&+&-&+&-&+\\
\hline
            \rule[-2mm]{0mm}{6mm}
\Phi_{11}&0&2&0&-2&0
\end{array}
\end{equation}
As in the proof of Proposition \ref{2T}, $m=\frac14$, $\Phi_\infty= 1$ (since $k=6$), 
and $\Phi_2=-\zeta$.  Hence 
$\Phi(\g)=-\frac{\zeta\Phi_{11}}4$ for $\Phi_{11}$ as above.
Thus
\begin{equation}\label{dim211}
\dim S_6(\widehat{\sigma})=\frac{25}4-\frac{\epsilon_{11}}4-\frac{\zeta\Phi_{11}}4 
=\begin{cases} 6&\text{if }\epsilon_{11}=1,\text{ or $\zeta=1$ and $\nu=\nu_{20}$},\\
&\text{or $\zeta=-1$ and $\nu=\nu_{40}$};\\
7&\text{if $\zeta=1$ and $\nu=\nu_{40}$},\\
&\text{or $\zeta=-1$ and $\nu=\nu_{20}.$}\end{cases}
\end{equation}

We would like to match the above spaces to Galois orbits of twist-minimal newforms in $S_6^{\new}(2^311^2)$.
In the table below, the first five columns show \cite{LMFDB} data, with AL entries corresponding to the 
Atkin-Lehner signs at $p=2,11$.  These are equal to $\zeta$ and $\epsilon_{11}$ respectively.
The dim column gives the size of the orbit.
  \[        \begin{array}{c|c|c|c|c||c}
            \rule[-3mm]{0mm}{8mm}
 \text{LMFDB label}&\text{dim}&\tr T_7&\text{AL }2&\text{AL }11&(\zeta,\nu)\\
     \hline
     \hline
            \rule[-2mm]{0mm}{6mm}
968.6.a.f & 6 & -124 & - & - & (-1,\nu_{40})\\
     \hline
            \rule[-2mm]{0mm}{6mm}
968.6.a.g & 6 & 124 & + & - & (1,\nu_{20})\\
     \hline
            \rule[-2mm]{0mm}{6mm}
968.6.a.h & 6 & -88 & + & + & (1,\nu_{30})\\
     \hline
            \rule[-2mm]{0mm}{6mm}
968.6.a.i & 6 & 88 & - & + & (-1,\nu_{30})\\
     \hline
            \rule[-2mm]{0mm}{6mm}
968.6.a.j & 7 & -62 & - & - & (-1,\nu_{20})\\
     \hline
            \rule[-2mm]{0mm}{6mm}
968.6.a.k & 7 & 62 & + & - & (1,\nu_{40})\\
     \hline
            \rule[-2mm]{0mm}{6mm}
968.6.a.l & 6 & -206 & + & + & (1,\nu_{10})\oplus (1,\nu_{50})\\
     \hline
            \rule[-2mm]{0mm}{6mm}
968.6.a.m & 6 & 206 & - & + & (-1,\nu_{10})\oplus(-1,\nu_{50})
 \end{array}
\]
  In the final column we have adopted the notation 
  $S_6(\widehat{\sigma})=(\zeta,\nu)$.  This column was
 obtained as follows.  Comparing \eqref{nusign} and \eqref{dim211} with the AL and
dim columns, we immediately infer the entries with $\epsilon_{11}=-1$, i.e. with $\nu_{20}$ and $\nu_{40}$.
   We can distinguish the remaining entries by looking at Hecke eigenvalues.  
For this we apply Theorem \ref{mainST}
to compute $\tr(T_7|S_6(\widehat{\sigma}))$.
The result is the following.

\begin{example}\label{211ex}
Let $N=2^311^2$ and $\widehat{\sigma}=(\sigma^\zeta,\sigma_{\nu})$ be a tuple of supercuspidal
representations of conductors $2^3$ and $11^2$ respectively, as above.  Then
\[\tr(T_7|S_6(\widehat{\sigma}))=-98\zeta\epsilon_{11} -5\zeta X_{11}- 31Y_{11},
\]
where $\epsilon_{11}$, $X_{11}$ and $Y_{11}$ are given as follows:
\[\begin{array}{l||c|c|c|c|c}
            \rule[-2mm]{0mm}{6mm}
\nu&\nu_{10}&\nu_{20}&\nu_{30}&\nu_{40}& \nu_{50}\\
\hline
            \rule[-2mm]{0mm}{6mm}
\epsilon_{11}&+&-&+&-&+\\
\hline
            \rule[-2mm]{0mm}{6mm}
X_{11}&1&1&-2&1&1\\
\hline
            \rule[-2mm]{0mm}{6mm}
Y_{11}&\sqrt3&-1&0&1&-\sqrt3
\end{array}
\]
For example, in the notation used above, 
\[\tr(T_7|(1,\nu_{10}))= -103-31\sqrt3,\qquad
\tr(T_7|(1,\nu_{50}))=-103+31\sqrt3.\]
\end{example}

We sketch the proof as follows.  By Theorem \ref{mainST},
\[\tr(T_7|S_6(\widehat{\sigma}))=7^2\left[\frac12\Phi(\mat{}{-7}1{})+\frac12\Phi(\mat{}{-14}1{})
+\sum_{r=1}^5\Phi(\mat{}{-7}1r)+\sum_{r=1}^3\Phi(\mat{}{-14}1{2r})\right].\]
All but three of the orbital integrals vanish for simple reasons.  The matrices
$\smat{}{-7}1{},\smat{}{-7}12,\smat{}{-7}13$, $\smat{}{-14}11$, and $\smat{}{-14}12$ are 
hyperbolic in $G(\Q_{11})$, since their characteristic polynomials have two distinct roots modulo $11$.
The matrices $\smat{}{-7}11,\smat{}{-7}15$ are unramified at $p=2$ but do not have characteristic
roots modulo $2$.  So the associated orbital integrals vanish by Proposition \ref{relevant}, and
\[\tr(T_7|S_6(\widehat{\sigma}))=7^2\left[\frac12\Phi(\mat{}{-14}1{})
+\Phi(\mat{}{-7}14)+\Phi(\mat{}{-14}1{6})\right].\]
The formula in Example \ref{211ex} follows upon computing each of these terms locally.  The
local results are shown in the following table, with notation as in the previous $N=27$ 
example.  The global orbital integral for a given row is $\Phi=m\Phi_\infty\Phi_2\Phi_{11}\Phi_\ell$.
 \[        \begin{array}{c|c|c||c|c|c|c|c}
            \rule[-3mm]{0mm}{8mm}
     \g&\Delta_\g&\ell&m&\Phi_\infty&\Phi_2&\Phi_{11}&\Phi_\ell\\
     \hline
     \hline
            \rule[-3mm]{0mm}{8mm}
\smat{}{-14}1{}&-2^3\cdot7&7&1&-1&\zeta&2\epsilon_{11}&2\\
     \hline
            \rule[-3mm]{0mm}{8mm}
\smat{}{-14}1{6}&-2^2\cdot 5&5&\frac12&\frac5{7^2}&-\zeta& X_{11} & 2 \\
     \hline
            \rule[-3mm]{0mm}{8mm}
\smat{}{-7}1{4}&-2^2\cdot 3&3&\frac16&\frac{31}{7^2}&-3&Y_{11}&2
 \end{array}
\]
The $\Phi_{11}$ column was determined as follows.
As described earlier, $\F_{11^2}^*=\sg{t}$ where $t^2+7t+2=0$.  For each $\g$ as above, there is a power
$t^j$ whose minimum polynomial over $\F_{11}$ is $P_\g(X)$. 
The power $j$ was found with software, and is given as follows:
\[\begin{array}{l||c|c|c}
            \rule[-2mm]{0mm}{6mm}
\g&\smat{}{-14}1{}&\smat{}{-14}16&\smat{}{-7}14\\
\hline
            \rule[-2mm]{0mm}{6mm}
j&18&8&17
\end{array}
\]
In each case, \eqref{Tcase} implies that 
\[\Phi_{11}=-\ol{\nu(\g)}-\ol{\nu^{11}(\g)}= -\ol{\nu(t^j)}-\ol{\nu(t^{11j})}.\]
By definition, $\nu_m(t)=e(m/120)$, so if $\nu=\nu_m$ for $m=10w$,
\[\Phi_{11}(\g)=-e(-\frac{jw}{12})-e(-\frac{11jw}{12}),\]
which can be evaluated by hand or using software to obtain the $\Phi_{11}$ column in the above table.

\vskip 1cm

\small


\let\addcontentsline\oldaddcontentsline
\end{document}